\documentclass[11pt]{article}

\usepackage{comment,url,algorithm,algorithmic,graphicx,subcaption,relsize}
\usepackage{amssymb,amsfonts,amsmath,amsthm,amscd,dsfont,mathrsfs,mathtools,nicefrac}
\usepackage{float,psfrag,epsfig,color,xcolor,url,hyperref}
\usepackage{epstopdf,bbm,mathtools,enumitem}
\usepackage[toc,page]{appendix}
\usepackage[mathscr]{euscript}
\usepackage{xspace}

\usepackage[top=1in, bottom=1in, left=1in, right=1in]{geometry}

\def\balign#1\ealign{\begin{align}#1\end{align}}
\def\baligns#1\ealigns{\begin{align*}#1\end{align*}}
\def\balignat#1\ealign{\begin{alignat}#1\end{alignat}}
\def\balignats#1\ealigns{\begin{alignat*}#1\end{alignat*}}
\def\bitemize#1\eitemize{\begin{itemize}#1\end{itemize}}
\def\benumerate#1\eenumerate{\begin{enumerate}#1\end{enumerate}}

\newenvironment{talign*}
 {\csname align*\endcsname}
 {\endalign}
\newenvironment{talign}
 {\csname align\endcsname}
 {\endalign}

\def\balignst#1\ealignst{\begin{talign*}#1\end{talign*}}
\def\balignt#1\ealignt{\begin{talign}#1\end{talign}}

\let\originalleft\left
\let\originalright\right
\renewcommand{\left}{\mathopen{}\mathclose\bgroup\originalleft}
\renewcommand{\right}{\aftergroup\egroup\originalright}

\def\tinycitep*#1{{\tiny\citep*{#1}}}
\def\tinycitealt*#1{{\tiny\citealt*{#1}}}
\def\tinycite*#1{{\tiny\cite*{#1}}}
\def\smallcitep*#1{{\scriptsize\citep*{#1}}}
\def\smallcitealt*#1{{\scriptsize\citealt*{#1}}}
\def\smallcite*#1{{\scriptsize\cite*{#1}}}

\def\<{\left\langle} %
\def\>{\right\rangle}

\DeclareSymbolFont{rsfs}{U}{rsfs}{m}{n}
\DeclareSymbolFontAlphabet{\mathscrsfs}{rsfs}

\ifdefined\nonewproofenvironments\else
\ifdefined\ispres\else
\newtheorem{theorem}{Theorem}[section]
\newtheorem{lemma}[theorem]{Lemma}
\newtheorem{corollary}[theorem]{Corollary}

\renewenvironment{proof}{\noindent\textbf{Proof.}\hspace*{.3em}}{\qed\\}
\newenvironment{proof-sketch}{\noindent\textbf{Proof Sketch}
  \hspace*{1em}}{\qed\bigskip\\}
\newenvironment{proof-idea}{\noindent\textbf{Proof Idea}
  \hspace*{1em}}{\qed\bigskip\\}
\newenvironment{proof-of-lemma}[1][{}]{\noindent\textbf{Proof of Lemma {#1}}
  \hspace*{1em}}{\qed\\}
\newenvironment{proof-of-theorem}[1][{}]{\noindent\textbf{Proof of Theorem {#1}}
  \hspace*{1em}}{\qed\\}
\newenvironment{proof-attempt}{\noindent\textbf{Proof Attempt}
  \hspace*{1em}}{\qed\bigskip\\}
\newenvironment{proofof}[1]{\noindent\textbf{Proof of {#1}}
  \hspace*{1em}}{\qed\bigskip\\}

\fi

\newtheorem{proposition}[theorem]{Proposition}

\newtheorem{assumption}{Assumption}
\fi
\makeatletter
\@addtoreset{equation}{section}
\makeatother

\hypersetup{
  colorlinks,
  linkcolor={red!50!black},
  citecolor={blue!50!black},
  urlcolor={blue!80!black}
}

\mathtoolsset{showonlyrefs}
\usepackage{natbib}
\allowdisplaybreaks[4]

\newcommand{\rmd}{\mathrm d}

\def\wass{{\sf W}}

\usepackage{tikz}
\usetikzlibrary{arrows.meta}

\definecolor{darkmidnightblue}{rgb}{0.0, 0.2, 0.4}
\definecolor{darkpowderblue}{rgb}{0.0, 0.2, 0.6}
\definecolor{dukeblue}{rgb}{0.0, 0.0, 0.61}

\hypersetup{
    colorlinks = true,
    citecolor= midnightblue,
    urlcolor= black,
    breaklinks=true,
    linkcolor = midnightblue,
    linkbordercolor = {white},
}

\definecolor{darkmidnightblue}{HTML}{003366}    
\definecolor{midnightblue}{HTML}{0059b3}
\definecolor{chromered}{HTML}{f14233}

\begin{document}
\title{Robustness of Diffusion Models under Distribution Shift}
\author{Wei Luo\thanks{Department of Mathematical Sciences, Tsinghua University,  \texttt{luow25@mails.tsinghua.edu.cn
}}  \qquad Neil K. Chada\thanks{Department of Mathematics, City University of Hong Kong, Hong Kong SAR, \texttt{neilchada123@gmail.com}}
\qquad Shijie Zhang\thanks{Artificial Intelligence Research Institute, Shenzhen MSU-BIT University, \texttt{shijie.z@smbu.edu.cn}} 
\qquad Lu Yu\thanks{Corresponding author. Data Science Institute, Shandong University, \texttt{lu.yu@sdu.edu.cn}}}

\maketitle

\begin{abstract}

Score-based diffusion models are increasingly considered in settings where the underlying data distribution may differ from the training distribution, yet existing theoretical guarantees largely focus on the no-shift setting. In this work, we study robust score estimation under
Wasserstein perturbations of a reference distribution. For the
Ornstein--Uhlenbeck diffusion, we show that robust estimation decomposes into
two fundamental components: the statistical cost of learning the reference
distribution and the intrinsic cost of distribution shift. The latter scales
quadratically with the Wasserstein radius, and this dependence is minimax
optimal. We construct an explicit finite-sample estimator achieving the
resulting robust minimax rate without knowing the shift radius. When the
reference distribution lies on an unknown low-dimensional subspace, the
statistical term adapts to the intrinsic dimension while the shift cost remains
unchanged. Finally, we show that the same decomposition governs positive-time
reverse sampling and obtain matching minimax guarantees in KL divergence.
Together, these results characterize how finite data, intrinsic dimension, and
distribution shift affect the robustness of score-based diffusion models.

\end{abstract}

\begingroup
\color{midnightblue} 
\tableofcontents
\endgroup

\section{Introduction}

Diffusion models have become central to modern generative modeling
~\citep{dhariwal2021diffusion,ho2020denoising,song2020denoising,xu2022geodiff}.
A common formulation is the score-based diffusion model (SDM), which
constructs a generative process by reversing a stochastic differential
equation (SDE)~\citep{song2020score}. 
At each diffusion time, the reverse dynamics rely on the score function of
the corresponding noisy data distribution. Since the score is unknown, it
must be estimated from data, typically using a neural network in
practical implementations.

A substantial body of theory has studied the statistical properties of
learned scores and their impact on the reverse generative process. Existing results provide guarantees for score estimation and sampling under a fixed data distribution, where the estimator is evaluated on the same distribution used for training (see, e.g.,
~\citet{gao2023wasserstein,chen2022sampling,li2024sharp,yu2026diffusion}).
However, pretrained diffusion models are increasingly reused or adapted across changing data sources and new target domains~\citep{rombach2022high,ouyang2024transfer,gao2023back}. Such distribution shifts arise from domain changes, temporal drift, sampling bias, and variations in data collection~\citep{moreno2012unifying,koh2021wilds,ben2010theory,torralba2011unbiased,cheng2026generative}. This motivates a fundamental question: how robust are diffusion models when the deployment distribution differs from the reference distribution?

We study robustness through the score function, which governs the reverse diffusion dynamics and thus provides a natural entry point for analyzing generative robustness. Unlike standard score estimation, distribution shift introduces a mismatch between the distribution used for learning and the distribution encountered after deployment. The resulting robust error contains two components: the statistical error from learning the reference distribution and the additional error caused by the unknown shift. Our goal is to characterize these contributions and their optimal scaling.

To quantify distribution shifts, we model the allowed perturbations through Wasserstein neighborhoods of the reference distribution. Wasserstein distance has been widely used in diffusion sampling and stability analyses (see, e.g.,~\citet{gao2023wasserstein,yu2025advancing,beyler2025convergence}). At the same time, Wasserstein proximity does not directly imply proximity of
score functions, which depend on local variations of the underlying density. Forward diffusion regularizes this discrepancy through Gaussian smoothing, but the regularization becomes weaker at early diffusion times. Quantifying how a Wasserstein perturbation of the initial distribution propagates to the score along the diffusion trajectory is therefore a key step toward a statistical theory of diffusion robustness.

Specifically, let $\mathcal Q_\varepsilon(p_0)$ denote the Wasserstein-2 neighborhood of radius \(\varepsilon\) around the reference distribution $p_0,$ where each
 $q_0\in\mathcal Q_\varepsilon(p_0)$ represents a possible shifted
distribution.
Applying the same forward diffusion to $p_0$ and $q_0$ gives
the corresponding noisy marginals $p_t$ and $q_t$. We study score estimation
over diffusion times $t\in[t_0,T]$, where $t_0>0$ specifies the earliest diffusion time considered and avoids the singular zero-noise regime.
A score estimator $\hat s(x,t)$ is learned from samples of $p_0$, but under
distribution shift it is evaluated on $q_t$ and compared with the shifted
score $\nabla\log q_t$. We quantify robustness through the worst-case time-averaged score risk
\[
\mathcal R_{\varepsilon,p_0}(\hat s)
:=  \sup_{q_0\in\mathcal Q_\varepsilon(p_0)}
\mathbb{E}\left[
\frac{1}{T-t_0}
\int_{t_0}^T
\mathbb E_{X\sim q_t}
\left[
\|\hat s(X,t)-\nabla\log q_t(X)\|_2^2
\right]\,\rmd t
\right],
\]
where the outer expectation is over the training data and any randomness in
the estimator.

When \(\hat s\) is learned from finite samples of \(p_0\), this robust risk captures two sources of uncertainty: the statistical error from learning the reference distribution and the shift error arising from the unknown deployment distribution \(q_0\), which changes both the evaluation distribution and the target score. Understanding their interaction, the intrinsic cost of distribution shift, and whether such effects extend to generation remains beyond existing score-estimation theory.

These considerations lead to the central question of this work:
\begin{center}
\textit{
What are the statistical and intrinsic costs of distribution shift in
diffusion models, and how do they propagate from score estimation to
generation?
}
\end{center}
To our knowledge, this is the first work to develop a minimax theory for
robust SDMs under distribution shift.
We focus on the Ornstein--Uhlenbeck (OU)
process as a canonical setting in which these costs can be characterized
sharply.
We develop a minimax theory that reveals a fundamental decomposition
of the robust error into a statistical estimation cost and an intrinsic
distribution-shift cost. 
The latter
is quadratic in the Wasserstein radius and minimax optimal, while the former
adapts to low-dimensional structure. We further show that this decomposition
extends to positive-time reverse sampling with matching KL guarantees. Our main
contributions are summarized as follows.

\begin{itemize}

\item \textbf{Intrinsic cost of distribution shift.}
We establish a comparison principle for score estimators induced by estimated
initial laws under the OU semigroup. The resulting bound separates reference
distribution estimation error from an intrinsic shift cost that scales
quadratically with the Wasserstein radius. A matching minimax lower bound shows
that this dependence is unavoidable.

\item \textbf{Finite-sample robust score estimation.}
We construct an explicit estimator from $n$ i.i.d.\ samples from the reference distribution and prove, under compact support and for fixed dimension,
$$   \mathcal R_{\varepsilon,p_0}(\hat s_n)
    \lesssim
    \frac{1}{n(T-t_0)t_0^{d/2}}
    +
    \frac{\varepsilon^2}{t_0(T-t_0)}.
$$
The estimator does not require knowledge of the shift radius and achieves the
optimal robust rate.

\item \textbf{Adaptation to intrinsic dimension.}
We extend the theory to reference distributions supported on an unknown
$k$-dimensional linear subspace of an ambient $d$-dimensional space. In this
setting, 
we show that the statistical term depends on $k$ rather than the ambient
dimension $d$, while the shift cost remains unchanged.
Matching lower bounds establish
the minimax optimality of this intrinsic-dimensional rate.

\item \textbf{Robust generation.}
Finally, we extend the score estimation guarantees to positive-time reverse
sampling. The resulting KL error admits the same decomposition into a
finite-sample estimation term and a quadratic distribution shift term, with
matching minimax lower bounds. Thus, the statistical and intrinsic costs at the
score level persist in the generated distribution.

\end{itemize}

Together, these results provide a unified characterization of diffusion
robustness under Wasserstein shifts: more data and structural assumptions reduce
the statistical difficulty of learning the reference distribution, but cannot
remove the intrinsic uncertainty caused by the distribution shift.
A controlled numerical study in Appendix~\ref{app:three-dimensional-experiment} illustrates the predicted finite-sample decay and quadratic shift behavior for both the time-averaged score risk and the generation error in KL divergence
at time $t_0$.
All proofs are deferred to the appendix.

\paragraph{Additional Related Work.}
A substantial literature studies the learnability of score functions in diffusion models. Early work established $L^2$-estimation guarantees in terms of hypothesis-class complexity~\citep{block2020generative}, while subsequent studies derived sharper sample-complexity bounds using expressive neural networks, bounded-parameter local approximations for sub-Gaussian targets, and neural tangent kernel techniques~\citep{gupta2024improved,cole2024score,han2024neural}. Complementarily, \citet{chewi2025ddpm} relate DDPM score estimation to classical distribution learning. More refined guarantees have been obtained under additional structural assumptions, including Gaussian-mixture models~\citep{gatmiry2024learning,chen2024learning}, data supported on unknown linear subspaces~\citep{chen2023score}, and distributions concentrated on low-dimensional manifolds~\citep{azangulov2024convergence,yakovlev2025generalization,yakovlev2025implicit}. A related line of work establishes minimax rates for score estimation and sampling under assumptions such as sub-Gaussian tails~\citep{zhang2024minimax,stephanovitch2025generalization,cai2025minimax,dou2024optimal,fu2024unveil}, explicit density lower bounds~\citep{oko2023diffusion,fan2025optimal}, compact support~\citep{stephanovitch2025generalization,dou2024optimal,tang2024adaptivity}, and, more recently, heavy-tailed target distributions~\citep{yu2026diffusion}.

Another line of work studies how diffusion models and their learned representations can be transferred or adapted across related domains~\citep{cheng2025provable,ouyang2024transfer,song2025domain,kleutgens2025guided,moon2022fine,xie2023difffit,han2023svdiff}. For example, some methods adapt a pretrained source-domain score through additional guidance~\citep{ouyang2024transfer,kleutgens2025guided}, while others establish sample-efficient transfer guarantees for conditional diffusion models through representation learning~\citep{cheng2025provable}. Complementing these adaptation-oriented approaches, \citet{yu2026limits} characterize the reliability and limitations of reusing a frozen source-domain latent representation under distribution shift.

\paragraph{Notation.}
Let $\mathbb{R}^d$ be the $d$-dimensional Euclidean space and $I_d$ the identity matrix.
For a measurable $f:A\to\mathbb R^k$, write
$\|f\|_{L^2(A)}^2:=\int_A\|f(x)\|_2^2\,\rmd x$
and $\|f\|_2:=\|f\|_{L^2(\mathbb R^d)}$; for a measure $\mu$,
$\|f\|_{L^2(\mu)}^2:=\int\|f(x)\|_2^2\,\mu(\rmd x)$.
Let $\mathcal P_q(\mathbb R^d)$ denote the set of probability measures with finite $q$-th moment:
$\mathcal P_q(\mathbb R^d):=\{\mu:\int\|x\|_2^q\,\mu(\rmd x)<\infty\}$.
For $\mu,\nu\in\mathcal P_q(\mathbb R^d)$, define the Wasserstein distance
$
\wass_q(\mu,\nu):=
\left(\inf_{\varrho\in\Gamma(\mu,\nu)}
\int\|x-y\|_2^q\,\varrho(\rmd x,\rmd y)\right)^{1/q},
$
where $\Gamma(\mu,\nu)$ denotes the set of couplings of $\mu$ and $\nu$; define
$\mathrm{KL}(\mu\|\nu):=\int\log(\rmd\mu/\rmd\nu)\,\rmd\mu$
if $\mu\ll\nu$, and $\infty$ otherwise.
For $x\in\mathbb R^d$ and $r>0$, 
let $\mathcal B(x,r)$ denote the closed Euclidean ball centered at \(x\), and let \(\delta_x\) denote the Dirac measure at \(x\).

\section{Setting and Problem Formulation}
\label{sec:pre}
We introduce the framework for studying score estimation under distribution
shift, where $p_0$ is the reference distribution used to learn the score, and \(q_0\) is a possible shifted distribution at deployment.

\subsection{Diffusion Dynamics and Score Approximation}
We focus on the Ornstein-Uhlenbeck (OU) forward process,
\begin{equation}
\label{eq:intro-forward}
\rmd X_t = -\frac{1}{2}X_t \,\rmd t + \rmd B_t,
\qquad X_0\sim p_0,
\end{equation}
where $B_t$ is a standard $d$-dimensional Brownian motion. 
Its transition
kernel is Gaussian:
\[
X_t\mid X_0
=x_0\sim\mathcal N(a_tx_0,\sigma_t^2I_d),\qquad a_t=e^{-t/2},\qquad\sigma_t^2=1-e^{-t}.
\]
Thus, if $P_t$ denotes the OU semigroup, then $p_t:=P_t\, p_0$ is the law of
\[
a_tX_0+\sigma_t Z,
\qquad X_0\sim p_0, \qquad Z\sim\mathcal N(0,I_d),
\]
where $X_0$ and $Z$ are independent. More generally, for any initial law $\mu$, $P_t\mu$ denotes its OU-smoothed marginal at time $t$.
In SDMs, the forward process is reversed from a
large terminal time $T$, yielding
\begin{equation}
\label{eq:backward}
\rmd X_t^{\leftarrow}=\frac{1}{2}\left(X_t^{\leftarrow}+2\nabla\log p_{T-t}(X_t^{\leftarrow})\right)\rmd t+\rmd W_t,\qquad X_0^{\leftarrow}\sim p_T,
\end{equation}
where $W_t$ is another standard Brownian motion. The term $\nabla\log p_t$ is the score function of $p_t$.
Under mild conditions, the reverse process initialized at \(p_T\) coincides in distribution with the time-reversed forward process, and hence generates samples from \(p_0\)~\citep{anderson1982reverse,cattiaux2023time}.
Since the score $\nabla\log p_t$ is unknown, it must be
estimated from data.

We study score estimation over  $t\in[t_0,T]$ with $t_0>0$.
For later use, define
\[
\rho_t:=\sigma_t^2=1-e^{-t},
\qquad
\tau_0:=\frac{\rho_{t_0}}{a_{t_0}^2}=e^{t_0}-1,
\qquad
\tau_T:=\frac{\rho_T}{a_T^2}=e^T-1.
\]

\subsection{Wasserstein Distribution Shifts}
\label{sec:wass_shift}

We model deployment shifts through Wasserstein neighborhoods of the reference
distribution. Let $p_0\in\mathcal P_2(\mathbb R^d)$ be the reference
distribution. For $\varepsilon\geq0$, define the perturbation class
\begin{equation}
\mathcal Q_\varepsilon(p_0)
:=
\left\{
q_0\in\mathcal P_2(\mathbb R^d):
\wass_2(q_0,p_0)\leq\varepsilon
\right\}.
\label{eq:wasserstein-perturbation-class}
\end{equation}
Here, $\varepsilon$ quantifies the magnitude of the distribution shift.
Neither $p_0$ nor $q_0$ is required to admit a density. For every $t>0$,
Gaussian OU smoothing maps any initial law in $\mathcal P_2(\mathbb R^d)$ to
a strictly positive smooth density. Hence, for
$q_0\in\mathcal Q_\varepsilon(p_0)$, the marginals
$p_t=P_tp_0$ and $q_t=P_tq_0$ admit well-defined scores
$\nabla\log p_t$ and $\nabla\log q_t$.

Under distribution shift, an estimator learned from $p_0$ is evaluated on the
shifted marginal $q_t$ and compared with the shifted score
$\nabla\log q_t$, rather than the reference score $\nabla\log p_t$.

\subsection{Shifted Score Risk}

For a shifted distribution
$q_0\in\mathcal Q_\varepsilon(p_0)$, let $q_t=P_tq_0$. For a measurable score
function
$
s:\mathbb R^d\times[t_0,T]\rightarrow\mathbb R^d,
$
define the shifted score loss
\begin{equation}
\mathcal L_{q_0}(s)
:=
\frac{1}{T-t_0}
\int_{t_0}^{T}
\mathbb E_{X\sim q_t}
\left[
\|s(X,t)-\nabla\log q_t(X)\|_2^2
\right]
\,\rmd t .
\label{eq:shifted-score-loss}
\end{equation}
For a measurable, possibly random score estimator $\hat s$, define the robust
shifted score risk
\begin{equation}
\mathcal R_{\varepsilon,p_0}(\hat s)
:=
\sup_{q_0\in\mathcal Q_\varepsilon(p_0)}
\mathbb E
\left[
\mathcal L_{q_0}(\hat s)
\right],
\label{eq:robust-shifted-score-risk}
\end{equation}
where the expectation is taken over the reference training samples and any
internal randomness used to construct $\hat s$.
This risk measures worst-case score estimation error over all deployment
distributions within Wasserstein distance $\varepsilon$ of $p_0$. When
$\hat s$ is learned from finitely many reference samples, it incorporates both
the statistical difficulty of estimating the reference distribution and the
additional uncertainty induced by distribution shift. Our goal is to
characterize these two effects and their dependence on $\varepsilon$, $t_0$,
and the sample size.

\section{
The Intrinsic Cost of Distribution Shift}
\label{sec:structural-robustness}

We begin by isolating the effect of distribution shift from the statistical
error of learning the reference distribution. The key observation is that, if
an estimated initial law is propagated through the same OU semigroup as the
data distribution, then the entire estimated score trajectory is generated by
a single underlying density path. This allows the shifted score risk to be
controlled through a divergence between the shifted and estimated marginals at
one positive diffusion time.

\subsection{A Robust Comparison Principle}
\label{subsec:general-ou-oracle}

Let $\hat\mu_0$ be a possibly random estimate of $p_0$ with finite second
moment, and define
\begin{equation}
  \hat p_t:=P_t\hat\mu_0,
  \qquad
  \hat s(x,t):=\nabla\log\hat p_t(x).
  \label{eq:generic-ou-plug-in}
\end{equation}
The same estimated initial law therefore determines the entire score trajectory,
rather than estimating the score independently at each diffusion time. For strictly positive densities
$p$ and $r$, define the order-two R\'enyi divergence
\begin{equation}
  D_2(p\|r)
  :=
  \log\,\int_{\mathbb R^d}\frac{p(x)^2}{r(x)}\,\rmd x.
  \label{eq:renyi-two-definition}
\end{equation}

The following proposition gives the basic comparison principle underlying our
robustness analysis.
\begin{proposition}
\label{prop:ou-score-oracle}
Suppose $p_0\in\mathcal P_2(\mathbb R^d)$ and
$D_2(p_{t_0}\|\hat p_{t_0})<\infty$ almost surely. Then, for every
$q_0\in\mathcal P_2(\mathbb R^d)$, 
\begin{equation}
\mathrm{KL}\,\left(q_{t_0}\middle\|\hat p_{t_0}\right)
  \le
  \frac{a_{t_0}^2}{\rho_{t_0}}\,
  \wass_2^2(q_0,p_0)
  +
  D_2\,\left(p_{t_0}\middle\|\hat p_{t_0}\right)
  \label{eq:generic-shifted-kl}
\end{equation}
almost surely.
Consequently, for every $\varepsilon\ge0$,
\begin{equation}
  \mathcal R_{\varepsilon,p_0}(\hat s)
  \le
  \frac{2e^{-t_0}}{(T-t_0)(1-e^{-t_0})}\varepsilon^2
  +
  \frac{2}{T-t_0}
  \mathbb E \left[D_2\,\left(p_{t_0}\middle\|\hat p_{t_0}\right)\right].
  \label{eq:generic-ou-score-oracle}
\end{equation}
\end{proposition}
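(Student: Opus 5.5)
The plan has two parts: first the pointwise KL bound \eqref{eq:generic-shifted-kl}, then its transfer to the time-averaged score risk through a de Bruijn / data-processing argument along the OU flow.

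\emph{Step 1: KL bound at time $t_0$.} The bound follows from a change-of-measure inequality for KL. For densities $q,p,r$,
\[
\mathrm{KL}(q\|r)=\mathrm{KL}(q\|p)+\int q\log\frac{p}{r}.
\]
We would bound the second term by the Donsker--Varadhan variational formula with test function $\log(p/r)$:
\[
\int q\log\frac pr\le \mathrm{KL}(q\|p)+\log\int p\cdot\frac pr=\mathrm{KL}(q\|p)+D_2(p\|r).
\]
This gives $\mathrm{KL}(q\|r)\le 2\,\mathrm{KL}(q\|p)+D_2(p\|r)$, so it remains to show $\mathrm{KL}(q_{t_0}\|p_{t_0})\le \frac{a_{t_0}^2}{2\rho_{t_0}}\wass_2^2(q_0,p_0)$. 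For this, take an optimal coupling $(X_0,Y_0)$ of $q_0$ and $p_0$. Then $q_{t_0}$ and $p_{t_0}$ are the images of the joint laws of $(X_0,a_{t_0}X_0+\sigma_{t_0}Z)$ and $(Y_0,a_{t_0}Y_0+\sigma_{t_0}Z)$, taken under the same coupling. By joint convexity of KL (a data-processing argument on the mixtures),
\[
\mathrm{KL}(q_{t_0}\|p_{t_0})\le \mathbb E\,\mathrm{KL}\bigl(\mathcal N(a_{t_0}X_0,\rho_{t_0}I)\,\|\,\mathcal N(a_{t_0}Y_0,\rho_{t_0}I)\bigr)=\frac{a_{t_0}^2}{2\rho_{t_0}}\,\mathbb E\|X_0-Y_0\|^2 .
\]
Together these yield exactly \eqref{eq:generic-shifted-kl}. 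Integrability concerns are handled by the assumption $D_2<\infty$ a.s., together with the fact that all smoothed densities are positive and have Gaussian tails.

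\emph{Step 2: from KL at $t_0$ to the score risk.} Both $q_t$ and $\hat p_t$ evolve under the same OU Fokker--Planck equation, and $\hat s=\nabla\log\hat p_t$ is precisely the score of the second flow. The standard de Bruijn-type identity for two solutions of the same Fokker--Planck equation with unit diffusion $\frac12\Delta$ gives
\[
\frac{\rmd}{\rmd t}\mathrm{KL}(q_t\|\hat p_t)=-\frac12\,\mathbb E_{q_t}\|\nabla\log q_t-\nabla\log\hat p_t\|^2 .
\]
Integrating from $t_0$ to $T$ and using $\mathrm{KL}\ge0$ yields
\[
\int_{t_0}^T \mathbb E_{q_t}\|\hat s-\nabla\log q_t\|^2\,\rmd t\le 2\,\mathrm{KL}(q_{t_0}\|\hat p_{t_0}).
\]
Dividing by $T-t_0$, inserting Step 1 with $a_{t_0}^2/\rho_{t_0}=e^{-t_0}/(1-e^{-t_0})$ and $\wass_2\le\varepsilon$, taking expectations over the training data, and then the supremum over $q_0$, gives \eqref{eq:generic-ou-score-oracle}.

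\emph{Main obstacle.} The key difficulty is rigorously justifying the entropy-dissipation identity, or at least the inequality in the integrated form, for general initial laws in $\mathcal P_2$ rather than smooth densities. The calculation requires differentiating under the integral and integrating by parts with no boundary terms. The route I would try first is to establish the identity on $[t_0+\eta,T]$, where both densities are smooth, positive and have Gaussian-type tails so the integrations by parts are valid. I would then let $\eta\downarrow0$ using continuity of $t\mapsto\mathrm{KL}(q_t\|\hat p_t)$, which follows from lower semicontinuity combined with the data-processing monotonicity. If continuity at $t_0$ fails, the alternative is a path-space argument: Girsanov's theorem applied to the two reverse-time SDEs gives the inequality directly from the chain rule for KL on path space.
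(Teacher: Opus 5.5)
Your architecture is the same as the paper's: the three-density inequality $\mathrm{KL}(q\|r)\le 2\,\mathrm{KL}(q\|p)+D_2(p\|r)$, the Gaussian-channel bound via joint convexity over a coupling, and the relative de Bruijn identity along the common OU flow. Step~1 is fine. Your derivation of the three-density inequality differs slightly from the paper's. You use an exact decomposition plus Donsker--Varadhan with the unbounded test function $\log(p/r)$; the paper applies the variational formula to bounded $g/2$ and then Cauchy--Schwarz under $r$. Your version needs no tail information: Young's inequality $uv\le u\log u-u+e^v$, together with $\mathrm{KL}(q\|p)<\infty$ and $\int p^2/r<\infty$, makes $(\log(p/r))^+$ integrable under $q$, so every term is well defined in $[-\infty,\infty)$.

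The gap is in Step~2, which is the real content of the proposition. You justify the integrations by parts by asserting that $q_t$ and $\hat p_t$ have Gaussian-type tails. This is false in the required generality. Here $q_0$ ranges over all of $\mathcal Q_\varepsilon(p_0)\subset\mathcal P_2(\mathbb R^d)$, and $\hat\mu_0$ is an arbitrary $\mathcal P_2$ law. For example, a $q_0$ with density of order $\|x\|_2^{-d-2-\delta}$ yields $q_t$ with the same polynomial tails. In general a second moment gives only integral, not pointwise, control of $q_t$, $\log(q_t/\hat p_t)$ and the scores at infinity. Restricting to $[t_0+\eta,T]$ does not help. The densities are already smooth and positive at $t_0>0$, so the difficulty is spatial, not temporal. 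Moreover, for the inequality you only need $\mathrm{KL}(q_{t_0+\eta}\|\hat p_{t_0+\eta})\le\mathrm{KL}(q_{t_0}\|\hat p_{t_0})$, which is just data processing.

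What is missing is a de Bruijn identity under purely integral hypotheses. The paper obtains it (Lemma~\ref{lem:relative-de-bruijn-ou}) in the heat clock, with $\mathsf{H}_s\mu=\mu*\mathcal N(0,sI_d)$. It uses a moving-reference integral representation that requires only finite Fisher information of both smoothed laws and finite relative entropy. Both hypotheses are checkable for any $\mathcal P_2$ laws:
\begin{itemize}
\item $J(\mathsf{H}_s\mu)\le d/s$, from the Gaussian score identity plus conditional Jensen;
\item $\mathrm{KL}(\mathsf{H}_sq_0\|\mathsf{H}_s\hat\mu_0)\le\wass_2^2(q_0,\hat\mu_0)/(2s)$, from the Gaussian-channel bound.
\end{itemize}
The identity is then transported to the OU clock by the dilation $x\mapsto a_ux$ with $s=e^u-1$. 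Under this dilation $\mathrm{KL}$ is invariant and the relative Fisher information rescales by $a_u^{-2}=\rmd s/\rmd u$.

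Your path-space fallback can also be completed, but not ``directly''. The forward OU path laws on $[t_0,T]$ have relative entropy exactly $\mathrm{KL}(q_{t_0}\|\hat p_{t_0})$. You then need the finite-entropy time-reversal theorem to identify the reversals as diffusions with drifts $\tfrac12x+\nabla\log q_{T-u}$ and $\tfrac12x+\nabla\log\hat p_{T-u}$. Crucially, you need the \emph{lower} bound $\mathrm{KL}(q_{t_0}\|\hat p_{t_0})\ge\mathrm{KL}(q_T\|\hat p_T)+\tfrac12\int_{t_0}^T I(q_t\|\hat p_t)\,\rmd t$. This follows from exact Girsanov for paths stopped at $\|X\|_2\ge R$, which uses a bounded drift difference and uniqueness in law for the $\hat p$-driven reversed equation. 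Data processing under stopping and monotone convergence in $R$ finish the argument. Note this is the opposite direction from the usual localized-Girsanov upper bound used in sampling analyses.
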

Proposition~\ref{prop:ou-score-oracle} separates two sources of error in robust score estimation. The first term depends only on the discrepancy between the shifted and reference initial distributions and is quadratic in $\wass_2(q_0,p_0)$, while the second measures the error in estimating the reference marginal $p_{t_0}$. The bound therefore controls the effect of distribution shift through an additive term of order $\varepsilon^2$. This reduces the finite-sample problem to controlling $D_2(p_{t_0}\|\hat p_{t_0})$, which we address in Section~\ref{sec:finite-sample}. Before turning to that construction, we show that the quadratic dependence on $\varepsilon$ is unavoidable.

\subsection{Minimax Optimality of the Quadratic Shift Cost}

Proposition~\ref{prop:ou-score-oracle} gives a shift contribution of order
$\varepsilon^2/[(T-t_0)(e^{t_0}-1)]$. We now show that this dependence is
unavoidable: when $\tau_0\asymp t_0$, every estimator incurs error of order
$\varepsilon^2/[t_0(T-t_0)]$ for some distribution in the Wasserstein
neighborhood.

The upper bound in Proposition~\ref{prop:ou-score-oracle} holds for arbitrary
initial laws in $\mathcal P_2(\mathbb R^d)$. For the lower bound, it suffices
to consider a compactly supported reference distribution. 
Let
\[
p_0^{(1)}(u)
:=
\frac12\mathbf 1_{[-1,1]}(u),
\qquad
p_0^{(d)}(x)
:=
\prod_{j=1}^d p_0^{(1)}(x_j).
\]
The following theorem gives a matching lower bound for the distribution shift
term.
\begin{theorem}
\label{thm:fixed-dimensional-lower-main}
Fix $d\ge1$. Let $P_t^{(d)}$ be the semigroup of the
$d$-dimensional Ornstein--Uhlenbeck process.
Suppose that
$0<t_0\le 1/(16\pi^2),
2t_0\le T,$ and $
0<\varepsilon\le\sqrt{t_0}.
$
Then,
\begin{equation}
\label{eq:fixed-dimensional-lower-main}
\inf_{\hat s}\mathcal R_{\varepsilon,p_0^{(d)}}(\hat s)
\ge
\frac{e^{-4/9}}{2592}
\frac{\varepsilon^2}{t_0(T-t_0)}\,,
\end{equation}
where the infimum is over all possibly randomized score estimators with
measurable realizations.
\end{theorem}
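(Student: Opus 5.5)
The plan is a two-point (Le Cam–type) argument in which the reference $p_0^{(d)}$ is fixed and known. Because of this, the training data carry no information about which shifted law is in force, and a possibly randomized estimator is just a random measurable function $s$ whose law does not depend on $q_0$. I will construct two laws $q_0^{\pm}\in\mathcal Q_\varepsilon(p_0^{(d)})$ and bound
$\mathcal R_{\varepsilon,p_0^{(d)}}(\hat s)\ge \tfrac12\,\mathbb E\big[\mathcal L_{q_0^+}(\hat s)+\mathcal L_{q_0^-}(\hat s)\big]$.
It then suffices to lower bound $\mathcal L_{q_0^+}(s)+\mathcal L_{q_0^-}(s)$ uniformly over deterministic measurable $s$.

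\textbf{Construction.} I define $q_0^\pm$ as pushforwards of $p_0^{(d)}$ under transport maps, since this gives the Wasserstein bound for free. Take $\omega=2\pi m$ with $m\in\mathbb N$ chosen so that $\omega\asymp 1/\sqrt{t_0}$. The condition $t_0\le 1/(16\pi^2)$ is what guarantees that such an $m\ge1$ exists, with room to spare. Set
\[
\Phi_\pm(x)=x\pm \tfrac{\varepsilon}{2}\,\sin(\omega x_1)\,e_1 .
\]
Since $\sin(\omega\cdot)$ vanishes at $\pm1$ and $\tfrac{\varepsilon}{2}\omega\lesssim \varepsilon/\sqrt{t_0}\le 1/2$ (after fixing constants in the choice of $m$), $\Phi_\pm$ is a monotone bijection of $[-1,1]^d$ onto itself. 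Hence $\wass_2(q_0^\pm,p_0^{(d)})\le \varepsilon/2\le\varepsilon$. The density of $q_0^\pm$ is $p_0^{(d)}/(1\pm \tfrac{\varepsilon\omega}{2}\cos(\omega y_1))$ evaluated at the preimage $y$. To first order this is $p_0^{(d)}(1\mp h\cos(\omega x_1))$ with $h\asymp\varepsilon\omega\le 1/2$, and both densities lie within a factor $2$ of $p_0^{(d)}$.

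\textbf{Two-point inequality.} Pointwise, $\|a-u\|^2+\|a-v\|^2\ge\tfrac12\|u-v\|^2$. Combined with $q_t^\pm\ge c\,p_t$ (which is preserved by smoothing), this gives
\[
\mathcal L_{q_0^+}(s)+\mathcal L_{q_0^-}(s)\ \ge\ \frac{c}{2(T-t_0)}\int_{t_0}^{2t_0}\!\!\int p_t\,\big\|\nabla\log q_t^+-\nabla\log q_t^-\big\|^2\,\rmd x\,\rmd t .
\]
Here I restrict to $t\in[t_0,2t_0]$, which is allowed because $2t_0\le T$. On this window $a_t,\rho_t$ are comparable to their values at $t_0$, and $\tau_t\le e^{2t_0}-1\lesssim t_0$.

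\textbf{Score separation.} It remains to show that on an interior box, say $[-1/2,1/2]^d$, the score difference has a first coordinate of size $\gtrsim h\,\omega\, e^{-\omega^2\tau_t/2}\,|\sin(\omega x_1/a_t)|$ up to harmless rescaling. With $\omega^2\tau_t=O(1)$, this produces the factor $e^{-4/9}$-type constant after tracking the exact choice of $m$. On the interior box, Gaussian smoothing at scale $\sqrt{\tau_t}\ll 1$ keeps $p_t$ essentially constant, with boundary contributions exponentially small. The Fourier mode $\cos(\omega x_1)$ is damped by the OU kernel by the exact factor $e^{-\omega^2\tau_t/2}$. Squaring and averaging $\sin^2$ over full periods, which holds since $m$ is an integer, gives
\[
\int p_t\|\cdot\|^2\gtrsim h^2\omega^2\asymp\varepsilon^2\omega^4\asymp \varepsilon^2/t_0^2 .
\]
Integrating over a time window of length $t_0$ and dividing by $T-t_0$ yields $\varepsilon^2/(t_0(T-t_0))$.

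\textbf{Main obstacle.} I expect the hardest step to be making the score separation rigorous with explicit constants. There are two sources of difficulty. First, $q_0^\pm$ is not exactly $p_0(1\mp h\cos)$, so the second-order terms in $h$ must be controlled. Second, $\nabla\log q_t^\pm=\nabla q_t^\pm/q_t^\pm$ has denominators that vary near the cube's boundary. My first attempt would be to write $q_t^\pm=p_t(1\mp g_t^\pm)$ with $|g_t^\pm|\le 1/2$ and expand
\[
\nabla\log q_t^+-\nabla\log q_t^-=\nabla\log\frac{1-g_t^+}{1+g_t^-}.
\]
I would lower bound this only on the interior box, where $\nabla\log p_t$ terms cancel exactly and $g_t^\pm$ equals the damped cosine up to an exponentially small boundary error. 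The second-order terms in $h$ can be absorbed because $h\le1/2$ costs only a constant factor.
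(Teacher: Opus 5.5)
Your reduction is sound. Since $p_0^{(d)}$ is fixed, the estimator's law does not depend on $q_0$, so it is legitimate to average over two alternatives, restrict to $[t_0,2t_0]$, and use $q_t^\pm\ge c\,p_t$ with $\|a-u\|^2+\|a-v\|^2\ge\frac12\|u-v\|^2$. Your transport maps also do give $q_0^\pm\in\mathcal Q_\varepsilon(p_0^{(d)})$.

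\textbf{The gap: score separation.} The gap is in the score-separation step, which carries the entire bound. It is not true that on the interior ``$g_t^\pm$ equals the damped cosine up to an exponentially small boundary error.'' For your pushforward, the first-coordinate density of $q_0^\pm$ on $[-1,1]$ is exactly $\frac12(1+e\mp o)$, with $h=\varepsilon\omega/2$. This is a Kapteyn series in the Bessel functions $J_j$: $o=2\sum_{j\ \mathrm{odd}}J_j(jh)\cos(j\omega x_1)$, and $e=2\sum_{j\ \mathrm{even}}J_j(jh)\cos(j\omega x_1)$ is an even part of size about $h^2$, common to both alternatives. On $[t_0,2t_0]$ you have $\omega^2\tau_t=O(1)$, so the heat semigroup damps these higher harmonics only by $O(1)$ factors $e^{-j^2\omega^2\tau_t/2}$. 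With $h$ as large as $1/2$, they are therefore not of lower order than the leading mode. ``$h\le1/2$ costs only a constant factor'' does not rule out cancellation. You would need, for example, the exact identity $\partial_{x_1}\log(q_t^+/q_t^-)=-2w'/(1-w^2)$ with $w=O_t/(1+E_t)$, where $O_t,E_t$ are the smoothed odd and even parts relative to $p_t$. That identity must be combined with explicit Bessel-coefficient bounds on $E_t$ and on the odd tail, or else you must take a smaller $h$.

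\textbf{Remaining issues once this is repaired.}
\begin{itemize}
\item The statement asserts the specific constant $e^{-4/9}/2592$. The $e^{-4/9}$ and $2592=32\cdot 81$ come from the paper's choice $\lambda_k^2t_0\in[1/9,1/4]$ and the $1/32$ in its mode-persistence lemma. Your construction yields a different constant: it uses amplitude $\varepsilon/2$, a different frequency window, the factor $c$ from $q_t^\pm\ge c\,p_t$, and incurs the losses above. You must compute that constant and check it is at least $e^{-4/9}/2592$; ``tracking $m$'' will not produce it.
\item Restricting to $[-1/2,1/2]^d$ loses a factor $2^{-(d-1)}$, which is incompatible with a dimension-free constant. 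Instead, use that your $q_t^\pm$ factor as a first-coordinate density times $\prod_{j\ge2}p_t^{(1)}(x_j)$. The score difference then depends only on $x_1$, and the other coordinates integrate out exactly.
\end{itemize}

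\textbf{How the paper avoids the nonlinearity.} The paper perturbs the density linearly: $q_0^\pm=\frac12(1\pm\alpha\cos(\pi kx))\mathbf 1_{[-1,1]}$ with $\alpha=\varepsilon\pi k$. Admissibility then costs one Benamou--Brenier line, using the explicit flux $-\frac{\alpha}{2\lambda_k}\sin(\lambda_kx)$, which vanishes at $\pm1$. In exchange, $q_t^\pm=p_t(1\pm\alpha\eta_t)$ with the same $\eta_t$ for both signs. The two-point bound with weight $q_t^+q_t^-/(q_t^++q_t^-)$ then gives exactly $R_++R_-\ge2\alpha^2\int|\partial_x\eta_t|^2p_t$, with no second-order terms. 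What remains is persistence of a single cosine mode, where the only errors really are exponentially small boundary terms.
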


This theorem shows that the quadratic
dependence on the shift radius is intrinsic: even with unrestricted
choice of estimator, uncertainty over
$\mathcal Q_\varepsilon(p_0^{(d)})$ incurs a cost of order
$\varepsilon^2/[t_0(T-t_0)]$. Since
$\tau_0\asymp t_0$ in the regime above, this matches the shift term
in Proposition~\ref{prop:ou-score-oracle} up to universal constants.
Thus, both the quadratic dependence on $\varepsilon$ and its early-time
scaling are optimal.

\section{Finite-Sample Robust Score Estimation}
\label{sec:finite-sample}
Section~\ref{sec:structural-robustness} establishes the intrinsic quadratic cost of distribution shift. We now turn to the finite-sample setting, where \(p_0\) is observed through \(n\) i.i.d. samples. Proposition~\ref{prop:ou-score-oracle} reduces robust score estimation to controlling the positive-time estimation error \(D_2(p_{t_0}\|\hat p_{t_0})\). We construct an explicit estimator for which
this term can be controlled sharply and establish the resulting minimax rate.

\subsection{The Gaussian-Blanket Estimator}
\label{subsec:finite-sample-score}

For the finite-sample analysis, we impose the following compact support
assumption.
\begin{assumption}
\label{assum:support}
For some $D>0$,
$
    \operatorname{supp}(p_0)\subseteq \mathcal{B}(0,D).
$
\end{assumption}
Assumption~\ref{assum:support} is needed only for the finite-sample
construction below. The oracle inequality in
Proposition~\ref{prop:ou-score-oracle}, and hence the quadratic dependence on
the distribution shift established in Section~\ref{sec:structural-robustness},
continues to hold over the larger class $\mathcal P_2(\mathbb R^d)$.

A direct empirical approximation of \(p_0\) becomes strictly positive after OU smoothing, but positivity alone is insufficient for controlling the Rényi divergence required in our analysis, since the estimated density appears in the denominator and can be arbitrarily small in poorly sampled regions. We therefore introduce Gaussian blanket regularization, which provides a deterministic Gaussian lower envelope while preserving OU compatibility.

For $v>0$, let
\(\varphi_v(x):=(2\pi v)^{-d/2}\exp(-\|x\|_2^2/(2v))\) denote the density of $\mathcal{N}(0,vI_d)$.
Given
$X_1,\ldots,X_n\overset{\mathrm{i.i.d.}}{\sim}p_0$, fix $\vartheta>0$ and define $\nu_0^\vartheta:=\mathcal{N}(0,\vartheta^2I_d)$. Let $v_t:=\rho_t+a_t^2\vartheta^2$, and define the calibration factor
\begin{equation}
    M_{\vartheta,t_0}
    :=
    \exp\,\left(\frac{D^2}{2\vartheta^2}\right)
    \left(1+\frac{\vartheta^2}{\tau_0}\right)^{d/2}.
    \label{eq:blanket-calibration}
\end{equation}
The Gaussian-blanket estimator regularizes the empirical measure as
\begin{equation}
    \hat\mu_{n,0}^\vartheta
    :=
    \frac{1}{n+M_{\vartheta,t_0}}
    \sum_{i=1}^n\delta_{X_i}
    +
    \frac{M_{\vartheta,t_0}}{n+M_{\vartheta,t_0}}
    \nu_0^\vartheta\,.
    \label{eq:blanket-initial-measure}
\end{equation}
Propagating this measure through the same OU semigroup gives
\begin{equation}
    \hat p_{n,t}^\vartheta(x)
    :=
    P_t\hat\mu_{n,0}^\vartheta(x)
    =
    \frac{
        \sum_{i=1}^n\varphi_{\rho_t}(x-a_tX_i)
        +M_{\vartheta,t_0}\varphi_{v_t}(x)
    }{n+M_{\vartheta,t_0}}.
    \label{eq:blanket-density}
\end{equation}
The corresponding score estimator is
\begin{equation}
    \hat s_n^\vartheta(x,t)
    :=
    \nabla\log\hat p_{n,t}^\vartheta(x).
    \label{eq:blanket-score}
\end{equation}
The choice of $M_{\vartheta,t_0}$ ensures that the Gaussian component provides
uniform denominator control over all sample realizations allowed by
Assumption~\ref{assum:support}. This is the key ingredient for bounding the
R\'enyi-divergence term $
D_2(p_{t_0}\|\hat p_{n,t_0}^\vartheta).
$

\subsection{Finite-Sample Upper Bound}
\label{subsec}

We first quantify the approximation error of the Gaussian-blanket density.
\begin{proposition}
\label{prop:reverse-chi-square}
Under Assumption~\ref{assum:support}, for every $t\in[t_0,T]$,
\begin{equation}
    \mathbb E
    \left[
        D_2\,\left(
            p_t\middle\|\hat p_{n,t}^\vartheta
        \right)
    \right]
    \le
    \log\,\left(
        1+\frac{M_{\vartheta,t_0}-1}{n+1}
    \right).
    \label{eq:renyi-rate}
\end{equation}
\end{proposition}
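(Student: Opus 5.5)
Since $\log$ is concave, Jensen gives $\mathbb E[D_2] \le \log \mathbb E\int p_t^2/\hat p^\vartheta_{n,t}\,\rmd x$. It therefore suffices to prove
\[
\mathbb E\int \frac{p_t(x)^2}{\hat p_{n,t}^\vartheta(x)}\,\rmd x\le 1+\frac{M-1}{n+1},\qquad M:=M_{\vartheta,t_0}.
\]
Write $k_t(x,y):=\varphi_{\rho_t}(x-a_ty)$, so that $p_t(x)=\mathbb E_{Y\sim p_0}k_t(x,Y)$ and
\[
\hat p^\vartheta_{n,t}(x)=\frac{\sum_{i}k_t(x,X_i)+M\varphi_{v_t}(x)}{n+M}.
\]
The plan is a leave-one-in symmetrization. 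Introduce an independent copy $X_{n+1}\sim p_0$ and write $p_t(x)^2=p_t(x)\,\mathbb E[k_t(x,X_{n+1})]$. By Fubini and exchangeability of $X_1,\dots,X_{n+1}$,
\[
\mathbb E\int\frac{p_t^2}{\hat p^\vartheta_{n,t}}
=(n+M)\int p_t(x)\,\mathbb E\left[\frac{k_t(x,X_{n+1})}{\sum_{i\le n}k_t(x,X_i)+M\varphi_{v_t}(x)}\right]\rmd x .
\]

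The key pointwise ingredient is the blanket domination $k_t(x,y)\le M\varphi_{v_t}(x)$ for all $\|y\|\le D$, $x\in\mathbb R^d$, and $t\ge t_0$. To prove it, compute the ratio $\varphi_{\rho_t}(x-a_ty)/\varphi_{v_t}(x)$. The prefactor is $(v_t/\rho_t)^{d/2}=(1+\vartheta^2/\tau_t)^{d/2}\le(1+\vartheta^2/\tau_0)^{d/2}$, since $\tau_t=e^t-1$ is increasing. The exponent is
\[
-\frac{\|x-a_ty\|^2}{2\rho_t}+\frac{\|x\|^2}{2v_t}.
\]
Maximizing over $x$ is a Gaussian completion of squares: the exponent is a concave quadratic with maximum $a_t^2\|y\|^2/(2(v_t-\rho_t))=\|y\|^2/(2\vartheta^2)\le D^2/(2\vartheta^2)$. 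This gives exactly the calibration in \eqref{eq:blanket-calibration}, and it is where Assumption~\ref{assum:support} enters.

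Now set $A:=\sum_{i\le n}k_t(x,X_i)$, $B:=k_t(x,X_{n+1})$, and $c:=M\varphi_{v_t}(x)$, so that $B\le c$. The idea is to compare with the symmetric quantity $S:=\sum_{i\le n+1}k_t(x,X_i)$. We have $A+c= S-B+c\ge S+c-c=S$, but it is sharper to use $A+c\ge S+(c-B)$ together with $B\le c$. By exchangeability,
\[
\mathbb E\left[\frac{B}{S+c-B}\right]
\]
needs a bound. Use the map $u\mapsto u/(S+c-u)$, which is convex in $u$. Since $0\le B\le c$, convexity (the chord bound) gives
\[
\frac{B}{S+c-B}\le\frac{B}{c}\cdot\frac{c}{S}=\frac{B}{S}
\]
whenever $S>0$; this is the chord from $u=0$ to $u=c$, and it uses $S\ge B$. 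Exchangeability then yields $\mathbb E[B/S]=\frac1{n+1}$, hence
\[
\mathbb E\int\frac{p_t^2}{\hat p^\vartheta_{n,t}}\le\frac{n+M}{n+1}\int p_t=1+\frac{M-1}{n+1},
\]
as desired.

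The main obstacle is this last comparison: making sure the chord/convexity step is valid (domain $u\in[0,c]$ with $S+c-u\ge S>0$) and that exchangeability is applied to the correct symmetric quantity. I expect this to need care but to go through as sketched. Measurability and Fubini are routine since all quantities are nonnegative.
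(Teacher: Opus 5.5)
Your proof is correct, and it proves the key add-one estimate by a different route from the paper. Your domination computation is the same as Lemma~\ref{lem:gaussian-blanket-domination}.

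The difference is in Lemma~\ref{lem:dominated-kernel-add-one}. The paper's steps are:
\begin{enumerate}
\item normalize $U_i(x)=k_t(x,X_i)/(M\varphi_{v_t}(x))\in[0,1]$;
\item use convexity of $u\mapsto(c+u)^{-1}$ to replace each $U_i$, one at a time, by a Bernoulli variable with the same mean $\theta$;
\item evaluate $\mathbb E[(1+B)^{-1}]$ for $B\sim\mathrm{Bin}(n,\theta)$ via $\int_0^1(1-\theta+\theta z)^n\,\rmd z$, obtaining $\bigl(1-(1-\theta)^{n+1}\bigr)/((n+1)\theta)\le 1/((n+1)\theta)$.
\end{enumerate}
You instead write $p_t(x)=\mathbb E\,k_t(x,X_{n+1})$ with a ghost sample. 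You then bound the denominator below by the symmetric sum $S$ over all $n+1$ samples, using $k_t(x,X_{n+1})\le M\varphi_{v_t}(x)$ (valid because $X_{n+1}\in\mathcal B(0,D)$ almost surely). Exchangeability gives $\mathbb E[B/S]=1/(n+1)$ exactly.

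This yields the same pointwise bound $\mathbb E[p_t(x)^2/\hat p^\vartheta_{n,t}(x)]\le p_t(x)(n+M)/(n+1)$. It needs neither the convex-order replacement nor the binomial integral, and it carries over to the paper's abstract lemma with the convention $0/0=0$. The paper's route keeps the slightly sharper factor $1-(1-\theta)^{n+1}$, but discards it immediately, so nothing is lost.

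One remark on presentation: the step you flag as the main obstacle is a one-liner. Since $A+c=S+(c-B)\ge S$, you get $B/(A+c)\le B/S$ directly, and $S>0$ because the Gaussian kernels are strictly positive. The chord argument for $u\mapsto u/(S+c-u)$ with $S$ frozen is therefore unnecessary. Your sentence contrasting $A+c\ge S$ with the ``sharper'' $A+c\ge S+(c-B)$ compares two forms of the same identity.
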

Proposition~\ref{prop:reverse-chi-square} provides the finite-sample control
of the estimation term in Proposition~\ref{prop:ou-score-oracle}. Combining the
two propositions yields the following robust score estimation guarantee.
\begin{theorem}
\label{thm:finite-sample-robust-kde}
Fix $d\ge1$ and $D>0$, and suppose Assumption~\ref{assum:support} holds. Let
$X_1,\ldots,X_n\overset{\mathrm{i.i.d.}}{\sim}p_0$.
Set $\vartheta=D/\sqrt d$ and construct $\hat s_n^\vartheta$ according to
\eqref{eq:blanket-density}-\eqref{eq:blanket-score}.
If $\tau_0\le1$, then, for every $\varepsilon\ge0$,
\begin{equation}
  \mathcal R_{\varepsilon,p_0}(\hat s_n^\vartheta)
  \le
  \frac{C_{d,D}}
       {(n+1)(T-t_0)\tau_0^{d/2}}
  +
  \frac{2\varepsilon^2}
       {(T-t_0)\tau_0},
  \label{eq:rate-finite-sample-robust-bound}
\end{equation}
where $C_{d,D}=2e^{d/2}(1+D^2/d)^{d/2}$.
\end{theorem}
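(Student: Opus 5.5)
The plan is to combine Proposition~\ref{prop:ou-score-oracle} with Proposition~\ref{prop:reverse-chi-square}, and then to bound the resulting logarithm explicitly using the choice $\vartheta=D/\sqrt d$ and the assumption $\tau_0\le1$.

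\textbf{Step 1: check the hypotheses of Proposition~\ref{prop:ou-score-oracle}.} Assumption~\ref{assum:support} gives $p_0\in\mathcal P_2(\mathbb R^d)$. The estimated initial law $\hat\mu^\vartheta_{n,0}$ is a finite mixture of Dirac masses and a Gaussian, so it has a finite second moment. To see that $D_2(p_{t_0}\|\hat p^\vartheta_{n,t_0})<\infty$ almost surely, I will use the Gaussian lower envelope
\[
\hat p^\vartheta_{n,t_0}\ge \frac{M}{n+M}\varphi_{v_{t_0}},
\]
together with the fact that $p_{t_0}$ is a mixture of Gaussians $\varphi_{\rho_{t_0}}(\cdot-a_{t_0}y)$ with $\|y\|\le D$. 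Since $\rho_{t_0}<v_{t_0}$, the integral $\int p_{t_0}^2/\varphi_{v_{t_0}}$ is finite: the integrand has Gaussian exponent $-\|x\|^2\bigl(1/\rho_{t_0}-1/(2v_{t_0})\bigr)$, whose coefficient is positive. Alternatively, finiteness follows from Proposition~\ref{prop:reverse-chi-square}, since a finite expectation forces the divergence to be finite almost surely.

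\textbf{Step 2: combine the two propositions.} Inserting Proposition~\ref{prop:reverse-chi-square} at $t=t_0$ into \eqref{eq:generic-ou-score-oracle} gives
\[
\mathcal R_{\varepsilon,p_0}(\hat s_n^\vartheta)\le \frac{2\varepsilon^2}{(T-t_0)\tau_0}+\frac{2}{T-t_0}\log\Bigl(1+\frac{M-1}{n+1}\Bigr),
\]
where I used $e^{-t_0}/(1-e^{-t_0})=1/(e^{t_0}-1)=1/\tau_0$. Applying $\log(1+x)\le x$, the estimation term is at most $2M/((n+1)(T-t_0))$, using $M-1\le M$.

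\textbf{Step 3: bound the calibration factor.} With $\vartheta^2=D^2/d$, we have $\exp(D^2/(2\vartheta^2))=e^{d/2}$. Because $\tau_0\le1$,
\[
1+\frac{D^2}{d\tau_0}\le \frac{1+D^2/d}{\tau_0},
\]
so $M\le e^{d/2}(1+D^2/d)^{d/2}\tau_0^{-d/2}$. Then $2M\le C_{d,D}\tau_0^{-d/2}$, which is exactly \eqref{eq:rate-finite-sample-robust-bound}.

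The argument is mostly bookkeeping. The only delicate point is the almost-sure finiteness hypothesis in Step 1, which the Gaussian blanket handles directly.
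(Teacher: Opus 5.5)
Your proposal is correct and follows the paper's proof essentially step for step. Both arguments verify the hypotheses of Proposition~\ref{prop:ou-score-oracle}, insert Proposition~\ref{prop:reverse-chi-square} at $t=t_0$, linearize with $\log(1+u)\le u$, and bound $M_{\vartheta,t_0}\le e^{d/2}(1+D^2/d)^{d/2}\tau_0^{-d/2}$ using $\tau_0\le1$. The only cosmetic difference is in checking almost-sure finiteness of $D_2(p_{t_0}\|\hat p^\vartheta_{n,t_0})$: the paper uses the pointwise ratio bound $p_{t_0}/\hat p^\vartheta_{n,t_0}\le n+M_{\vartheta,t_0}$ coming from Gaussian blanket domination, whereas you use direct Gaussian integrability or the finite-expectation argument, and both work.
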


For fixed $d$ and $D$, since
$\tau_0\asymp t_0$ for bounded $t_0$, the two terms in
Theorem~\ref{thm:finite-sample-robust-kde} scale as
$
{1}/{[n(T-t_0)t_0^{d/2}]}
$ and $
{\varepsilon^2}/{[t_0(T-t_0)]},
$
respectively. The first term corresponds to the finite-sample estimation
error, while the second term captures the intrinsic cost of distribution
shift. Notably, the estimator does not depend on the shift radius
$\varepsilon$, and the coefficient of the shift term is independent of the
dimension, sample size, and support radius.

\paragraph{Neural realizability.}
The Gaussian-blanket score also admits a constructive ReLU-based approximation.
Under Assumption~\ref{assum:support},
Appendix~\ref{app:blanket-neural-approximation} constructs a
sample-dependent approximation that is uniform over
$\mathbb R^d\times[t_0,T]$, with explicit width and depth bounds (see
Proposition~\ref{gbnn:prop-approximation}). Moreover,
Subsection~\ref{gbnn:subsec-defect-proof} shows that the resulting neural
score inherits the robust finite-sample guarantee of
Theorem~\ref{thm:finite-sample-robust-kde}, up to the prescribed neural
approximation accuracy. In particular, choosing the approximation error
sufficiently small preserves the robust minimax rate.

\subsection{Minimax Optimality}
\label{subsec:finite-sample-lb}

We now show that the upper bound in
Theorem~\ref{thm:finite-sample-robust-kde} is minimax optimal. 
Define 
\[
\mathcal P_D
:=
\left\{
p_0\in\mathcal P(\mathbb R^d):
\operatorname{supp}(p_0)\subseteq \mathcal B(0,D)
\right\},
\]
and the minimax robust risk
\begin{equation}
\mathfrak R_{n,\varepsilon}^{\star}(D;t_0,T)
:=
\inf_{\hat s_n}
\sup_{p_0\in\mathcal P_D}
\mathcal R_{\varepsilon,p_0}(\hat s_n)\,,
\label{eq:finite-sample-minimax-risk}
\end{equation}
where the infimum is over all measurable, possibly randomized score estimators
based on \(n\) i.i.d. samples from \(p_0\).
When $\varepsilon=0$, this reduces to the nominal time-averaged score estimation risk.

Theorem~\ref{thm:fixed-dimensional-lower-main} already identifies the
unavoidable contribution of distribution shift. It remains to characterize
the statistical difficulty of estimating the reference score trajectory from
finite data. Because the risk is integrated over diffusion time, a fixed-time
lower bound does not immediately suffice: the hardest instance may depend on
the diffusion time. The next theorem constructs a single family whose score
separation persists over a nontrivial time interval.
\begin{theorem}
\label{thm:integrated-statistical-lower-bound}
Fix $d\ge1$ and $D>0$. Let $c_0,c_1>0$ be the constants given in
Appendix~\ref{app:statistical-minimax-lower-bound}, which depend only on $d$
and $D$. If
$
0<t_0\le c_0,
$ and $
2t_0\le T,
$
then, for every $n\ge1$ and $\varepsilon\ge0$,
\begin{equation}
\mathfrak R_{n,\varepsilon}^{\star}(D;t_0,T)
\ge
\frac{c_1}{T-t_0}
\left(
1\wedge\frac{1}{n\tau_0^{d/2}}
\right).
\label{eq:integrated-statistical-lower-bound}
\end{equation}
\end{theorem}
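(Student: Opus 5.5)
We reduce to the case $\varepsilon=0$ and prove a lower bound on the nominal time-averaged score risk. This suffices because $q_0=p_0\in\mathcal Q_\varepsilon(p_0)$, so $\mathcal R_{\varepsilon,p_0}(\hat s)\ge \mathbb E[\mathcal L_{p_0}(\hat s)]$ for every estimator. For the nominal problem we use a Fano/Assouad-type argument over a packing of $\mathcal P_D$ by local bump perturbations at the spatial scale $h:=\sqrt{\tau_0}$ (equivalently $\sqrt{t_0}$).

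\textbf{Construction.} Take a base density $f_0$ on $\mathcal B(0,D/2)$, for instance a smooth density bounded below on a cube $[-c,c]^d$. Place $m\asymp h^{-d}$ disjoint cubes of side $h$ inside that cube. For $\omega\in\{0,1\}^m$ set
\[
f_\omega=f_0+\gamma\sum_{j}\omega_j\,\psi\!\left(\frac{x-z_j}{h}\right),
\]
where $\psi$ is smooth, has zero mean, and is supported in the unit cube. The amplitude is $\gamma\asymp (1\wedge (nh^d)^{-1/2})$, small enough that $f_\omega\ge f_0/2$. Then $\mathrm{KL}(f_\omega^{\otimes n}\|f_{\omega'}^{\otimes n})\lesssim n\gamma^2h^d$ when $\omega,\omega'$ differ in one coordinate, which is $O(1)$. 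If $nh^d<1$ we instead take $m\asymp$ a constant and $\gamma\asymp1$; this handles the ``$1\wedge$'' branch.

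\textbf{Score separation over a time interval.} For $t\in[t_0,2t_0]$ (admissible since $2t_0\le T$) the smoothing variance satisfies $\rho_t/a_t^2\in[\tau_0,\,e^{t_0}(e^{t_0}+1)\tau_0]\asymp h^2$. The perturbation $\psi_h$ convolved with a Gaussian of comparable width therefore retains a nonvanishing fraction of its mass: in Fourier terms, $\hat\psi$ is concentrated at frequencies $\asymp 1/h$, where the Gaussian multiplier is bounded below by a constant. This is where $t_0\le c_0$ enters, keeping $a_t$ and the scaling constants uniform. Next we lower bound the score difference. Writing $p_t^\omega=P_tf_\omega$, on the $j$-th cell we have $\nabla\log p_t^\omega-\nabla\log p_t^{\omega'}\approx \nabla(p_t^\omega-p_t^{\omega'})/p_t^0$, because $p_t^0$ is bounded below near the support, up to a factor $a_t^{-d}$. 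The $L^2(p_t)$ norm squared of this per-cell difference is $\gtrsim \gamma^2 h^{-2}h^{d}$. Interactions between cells are controlled by the Gaussian tails, which fall off on scale $h$, at the cost of a constant factor.

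\textbf{Assouad.} Summing over cells, the per-coordinate loss is $\gamma^2h^{d-2}$ per unit time. Integrating over $[t_0,2t_0]$ (length $t_0\asymp h^2$) and dividing by $T-t_0$ gives a per-cell contribution $\gtrsim \gamma^2h^{d}/(T-t_0)$. Assouad's lemma then yields a risk of order $m\gamma^2h^d/(T-t_0)\asymp \gamma^2/(T-t_0)\asymp (1\wedge (n\tau_0^{d/2})^{-1})/(T-t_0)$. Two technical points remain. First, the loss is evaluated under $p_t^\omega$, which depends on $\omega$; this is handled by comparability $p_t^\omega\asymp p_t^0$. Second, we need the triangle-type inequality for the separation, using $\|a-b\|^2\le2\|a-s\|^2+2\|s-b\|^2$.

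\textbf{Main obstacle.} The hardest step is the uniform-in-$t$ lower bound on the score separation for $t\in[t_0,2t_0]$. It requires showing that the Gaussian-smoothed bump keeps a constant fraction of its gradient energy, and that the denominator $p_t^\omega$ does not wash out the signal. The natural tool is the Fourier/Plancherel representation of the smoothed perturbation, combined with a lower bound on $p_t^0$ on the region where the bumps live.
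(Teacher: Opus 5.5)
Your strategy (reduce to $q_0=p_0$, bumps at scale $h\asymp\sqrt{\tau_0}$ on $m\asymp h^{-d}$ cells with amplitude $\gamma^2\asymp1\wedge m/n$, per-cell separation $\gamma^2h^{d-2}$ over a time window of length $\asymp h^2$) gives the right rate. It follows a different route from the paper. The paper uses Fano with a Gilbert--Varshamov packing, a uniform base density on a cube, and bumps of width $\rho=K_0\sqrt{\tau_0}$ with $K_0$ a large constant. It proves persistence for the whole signed sum $h_{b,b'}=f_b-f_{b'}$ at once, through the dissipation bound $\|\nabla\mathsf H_\tau h_{b,b'}\|_2^2\ge\|\nabla h_{b,b'}\|_2^2-\tau\|\Delta h_{b,b'}\|_2^2$, whose right-hand terms split exactly over the disjoint supports. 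It then localizes only once, to a fixed interior cube, via an off-diagonal Gaussian gradient estimate. Since the dissipation bound involves only time-zero quantities and Fano needs only pairwise global separation, overlaps between smoothed bumps never have to be resolved, and every constant is explicit. Your Assouad version trades the packing lemma for neighbour-wise comparisons. Your smooth base density removes the boundary-tail terms that the paper controls through $\bar\tau$ and $C_1$. Single-bump persistence is immediate by scaling: with $\psi_h:=\psi(\cdot/h)$, one has $\|\nabla\mathsf H_\tau\psi_h\|_2^2=h^{d-2}F(\tau/h^2)$, where $F(s):=\|\nabla\mathsf H_s\psi\|_2^2$ is continuous and positive on the compact range of $\tau/h^2$, so no frequency-concentration claim is needed. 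The price is non-explicit constants, plus the issues below.

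Three steps need repair.

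\emph{The sample-poor alternative is wrong.} With $m\asymp1$ bumps of width $h$, the total separation is only $\asymp\gamma^2h^d$; with bumps of width $\asymp1$ it is $\asymp t_0$. Neither is of order one. Simply drop this alternative. Keeping $m\asymp h^{-d}$ and taking $\gamma$ a small constant gives neighbour KL $\lesssim nh^d<1$ and risk $\asymp1/(T-t_0)$. This is already what your formula $\gamma\asymp1\wedge(nh^d)^{-1/2}$ does, and it matches the paper's $\eta^2=\kappa(1\wedge m/n)$.

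\emph{The localization step is not established.} Assouad needs either a loss that dominates a sum of terms on disjoint regions $C_j$, or a global separation of order $\gamma^2h^d\,d_{\mathrm{Ham}}(\omega,\omega')$ for all pairs, which is what the paper proves. The global Plancherel energy of one smoothed bump gives neither. With cells of side $h$ and smoothing width between $h$ and $\sqrt3\,h$, the energy of $\nabla\mathsf H_\tau\psi_{h,j}$ leaking out of $C_j$ is a constant fraction, not a tail. Equivalently, neighbouring smoothed bumps overlap substantially. So the appeal to Gaussian tails does not by itself give the needed lower bound. Any one of the following fixes it:
\begin{itemize}
\item space the centres at $Kh$ with $K$ large and use an off-diagonal bound as in Lemma~\ref{lem:off-diagonal-gradient};
\item apply the scaling-and-continuity argument to the localized energy $\int_{C_j}\|\nabla\mathsf H_\tau\psi_{h,j}\|_2^2$;
\item run the paper's dissipation argument on the whole sum.
\end{itemize}

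\emph{The score linearization drops a term.} Let $\omega'$ flip coordinate $j$ and set $\delta:=p_t^\omega-p_t^{\omega'}$. Your ``$\approx$'' drops the term $\delta\,\nabla\log p_t^{\omega'}$ in
$\nabla\log p_t^\omega-\nabla\log p_t^{\omega'}=(\nabla\delta-\delta\,\nabla\log p_t^{\omega'})/p_t^\omega$.
On the bump region, $\|\nabla\log p_t^{\omega'}\|_2\lesssim1+\gamma/h$ and $\|\delta\|_2\lesssim h\,\|\nabla\delta\|_{L^2(C_j)}$. So this term is a fraction $\lesssim\gamma+h$ of the main one. It is absorbed only if $\gamma$ is capped by a small constant and $t_0$ is small. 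This is exactly the role of $\eta_0$ and $\tau_\star$ in Lemma~\ref{lem:uniform-window-score-separation}.
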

Combining this result with the
shift lower bound in Theorem~\ref{thm:fixed-dimensional-lower-main} gives the
full robust minimax rate.
\begin{theorem}
\label{thm:finite-sample-minimax-optimality}
Fix \(d\ge1\) and \(D\ge\sqrt d\), and let \(c_0,c_1\) be the constants in
Theorem~\ref{thm:integrated-statistical-lower-bound}.  If
\(0<t_0\le\min\{c_0,1/(16\pi^2)\}\), \(2t_0\le T\),
\(n\tau_0^{d/2}\ge1\), and \(0\le\varepsilon\le\sqrt{t_0}\), then
\begin{equation}
  c_{d,D}
  \left[
    \frac{1}{n(T-t_0)\tau_0^{d/2}}
    +
    \frac{\varepsilon^2}{t_0(T-t_0)}
  \right]
  \le
  \mathfrak R_{n,\varepsilon}^{\star}(D;t_0,T)
  \le
  C_{d,D}
  \left[
    \frac{1}{n(T-t_0)\tau_0^{d/2}}
    +
    \frac{\varepsilon^2}{t_0(T-t_0)}
  \right].
\label{eq:combined-finite-sample-minimax-rate}
\end{equation}
Here, \(C_{d,D}=2e^{d/2}(1+D^2/d)^{d/2}\), and 
\(c_{d,D}:=\frac12\min\{c_1,e^{-4/9}/2592\}\). 
\end{theorem}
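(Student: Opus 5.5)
The proof combines three results already stated in the paper: Theorem~\ref{thm:finite-sample-robust-kde} for the upper bound, and Theorems~\ref{thm:integrated-statistical-lower-bound} and~\ref{thm:fixed-dimensional-lower-main} for the lower bound. The only real work is translating between $\tau_0$ and $t_0$, and checking that the hard distribution in the shift lower bound lies in $\mathcal P_D$.

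\emph{Upper bound.} Use the Gaussian-blanket estimator $\hat s_n^\vartheta$ with $\vartheta=D/\sqrt d$. It is built from the samples and the known $D$ only, so it is an admissible estimator. First, $\tau_0=e^{t_0}-1\le1$: indeed $t_0\le 1/(16\pi^2)$, so $\tau_0\le e^{1/(16\pi^2)}-1<1$. Theorem~\ref{thm:finite-sample-robust-kde} therefore applies uniformly over $p_0\in\mathcal P_D$. Bound $1/(n+1)\le 1/n$. For the shift term, $\tau_0=e^{t_0}-1\ge t_0$, so
\[
\frac{2\varepsilon^2}{(T-t_0)\tau_0}\le \frac{2\varepsilon^2}{t_0(T-t_0)}.
\]
Since $C_{d,D}=2e^{d/2}(1+D^2/d)^{d/2}\ge 2$, this term is at most $C_{d,D}\,\varepsilon^2/[t_0(T-t_0)]$. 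Taking the supremum over $p_0$ and then the infimum over estimators gives the right-hand inequality.

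\emph{Lower bound.} Write $A:=1/[n(T-t_0)\tau_0^{d/2}]$ and $B:=\varepsilon^2/[t_0(T-t_0)]$. Since $\max\{x,y\}\ge (x+y)/2$, it suffices to show $\mathfrak R^\star\ge c_1A$ and $\mathfrak R^\star\ge (e^{-4/9}/2592)B$; these give $\mathfrak R^\star\ge \tfrac12\min\{c_1,e^{-4/9}/2592\}(A+B)=c_{d,D}(A+B)$.

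The first bound is Theorem~\ref{thm:integrated-statistical-lower-bound}. Its hypotheses $t_0\le c_0$ and $2t_0\le T$ hold, and $n\tau_0^{d/2}\ge1$ gives $1\wedge (n\tau_0^{d/2})^{-1}=(n\tau_0^{d/2})^{-1}$.

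For the second bound, use the product uniform law $p_0^{(d)}$. It is supported on $[-1,1]^d\subseteq\mathcal B(0,\sqrt d)\subseteq\mathcal B(0,D)$ because $D\ge\sqrt d$; this is exactly why the hypothesis $D\ge\sqrt d$ is imposed. For any estimator $\hat s_n$, the samples come from this fixed $p_0^{(d)}$, so $\hat s_n$ is a particular possibly randomized score estimator in the sense of Theorem~\ref{thm:fixed-dimensional-lower-main}. Hence
\[
\sup_{p_0\in\mathcal P_D}\mathcal R_{\varepsilon,p_0}(\hat s_n)\ge \mathcal R_{\varepsilon,p_0^{(d)}}(\hat s_n)\ge \frac{e^{-4/9}}{2592}B.
\]
This holds for $0<\varepsilon\le\sqrt{t_0}$ under the stated constraints on $t_0$ and $T$; when $\varepsilon=0$ the bound is trivial. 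Taking the infimum over $\hat s_n$ completes the lower bound.

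The main point needing care is this reduction. Theorem~\ref{thm:fixed-dimensional-lower-main} takes the infimum over all estimators, and estimators that depend on data drawn from $p_0^{(d)}$ form a subclass, so the lower bound transfers. I do not expect a substantive obstacle beyond checking the constants and the containment $p_0^{(d)}\in\mathcal P_D$.
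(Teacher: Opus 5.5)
Your proposal is correct and follows the paper's proof essentially line for line. The upper bound comes from Theorem~\ref{thm:finite-sample-robust-kde} using $\tau_0\le1$, $n+1\ge n$, $\tau_0\ge t_0$ and $C_{d,D}\ge2$. The lower bound takes the larger of the statistical bound $c_1A$ and the shift bound $(e^{-4/9}/2592)B$, where the latter uses $p_0^{(d)}\in\mathcal P_D$ because $D\ge\sqrt d$, and then bounds that maximum below by half the sum. Your remark that estimators built from samples of $p_0^{(d)}$ form a subclass of the randomized estimators in Theorem~\ref{thm:fixed-dimensional-lower-main} is a step the paper leaves implicit, and it is the right justification.
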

This theorem shows that the upper bound in
Theorem~\ref{thm:finite-sample-robust-kde} is sharp, up to constants, in both
the finite-sample and distribution-shift terms. Thus, the Gaussian-blanket
estimator attains the robust minimax rate in the stated regime.

\paragraph{Relation to fixed-time score estimation.}
The statistical term agrees with existing fixed-time score-estimation rates. For fixed $d$ and $D$, prior work gives, up to logarithmic factors, the fixed-time rate
$
n^{-1}\tau^{-d/2-1}
$~\citep{zhang2024minimax,dou2024optimal,cai2025minimax}.
Reparameterizing OU time by $\tau=e^t-1$, together with the corresponding score rescaling, integration over $[t_0,T]$ yields
$
{1}/{[n(T-t_0)\tau_0^{d/2}]},
$
which matches the statistical term in
Theorem~\ref{thm:finite-sample-robust-kde}.

\paragraph{Adaptation to the shift radius.}
The Gaussian-blanket estimator does not depend on $\varepsilon$. Under the
conditions of Theorem~\ref{thm:finite-sample-minimax-optimality}, the same
estimator is minimax optimal simultaneously for all
$0\le\varepsilon\le\sqrt{t_0}$, without radius-dependent tuning.

\section{Adaptation to Intrinsic Dimension}
\label{sec:unknown-subspace}

The finite-sample rate in Section~\ref{sec:finite-sample} depends on the ambient dimension \(d\). We show that this dependence can be replaced by the intrinsic dimension when \(p_0\) is supported on an unknown low-dimensional linear subspace. The shifted distribution is allowed to leave the source subspace, and the resulting estimator adapts to the intrinsic dimension without requiring exact subspace recovery.

We impose the following additional structure.

\begin{assumption}
\label{assum:unknown-subspace}
Let $k\in\{1,\ldots,d\}$ be known. There exists an unknown matrix
$U\in\mathbb R^{d\times k}$ such that
\begin{equation}
  U^\top U=I_k,
  \qquad
  \operatorname{supp}(p_0)\subseteq\operatorname{range}(U).
  \label{eq:unknown-subspace-model}
\end{equation}
\end{assumption}

Let $X\sim p_0$ and write $X=UZ$ with $Z\in\mathbb R^k$. We do not impose
smoothness or non-degeneracy assumptions on the latent distribution. The
assumption applies only to $p_0$. The perturbation class
$\mathcal Q_\varepsilon(p_0)$ remains the full Wasserstein neighborhood in
$\mathbb R^d$.

To estimate the unknown source subspace, we split the $n$ i.i.d. samples from $p_0$ into
two independent subsets of sizes
$n_1=\lceil n/2\rceil$ and $n_2=n-n_1$, with $n\ge2$.
Using the first subset, we construct the empirical subspace and its orthogonal
projector
\[
\hat L:=\operatorname{span}(X_1,\ldots,X_{n_1}),
\qquad
\hat\Pi:=\operatorname{Proj}_{\hat L},
\]
and project the remaining samples as
\[
Y_i:=\hat\Pi X_{n_1+i},\qquad 1\le i\le n_2 .
\]
We then apply the Gaussian-blanket construction in the estimated subspace
with intrinsic scale $\vartheta_k^2=D^2/k$ {and the calibration
$M_{\vartheta_k,t_0}:=e^{k/2}(1+D^2/(k\tau_0))^{k/2}$.}
Specifically, define
\[
\hat\mu_{n,0}^{\mathrm{sub}}
:=
\frac{\sum_{i=1}^{n_2}\delta_{Y_i}
+M_{\vartheta_k,t_0}\mathcal N(0,\vartheta_k^2\hat\Pi)}
{n_2+M_{\vartheta_k,t_0}},
  \qquad 
\hat s_n^{\mathrm{sub}}(x,t)
:=
\nabla\log P_t\hat\mu_{n,0}^{\mathrm{sub}}(x).
\]
The Gaussian component provides the required regularization, while the
projection onto the estimated subspace enables adaptation from the ambient
dimension $d$ to the intrinsic dimension $k$.

The estimator does not require exact recovery of the source subspace. Instead,
it suffices to control the source mass missed by the empirical span, leading to
the following guarantee.
\begin{theorem}
\label{thm:unknown-subspace-upper}
Under Assumptions~\ref{assum:support} and~\ref{assum:unknown-subspace},
for every
$n\ge2$, $0<t_0<T<\infty$, and $\varepsilon\ge0$,
\begin{equation}
  \mathcal R_{\varepsilon,p_0}(\hat s_n^{\mathrm{sub}})
  \le
  \frac{2}{T-t_0}
  \log\left(1+\frac{M_{\vartheta_k,t_0}-1}{n_2+1}\right)
  +\frac{2\varepsilon^2}{(T-t_0)\tau_0}
  +\frac{2kD^2}{(n_1+k)(T-t_0)\tau_0}.
  \label{eq:unknown-subspace-finite}
\end{equation}
In particular, if $k\ge2$ and $\tau_0\le1$, then
\begin{equation}
  \mathcal R_{\varepsilon,p_0}(\hat s_n^{\mathrm{sub}})
  \le
  \frac{\tilde C_{k,D}}
       {n(T-t_0)\tau_0^{k/2}}
  +\frac{2\varepsilon^2}{(T-t_0)\tau_0}.
  \label{eq:unknown-subspace-rate}
\end{equation}
Here
\(\tilde C_{k,D}
:=4[e^{k/2}(1+D^2/k)^{k/2}+kD^2]\)
is independent of the ambient dimension $d$.
\end{theorem}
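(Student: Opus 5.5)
The plan is to apply Proposition~\ref{prop:ou-score-oracle} conditionally on the first half of the sample, with the \emph{projected} reference law $\tilde p_0:=\hat\Pi_\#p_0$ as the intermediate distribution. Since $\hat L\subseteq\operatorname{range}(U)$, we have $\dim\hat L=j\le k$, and conditionally on $X_1,\dots,X_{n_1}$ the points $Y_1,\dots,Y_{n_2}$ are i.i.d.\ from $\tilde p_0$. Moreover, $\tilde p_0$ is supported in $\mathcal B(0,D)\cap\hat L$. Write $\tilde p_t:=P_t\tilde p_0$ and $\hat p_t:=P_t\hat\mu^{\mathrm{sub}}_{n,0}$. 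As in the proof of Proposition~\ref{prop:ou-score-oracle}, the time-integrated score loss is controlled by the positive-time KL:
\[
\frac{1}{T-t_0}\int_{t_0}^T\mathbb E_{q_t}\|\hat s-\nabla\log q_t\|^2\,\rmd t\le\frac{2}{T-t_0}\,\mathrm{KL}(q_{t_0}\|\hat p_{t_0}).
\]
It therefore suffices to bound $\mathbb E\,\mathrm{KL}(q_{t_0}\|\hat p_{t_0})$ by three quantities: $\varepsilon^2/\tau_0$, a missed-mass term over $\tau_0$, and a R\'enyi term.

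\textbf{Step 1 (R\'enyi term in the subspace).} Under the OU semigroup the coordinates in $\hat L$ and in $\hat L^\perp$ evolve independently. Both $\tilde p_t$ and $\hat p_t$ have the \emph{same} factor $\mathcal N(0,\rho_tI)$ on $\hat L^\perp$, because every atom $Y_i$ and the blanket $\mathcal N(0,\vartheta_k^2\hat\Pi)$ live in $\hat L$. Hence $D_2(\tilde p_{t_0}\|\hat p_{t_0})$ equals the $j$-dimensional R\'enyi divergence of the in-subspace marginals. That is exactly the Gaussian-blanket setting of Proposition~\ref{prop:reverse-chi-square} in dimension $j$ with $\vartheta=\vartheta_k$. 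The calibration $M_{\vartheta_k,t_0}$ dominates the one required in dimension $j\le k$: it satisfies $\exp(D^2/2\vartheta_k^2)=e^{k/2}$, and $(1+\vartheta_k^2/\tau_0)^{j/2}$ is increasing in $j$. The uniform denominator bound in that proof therefore still holds, giving
\[
\mathbb E\big[D_2(\tilde p_{t_0}\|\hat p_{t_0})\,\big|\,X_{1:n_1}\big]\le\log\Bigl(1+\tfrac{M_{\vartheta_k,t_0}-1}{n_2+1}\Bigr).
\]

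\textbf{Step 2 (shift plus missed mass).} First I bound the missed mass, $m:=\mathbb E_{X\sim p_0}\|(I-\hat\Pi)X\|^2\le D^2\,\mathbb P_{X\sim p_0}(X\notin\hat L)$. For its expectation I use an exchangeability argument. Among $n_1+1$ i.i.d.\ draws from $p_0$, at most $k$ can fall outside the span of the others, because all draws lie in a $k$-dimensional space. So the probability that the last draw is new is at most $k/(n_1+1)$, and a slightly refined count gives the form $k/(n_1+k)$. Hence $\mathbb E\,m\le kD^2/(n_1+k)$. Next I need the combined transport cost between $q_0$ and $\tilde p_0$ to enter additively, as $\varepsilon^2+m$, rather than through $(\varepsilon+\sqrt m)^2$. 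I would not apply the triangle inequality to $\wass_2(q_0,\tilde p_0)$ directly. Instead I would redo the convexity/coupling argument inside the proof of Proposition~\ref{prop:ou-score-oracle} with a three-point coupling $(x,y,\hat\Pi y)$, where $(x,y)$ is $\wass_2$-optimal for $(q_0,p_0)$. The conditional KL between Gaussian kernels is $a_{t_0}^2\|x-\hat\Pi y\|^2/(2\rho_{t_0})$ per pair, and I would attempt to absorb the cross term $\langle x-y,(I-\hat\Pi)y\rangle$ using the factor-two slack between the $\tfrac12$ in the Gaussian KL and the $1/\tau_0$ coefficient of \eqref{eq:generic-shifted-kl}. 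That slack yields at most $\varepsilon^2/\tau_0+m/\tau_0$. I expect this step to be the main obstacle. If the exact constants resist, the fallback is Young's inequality, which gives the same rate with a constant $2$ instead of $1$ in front of these terms.

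\textbf{Step 3 (assembly and rate).} Combining Steps 1–2 with the KL-to-risk inequality, and then taking expectations over both halves of the sample, gives \eqref{eq:unknown-subspace-finite}. For \eqref{eq:unknown-subspace-rate} I use four elementary facts: $\log(1+x)\le x$; $n_2+1\ge n/2$ and $n_1+k\ge n/2$; the bound $M_{\vartheta_k,t_0}\le e^{k/2}(1+D^2/k)^{k/2}\tau_0^{-k/2}$ when $\tau_0\le1$; and $\tau_0^{-1}\le\tau_0^{-k/2}$ for $k\ge2$. Together these put both statistical terms under $\tilde C_{k,D}/[n(T-t_0)\tau_0^{k/2}]$, with no dependence on $d$.
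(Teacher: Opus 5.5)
Your Steps 1 and 3 are fine and agree with the paper. Conditionally on the pilot sample, Lemma~\ref{lem:dominated-kernel-add-one} needs only domination, so using the larger calibration $M_{\vartheta_k,t_0}$ is harmless.

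\textbf{Main gap (Step 2).} Once all of $q_{t_0}$ is routed through $\tilde p_{t_0}=P_{t_0}\hat\Pi_\#p_0$ by Lemma~\ref{lem:three-density-kl}, the factor $2$ in front of $\mathrm{KL}(q_{t_0}\|\tilde p_{t_0})$ exactly cancels the $\tfrac12$ in the Gaussian KL. No slack remains. Your bound is therefore $\tau_0^{-1}\mathbb E\|x-\hat\Pi y\|_2^2=\tau_0^{-1}\bigl(\wass_2^2(q_0,p_0)+m+2\,\mathbb E\langle x-y,(I_d-\hat\Pi)y\rangle\bigr)$, and the cross term has no sign.

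For example, take $k=1$, $p_0=(1-\pi)\delta_0+\pi\delta_{De_1}$, $q_0=\delta_{ce_1}$ with $c>D$, and $\varepsilon:=\wass_2(q_0,p_0)$. On the positive-probability event $\{\hat\Pi=0\}$ one has exactly $2\mathrm{KL}(q_{t_0}\|\tilde p_{t_0})=c^2/\tau_0=\bigl(\varepsilon^2+m+2\pi D(c-D)\bigr)/\tau_0$. So the conditional bound you want fails along this route.

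Your Young fallback gives a shift term $4\varepsilon^2/[(T-t_0)\tau_0]$. That proves neither display, since both keep the coefficient $2$ of Theorem~\ref{thm:finite-sample-robust-kde}.

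The missing idea is the exact orthogonal chain rule of Lemma~\ref{lem:unknown-subspace-oracle}. Set $q_t^{\hat\Pi}:=P_t(\hat\Pi_\#q_0)$. The ratio $q_t^{\hat\Pi}/\hat p_t$ depends only on the $\hat L$-coordinate, because both carry the factor $\mathcal N(0,\rho_tI)$ on $\hat L^\perp$. Moreover $q_t$ and $q_t^{\hat\Pi}$ have the same $\hat L$-marginal. Hence
\[
\mathrm{KL}(q_t\|\hat p_t)=\mathrm{KL}(q_t\|q_t^{\hat\Pi})+\mathrm{KL}(q_t^{\hat\Pi}\|\hat p_t).
\]
Only the second term goes through Lemma~\ref{lem:three-density-kl} with intermediate $\tilde p_{t_0}$, at cost $\mathbb E\|\hat\Pi\delta\|_2^2/\tau_0$, where $\delta:=x-y$. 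The first term costs only $\mathbb E\|(I_d-\hat\Pi)x\|_2^2/(2\tau_0)\le\bigl(m+\mathbb E\|(I_d-\hat\Pi)\delta\|_2^2\bigr)/\tau_0$. This is where the factor-two slack actually lives, and the two components of $\delta$ recombine into $\wass_2^2(q_0,p_0)\le\varepsilon^2$.

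\textbf{Second gap (missed mass).} Bounding $m\le D^2\,\mathbb P_{X\sim p_0}(X\notin\hat L)$ and using exchangeability gives only $\mathbb E m\le kD^2/(n_1+1)$. The claimed refinement of the probability bound to $k/(n_1+k)$ is false. If $p_0$ is uniform on a circle of radius $D$ in a two-dimensional subspace and $n_1=1$, then $\hat L$ is a line and $\mathbb P(X\notin\hat L)=1>2/3$.

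The factor $kD^2/(n_1+k)$ in \eqref{eq:unknown-subspace-finite} requires bounding the missed \emph{energy} itself (equal to $D^2/2$ in this example), as in Lemma~\ref{lem:unknown-subspace-energy}. Let $\Pi_j$ project onto the span of the first $j$ pilot samples, $C:=\mathbb E_{p_0}[XX^\top]$, and $b_j:=\mathbb E\operatorname{tr}[C(I_d-\Pi_j)]$. The rank-one update $\Pi_{j+1}=\Pi_j+zz^\top/\|z\|_2^2$ with $z=(I_d-\Pi_j)X_{j+1}$, combined with Cauchy--Schwarz on the at most $k$ nonzero eigenvalues of $(I_d-\Pi_j)C(I_d-\Pi_j)$, gives $b_{j+1}\le b_j-b_j^2/(kD^2)$. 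Hence $b_{n_1}\le kD^2/(n_1+k)$.

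Your cruder bound $kD^2/(n_1+1)$ would still suffice for \eqref{eq:unknown-subspace-rate}, since $n_1+1\ge n/2$, but not for \eqref{eq:unknown-subspace-finite}.
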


The bound decomposes into three terms corresponding to estimation within the
empirical subspace, distribution shift, and the missed source component. The
last term is controlled by
\[
\mathbb E\int\|(I_d-\hat\Pi)x\|_2^2\,p_0(\rmd x)
\le \frac{kD^2}{n_1+k}.
\]
For $k\ge2$ and $\tau_0\le1$, it is absorbed into the statistical term,
yielding the simplified rate.
Furthermore, the resulting rate replaces the ambient-dimensional dependence
$\tau_0^{-d/2}$ in Theorem~\ref{thm:finite-sample-robust-kde} by
$\tau_0^{-k/2}$, while the distribution shift term remains unchanged. Thus,
intrinsic structure reduces the statistical cost of learning the reference
distribution but does not mitigate the uncertainty induced by distribution
shift.

\subsection{Minimax Optimality}
\label{subsec:unknown-subspace-minimax}
The intrinsic-dimensional rate is also minimax optimal. 
Let $\mathcal P_{k,D}$ denote the subclass of $\mathcal P_D$ satisfying
Assumption~\ref{assum:unknown-subspace}.
Define the minimax robust score risk by
\begin{equation}
  \mathfrak R_{n,\varepsilon}^{\star}(k,D;t_0,T)
  :=\inf_{\hat s_n}\sup_{p_0\in\mathcal P_{k,D}}
  \mathcal R_{\varepsilon,p_0}(\hat s_n).
  \label{eq:unknown-subspace-minimax-risk}
\end{equation}
By Corollary~\ref{cor:unknown-subspace-minimax}\footnote{We defer the formal statement and proof to Appendix~\ref{app:unknown-subspace-minimax}.}, when $k\geq 2,$
\[
\mathfrak R_{n,\varepsilon}^{\star}(k,D;t_0,T)
\asymp
\frac{1}{n(T-t_0)\tau_0^{k/2}}
+
\frac{\varepsilon^2}{t_0(T-t_0)},
\]
with constants depending only on $k$ and $D$, independently of the ambient
dimension $d$.

\section{From Robust Estimation to Robust Generation}

We now translate the score estimation guarantees developed above into
distributional guarantees for reverse-time sampling. Since the shifted data
law $q_0$ may be singular, we evaluate the sampler at the positive forward
time $t_0>0$ and measure its error relative to $q_{t_0}$ in KL divergence.

\subsection{Minimax Optimal Reverse Sampling}
\label{subsec:positive-time-kl-sampling}

{
{Under Assumption~\ref{assum:support}, take $\hat s_n=\hat s_n^\vartheta$
and construct the approximate reverse process by replacing the true score
in \eqref{eq:backward} with $\hat s_n$.}
When initialized with the self-consistent terminal distribution
$\nu_T=\hat p_{n,T}^{\vartheta}$, the reverse process exactly recovers the
estimated OU trajectory, so its output distribution at time $t_0$ coincides
with $\hat p_{n,t_0}^{\vartheta}$. 
Therefore, setting $\vartheta=D/\sqrt d$ and assuming $\tau_0\le1$,
combining Propositions~\ref{prop:ou-score-oracle} and~\ref{prop:reverse-chi-square} yields
\begin{equation}
\sup_{q_0\in\mathcal Q_\varepsilon(p_0)}
\mathbb E\left[
\mathrm{KL}\bigl(q_{t_0}\|\hat\mu_{n,t_0}^{\,\nu}\bigr)
\right]
\le
\frac{C_{d,D}}{2(n+1)\tau_0^{d/2}}
+\frac{\varepsilon^2}{\tau_0},
\label{eq:self-consistent-kl-sampling-bound}
\end{equation}
where $\hat\mu_{n,t_0}^{\,\nu}$ denotes the sampler output distribution.
Hence, the reverse sampler inherits the same decomposition as score
estimation: a finite-sample statistical term and an intrinsic quadratic
distribution-shift term. {A corresponding bound under standard Gaussian initialization
is given in Corollary~\ref{cor:standard-gaussian-kl-sampling}.}

To characterize whether these two terms are unavoidable, define the minimax
positive-time KL risk
\[
\mathfrak K_{n,\varepsilon}^{\star}(D;t_0)
:=
\inf_{\hat\mu_n}
\sup_{p_0\in\mathcal P_D}
\sup_{q_0\in\mathcal Q_\varepsilon(p_0)}
\mathbb E \left[
\mathrm{KL}(q_{t_0}\|\hat\mu_n)
\right],
\]
where the infimum is over all measurable, possibly randomized estimators based on
$n$ i.i.d.\ samples from $p_0$.

The following theorem shows that both components are minimax optimal.
}

\begin{theorem}
\label{thm:robust-kl-minimax}
Fix $d\ge1$ and $D>0$. Let $C_{d,D}$ be as in
Theorem~\ref{thm:finite-sample-robust-kde}. There exist constants
$c_0,\tilde c_{d,D}>0$, depending only on $d$ and $D$, such that if
$
0<t_0\le c_0,
n\tau_0^{d/2}\ge1,
0\le\varepsilon\le\sqrt{\tau_0},
$
then
\begin{equation}
  \tilde c_{d,D}
  \left[
    \frac{1}{n\tau_0^{d/2}}
    +\frac{\varepsilon^2}{\tau_0}
  \right]
  \le
  \mathfrak K_{n,\varepsilon}^{\star}(D;t_0)
  \le
  C_{d,D}
  \left[
    \frac{1}{n\tau_0^{d/2}}
    +\frac{\varepsilon^2}{\tau_0}
  \right].
  \label{eq:robust-kl-minimax-rate}
\end{equation}
The lower-bound constants are specified in
Appendices~\ref{app:statistical-minimax-lower-bound}
and~\ref{app:intrinsic-lower-bound}.
\end{theorem}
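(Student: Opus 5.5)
The upper bound comes almost directly from earlier results; the work is in the lower bound, which I will reduce to the score-level lower-bound constructions.

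\textbf{Upper bound.} I take the self-consistent reverse sampler built from $\hat s_n^\vartheta$ with $\vartheta=D/\sqrt d$. Its output at time $t_0$ is exactly $\hat p_{n,t_0}^\vartheta$. The estimate \eqref{eq:self-consistent-kl-sampling-bound} then gives
\[
\mathbb E\bigl[\mathrm{KL}(q_{t_0}\|\hat\mu_n)\bigr]\le \frac{C_{d,D}}{2(n+1)\tau_0^{d/2}}+\frac{\varepsilon^2}{\tau_0}.
\]
To justify this I apply \eqref{eq:generic-shifted-kl} pointwise and take expectations. The shift term is at most $a_{t_0}^2\varepsilon^2/\rho_{t_0}=\varepsilon^2/\tau_0$. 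For the statistical term, Proposition~\ref{prop:reverse-chi-square} and $\log(1+x)\le x$ give at most $(M_{\vartheta,t_0}-1)/(n+1)$. With $\vartheta^2=D^2/d$ and $\tau_0\le1$ (which holds once $c_0\le\log 2$), we have
\[
M_{\vartheta,t_0}\le e^{d/2}(1+D^2/d)^{d/2}\tau_0^{-d/2}=\tfrac12 C_{d,D}\tau_0^{-d/2}.
\]
Since $C_{d,D}\ge 2$, both terms are dominated by $C_{d,D}[\,\cdot\,]$. This holds uniformly over $p_0\in\mathcal P_D$, so the upper bound follows.

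\textbf{Lower bound.} It suffices to prove each term separately, because $\max\ge\frac12(\text{sum})$.

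\emph{Statistical term.} I first set $\varepsilon=0$, so $q_0=p_0$. I reuse the packing family from Appendix~\ref{app:statistical-minimax-lower-bound} that underlies Theorem~\ref{thm:integrated-statistical-lower-bound}. This is a hypercube of bump perturbations of a base density at spatial scale $h\asymp\sqrt{\tau_0}$, with about $h^{-d}$ bumps and amplitude tuned so that the $n$-sample KL between neighbors is $O(1)$. For any estimator $\hat\mu_n$ I then use a Pinsker/Le Cam–Assouad argument, working at the level of densities at time $t_0$ rather than scores. On each cell, KL restricted to the cell is comparable to a squared-Hellinger or $L^2(1/p_{t_0})$ separation between the two candidate smoothed densities. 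Assouad's lemma then yields a total of order $h^{-d}\cdot(\text{per-cell separation})\asymp 1/(n\tau_0^{d/2})$. A technical point is that KL is not additive across cells for a general $\hat\mu_n$. I will handle this by lower bounding $\mathrm{KL}(p_{t_0}\|\hat\mu)\ge 2\,H^2(p_{t_0},\hat\mu)$ and using that $H^2$ is additive over a partition. The condition $n\tau_0^{d/2}\ge1$ keeps the amplitude admissible, so densities stay positive and supported in $\mathcal B(0,D)$.

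\emph{Shift term.} I take $p_0=p_0^{(d)}$ (after rescaling into $\mathcal B(0,D)$) and two shifted laws $q_0^{\pm}$ within $\wass_2$-distance $\varepsilon$. Concretely, I translate the mass near an edge of the cube by $\pm\varepsilon$ in the first coordinate, as in Appendix~\ref{app:intrinsic-lower-bound}. The estimator $\hat\mu_n$ sees only samples of $p_0$, so it cannot depend on the sign. Hence
\[
\max_\pm\mathrm{KL}(q^\pm_{t_0}\|\hat\mu)\ge \tfrac12\bigl[\cdots\bigr]\gtrsim H^2(q^+_{t_0},q^-_{t_0}),
\]
using the triangle inequality for Hellinger. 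It remains to show $H^2(q^+_{t_0},q^-_{t_0})\gtrsim\varepsilon^2/\tau_0$ when $\varepsilon\le\sqrt{\tau_0}$. The discontinuity of the uniform density at the edge is smoothed only at scale $\sqrt{\tau_0}$. Therefore an $\varepsilon$-translation changes the density by relative amount $\asymp\varepsilon/\sqrt{\tau_0}$ on a region of $q_{t_0}$-mass $\asymp\sqrt{\tau_0}$. Taking the $\sqrt{\tau_0}$ mass factor into account, the bound needs a shift concentrated on the boundary layer. So I will choose $q_0^\pm$ to move mass $\asymp$ const by $\varepsilon$ across a bump of width $\sqrt{\tau_0}$, i.e.\ a sinusoidal profile at frequency $1/\sqrt{\tau_0}$, mirroring the $t_0\le 1/(16\pi^2)$ condition in Theorem~\ref{thm:fixed-dimensional-lower-main}.

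\textbf{Main obstacle.} The hardest step is this Hellinger lower bound: verifying that Gaussian smoothing at variance $\tau_0$ preserves an $\varepsilon^2/\tau_0$ separation. I would compute it by Fourier analysis of the sinusoidal perturbation, which is damped by $e^{-2\pi^2\tau_0 h^{-2}}$, a constant factor when $h^2\asymp\tau_0$. I would also need a positivity margin so that the $\chi^2$ and Hellinger distances are comparable.
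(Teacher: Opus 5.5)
Your upper bound and your reduction of the lower bound to two separate terms via $\max\ge\frac12(\text{sum})$ match the paper's proof. Both lower-bound ingredients, however, follow different routes.

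\emph{Statistical term.} The paper uses Fano rather than Assouad. It takes a Gilbert--Varshamov packing of the same bump family and proves global $L^2$ persistence of $g_{b,\tau_0}-g_{b',\tau_0}$ from the heat equation (Lemma~\ref{lem:fixed-time-density-persistence}). It converts this to Hellinger separation using $g_{b,\tau_0}\le 5c_Q/4$, and decodes by Hellinger nearest neighbour together with $\mathrm{KL}\ge\mathsf h^2$ (Proposition~\ref{prop:fixed-time-kl-statistical-lower}). Your Assouad argument with cellwise additivity of $\mathsf h^2$ is sound and avoids the packing lemma, but it relies on a step you have not stated. Each smoothed bump $\mathsf H_{\tau_0}w_{j,\rho}$ must keep a fixed fraction of its $L^2$ mass inside its own cell. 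That is an off-diagonal Gaussian tail estimate, in the spirit of Lemma~\ref{lem:off-diagonal-gradient}, and it requires the gap between bump support and cell boundary to be a sufficiently large multiple of $\sqrt{\tau_0}$. The global Fano argument needs no such localization.

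\emph{Shift term.} The paper sidesteps your ``main obstacle'' entirely. Since $\mathcal P_D$ contains point masses, it takes $p_0=\delta_0$ and $q_0^\pm=\delta_{\pm\varepsilon e_1}$, so $q^\pm_{t_0}=\mathcal N(\pm a_{t_0}\varepsilon e_1,\rho_{t_0}I_d)$. Pinsker plus the total-variation triangle inequality, with the half-space bound $d_{\mathrm{TV}}(q^+_{t_0},q^-_{t_0})\ge2\phi(1)\varepsilon/\sqrt{\tau_0}$, then gives $(e^{-1}/\pi)\varepsilon^2/\tau_0$ for every $t_0>0$ (Proposition~\ref{prop:kl-shift-lower}).

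Your sinusoidal pair also works. Take $\lambda_k^2\tau_0\asymp1$ and $\alpha=\varepsilon\lambda_k\le1/2$, with Lemma~\ref{lem:oscillatory-wasserstein-size} for admissibility. Then $\mathsf h^2(q^+_{t_0},q^-_{t_0})\ge\alpha^2\int\eta_{t_0}^2p_{t_0}$. The bound $\int\eta_{t_0}^2p_{t_0}\gtrsim e^{-\lambda_k^2\tau_0}$ follows from the interior estimates of Lemma~\ref{lem:ou-mode-persistence}, or from Parseval. Note the damping is not exact, because the cosine is truncated to $[-1,1]$. This gives order $\varepsilon^2/\tau_0$.

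What your route buys is that the KL shift cost is shown at a reference law with bounded density, using the same hard pair as the score-level bound in Theorem~\ref{thm:fixed-dimensional-lower-main}, rather than at a degenerate Dirac reference. The price is a small-$\tau_0$ restriction (absorbed into $c_0$, with the cube rescaled into $\mathcal B(0,D)$) and boundary-tail estimates. The paper instead needs only a short two-Gaussian total-variation computation.
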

Hence, the statistical and distribution shift costs identified at the score
level persist in the generated distribution and are both minimax optimal.

\subsection{Intrinsic-Dimensional Reverse Sampling}

The intrinsic-dimensional estimator in Section~\ref{sec:unknown-subspace}
immediately yields corresponding generation guarantees. With
$\hat s_n=\hat s_n^{\mathrm{sub}}$ and self-consistent initialization,
the output distribution satisfies the same KL decomposition, with the ambient
dimension replaced by the intrinsic dimension.\footnote{We defer the formal statement and proof to Appendix~\ref{app:unknown-subspace-kl}.}

For the class $\mathcal P_{k,D}$, define
\[
\mathfrak K_{n,\varepsilon}^{\star}(k,D;t_0)
:=
\inf_{\hat\mu_n}
\sup_{p_0\in\mathcal P_{k,D}}
\sup_{q_0\in\mathcal Q_\varepsilon(p_0)}
\mathbb E[
\mathrm{KL}(q_{t_0}\|\hat\mu_n)].
\]
Then, by Corollary~\ref{cor:unknown-subspace-kl-minimax}\footnote{We defer the formal statement and proof to Appendix~\ref{app:unknown-subspace-kl-minimax}.}, when $k\geq 2,$
\[
\mathfrak K_{n,\varepsilon}^{\star}(k,D;t_0)
\asymp
\frac{1}{n\tau_0^{k/2}}
+\frac{\varepsilon^2}{\tau_0},
\]
with constants depending only on $k$ and $D$, independently of the ambient
dimension $d$. Thus, intrinsic structure reduces the statistical cost to the
intrinsic dimension $k$, while the distribution-shift cost remains unchanged.

{
\color{red}

}

\section{Discussion}
\label{sec:conc}

This work develops a theory of robust score estimation and generation under
distribution shift. We show that the robust error decomposes into a statistical
estimation component and an intrinsic distribution-shift component. For the
OU diffusion, the shift cost is quadratic in the Wasserstein
radius and minimax optimal. We further construct an optimal finite-sample
estimator, extend the analysis to low-dimensional source structures, and
establish corresponding minimax guarantees for positive-time reverse sampling.

Several directions remain open. First, our analysis focuses on the OU process,
which provides a canonical setting where the statistical and intrinsic costs of
distribution shift can be characterized sharply. Extending this framework to
more general diffusion dynamics and noise schedules is an important direction
for future work. Second, our intrinsic-dimensional results rely on linear
subspace structure; extending them to more general notions of intrinsic
complexity, such as nonlinear manifolds, remains open. Third, while we
establish neural realizability of the proposed estimator, understanding when
practical score-network training achieves the predicted robustness guarantees
remains an important challenge. Finally, extending the theory to discretized
samplers and quantifying the interaction between distribution shift, score
estimation, and numerical discretization would provide a more complete
end-to-end theory of robust diffusion sampling.

\section*{Acknowledgement}

This work was partially supported by the City University of Hong Kong Startup Fund 9610707 and the Hong Kong Research Grants Council through ECS Grant No. 21306325 and GRF Grant No. 11309626. NKC is supported by a City University of Hong Kong Start-up Grant 7200809, and a Hong Kong ECS-RGC Grant 21300826.

\bibliographystyle{plainnat}
\bibliography{bib}

\newpage
\appendix

\section{Proofs for Robust Score Upper Bounds}
\label{app:ou-information-estimates}
\label{app:finite-sample-robust-score}

We first establish two OU information estimates, then prove the robust
oracle bound in Proposition~\ref{prop:ou-score-oracle} and the
Gaussian-blanket upper bounds in Proposition~\ref{prop:reverse-chi-square}
and Theorem~\ref{thm:finite-sample-robust-kde}. The corresponding statistical
and shift lower bounds are proved in
Appendices~\ref{app:statistical-minimax-lower-bound}
and~\ref{app:intrinsic-lower-bound}, respectively.

\subsection{Fixed-Time OU Gaussian-Channel Bound}
\label{app:fixed-time-ou-kl-proof}

The fixed-time bound below is a special case of a standard reverse transport
inequality for Langevin semigroups; see, e.g.,
\citet[Corollary~3.11]{altschuler2025shifted}. We include a direct proof
to make our OU normalization explicit.

\begin{lemma}[Fixed-time OU Gaussian-channel bound]
\label{lem:fixed-time-ou-kl}
Let \(p_0\) and \(q_0\) be Borel probability measures on \(\mathbb R^d\)
with finite second moments, and set \(p_t=P_tp_0\) and \(q_t=P_tq_0\).
Recall that \(a_t=e^{-t/2}\) and \(\sigma_t^2=1-e^{-t}\).
Then, for every \(t>0\),
\begin{equation}
\label{eq:fixed-time-ou-kl}
    \mathrm{KL}(q_t\|p_t)
    \le
    \frac{a_t^2}{2\sigma_t^2}
    \wass_2^2(p_0,q_0)
    =
    \frac{e^{-t}}{2(1-e^{-t})}
    \wass_2^2(p_0,q_0).
\end{equation}
\end{lemma}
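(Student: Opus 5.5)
The plan is to write both marginals as mixtures of Gaussian channels and use joint convexity of KL. Let $\varrho$ be an optimal coupling of $(p_0,q_0)$; one exists since both measures lie in $\mathcal P_2(\mathbb R^d)$. For $x\in\mathbb R^d$ write $K_t(x,\cdot):=\mathcal N(a_tx,\sigma_t^2I_d)$ for the OU transition kernel. Then
\[
p_t=\int K_t(x,\cdot)\,\varrho(\rmd x,\rmd y),
\qquad
q_t=\int K_t(y,\cdot)\,\varrho(\rmd x,\rmd y).
\]
So $p_t$ and $q_t$ are mixtures of Gaussians indexed by the same mixing measure $\varrho$.

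The key step is to apply the joint convexity of KL, in the form of the chain rule or data-processing inequality, to the joint laws on $(X,Y,\text{output})$. Consider the two joint measures $\varrho(\rmd x,\rmd y)\,K_t(y,\rmd z)$ and $\varrho(\rmd x,\rmd y)\,K_t(x,\rmd z)$. They have the same first marginal $\varrho$, and their $z$-marginals are $q_t$ and $p_t$ respectively. Marginalizing to $z$ cannot increase KL, and the chain rule evaluates the joint KL. This gives
\[
\mathrm{KL}(q_t\|p_t)\le\int \mathrm{KL}\bigl(\mathcal N(a_ty,\sigma_t^2I_d)\,\big\|\,\mathcal N(a_tx,\sigma_t^2I_d)\bigr)\,\varrho(\rmd x,\rmd y).
\]

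Next I use the closed form for two Gaussians with a common covariance:
\[
\mathrm{KL}\bigl(\mathcal N(m_1,\sigma^2I)\,\|\,\mathcal N(m_2,\sigma^2I)\bigr)=\frac{\|m_1-m_2\|^2}{2\sigma^2}.
\]
With $m_1=a_ty$ and $m_2=a_tx$ the integrand equals $a_t^2\|x-y\|_2^2/(2\sigma_t^2)$. Integrating against the optimal $\varrho$ gives exactly $\frac{a_t^2}{2\sigma_t^2}\wass_2^2(p_0,q_0)$. Substituting $a_t^2=e^{-t}$ and $\sigma_t^2=1-e^{-t}$ yields the stated expression.

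The only delicate point is measure-theoretic, since $p_0$ and $q_0$ need not have densities. I will justify the data-processing step in full generality. The joint measures are well defined because $x\mapsto K_t(x,\cdot)$ is a measurable kernel. The chain rule for KL under disintegration gives
\[
\mathrm{KL}(\text{joint}_q\|\text{joint}_p)=\int \mathrm{KL}\bigl(K_t(y,\cdot)\|K_t(x,\cdot)\bigr)\,\rmd\varrho,
\]
because the conditional of $z$ given $(x,y)$ is explicit in both joint measures. The pushforward to $z$ then contracts KL. If $\wass_2=\infty$ the bound is trivial, but this case does not arise here because both measures have finite second moments. This step is standard, so I expect no genuine obstacle.
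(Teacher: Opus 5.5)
Your proposal is correct and follows essentially the same route as the paper: both write $p_t$ and $q_t$ as mixtures of the common Gaussian OU kernel over a coupling, obtain joint convexity of KL from the chain rule on the joint laws plus data processing under the projection to the output coordinate, and then apply the equal-covariance Gaussian formula. The only cosmetic difference is that you fix an optimal coupling up front, whereas the paper works with an arbitrary coupling $\pi\in\Gamma(p_0,q_0)$ and takes the infimum at the end, which avoids invoking existence of an optimizer.
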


\begin{proof}
Fix \(t>0\) and let \(\pi\in\Gamma(p_0,q_0)\) be an arbitrary coupling.
The \(\mathcal P_2\) assumption ensures that
\[
    \int\|y_0-x_0\|_2^2\,\pi(\rmd x_0,\rmd y_0)
    \le
    2\int\|x_0\|_2^2\,p_0(\rmd x_0)
    +
    2\int\|y_0\|_2^2\,q_0(\rmd y_0)
    <\infty.
\]
The common OU transition kernel is
\(p_{t\mid0}(\cdot\mid x_0):=\mathcal N(a_tx_0,\sigma_t^2I_d)\).
Since the first and second marginals of \(\pi\) are \(p_0\) and \(q_0\),
respectively, the two OU marginals admit the mixture representations
\[
    p_t
    =
    \int_{\mathbb R^d\times\mathbb R^d}
    p_{t\mid 0}(\cdot\mid x_0)\,
    \pi(\rmd x_0,\rmd y_0),
    \qquad
    q_t
    =
    \int_{\mathbb R^d\times\mathbb R^d}
    p_{t\mid 0}(\cdot\mid y_0)\,
    \pi(\rmd x_0,\rmd y_0).
\]

We use the following mixture form of the joint convexity of relative
entropy.  Let \(\lambda\) be a probability measure and let
\((\mu_z,\nu_z)\) be a measurable pair of probability kernels indexed by
\(z\).  Then
\[
    \mathrm{KL}\left(
        \int \mu_z\,\lambda(\rmd z)
        \,\middle\|\,
        \int \nu_z\,\lambda(\rmd z)
    \right)
    \le
    \int \mathrm{KL}(\mu_z\|\nu_z)\,\lambda(\rmd z).
\]
For completeness, define joint probability measures by
\[
    \overline\mu(\rmd z,\rmd x)
    :=
    \lambda(\rmd z)\mu_z(\rmd x),
    \qquad
    \overline\nu(\rmd z,\rmd x)
    :=
    \lambda(\rmd z)\nu_z(\rmd x).
\]
They have the same \(z\)-marginal \(\lambda\), and hence the chain rule for
relative entropy gives
\[
    \mathrm{KL}(\overline\mu\|\overline\nu)
    =
    \int \mathrm{KL}(\mu_z\|\nu_z)\,\lambda(\rmd z).
\]
Their \(x\)-marginals are the two mixtures in the displayed inequality.
Applying the data-processing inequality to the projection
\((z,x)\mapsto x\) therefore proves the claim.

Apply this inequality with \(z=(x_0,y_0)\), \(\lambda=\pi\),
\(\mu_z=p_{t\mid0}(\cdot\mid y_0)\), and
\(\nu_z=p_{t\mid0}(\cdot\mid x_0)\).
The preceding mixture representations yield
\[
    \mathrm{KL}(q_t\|p_t)
    \le
    \int_{\mathbb R^d\times\mathbb R^d}
    \mathrm{KL}\left(
        p_{t\mid 0}(\cdot\mid y_0)
        \middle\|
        p_{t\mid 0}(\cdot\mid x_0)
    \right)
    \pi(\rmd x_0,\rmd y_0).
\]
Since \(t>0\), we have \(\sigma_t^2>0\).  The two transition laws inside
the relative entropy are therefore nondegenerate Gaussians with the same
covariance matrix \(\sigma_t^2I_d\), so
\[
    \mathrm{KL}\left(
        p_{t\mid 0}(\cdot\mid y_0)
        \middle\|
        p_{t\mid 0}(\cdot\mid x_0)
    \right)
    =
    \mathrm{KL}\left(
        \mathcal N(a_ty_0,\sigma_t^2I_d)
        \middle\|
        \mathcal N(a_tx_0,\sigma_t^2I_d)
    \right)
    =
    \frac{a_t^2}{2\sigma_t^2}\|y_0-x_0\|_2^2.
\]
Consequently,
\[
    \mathrm{KL}(q_t\|p_t)
    \le
    \frac{a_t^2}{2\sigma_t^2}
    \int_{\mathbb R^d\times\mathbb R^d}
    \|y_0-x_0\|_2^2\,\pi(\rmd x_0,\rmd y_0).
\]
Taking the infimum over \(\pi\in\Gamma(p_0,q_0)\) proves
\eqref{eq:fixed-time-ou-kl}.
\end{proof}

\subsection{Relative de Bruijn Identity}
\label{app:relative-de-bruijn-ou-proof}

The relative de Bruijn identity with a moving reference under common
Gaussian smoothing goes back to \citet{verdu2010mismatched}.  A direct
heat-equation derivation and the corresponding exact integral representation
were given by \citet[Theorem~2.2]{hirata2012integral}.
See also \citet[Lemma~2 and Proposition~3]{yoshida2017dissipation} for the
Gaussian identity and its extension to common Fokker--Planck diffusion
flows under regularity assumptions.  We record the precise positive-time
OU formulation needed here.

\begin{lemma}[OU relative de Bruijn identity on \(\mathcal P_2\)]
\label{lem:relative-de-bruijn-ou}
Let \(p_0,q_0\in\mathcal P_2(\mathbb R^d)\), and set
\(p_t:=P_tp_0\) and \(q_t:=P_tq_0\).
For \(t>0\), regard \(q_t\) and \(p_t\) as their strictly positive smooth
densities, and define
\[
    I(q_t\|p_t)
    :=
    \int_{\mathbb R^d}
    q_t(x)
    \left\|
        \nabla\log q_t(x)-\nabla\log p_t(x)
    \right\|_2^2
    \,\rmd x.
\]
Then \(t\mapsto\mathrm{KL}(q_t\|p_t)\) is locally absolutely continuous
on \((0,\infty)\), and
\begin{equation}
\label{eq:relative-de-bruijn-differential}
    \frac{\rmd}{\rmd t}\mathrm{KL}(q_t\|p_t)
    =
    -\frac12 I(q_t\|p_t)
\end{equation}
for almost every \(t>0\).  Equivalently, for every
\(0<t_-<t_+<\infty\),
\begin{equation}
\label{eq:relative-de-bruijn-integrated}
    \mathrm{KL}(q_{t_+}\|p_{t_+})
    +
    \frac12\int_{t_-}^{t_+}I(q_t\|p_t)\,\rmd t
    =
    \mathrm{KL}(q_{t_-}\|p_{t_-}).
\end{equation}
\end{lemma}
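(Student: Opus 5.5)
The plan is to prove the identity on a compact interval $[t_-,t_+]\subset(0,\infty)$ by first establishing the differential form for smooth, rapidly decaying densities, and then justifying every interchange of derivative and integral using the Gaussian smoothing that is available at positive times. Fix $0<t_-<t_+$. For $t\ge t_-$, both $p_t$ and $q_t$ are Gaussian mixtures of the form $\int\varphi_{\sigma_t^2}(x-a_tx_0)\,\mu(\rmd x_0)$ with $\mu\in\mathcal P_2$. They are therefore strictly positive and $C^\infty$ in $(x,t)$, and each solves the OU Fokker--Planck equation
\[
\partial_t p_t=\tfrac12\nabla\cdot\bigl(x\,p_t+\nabla p_t\bigr)=\tfrac12\nabla\cdot\bigl(p_t(x+\nabla\log p_t)\bigr),
\]
and likewise for $q_t$. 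Tweedie's formula gives $\nabla\log p_t(x)=\sigma_t^{-2}\bigl(a_t\mathbb E[X_0\mid X_t=x]-x\bigr)$, where the conditional expectation is taken under the posterior. Jensen's inequality then bounds $\|\nabla\log p_t\|^2$ by $C_t\bigl(\|x\|^2+\mathbb E[\|X_0\|^2\mid X_t=x]\bigr)$, and the same argument controls $\nabla^2\log p_t$ and $\partial_t\log p_t$. The resulting bounds are uniform for $t\in[t_-,t_+]$. Combined with the Gaussian-type log-density lower bound $\log p_t(x)\ge -C(1+\|x\|^2)$, which follows from Jensen applied to the mixture, these estimates show that $\log(q_t/p_t)$ grows at most quadratically. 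They also show that $q_t\|\nabla\log q_t\|^2$, $q_t\|\nabla\log p_t\|^2$ and the relevant time derivatives are integrable, with dominating functions that are uniform on the interval.

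With integrability in hand, I differentiate under the integral sign:
\[
\frac{\rmd}{\rmd t}\int q_t\log\frac{q_t}{p_t}=\int \partial_tq_t\,\log\frac{q_t}{p_t}+\int\partial_t q_t-\int \frac{q_t}{p_t}\partial_tp_t .
\]
The middle term vanishes because mass is conserved. In the other two terms I substitute the Fokker--Planck equations and integrate by parts. The drift contributions $\tfrac12\nabla\cdot(xq_t)$ and $\tfrac12\nabla\cdot(xp_t)$ cancel against each other in the standard way, since the drift is common to both flows. What remains is
\[
-\tfrac12\int q_t\bigl(\nabla\log q_t-\nabla\log p_t\bigr)\cdot\nabla\log\tfrac{q_t}{p_t}=-\tfrac12 I(q_t\|p_t).
\]
To avoid delicate boundary terms, I would do the integration by parts against a cutoff $\chi_R(x)=\chi(x/R)$ and let $R\to\infty$. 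The error terms are of the form $R^{-1}\int_{R\le\|x\|\le 2R}q_t\,|\cdot|$ and tend to zero by the integrability established above, using $q_0,p_0\in\mathcal P_2$. This yields the pointwise derivative, in fact for every $t>0$. Integrating over $[t_-,t_+]$ then gives the integrated identity, and local absolute continuity follows because the derivative is locally bounded.

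The main obstacle is the domination step. The term $\log(q_t/p_t)\,\partial_t q_t$ involves a quadratically growing logarithm multiplied by a quantity that carries $\|x\|^2$ weights together with posterior second moments. With only $\mathcal P_2$ assumed, a naive bound would require fourth moments. My first attempt would therefore be to avoid the bound altogether: write $\mathrm{KL}(q_t\|p_t)$ as the limit of KL divergences between truncated initial laws $p_0^{(m)},q_0^{(m)}$, which are compactly supported so that all moments exist. I would prove the integrated identity for these, and then pass to the limit. KL is lower semicontinuous, and convergence of $I$ on the left-hand side follows from Fatou's lemma together with the upper bound supplied by the identity itself. If the truncation argument turns out to be awkward, the fallback is the heat-equation route of \citet{hirata2012integral} after the time change $\tau=e^t-1$ and spatial rescaling by $a_t$, which converts the OU flow into pure Gaussian convolution. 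In that form their integral representation applies directly, and only the factor $\tfrac12$ in front of $I$ needs to be tracked through the change of variables.
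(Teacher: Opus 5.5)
\textbf{Gap in the main route.} Your direct computation is fine for compactly supported initial laws. The problem is the limit passage that is meant to extend it to general $p_0,q_0\in\mathcal P_2(\mathbb R^d)$; as stated, it does not close.

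Write the truncated identity as $A_m+B_m=C_m$, where $A_m=\mathrm{KL}(q^{(m)}_{t_+}\|p^{(m)}_{t_+})$, $B_m=\frac12\int_{t_-}^{t_+}I(q^{(m)}_t\|p^{(m)}_t)\,\rmd t$ and $C_m=\mathrm{KL}(q^{(m)}_{t_-}\|p^{(m)}_{t_-})$.
\begin{itemize}
\item Lower semicontinuity and Fatou give $\mathrm{KL}(q_{t_+}\|p_{t_+})+\frac12\int_{t_-}^{t_+}I(q_t\|p_t)\,\rmd t\le\liminf_m C_m$.
\item But lower semicontinuity only gives $\liminf_m C_m\ge\mathrm{KL}(q_{t_-}\|p_{t_-})$, which points the wrong way. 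So not even one inequality in \eqref{eq:relative-de-bruijn-integrated} follows.
\item The reverse inequality needs $\limsup_m B_m\le\frac12\int_{t_-}^{t_+}I(q_t\|p_t)\,\rmd t$, i.e.\ no Fisher information escaping to infinity. The upper bound $B_m\le C_m$ supplied by the identity says nothing about this.
\end{itemize}

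\textbf{The missing ingredient} is a bound that is uniform in the truncation index.
\begin{itemize}
\item Lower-bound $p_t(x)$ by the contribution of a ball $\mathcal B(0,r_0)$ carrying half the initial mass. Truncations such as $X_0\mathbf 1_{\{\|X_0\|_2\le m\}}$ do not decrease this mass.
\item Split the posterior numerator at $\|y\|_2\approx 3\|x\|_2/a_t$. For $t\in[t_-,t_+]$ this gives $\mathbb E[\|X_0\|_2^2\mid X_t=x]\le C(1+\|x\|_2^2)$, with $C$ depending only on $d,t_\pm,r_0$.
\item Hence all scores $\nabla\log p^{(m)}_t,\nabla\log q^{(m)}_t$ grow at most linearly, uniformly in $m$. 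Your Jensen bound already gives $|\log(q^{(m)}_t/p^{(m)}_t)|\le C(1+\|x\|_2^2)$ uniformly in $m$.
\item The truncations converge in $\wass_2$, so the second moments of $q^{(m)}_t$ converge. Generalized dominated convergence then gives $A_m\to\mathrm{KL}(q_{t_+}\|p_{t_+})$, $C_m\to\mathrm{KL}(q_{t_-}\|p_{t_-})$, and $I(q^{(m)}_t\|p^{(m)}_t)\to I(q_t\|p_t)$ boundedly in $t$. That yields the equality.
\end{itemize}
This same pointwise growth bound is also what your earlier claims actually rest on, namely that $q_t\|\nabla\log p_t\|_2^2$ is integrable and that $I$ is locally bounded. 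Jensen through the posterior second moment does not suffice by itself: that posterior is computed under $p$ but integrated against $q_t$, so the tower property is unavailable.

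\textbf{Your fallback is the paper's proof.} The paper passes to the heat flow $\overline p_s=p_0*\mathcal N(0,sI_d)$. It applies the representation $\mathrm{KL}(\overline q_s\|\overline p_s)=\frac12\int_s^\infty I(\overline q_v\|\overline p_v)\,\rmd v$ of \citet{hirata2012integral} at each $s>0$ and subtracts two such tails. It then returns to the OU clock via $\tau_t=e^t-1$ and the dilation $x\mapsto a_tx$; under this change $I$ picks up a factor $a_t^{-2}$ that cancels against $\rmd\tau=a_t^{-2}\,\rmd t$.

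The theorem does not apply ``directly'', however. The paper first verifies its hypotheses:
\begin{itemize}
\item finite Fisher information of each smoothed law, $\int\overline q_s\|\nabla\log\overline q_s\|_2^2\le d/s$, by the Gaussian score identity and conditional Jensen;
\item finite relative entropy, $\mathrm{KL}(\overline q_s\|\overline p_s)\le\wass_2^2(p_0,q_0)/(2s)$, by Lemma~\ref{lem:fixed-time-ou-kl} and dilation invariance.
\end{itemize}
With these checks the fallback is complete and much shorter. The repaired truncation route is self-contained, and it additionally shows that $I(q_t\|p_t)$ is finite and locally bounded for every $t>0$, not merely for almost every $t$.
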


\begin{proof}
Let \((\mathsf H_s)_{s\ge0}\) denote the heat semigroup
\(\mathsf H_s\eta:=\eta*\mathcal N(0,sI_d)\), and set
\(\overline p_s:=\mathsf H_sp_0\) and \(\overline q_s:=\mathsf H_sq_0\).
We first verify the hypotheses of the moving-reference heat-flow identity in
\citet[Theorem~2.2]{hirata2012integral}.
For every \(s>0\), both \(\overline q_s\) and \(\overline p_s\) are
strictly positive smooth densities; in particular,
\(\overline q_s\ll\overline p_s\).  For any
\(\eta\in\mathcal P_2(\mathbb R^d)\), if \(Y\sim\eta\) and
\(Z\sim\mathcal N(0,I_d)\) are independent and
\(U_s:=Y+\sqrt{s}Z\), the Gaussian score identity gives
\[
    \nabla\log(\mathsf H_s\eta)(U_s)
    =
    -s^{-1/2}\mathbb E[Z\mid U_s].
\]
Consequently, conditional Jensen's inequality yields
\begin{equation}
\label{eq:heat-smoothed-fisher-bound}
\begin{aligned}
    J(\mathsf H_s\eta)
    :=
    \int_{\mathbb R^d}
    \mathsf H_s\eta(x)\|\nabla\log(\mathsf H_s\eta)(x)\|_2^2\,\rmd x
    &=
    \frac1s\mathbb E\left[
        \left\|\mathbb E[Z\mid U_s]\right\|_2^2
    \right]
    \\
    &\le
    \frac1s\mathbb E\|Z\|_2^2
    =
    \frac ds.
\end{aligned}
\end{equation}

It remains to check finiteness of the relative entropy.  Given \(s>0\),
put \(u:=\log(1+s)\), let \(S_c(x):=cx\), and write
\((S_c)_\#\lambda\) for the pushforward of \(\lambda\) under \(S_c\).  Since
\[
    P_u\eta=(S_{a_u})_\#\mathsf H_s\eta,
\]
invariance of relative entropy under a common invertible dilation and
Lemma~\ref{lem:fixed-time-ou-kl} imply
\begin{equation}
\label{eq:heat-smoothed-relative-entropy-bound}
\begin{aligned}
    \mathrm{KL}(\overline q_s\|\overline p_s)
    &=
    \mathrm{KL}(P_uq_0\|P_up_0)
    \\
    &\le
    \frac{a_u^2}{2\sigma_u^2}\wass_2^2(p_0,q_0)
    =
    \frac{1}{2s}\wass_2^2(p_0,q_0)
    <\infty.
\end{aligned}
\end{equation}
For each fixed \(s>0\), we now apply
\citet[Theorem~2.2]{hirata2012integral} with
\(f=\overline q_s\) and \(g=\overline p_s\).  As noted above, both densities
are strictly positive, and hence \(\overline q_s\ll\overline p_s\).  Applying
\eqref{eq:heat-smoothed-fisher-bound} separately to \(\eta=q_0\) and
\(\eta=p_0\) gives
\[
    J(\overline q_s)<\infty,
    \qquad
    J(\overline p_s)<\infty,
\]
while \eqref{eq:heat-smoothed-relative-entropy-bound} gives
\[
    \mathrm{KL}(\overline q_s\|\overline p_s)<\infty.
\]
These are precisely the hypotheses of the cited theorem.

The integral representation in the cited theorem, together with the
heat-semigroup property, gives
\begin{equation}
\label{eq:heat-relative-entropy-tail}
    \mathrm{KL}(\overline q_s\|\overline p_s)
    =
    \frac12\int_s^\infty
    I(\overline q_v\|\overline p_v)\,\rmd v.
\end{equation}
It follows that
\(s\mapsto\mathrm{KL}(\overline q_s\|\overline p_s)\) is locally absolutely
continuous and, for \(0<s_-<s_+<\infty\),
\begin{equation}
\label{eq:heat-relative-de-bruijn-integrated}
    \mathrm{KL}(\overline q_{s_+}\|\overline p_{s_+})
    +
    \frac12\int_{s_-}^{s_+}
    I(\overline q_s\|\overline p_s)\,\rmd s
    =
    \mathrm{KL}(\overline q_{s_-}\|\overline p_{s_-}).
\end{equation}

We now transfer this identity to the OU clock. A common invertible dilation
leaves relative entropy invariant and rescales relative Fisher information.
In the present notation,
\begin{align*}
\tau_t&:=\frac{\sigma_t^2}{a_t^2}=e^t-1,
&q_t&=(S_{a_t})_\#\overline q_{\tau_t},
&p_t&=(S_{a_t})_\#\overline p_{\tau_t},\\
\mathrm{KL}(q_t\|p_t)
&=\mathrm{KL}(\overline q_{\tau_t}\|\overline p_{\tau_t}),
&I(q_t\|p_t)
&=a_t^{-2}I(\overline q_{\tau_t}\|\overline p_{\tau_t}),
&\tau_t'&=e^t=a_t^{-2}.
\end{align*}
Consequently, under the change of variables \(s=\tau_t\), we have
\(\rmd s=\tau_t'\,\rmd t=a_t^{-2}\,\rmd t\), and
\[
\int_{\tau_{t_-}}^{\tau_{t_+}}
I(\overline q_s\|\overline p_s)\,\rmd s
=
\int_{t_-}^{t_+}I(q_t\|p_t)\,\rmd t.
\]
Together with
\(\mathrm{KL}(\overline q_{\tau_{t_\pm}}\|\overline p_{\tau_{t_\pm}})
=\mathrm{KL}(q_{t_\pm}\|p_{t_\pm})\),
substituting \(s_\pm=\tau_{t_\pm}\) into
\eqref{eq:heat-relative-de-bruijn-integrated} yields
\eqref{eq:relative-de-bruijn-integrated}.  The local absolute continuity
and the almost-everywhere differential identity
\eqref{eq:relative-de-bruijn-differential} follow from the same change of
variables.
\end{proof}
\subsection{Proof of Proposition~\ref{prop:ou-score-oracle}}
\label{app:general-ou-oracle}

The proof uses the fixed-time Gaussian-channel estimate in
Lemma~\ref{lem:fixed-time-ou-kl} and the $\mathcal P_2$ relative de Bruijn identity in
Lemma~\ref{lem:relative-de-bruijn-ou}.  The only additional ingredient is the following elementary
change-of-reference inequality.

\begin{lemma}[Three-density KL inequality]
\label{lem:three-density-kl}
Let $q,p,r$ be strictly positive probability densities.  If
$\mathrm{KL}(q\|p)<\infty$ and $D_2(p\|r)<\infty$, then
\begin{equation}
  \mathrm{KL}(q\|r)
  \le
  2\mathrm{KL}(q\|p)
  +
  D_2(p\|r).
  \label{eq:three-density-kl}
\end{equation}
\end{lemma}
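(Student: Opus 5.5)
The plan is to write $\log(q/r)=\log(q/p)+\log(p/r)$, integrate against $q$, and then control the cross term $\int q\log(p/r)$ by a change of measure from $q$ to $p$.

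\textbf{Step 1 (decomposition).} We have
\[
\mathrm{KL}(q\|r)=\mathrm{KL}(q\|p)+\int q(x)\log\frac{p(x)}{r(x)}\,\rmd x .
\]
This requires the second integral to be well defined. Its negative part is harmless, and the upper bound proved below shows its positive part is finite.

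\textbf{Step 2 (Donsker--Varadhan).} Apply the Gibbs variational inequality with test function $g=\log(p/r)$:
\[
\int g\,\rmd q \le \mathrm{KL}(q\|p)+\log\int e^{g}\,p\,\rmd x
=\mathrm{KL}(q\|p)+\log\int\frac{p^2}{r}\,\rmd x
=\mathrm{KL}(q\|p)+D_2(p\|r).
\]
Adding the two displays gives $\mathrm{KL}(q\|r)\le 2\mathrm{KL}(q\|p)+D_2(p\|r)$, which is \eqref{eq:three-density-kl}.

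To make Step 2 rigorous when $g$ is unbounded, I would derive it directly from Jensen rather than quoting the variational formula. Write
\[
\int q\log\frac{p}{r}=\int q\log\frac{p^2}{r\,q}+\int q\log\frac{q}{p}.
\]
Concavity of $\log$ under the probability measure $q$ then gives
\[
\int q\log\frac{p^2}{rq}\le\log\int\frac{p^2}{r}=D_2(p\|r).
\]
Strict positivity of the densities ensures every ratio is defined.

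\textbf{Main obstacle.} The only delicate point is integrability bookkeeping: $\int q\log(p^2/(rq))$ and the split of the integrals must be justified. I would handle this by noting that the positive part of $\log(p^2/(rq))$ has $q$-integral at most $\int p^2/r<\infty$, using $\log y\le y-1$, so that Jensen applies in the extended sense. Since $\mathrm{KL}(q\|p)<\infty$, all the sums are then legitimate, with $-\infty$ allowed on the left-hand side.
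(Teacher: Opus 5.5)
Your argument is correct, but it takes a different route from the paper. The paper never works with the unbounded function $\log(p/r)$. It applies the entropy (Donsker--Varadhan) inequality to $g/2$ for an arbitrary bounded $g$, changes measure from $p$ to $r$ by Cauchy--Schwarz, $\mathbb E_p e^{g/2}\le(\int p^2/r)^{1/2}(\mathbb E_r e^{g})^{1/2}$, and then takes the supremum over bounded $g$ in the variational formula for $\mathrm{KL}(q\|r)$; there the factor $2$ comes from halving $g$. You instead use the pointwise identity $\log\frac{q}{r}=2\log\frac{q}{p}+\log\frac{p^2}{qr}$ and a single Jensen step on the last term. The factor $2$ then appears directly, and the slack in the inequality is visible as a Jensen gap. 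The paper's version avoids integrability bookkeeping, since every test function is bounded, at the price of quoting the variational representation. Yours is more elementary and self-contained but needs exactly the bookkeeping you sketch, and that bookkeeping is sound. First, $\log(q/p)\in L^1(q)$: its negative part always has $q$-integral at most $1$, and its positive part is controlled by $\mathrm{KL}(q\|p)<\infty$. Second, $(\log\frac{p^2}{qr})_+\le\frac{p^2}{qr}$ has $q$-integral at most $\int p^2/r$. With these two facts the identity integrates directly in $[-\infty,\infty)$, so you can fold your separate Step 1 into this single decomposition. That also removes the remark that an upper bound on $\int q\log(p/r)$ shows its positive part is finite, which is not by itself a valid justification; the pointwise splitting is what does that work.
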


\begin{proof}
For any bounded measurable function $g$, the entropy variational inequality applied to $g/2$
gives
\begin{align*}
  \mathbb E_q g
  &\le
  2\mathrm{KL}(q\|p)
  +2\log\mathbb E_p e^{g/2},\\
  \mathbb E_p e^{g/2}
  &=
  \mathbb E_r\left[\frac{p}{r}e^{g/2}\right]
  \le
  \left(\int\frac{p^2}{r}\right)^{1/2}
  \left(\mathbb E_r e^g\right)^{1/2}.
\end{align*}
Here the second line rewrites the expectation under $r$ and applies
Cauchy--Schwarz.
Therefore,
\[
  \mathbb E_q g-\log\mathbb E_r e^g
  \le
  2\mathrm{KL}(q\|p)+D_2(p\|r).
\]
Taking the supremum over bounded measurable $g$ in the variational representation of
$\mathrm{KL}(q\|r)$ proves \eqref{eq:three-density-kl}.
\end{proof}

\begin{proofof}{Proposition~\ref{prop:ou-score-oracle}}
Fix a realization of $\hat\mu_0$ and let $q_0\in\mathcal P_2(\mathbb R^d)$.  At time
$t_0$, apply Lemma~\ref{lem:three-density-kl} with $q=q_{t_0}$,
$p=p_{t_0}$, and $r=\hat p_{t_0}$. All three densities are strictly
positive. Lemma~\ref{lem:fixed-time-ou-kl} gives
\(2\mathrm{KL}(q_{t_0}\|p_{t_0})
\le a_{t_0}^2\wass_2^2(q_0,p_0)/\rho_{t_0}\),
and $D_2(p_{t_0}\|\hat p_{t_0})<\infty$ by assumption.  Hence
\[
  \mathrm{KL}\left(q_{t_0}\middle\|\hat p_{t_0}\right)
  \le
  \frac{a_{t_0}^2}{\rho_{t_0}}\wass_2^2(q_0,p_0)
  +D_2\left(p_{t_0}\middle\|\hat p_{t_0}\right),
\]
which is \eqref{eq:generic-shifted-kl}.

Because $q_0,\hat\mu_0\in\mathcal P_2(\mathbb R^d)$,
Lemma~\ref{lem:relative-de-bruijn-ou} applies directly to the two OU trajectories
$q_t=P_tq_0$ and $\hat p_t=P_t\hat\mu_0$.  It yields
\begin{align*}
&\int_{t_0}^T
\mathbb E_{q_t}
\left[
  \|\hat s(X,t)-\nabla\log q_t(X)\|_2^2
\right]\rmd t
\\
&\qquad=
2\left[
  \mathrm{KL}\left(q_{t_0}\middle\|\hat p_{t_0}\right)
  -
  \mathrm{KL}\left(q_T\middle\|\hat p_T\right)
\right]
\\
&\qquad\le
  2\mathrm{KL}\left(q_{t_0}\middle\|\hat p_{t_0}\right).
\end{align*}
Combining the preceding display with \eqref{eq:generic-shifted-kl}, dividing by
$T-t_0$, and using $a_{t_0}^2/\rho_{t_0}=1/\tau_0$ gives, for every
$q_0\in\mathcal Q_\varepsilon(p_0)$,
\[
  \mathcal L_{q_0}(\hat s)
  \le
  \frac{2}{(T-t_0)\tau_0}\wass_2^2(q_0,p_0)
  +
  \frac{2}{T-t_0}
  D_2\left(p_{t_0}\middle\|\hat p_{t_0}\right),
\]
for almost every realization of $\hat\mu_0$. Fixing $q_0$ and taking expectation over
the randomness of $\hat\mu_0$ yields
\[
  \mathbb E\mathcal L_{q_0}(\hat s)
  \le
  \frac{2\varepsilon^2}{(T-t_0)\tau_0}
  +
  \frac{2}{T-t_0}
  \mathbb E D_2\left(p_{t_0}\middle\|\hat p_{t_0}\right).
\]
The right-hand side is independent of $q_0$. Taking the supremum over
$q_0\in\mathcal Q_\varepsilon(p_0)$ and using
\eqref{eq:robust-shifted-score-risk} proves \eqref{eq:generic-ou-score-oracle}.
\end{proofof}

\subsection{Proof of Proposition~\ref{prop:reverse-chi-square}}
\label{app:gaussian-domination}

The proof uses the following two ingredients.  The first establishes the
Gaussian domination property underlying the blanket construction, while the
second is a general add-one bound for dominated kernels.  Their proofs are
deferred until after the short deduction of
Proposition~\ref{prop:reverse-chi-square}.

\begin{lemma}[Gaussian blanket domination]
\label{lem:gaussian-blanket-domination}
For \(t\ge t_0\), \(y\in\mathcal B(0,D)\), and \(x\in\mathbb R^d\), let
\(k_{t,y}(x):=\varphi_{\rho_t}(x-a_ty)\). Then
\begin{equation}
  k_{t,y}(x)
  \le
  M_{\vartheta,t_0}\varphi_{v_t}(x).
  \label{eq:kernel-blanket-domination}
\end{equation}
More precisely,
\begin{equation}
  \sup_{x\in\mathbb R^d}
  \frac{k_{t,y}(x)}{\varphi_{v_t}(x)}
  =
  \left(1+\frac{\vartheta^2}{e^t-1}\right)^{d/2}
  \exp\left(\frac{\|y\|_2^2}{2\vartheta^2}\right).
  \label{eq:exact-gaussian-ratio}
\end{equation}
\end{lemma}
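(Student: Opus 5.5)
The ratio $k_{t,y}/\varphi_{v_t}$ is a ratio of two isotropic Gaussian densities, so we write down its logarithm explicitly as a quadratic in $x$ and maximize it by completing the square. Two facts drive the computation. First, $v_t-\rho_t=a_t^2\vartheta^2>0$, so the quadratic is concave and the supremum is attained at a unique finite point. Second, $v_t/\rho_t=1+a_t^2\vartheta^2/\rho_t=1+\vartheta^2/(e^t-1)$, since $\rho_t/a_t^2=e^t-1$; this produces the prefactor in \eqref{eq:exact-gaussian-ratio}.

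Concretely,
\[
\log\frac{k_{t,y}(x)}{\varphi_{v_t}(x)}=\frac d2\log\frac{v_t}{\rho_t}-\frac{\|x-a_ty\|_2^2}{2\rho_t}+\frac{\|x\|_2^2}{2v_t}.
\]
The gradient in $x$ vanishes when $x/\rho_t-a_ty/\rho_t=x/v_t$, which gives $x^\ast=\frac{v_t a_t}{v_t-\rho_t}\,y=y/a_t$.

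Substituting $x^\ast$ back in:
\[
\|x^\ast-a_ty\|^2=\frac{(1-a_t^2)^2}{a_t^2}\|y\|^2=\frac{\rho_t^2}{a_t^2}\|y\|^2,\qquad \|x^\ast\|^2=\frac{\|y\|^2}{a_t^2}.
\]
The exponent is therefore
\[
\frac{\|y\|^2}{2a_t^2}\Bigl(\frac1{v_t}-\rho_t\cdot\frac1{\rho_t}\cdot\rho_t\cdot\frac1{\rho_t}\Bigr)\cdot
\]
More cleanly, it equals $\frac{\|y\|^2}{2a_t^2}\bigl(\frac{1}{v_t}-\frac{\rho_t}{1}\cdot\frac{1}{\rho_t^{\,}}\cdot\frac{\rho_t}{\rho_t}\bigr)$, and the intended closed form is $\frac{a_t^2\|y\|^2}{2(v_t-\rho_t)}=\frac{\|y\|^2}{2\vartheta^2}$. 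This is the standard formula $\sup_x[-\|x-m\|^2/(2\rho)+\|x\|^2/(2v)]=\|m\|^2/(2(v-\rho))$ applied with $m=a_ty$. I would verify that formula via the completed-square identity rather than by plugging in, since the direct substitution is error-prone.

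That proves \eqref{eq:exact-gaussian-ratio}. For \eqref{eq:kernel-blanket-domination}, use monotonicity:
\begin{itemize}
\item $t\ge t_0$ implies $e^t-1\ge e^{t_0}-1=\tau_0$, so $\bigl(1+\vartheta^2/(e^t-1)\bigr)^{d/2}\le(1+\vartheta^2/\tau_0)^{d/2}$;
\item $\|y\|_2\le D$ gives $\exp(\|y\|^2/(2\vartheta^2))\le\exp(D^2/(2\vartheta^2))$.
\end{itemize}
The product of these two bounds is exactly $M_{\vartheta,t_0}$ from \eqref{eq:blanket-calibration}.

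The only real obstacle is keeping the algebra in the supremum step clean. If the substitution gets messy, an alternative is to note that $\varphi_{\rho_t}(\cdot-m)/\varphi_{v}$ is proportional to a Gaussian with variance $\rho v/(v-\rho)$ and read off its maximum directly.
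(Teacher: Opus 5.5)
Your strategy is the same as the paper's. You write the log-ratio as a strictly concave quadratic in $x$, maximize it, use $v_t/\rho_t=1+\vartheta^2/(e^t-1)$, and then bound via $e^t-1\ge\tau_0$ and $\|y\|_2\le D$. The monotonicity step is correct, and so is the value $\|y\|_2^2/(2\vartheta^2)$ that you finally assert. However, the maximization you actually wrote down is wrong.

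Your expression $x^\ast=\frac{v_ta_t}{v_t-\rho_t}\,y$ is right. Since $v_t-\rho_t=a_t^2\vartheta^2$, it equals $\frac{v_t}{a_t\vartheta^2}\,y$. This coincides with $y/a_t$ only when $v_t=\vartheta^2$, i.e.\ $\vartheta=1$. Substituting $y/a_t$ therefore evaluates the quadratic at a non-maximizing point. That gives $\frac{\|y\|_2^2}{2a_t^2}\bigl(v_t^{-1}-\rho_t\bigr)$, which is in general strictly smaller than the supremum. The bracketed displays that follow do not reduce to the claimed value either. You end up quoting a ``standard formula'' (true, but unverified in your write-up) for the one step that carries the content of the lemma, instead of deriving it.

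The repair takes two lines. With the correct maximizer, $x^\ast-a_ty=\frac{v_t-a_t^2\vartheta^2}{a_t\vartheta^2}\,y=\frac{\rho_t}{a_t\vartheta^2}\,y$, so the maximal value is
\[
-\frac{\rho_t\|y\|_2^2}{2a_t^2\vartheta^4}+\frac{v_t\|y\|_2^2}{2a_t^2\vartheta^4}
=\frac{(v_t-\rho_t)\|y\|_2^2}{2a_t^2\vartheta^4}
=\frac{\|y\|_2^2}{2\vartheta^2}.
\]
Equivalently, as the paper does, you can verify the completed-square identity
\[
-\frac{\|x-a_ty\|_2^2}{2\rho_t}+\frac{\|x\|_2^2}{2v_t}
=\frac{\|y\|_2^2}{2\vartheta^2}-\frac{a_t^2\vartheta^2}{2\rho_tv_t}\Bigl\|x-\frac{v_t}{a_t\vartheta^2}\,y\Bigr\|_2^2,
\]
which gives the supremum, and the fact that it is attained, immediately. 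With this correction, the rest of your argument goes through unchanged.
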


\begin{lemma}[Dominated-kernel add-one bound]
\label{lem:dominated-kernel-add-one}
Let $(\mathsf Y,\mathcal A)$ be a measurable space and let $n\ge1$. Suppose
$Y_1,\ldots,Y_n$ are i.i.d. copies of a random element $Y$. Let
$k:\mathsf Y\times\mathbb R^d\to[0,\infty)$ be jointly measurable, write
$k_y(x):=k(y,x)$, and suppose that $k_y$ is a probability density on
$\mathbb R^d$ for every $y\in\mathsf Y$. Let $r$ be a strictly positive
probability density. Assume that, for some $M\ge1$,
\begin{equation}
  k_y(x)\le Mr(x)
  \qquad
  \text{for every }x\in\mathbb R^d\text{ and every }y\in\mathsf Y.
  \label{eq:abstract-domination}
\end{equation}
Define
\[
  p(x):=\mathbb E[k_Y(x)],
  \qquad
  \hat p_n(x):=
  \frac{\sum_{i=1}^n k_{Y_i}(x)+Mr(x)}{n+M}.
\]
Then
\begin{equation}
  \mathbb E\int_{\mathbb R^d}\frac{p(x)^2}{\hat p_n(x)}\,\rmd x
  \le
  \frac{n+M}{n+1}.
  \label{eq:abstract-add-one}
\end{equation}
\end{lemma}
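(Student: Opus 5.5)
The plan is to prove the bound pointwise in $x$ and then integrate, using an independent ``ghost'' sample together with exchangeability. Let $Y_0$ be an independent copy of $Y$, independent of $(Y_1,\ldots,Y_n)$. Write $S_n(x):=\sum_{i=1}^n k_{Y_i}(x)$, so that $\hat p_n(x)=(S_n(x)+Mr(x))/(n+M)$. Every denominator is at least $Mr(x)>0$, so all integrands are well-defined, nonnegative and jointly measurable. Tonelli's theorem therefore lets us exchange $\mathbb E$ and $\int\rmd x$ freely.

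The first step splits $p(x)^2$ into two factors: $p(x)$ itself, and $p(x)=\mathbb E[k_{Y_0}(x)]$. The latter expectation is over $Y_0$ only, which is independent of $S_n$. Hence
\[
  \mathbb E\left[\frac{p(x)^2}{\hat p_n(x)}\right]
  =(n+M)\,p(x)\,\mathbb E\left[\frac{k_{Y_0}(x)}{S_n(x)+Mr(x)}\right].
\]

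The second step, which is the key point, uses the domination hypothesis \eqref{eq:abstract-domination}. It gives $Mr(x)\ge k_{Y_0}(x)$, so the denominator satisfies $S_n(x)+Mr(x)\ge S_{n+1}(x):=\sum_{i=0}^n k_{Y_i}(x)$. Consequently
\[
  \mathbb E\left[\frac{k_{Y_0}(x)}{S_n(x)+Mr(x)}\right]
  \le
  \mathbb E\left[\frac{k_{Y_0}(x)}{S_{n+1}(x)}\,\mathbf 1\{S_{n+1}(x)>0\}\right].
\]
The indicator is harmless: on $\{S_{n+1}(x)=0\}$ we have $k_{Y_0}(x)=0$, so the left-hand integrand vanishes there.

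Next I apply exchangeability to $(Y_0,\ldots,Y_n)$. Each ratio $k_{Y_j}(x)\mathbf 1\{S_{n+1}>0\}/S_{n+1}(x)$ has the same expectation. Summing over $j=0,\ldots,n$ gives $\mathbf 1\{S_{n+1}(x)>0\}\le1$. So each expectation is at most $1/(n+1)$, and we obtain the pointwise bound
\[
  \mathbb E\left[\frac{p(x)^2}{\hat p_n(x)}\right]\le\frac{n+M}{n+1}\,p(x).
\]

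Finally, I integrate over $x$ and use $\int p=1$; here $p$ is a probability density because each $k_y$ is one and Tonelli applies. This gives \eqref{eq:abstract-add-one}. The only delicate point is the second step: the comparison of the blanket term $Mr$ with the ghost kernel $k_{Y_0}$ is exactly what turns the $n$-sample denominator into an $(n+1)$-sample sum. The rest is bookkeeping of measurability and zero-denominator events.
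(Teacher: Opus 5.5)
Your proof is correct. It reaches the same pointwise bound $\mathbb E[p(x)^2/\hat p_n(x)]\le\frac{n+M}{n+1}\,p(x)$ as the paper, but by a different mechanism.

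\textbf{The paper's route.} It fixes $x$ and normalizes $U_i:=k_{Y_i}(x)/(Mr(x))\in[0,1]$, which has mean $\theta(x)=p(x)/(Mr(x))$. It then uses convexity of $u\mapsto(c+u)^{-1}$ on $[0,1]$ (a chord inequality) to replace the $U_i$ one at a time by independent Bernoulli$(\theta(x))$ variables. Finally it evaluates $\mathbb E[(1+B)^{-1}]=\frac{1-(1-\theta)^{n+1}}{(n+1)\theta}$ for $B\sim\mathrm{Bin}(n,\theta)$ via $\int_0^1\mathbb E[z^B]\,\rmd z$.

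\textbf{Your route.} You read the blanket $Mr(x)$ as an upper bound for a ghost kernel $k_{Y_0}(x)$. The $n$-sample denominator therefore dominates an $(n+1)$-sample sum, and exchangeability of $(Y_0,\ldots,Y_n)$ gives the factor $1/(n+1)$ with no computation.

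\textbf{Comparison.} In both proofs the domination hypothesis is used exactly once: to keep $U_i\le1$ for the chord inequality in the paper, and to lower-bound the denominator by $S_{n+1}(x)$ in yours. Your argument is shorter and avoids both the sequential conditional replacement step and the binomial identity. The paper's route, in exchange, identifies the Bernoulli configuration as extremal. It yields the intermediate factor $1-(1-\theta(x))^{n+1}$, whereas yours yields $\mathbb P(S_{n+1}(x)>0)$. The paper's factor is never larger than yours, because $\mathbb P(U_1=0)\le1-\theta(x)$, and the two coincide in the Bernoulli case. Both factors are bounded by one and discarded in the final bound, so nothing is lost for the lemma as stated.
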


\begin{proofof}{Proposition~\ref{prop:reverse-chi-square}}
Fix $t\in[t_0,T]$.  Take $\mathsf Y=\mathcal B(0,D)$, equipped with its Borel
$\sigma$-field, and apply Lemma~\ref{lem:dominated-kernel-add-one} with
$Y_i=X_i$, $k_y(x)=k_{t,y}(x)=\varphi_{\rho_t}(x-a_ty)$,
$r(x)=\varphi_{v_t}(x)$, and $M=M_{\vartheta,t_0}$.
The domination assumption follows from Lemma~\ref{lem:gaussian-blanket-domination}, while
$p(x)=\mathbb E[k_{t,X}(x)]=p_t(x)$ for a generic draw $X\sim p_0$, and the regularized density
in the abstract lemma is exactly
$\hat p_{n,t}^\vartheta$. Therefore,
\begin{equation}
  \mathbb E_{X_{1:n}}
  \int_{\mathbb R^d}
  \frac{p_t(x)^2}{\hat p_{n,t}^\vartheta(x)}\,\rmd x
  \le
  \frac{n+M_{\vartheta,t_0}}{n+1}.
  \label{eq:add-one-second-moment}
\end{equation}

Finally, the concavity of the logarithm implies
\begin{align*}
\mathbb E_{X_{1:n}}
D_2\left(p_t\middle\|\hat p_{n,t}^\vartheta\right)
&=
\mathbb E_{X_{1:n}}\log\int\frac{p_t^2}{\hat p_{n,t}^\vartheta}
\\
&\le
\log\mathbb E_{X_{1:n}}\int\frac{p_t^2}{\hat p_{n,t}^\vartheta}
\le
\log\frac{n+M_{\vartheta,t_0}}{n+1},
\end{align*}
which is \eqref{eq:renyi-rate}.
\end{proofof}

\begin{proofof}{Lemma~\ref{lem:gaussian-blanket-domination}}
Fix $t\ge t_0$ and $y\in\mathcal B(0,D)$.  Recall that
\(k_{t,y}(x)=\varphi_{\rho_t}(x-a_ty)\) and
\(v_t=\rho_t+a_t^2\vartheta^2\).
For every $x\in\mathbb R^d$,
\begin{align*}
\log\frac{k_{t,y}(x)}{\varphi_{v_t}(x)}
&=
\frac d2\log\frac{v_t}{\rho_t}
-
\frac{\|x-a_ty\|_2^2}{2\rho_t}
+
\frac{\|x\|_2^2}{2v_t}.
\end{align*}
The last two terms form a strictly concave quadratic function of $x$.  Since
$v_t-\rho_t=a_t^2\vartheta^2$, completing the square gives
\begin{equation}
-
\frac{\|x-a_ty\|_2^2}{2\rho_t}
+
\frac{\|x\|_2^2}{2v_t}
=
\frac{\|y\|_2^2}{2\vartheta^2}
-
\frac{a_t^2\vartheta^2}{2\rho_t v_t}
\left\|x-\frac{v_t}{a_t\vartheta^2}y\right\|_2^2.
\label{eq:gaussian-blanket-completed-square}
\end{equation}
Taking the supremum over $x$ in the preceding identity and simplifying the
variance ratio gives
\begin{align*}
\sup_{x\in\mathbb R^d}
  \frac{k_{t,y}(x)}{\varphi_{v_t}(x)}
&=
\left(\frac{v_t}{\rho_t}\right)^{d/2}
\exp\left(\frac{\|y\|_2^2}{2\vartheta^2}\right),\\
  \frac{v_t}{\rho_t}
  &=1+\frac{a_t^2\vartheta^2}{\rho_t}
  =1+\frac{\vartheta^2}{e^t-1}.
\end{align*}
These identities give \eqref{eq:exact-gaussian-ratio}. Since
$t\mapsto(e^t-1)^{-1}$ is decreasing and
$\|y\|_2\le D$, the right-hand side is bounded by $M_{\vartheta,t_0}$ from
\eqref{eq:blanket-calibration}.  This proves \eqref{eq:kernel-blanket-domination}.
\end{proofof}

\begin{proofof}{Lemma~\ref{lem:dominated-kernel-add-one}}
Fix $x\in\mathbb R^d$ and write
\begin{align*}
U_i(x)&:=\frac{k_{Y_i}(x)}{Mr(x)}\in[0,1],
&\theta(x)&:=\mathbb E[U_i(x)]=\frac{p(x)}{Mr(x)}\in[0,1].
\end{align*}
Then
\begin{equation}
  \frac{p(x)^2}{\hat p_n(x)}
  =
  Mr(x)\theta(x)^2(n+M)
  \frac{1}{1+\sum_{i=1}^nU_i(x)}.
  \label{eq:pointwise-add-one-start}
\end{equation}
For every $c>0$, the map $u\mapsto(c+u)^{-1}$ is convex on $[0,1]$.
Hence, if $U\in[0,1]$ has mean $\theta$, the chord inequality gives
\[
  \mathbb E\left[\frac{1}{c+U}\right]
  \le
  \frac{1-\theta}{c}+\frac{\theta}{c+1}.
\]
Let $V_1,\ldots,V_n$ be i.i.d.\
$\operatorname{Bernoulli}(\theta(x))$ random variables, independent of
$U_1(x),\ldots,U_n(x)$. Conditional on the remaining summands, the
preceding inequality shows that replacing $U_i(x)$ by $V_i$ can only
increase the expectation. Repeating this replacement successively for
$i=1,\ldots,n$ yields
\begin{equation}
  \mathbb E\frac{1}{1+\sum_{i=1}^nU_i(x)}
  \le
  \mathbb E\frac{1}{1+\sum_{i=1}^nV_i}
  =
  \mathbb E\frac{1}{1+B},
  \qquad
  B:=\sum_{i=1}^nV_i
  \sim\operatorname{Bin}(n,\theta(x)).
  \label{eq:bernoulli-extremal}
\end{equation}
For $\theta\in(0,1]$,
\begin{align*}
  \mathbb E\frac{1}{1+B}
  &=
  \int_0^1\mathbb E[z^B]\,\rmd z
  =
  \int_0^1(1-\theta+\theta z)^n\,\rmd z
  \\
  &=
  \frac{1-(1-\theta)^{n+1}}{(n+1)\theta}
  \le
  \frac{1}{(n+1)\theta}.
\end{align*}
If $\theta(x)=0$, then $p(x)=0$ and the left-hand side of
\eqref{eq:pointwise-add-one-start} vanishes.  If $\theta(x)>0$, substituting the preceding estimate
into \eqref{eq:pointwise-add-one-start} yields
\[
  \mathbb E\frac{p(x)^2}{\hat p_n(x)}
  \le
  Mr(x)\theta(x)\frac{n+M}{n+1}
  =
  p(x)\frac{n+M}{n+1}.
\]
Tonelli's theorem and $\int p=1$ now give \eqref{eq:abstract-add-one}.
\end{proofof}

We record the consequences of Lemma~\ref{lem:gaussian-blanket-domination}
needed below. For every probability measure $\mu$ supported in
$\mathcal B(0,D)$,
\begin{equation}
  P_t\mu(x)
  =\int k_{t,y}(x)\,\mu(\rmd y)
  \le M_{\vartheta,t_0}\varphi_{v_t}(x),
  \qquad t\in[t_0,T].
  \label{eq:any-mixture-dominated}
\end{equation}
Moreover, by construction,
\begin{equation}
  \hat p_{n,t}^\vartheta(x)
  \ge
  \frac{M_{\vartheta,t_0}}{n+M_{\vartheta,t_0}}\varphi_{v_t}(x).
  \label{eq:estimator-lower-blanket}
\end{equation}
Taking $\mu=p_0$ in \eqref{eq:any-mixture-dominated} and combining the two displays gives
\begin{equation}
  \frac{p_t(x)}{\hat p_{n,t}^\vartheta(x)}
  \le n+M_{\vartheta,t_0},
  \qquad t\in[t_0,T],\ x\in\mathbb R^d.
  \label{eq:p-over-phat-bound}
\end{equation}
Multiplying \eqref{eq:p-over-phat-bound} by $p_t$ and integrating gives
\[
  \int_{\mathbb R^d}
  \frac{p_t(x)^2}{\hat p_{n,t}^\vartheta(x)}\,\rmd x
  \le
  \bigl(n+M_{\vartheta,t_0}\bigr)\int_{\mathbb R^d}p_t(x)\,\rmd x
  =n+M_{\vartheta,t_0}<\infty.
\]
Thus $D_2(p_t\|\hat p_{n,t}^\vartheta)<\infty$ whenever
$X_1,\ldots,X_n\in\mathcal B(0,D)$, and hence $p_0^{\otimes n}$-almost surely. This
verifies the finiteness assumption in Proposition~\ref{prop:ou-score-oracle}.

\subsection{Proof of Theorem~\ref{thm:finite-sample-robust-kde}}
\label{app:finite-sample-assembly}

\begin{proof}
Assumption~\ref{assum:support} implies $p_0\in\mathcal P_2(\mathbb R^d)$. Since
$X_i\in\mathcal B(0,D)$ almost surely, for $p_0^{\otimes n}$-almost every
realization of the training sample, the blanket initial measure belongs to
\(\mathcal P_2(\mathbb R^d)\), since
\[
  \int_{\mathbb R^d}\|x\|_2^2\,
  \hat\mu_{n,0}^\vartheta(\rmd x)
  =
  \frac{
    \sum_{i=1}^n\|X_i\|_2^2
    +
    M_{\vartheta,t_0}d\vartheta^2
  }{
    n+M_{\vartheta,t_0}
  }
  <\infty.
\]
Moreover, the pointwise bound \eqref{eq:p-over-phat-bound} guarantees
$D_2(p_{t_0}\|\hat p_{n,t_0}^\vartheta)<\infty$ for every such realization.
Therefore, Proposition~\ref{prop:ou-score-oracle} applies to the random measure
$\hat\mu_0=\hat\mu_{n,0}^\vartheta$ and gives
\[
  \mathcal R_{\varepsilon,p_0}(\hat s_n^\vartheta)
  \le
  \frac{2\varepsilon^2}{(T-t_0)\tau_0}
  +
  \frac{2}{T-t_0}
  \mathbb E_{X_{1:n}}
  D_2\left(
    p_{t_0}\middle\|\hat p_{n,t_0}^\vartheta
  \right).
\]
Applying Proposition~\ref{prop:reverse-chi-square} at \(t=t_0\), and then using
\(\log(1+u)\le u\) for \(u\ge0\), yields
\begin{align}
  \mathcal R_{\varepsilon,p_0}(\hat s_n^\vartheta)
  &\le
  \frac{2\varepsilon^2}{(T-t_0)\tau_0}
  +
  \frac{2}{T-t_0}
  \log\left(1+\frac{M_{\vartheta,t_0}-1}{n+1}\right)
  \label{eq:exact-finite-sample-robust-bound}
  \\
  &\le
  \frac{2\varepsilon^2}{(T-t_0)\tau_0}
  +
  \frac{2(M_{\vartheta,t_0}-1)}{(n+1)(T-t_0)}.
  \label{eq:linearized-finite-sample-bound}
\end{align}

It remains to prove the explicit rate specialization. Set
$\vartheta^2=D^2/d$. Since $\tau_0=e^{t_0}-1$, we have
\begin{equation}
  M_{\vartheta,t_0}
  =
  e^{d/2}
  \left(
    1+\frac{D^2}{d\tau_0}
  \right)^{d/2}.
  \label{eq:M-simple-blanket-scale}
\end{equation}
If $\tau_0\le1$, then
\[
  1+\frac{D^2}{d\tau_0}
  \le
  \tau_0^{-1}\left(1+\frac{D^2}{d}\right),
  \qquad
  M_{\vartheta,t_0}
  \le
  e^{d/2}
  \left(1+\frac{D^2}{d}\right)^{d/2}\tau_0^{-d/2}.
\]
Substituting this into \eqref{eq:linearized-finite-sample-bound} and using
$M_{\vartheta,t_0}-1\le M_{\vartheta,t_0}$ proves
\eqref{eq:rate-finite-sample-robust-bound} with the constant $C_{d,D}$
specified in Theorem~\ref{thm:finite-sample-robust-kde}. The order comparison
following that theorem follows from \(n+1\asymp n\) and
\(\tau_0=e^{t_0}-1\asymp t_0\) whenever \(\tau_0\le1\).
\end{proof}

\section{Proofs for Statistical Lower Bounds}
\label{app:statistical-minimax-lower-bound}

This appendix proves the statistical lower bounds for time-integrated score
estimation and positive-time KL estimation. The argument is self-contained
apart from the standard Gilbert--Varshamov packing lemma and Fano's
inequality, and its construction is adapted from the high-noise
score-estimation lower bound of \citet{dou2024optimal}. The essential
additional ingredient for the integrated problem is that a single hard family
remains score-separated throughout the heat-time window
\([\tau_0,2\tau_0]\). Subsections~\ref{app:ou-to-heat-reduction}--
\ref{app:fano-proof-statistical-lower} prove
Theorem~\ref{thm:integrated-statistical-lower-bound};
Subsections~\ref{app:fixed-time-kl-separation}
and~\ref{app:nominal-kl-minimax-proof} reuse the same hard family to prove the
nominal positive-time KL lower bound. The robust minimax conclusions are
completed in Appendix~\ref{app:intrinsic-lower-bound} by combining these
statistical bounds with the corresponding shift lower bounds.

Throughout the appendix, \(\phi_\tau\) denotes the density of \(\mathcal N(0,\tau I_d)\), and
\[
  (\mathsf H_\tau f)(x):=(\phi_\tau*f)(x)
\]
denotes the heat semigroup.  Thus
\(\partial_\tau\mathsf H_\tau f=\frac12\Delta\mathsf H_\tau f\).  The bump used below is fixed
explicitly for each dimension, and every constant that enters the final lower bound is specified
below.

\subsection{Reduction to the Integrated Heat-Score Problem}
\label{app:ou-to-heat-reduction}

The first reduction is immediate but important.  Since
\(p_0\in\mathcal Q_\varepsilon(p_0)\), the definition of the robust risk gives, for every
estimator \(\hat s_n\),
\begin{equation}
  \mathcal R_{\varepsilon,p_0}(\hat s_n)
  \ge
  \mathbb E
  \left[
    \frac{1}{T-t_0}
    \int_{t_0}^{T}
    \mathbb E_{X\sim p_t}
    \bigl[\|\hat s_n(X,t)-\nabla\log p_t(X)\|_2^2\bigr]\,\rmd t
  \right].
\label{eq:robust-dominates-nominal}
\end{equation}
Here the expectation is over $X_{1:n}\sim p_0^{\otimes n}$ and any internal
randomization used by the estimator. It therefore suffices to lower bound the
nominal time-integrated score-estimation problem.

We next record the exact correspondence between the OU and variance-exploding parametrizations.

\begin{lemma}[Exact OU--heat loss identity]
\label{lem:ou-heat-loss-identity}
Let \(p_0\) be any Borel probability measure and define
\[
  a_t:=e^{-t/2},
  \qquad
  \tau_t:=\frac{1-a_t^2}{a_t^2}=e^t-1,
  \qquad
  g_\tau:=p_0*\phi_\tau,
  \qquad
  u_\tau:=\nabla\log g_\tau.
\]
Then
\begin{equation}
  p_t(x)=a_t^{-d}g_{\tau_t}(x/a_t),
  \qquad
  \nabla\log p_t(x)=a_t^{-1}u_{\tau_t}(x/a_t).
\label{eq:ou-heat-density-score-scaling}
\end{equation}
For an arbitrary score estimator \(\hat s_n(x,t)\), define
\begin{equation}
  \tilde s_n(z,\tau)
  :=a_t\hat s_n(a_tz,t),
  \qquad
  t=\log(1+\tau).
\label{eq:transformed-score-estimator}
\end{equation}
Then, for every \(0<t_a<t_b\),
\begin{equation}
\begin{aligned}
  \int_{t_a}^{t_b}
  \|\hat s_n(\cdot,t)-\nabla\log p_t\|_{L^2(p_t)}^2\,\rmd t
  =
  \int_{\tau_{t_a}}^{\tau_{t_b}}
  \|\tilde s_n(\cdot,\tau)-u_\tau\|_{L^2(g_\tau)}^2\,\rmd \tau.
\end{aligned}
\label{eq:exact-ou-heat-loss-identity}
\end{equation}
\end{lemma}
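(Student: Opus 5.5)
We prove the two identities in \eqref{eq:ou-heat-density-score-scaling} directly from the representation of the OU marginal, and then obtain \eqref{eq:exact-ou-heat-loss-identity} by two changes of variables: one in space and one in time.

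\textbf{Density identity.} If \(X_0\sim p_0\) and \(Z\sim\mathcal N(0,I_d)\) are independent, then
\[
a_tX_0+\sigma_tZ=a_t\bigl(X_0+\sqrt{\tau_t}\,Z\bigr),
\qquad
\sigma_t/a_t=\sqrt{\tau_t}.
\]
The variable \(X_0+\sqrt{\tau_t}Z\) has the smooth, strictly positive density \(g_{\tau_t}=p_0*\phi_{\tau_t}\), and this needs no regularity of \(p_0\). Hence \(p_t\) is the pushforward of \(g_{\tau_t}\) under \(S_{a_t}\). The change-of-variables formula then gives \(p_t(x)=a_t^{-d}g_{\tau_t}(x/a_t)\).

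\textbf{Score identity.} Taking logarithms, the constant \(-d\log a_t\) disappears under the gradient. The chain rule then gives \(\nabla\log p_t(x)=a_t^{-1}u_{\tau_t}(x/a_t)\).

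\textbf{Spatial change of variables.} Fix \(t\) and set \(\tau=\tau_t\), so that \(t=\log(1+\tau)\) matches the definition of \(\tilde s_n\). Substitute \(x=a_tz\), so \(\rmd x=a_t^d\,\rmd z\). The density factor \(a_t^{-d}\) cancels the Jacobian, and
\[
\|\hat s_n(\cdot,t)-\nabla\log p_t\|_{L^2(p_t)}^2
=\int g_\tau(z)\,\bigl\|\hat s_n(a_tz,t)-a_t^{-1}u_\tau(z)\bigr\|_2^2\,\rmd z
=a_t^{-2}\,\|\tilde s_n(\cdot,\tau)-u_\tau\|_{L^2(g_\tau)}^2 .
\]
The last equality uses \(\hat s_n(a_tz,t)=a_t^{-1}\tilde s_n(z,\tau)\).

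\textbf{Temporal change of variables.} Integrate over \(t\in[t_a,t_b]\) and substitute \(\tau=e^t-1\). Then \(\rmd\tau=e^t\,\rmd t=a_t^{-2}\,\rmd t\), so \(a_t^{-2}\,\rmd t=\rmd\tau\). The factor \(a_t^{-2}\) is absorbed exactly, which yields \eqref{eq:exact-ou-heat-loss-identity}.

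\textbf{Measurability and infinite values.} The map \(t\mapsto\tau_t\) is a smooth increasing bijection from \((0,\infty)\) onto \((0,\infty)\). The integrands are nonnegative and jointly measurable, since \(\hat s_n\) is measurable and \(g_\tau\), \(u_\tau\) are jointly continuous for \(\tau>0\). Tonelli's theorem therefore justifies the substitution even when both sides are \(+\infty\).

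The only potentially delicate point is this measurability and well-definedness, i.e.\ that \(u_\tau\) is defined everywhere because \(g_\tau>0\). This is immediate from the Gaussian convolution, so the proof involves no real obstacle.
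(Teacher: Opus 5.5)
Your proposal is correct and follows the same route as the paper: writing the OU marginal as \(a_t(X_0+\sqrt{\tau_t}Z)\) gives both scaling identities, the substitution \(x=a_tz\) produces the factor \(a_t^{-2}\), and \(\rmd\tau=a_t^{-2}\,\rmd t\) cancels it. Your added remarks on joint measurability and on allowing both sides to equal \(+\infty\) are a slightly more careful supplement to the paper's argument, not a change of method.
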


\begin{proof}
The OU representation can be rewritten as
\(X_t=a_tX_0+\sqrt{1-a_t^2}\,Z
=a_t(X_0+\sqrt{e^t-1}\,Z)\), where \(Z\sim\mathcal N(0,I_d)\) is independent of
\(X_0\).
This proves the density scaling in \eqref{eq:ou-heat-density-score-scaling}; differentiating its logarithm gives the score scaling.  After the change of variables \(x=a_tz\),
\begin{align*}
  \|\hat s_n(\cdot,t)-\nabla\log p_t\|_{L^2(p_t)}^2
  &=
  a_t^{-2}
  \|\tilde s_n(\cdot,\tau_t)-u_{\tau_t}\|_{L^2(g_{\tau_t})}^2.
\end{align*}
Finally, \(\rmd\tau_t/\rmd t=e^t=a_t^{-2}\), equivalently
\(\rmd t=a_t^2\,\rmd\tau\).  The two scaling factors cancel exactly, proving
\eqref{eq:exact-ou-heat-loss-identity}.
\end{proof}

Set \(\tau_0=e^{t_0}-1\) and \(t_1:=\log(1+2\tau_0)\). Since
\(e^{2t_0}-(1+2\tau_0)=e^{2t_0}-2e^{t_0}+1=(e^{t_0}-1)^2\ge0\), we have
\(t_1\le2t_0\). Moreover,
\(1+2\tau_0>1+\tau_0=e^{t_0}\), so \(t_1>t_0\).  Under the assumption
\(2t_0\le T\), the nonempty interval \([t_0,t_1]\) is contained in \([t_0,T]\).
Lemma~\ref{lem:ou-heat-loss-identity} therefore reduces the desired lower bound to the
heat-time window \([\tau_0,2\tau_0]\).

\subsection{Construction of a Compactly Supported Hard Family}
\label{app:hard-family-construction}

The calibration has two targets: the sample KL divergence must remain below a fixed fraction of
the packing entropy, while the associated heat scores remain separated throughout
\([\tau_0,2\tau_0]\).  We therefore use cells of width
\(\rho=K_0\sqrt{\tau_0}\), giving a number \(m\) of cells of order
\(\tau_0^{-d/2}\), and later choose \(\eta^2=\kappa(1\wedge m/n)\).  The exact admissible
constants are fixed below.

Fix
\[
  \ell:=\frac{D}{8\sqrt d},
  \qquad
  Q:=[-4\ell,4\ell]^d,
  \qquad
  I:=[-3\ell,3\ell]^d.
\]
The cube \(Q\) is contained in \(\mathcal B(0,D/2)\), hence in \(\mathcal B(0,D)\). Let
\(c_Q:=|Q|^{-1}=(8\ell)^{-d}\) and \(f_\circ:=c_Q\mathbf 1_Q\).
To remove any auxiliary dependence from the construction, define
\[
  \psi(u)
  :=
  \begin{cases}
    \exp\left(-\dfrac{1}{1-16u^2}\right), & |u|<1/4,\\[1ex]
    0, & |u|\ge1/4,
  \end{cases}
  \qquad
  \Psi_d(x):=\prod_{k=1}^d\psi(x_k),
\]
and fix the dimension-dependent bump
\begin{equation}
  w=w_d:=\frac{\partial_1\Psi_d}{\|\partial_1\Psi_d\|_\infty}.
\label{eq:explicit-hard-family-bump}
\end{equation}
Then \(w_d\in C_c^\infty((-1/2,1/2)^d)\), \(w_d\not\equiv0\), and
\begin{equation}
  \int_{\mathbb R^d}w(x)\,\rmd x=0,
  \qquad
  \|w\|_\infty=1,
  \qquad
  A_1:=\int\|\nabla w\|_2^2>0.
\label{eq:bump-assumptions}
\end{equation}
We also write
\[
  A_0:=\int_{\mathbb R^d}w^2,
  \qquad
  A_2:=\int_{\mathbb R^d}(\Delta w)^2,
  \qquad
  L_w:=\|\nabla w\|_\infty.
\]
Thus \(A_0,A_1,A_2,L_w\in(0,\infty)\) are fixed constants determined solely
by \(d\).

Set
\begin{equation}
  K_0:=\max\left\{
    1,
    \sqrt{\frac{8A_2}{A_1}}
  \right\},
  \qquad
  \rho:=K_0\sqrt{\tau_0}.
\label{eq:persistence-scale-constant}
\end{equation}
In particular,
\begin{equation}
  A_1-\frac{2A_2}{K_0^2}\ge\frac{3A_1}{4}.
\label{eq:K0-energy-choice}
\end{equation}
For the grid below, define \(\{z_1,\ldots,z_m\}\) to be the lattice points
\begin{equation}
  2\rho\mathbb Z^d\cap[-(\ell-\rho),\ell-\rho]^d.
\label{eq:explicit-packing-grid}
\end{equation}
Whenever \(\rho\le\ell/4\), the cubes
\(z_j+\rho[-1/2,1/2]^d\), \(1\le j\le m\), are pairwise disjoint and
\begin{equation}
  2^{-d}\left(\frac{\ell}{\rho}\right)^d
  \le m\le
  \left(\frac{\ell}{\rho}\right)^d.
\label{eq:grid-cardinality}
\end{equation}
Indeed, if \(i\ne j\), then
\(|(z_i)_k-(z_j)_k|\ge2\rho\) for some coordinate \(k\).
Since each displayed cube has coordinate half-width \(\rho/2\), the two cubes are
separated by at least \(\rho\) in that coordinate.  Hence the cubes are pairwise
disjoint; in particular, the rescaled bumps defined below have pairwise disjoint supports.

To count the lattice points, put \(y:=\ell/\rho\ge4\).  In each coordinate the admissible
points are \(2\rho k\) with \(|k|\le(y-1)/2\), so their number is
\(N=2\lfloor(y-1)/2\rfloor+1\). The elementary bounds
\(u-1\le\lfloor u\rfloor\le u\) give \(y-2\le N\le y\).
Since \(y\ge4\), it follows that \(y/2\le N\le y\).  The grid is the \(d\)-fold
Cartesian product of this one-dimensional set, so \(m=N^d\), which proves
\eqref{eq:grid-cardinality}.

Define \(w_{j,\rho}(x):=w((x-z_j)/\rho)\).
For \(b=(b_1,\ldots,b_m)\in\{0,1\}^m\) and an amplitude \(0<\eta\le1/4\), set
\begin{equation}
  f_b(x)
  :=
  f_\circ(x)
  \left(1+\eta\sum_{j=1}^m b_jw_{j,\rho}(x)\right).
\label{eq:hard-density-family}
\end{equation}
Because the bump supports are disjoint,
\(\lvert\sum_{j=1}^m b_jw_{j,\rho}\rvert\le1\).  Hence the factor in parentheses in
\eqref{eq:hard-density-family} lies in \([3/4,5/4]\).  The mean-zero condition in
\eqref{eq:bump-assumptions} gives \(\int f_b=1\).  Thus every \(f_b\) is a probability
density supported in \(Q\subset\mathcal B(0,D)\), with
  \(3f_\circ/4\le f_b\le5f_\circ/4\) on \(Q\).
For \(b,b'\in\{0,1\}^m\), write
\(d_{\mathrm{Ham}}(b,b'):=\sum_{j=1}^m\mathbf 1_{\{b_j\ne b_j'\}}\).

The following lemma records the information bound for this family.

\begin{lemma}[Sample KL bound]
\label{lem:hard-family-kl}
For every \(b\in\{0,1\}^m\),
\begin{equation}
  \mathrm{KL}(f_b^{\otimes n}\|f_\circ^{\otimes n})
  \le
  c_QA_0n\eta^2m\rho^d.
\label{eq:hard-family-kl}
\end{equation}
\end{lemma}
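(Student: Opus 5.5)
The argument rests on tensorization of KL followed by the standard comparison of KL with the $\chi^2$ divergence. Since the samples are i.i.d., $\mathrm{KL}(f_b^{\otimes n}\|f_\circ^{\otimes n})=n\,\mathrm{KL}(f_b\|f_\circ)$, so it suffices to prove the single-sample bound $\mathrm{KL}(f_b\|f_\circ)\le c_QA_0\eta^2m\rho^d$. For this we use $\mathrm{KL}(\mu\|\nu)\le\log(1+\chi^2(\mu\|\nu))\le\chi^2(\mu\|\nu)$. The first inequality is Jensen's inequality for the concave logarithm applied to $\mathbb E_\mu\log(\rmd\mu/\rmd\nu)$; alternatively, one can use $\log u\le u-1$ pointwise. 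Either route gives $\mathrm{KL}(f_b\|f_\circ)\le\int (f_b-f_\circ)^2/f_\circ$. Both densities are supported in $Q$ and $f_\circ=c_Q>0$ on $Q$, so $f_b\ll f_\circ$ and these integrals are well defined.

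Next we compute the $\chi^2$ divergence exactly. On $Q$ we have $f_b-f_\circ=c_Q\eta\sum_j b_jw_{j,\rho}$. Since $\rho\le\ell/4$ and the grid lies in $[-(\ell-\rho),\ell-\rho]^d$, each bump support $z_j+\rho(-1/2,1/2)^d$ lies inside $Q$. This justifies restricting the integral to $Q$ without losing any mass of the bumps. It follows that
\[
\int\frac{(f_b-f_\circ)^2}{f_\circ}=c_Q\eta^2\int\Bigl(\sum_j b_jw_{j,\rho}\Bigr)^2 .
\]
The supports of the bumps are pairwise disjoint, as established just before the lemma, so all cross terms vanish. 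The square therefore equals $\sum_j b_j^2w_{j,\rho}^2\le\sum_j w_{j,\rho}^2$.

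The last step is a change of variables. Substituting $x=z_j+\rho y$ gives $\int w_{j,\rho}^2=\rho^d\int w^2=\rho^dA_0$. Summing over at most $m$ indices and multiplying by $n$ yields $c_QA_0n\eta^2m\rho^d$. The only point requiring care is confirming that the supports are contained in $Q$ and are disjoint, and both facts are already recorded in the construction. The remainder of the argument is routine.
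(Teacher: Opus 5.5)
Your proposal is correct and follows essentially the same route as the paper: tensorization, the bound $\mathrm{KL}\le\chi^2$, and the disjoint-support computation $\int(f_b-f_\circ)^2/f_\circ=c_Q\eta^2\rho^dA_0\sum_j b_j\le c_QA_0\eta^2m\rho^d$. Your appeal to $\rho\le\ell/4$ is harmless but not needed here, since the bump supports lie in $Q$ and are pairwise disjoint for the grid as defined.
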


\begin{proof}
Write \(W_b:=\sum_{j=1}^m b_jw_{j,\rho}\).  Since \(f_b\ll f_\circ\),
tensorization and the standard inequality
\(\mathrm{KL}(P\|Q)\le\chi^2(P\|Q)\) give
\begin{align*}
  \mathrm{KL}(f_b^{\otimes n}\|f_\circ^{\otimes n})
  &\le n\int_Q\frac{(f_b-f_\circ)^2}{f_\circ}\,\rmd x
  =nc_Q\eta^2\int_QW_b^2\,\rmd x \\
  &=nc_QA_0\eta^2\rho^d\sum_{j=1}^m b_j
  \le c_QA_0n\eta^2m\rho^d.
\end{align*}
The second equality uses the pairwise disjoint bump supports and \(b_j^2=b_j\).
\end{proof}

\subsection{Uniform Score Separation over the Heat-Time Window}
\label{app:uniform-score-separation}

For \(\tau>0\), let
\[
  g_{b,\tau}:=\mathsf H_\tau f_b,
  \qquad
  s_{b,\tau}:=\nabla\log g_{b,\tau}.
\]
Define the dimensionally explicit envelope constants
\begin{equation}
  \bar\tau:=\frac{\ell^2}{2\log(4d)},
  \qquad
  C_1:=8\sqrt{\frac{d}{2\pi}},
  \qquad
  C_2:=4L_w.
\label{eq:explicit-interior-envelope-constants}
\end{equation}
We first establish common density and score envelopes on the fixed interior cube \(I\).

\begin{lemma}[Interior density and score bounds]
\label{lem:interior-density-score-bounds}
For every \(0<\tau\le\bar\tau\), every \(b\in\{0,1\}^m\), every \(x\in I\),
and every \(0<\eta\le1/4\), the constants in
\eqref{eq:explicit-interior-envelope-constants} satisfy
\begin{equation}
  \frac{c_Q}{4}
  \le g_{b,\tau}(x)
  \le \frac{5c_Q}{4},
\label{eq:interior-density-bounds}
\end{equation}
and
\begin{equation}
  \|s_{b,\tau}(x)\|_2
  \le
  C_1\tau^{-1/2}\exp\left(-\frac{\ell^2}{2\tau}\right)
  +C_2\frac{\eta}{\rho}.
\label{eq:interior-score-bound}
\end{equation}
\end{lemma}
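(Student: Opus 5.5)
The plan is to work directly with the convolution representation $g_{b,\tau}=\phi_\tau*f_b$ and split it into the flat part $\phi_\tau*f_\circ$ and the bump perturbation. Every rescaled bump $w_{j,\rho}$ is supported in $z_j+\rho(-1/2,1/2)^d$. Since $\|z_j\|_\infty\le\ell-\rho$, this support lies in $[-\ell,\ell]^d\subset Q$, so $f_\circ w_{j,\rho}=c_Qw_{j,\rho}$. This gives
\[
  g_{b,\tau}=c_Q\,\mathsf H_\tau\mathbf 1_Q+\eta c_Q\sum_{j=1}^m b_j\,\mathsf H_\tau w_{j,\rho},
\]
and both pieces are smooth, so $\nabla$ commutes with $\mathsf H_\tau$ on each.

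\emph{Density bounds.} The upper bound is immediate. We have $f_b\le 5f_\circ/4\le 5c_Q/4$ pointwise, and convolution with the probability density $\phi_\tau$ preserves this constant bound. For the lower bound, use $f_b\ge 3f_\circ/4$ to get $g_{b,\tau}(x)\ge \frac34 c_Q\,\mathbb P(x+\sqrt\tau Z\in Q)$ with $Z\sim\mathcal N(0,I_d)$. For $x\in I$, each coordinate satisfies $|x_k|\le3\ell$, so $x_k$ is at distance at least $\ell$ from both endpoints of $[-4\ell,4\ell]$. The one-dimensional Gaussian tail bound $\mathbb P(|\sqrt\tau Z_k|>\ell)\le 2e^{-\ell^2/(2\tau)}$ and a union bound over $k$ then give
\[
  \mathbb P(x+\sqrt\tau Z\notin Q)\le 2d\,e^{-\ell^2/(2\tau)}.
\]
Since $\tau\le\bar\tau$, we have $e^{-\ell^2/(2\tau)}\le 1/(4d)$, so this probability is at most $1/2$. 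Hence $g_{b,\tau}\ge \frac38c_Q\ge c_Q/4$ on $I$.

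\emph{Score bound.} Write $s_{b,\tau}=\nabla g_{b,\tau}/g_{b,\tau}$ and bound the numerator in two pieces.

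For the flat piece, $\mathsf H_\tau\mathbf 1_Q$ factorizes over coordinates. Its $k$-th partial derivative equals
\[
  \bigl(\phi^{(1)}_\tau(x_k+4\ell)-\phi^{(1)}_\tau(x_k-4\ell)\bigr)\prod_{i\ne k}\mathbb P\bigl(x_i+\sqrt\tau Z_i\in[-4\ell,4\ell]\bigr),
\]
where $\phi^{(1)}_\tau$ is the one-dimensional Gaussian density. For $|x_k|\le3\ell$ each of the two density values is at most $(2\pi\tau)^{-1/2}e^{-\ell^2/(2\tau)}$, and the product of probabilities is at most $1$. So each partial is at most $2(2\pi\tau)^{-1/2}e^{-\ell^2/(2\tau)}$ in absolute value, and
\[
  \|\nabla \mathsf H_\tau\mathbf 1_Q(x)\|_2\le 2\sqrt d\,(2\pi\tau)^{-1/2}e^{-\ell^2/(2\tau)}.
\]

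For the bump piece, $\nabla\sum_j b_j\mathsf H_\tau w_{j,\rho}=\mathsf H_\tau\bigl(\sum_j b_j\nabla w_{j,\rho}\bigr)$. Because the supports are disjoint, the vector field inside has pointwise norm at most $L_w/\rho$. Averaging against $\phi_\tau$ preserves this bound by Jensen's inequality for the norm.

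Combining the two pieces with the denominator lower bound $g_{b,\tau}\ge c_Q/4$ gives
\[
  \|s_{b,\tau}(x)\|_2\le \frac{4}{c_Q}\Bigl(2c_Q\sqrt{d/(2\pi\tau)}\,e^{-\ell^2/(2\tau)}+\eta c_QL_w/\rho\Bigr),
\]
which is exactly $C_1\tau^{-1/2}e^{-\ell^2/(2\tau)}+C_2\eta/\rho$ with the constants in \eqref{eq:explicit-interior-envelope-constants}.

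The main point requiring care is the lower density bound on $I$. It must hold uniformly in $b$ and rely only on $\tau\le\bar\tau$. This is where the choice of $\bar\tau$ (through the factor $4d$) and the distance $\ell$ from $I$ to $\partial Q$ are used. The rest is bookkeeping of Gaussian tails and the disjoint-support structure.
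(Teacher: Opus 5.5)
Your proof is correct and follows the paper's argument essentially step for step. You use the same Gaussian-tail and union-bound estimate on $I$, the same explicit coordinatewise derivative of $\mathsf H_\tau f_\circ$, the same sup-norm bound $\eta c_Q L_w/\rho$ on the smoothed bump gradient, and the same division by $c_Q/4$, which yields exactly $C_1$ and $C_2$. The one difference is cosmetic: for the lower density bound you use the multiplicative sandwich $f_b\ge\tfrac34 f_\circ$ to get $g_{b,\tau}\ge 3c_Q/8$, whereas the paper uses $\|f_b-f_\circ\|_\infty\le\eta c_Q$ with $L^\infty$-contraction to get $c_Q/2-\eta c_Q\ge c_Q/4$.
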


\begin{proof}
For any bounded \(h\) and \(Z\sim\mathcal N(0,I_d)\),
\((\mathsf H_\tau h)(x)=\mathbb E[h(x+\sqrt\tau Z)]\).  Since the Gaussian
kernel is nonnegative and has unit mass, this representation gives
\(\|\mathsf H_\tau h\|_\infty\le\|h\|_\infty\); the same argument applies to
vector-valued \(h\) with the Euclidean norm.

For \(x\in I\), the distance in each coordinate from \(x\) to the boundary of \(Q\)
is at least \(\ell\).  A union bound and the Gaussian tail bound therefore give
\[
  (\mathsf H_\tau f_\circ)(x)
  =c_Q\mathbb P(x+\sqrt\tau Z\in Q)
  \ge c_Q\left[1-2d\exp\left(-\frac{\ell^2}{2\tau}\right)\right].
\]
Since \(\tau\le\bar\tau\) makes the bracket at least \(1/2\), while
\(\|f_b-f_\circ\|_\infty\le\eta c_Q\), the contraction property yields
\(c_Q/2-\eta c_Q\le g_{b,\tau}(x)\le c_Q+\eta c_Q\).  As
\(\eta\le1/4\), this proves \eqref{eq:interior-density-bounds}.

The base convolution factorizes coordinatewise.  Differentiating it explicitly gives, for each \(k\),
\begin{align*}
  |\partial_k\mathsf H_\tau f_\circ(x)|
  &\le
  2c_Q(2\pi\tau)^{-1/2}
  \exp\left(-\frac{\ell^2}{2\tau}\right).
\end{align*}
Moreover, although \(f_\circ\) is discontinuous at \(\partial Q\), the difference
\(f_b-f_\circ\) is a smooth compactly supported sum of bumps lying strictly inside \(Q\).
The disjoint supports give
\(\|\nabla(f_b-f_\circ)\|_\infty\le c_Q\eta\rho^{-1}L_w\).  Since
\(\mathsf H_\tau\) commutes with derivatives, its contraction property then gives
\(\|\nabla\mathsf H_\tau(f_b-f_\circ)\|_\infty
\le c_Q\eta\rho^{-1}L_w\).  Using
\(s_{b,\tau}=[\nabla\mathsf H_\tau f_\circ+
\nabla\mathsf H_\tau(f_b-f_\circ)]/g_{b,\tau}\) and
\eqref{eq:interior-density-bounds},
\begin{align*}
  \|s_{b,\tau}(x)\|_2
  &\le
  \frac{4}{c_Q}\left[
    2c_Q\sqrt{\frac{d}{2\pi}}\,\tau^{-1/2}
    \exp\left(-\frac{\ell^2}{2\tau}\right)
    +c_Q\frac{\eta}{\rho}L_w
  \right] \\
  &=C_1\tau^{-1/2}\exp\left(-\frac{\ell^2}{2\tau}\right)
    +C_2\frac{\eta}{\rho},
\end{align*}
which proves \eqref{eq:interior-score-bound}.
\end{proof}

We next state a standard off-diagonal heat-kernel estimate.  A proof is included to make the localization step explicit.

\begin{lemma}[Off-diagonal gradient estimate]
\label{lem:off-diagonal-gradient}
Let \(A,B\subseteq\mathbb R^d\) be nonempty measurable sets, and write
\(\operatorname{dist}(A,B):=\inf_{x\in A,\,y\in B}\|x-y\|_2\).  Suppose that
\(\operatorname{dist}(A,B)\ge r>0\) and that
\(h\in L^2(\mathbb R^d)\) is supported in \(B\).  Then, for every \(\tau>0\),
\begin{equation}
  \|\mathbf 1_A\nabla\mathsf H_\tau h\|_2
  \le
  C_{\mathrm{off},d}\tau^{-1/2}
  \exp\left(-\frac{r^2}{16\tau}\right)
  \|h\|_2,
\label{eq:off-diagonal-gradient}
\end{equation}
where \(C_{\mathrm{off},d}:=\sqrt d(4/3)^{d/4}\).
\end{lemma}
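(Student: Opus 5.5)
The plan is to write \(\nabla\mathsf H_\tau h\) as a convolution with \(\nabla\phi_\tau\) and bound it pointwise using the Gaussian decay of the kernel. Only points separated by distance at least \(r\) contribute, so that decay produces the factor \(\exp(-r^2/(16\tau))\).

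\textbf{Step 1 (pointwise bound).} Since \(h\) is supported in \(B\) and \(x\in A\) forces \(\|x-y\|_2\ge r\) for every \(y\in B\), we have
\[
  \mathbf 1_A(x)\nabla\mathsf H_\tau h(x)
  =\int \nabla\phi_\tau(x-y)\mathbf 1_{\{\|x-y\|_2\ge r\}}h(y)\,\rmd y .
\]
Hence \(\|\mathbf 1_A\nabla\mathsf H_\tau h\|_2\le \|G*|h|\|_2\), where \(G(z):=\|\nabla\phi_\tau(z)\|_2\mathbf 1_{\{\|z\|_2\ge r\}}\).

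\textbf{Step 2 (Young's inequality).} Young's inequality gives \(\|G*|h|\|_2\le \|G\|_{L^1}\|h\|_2\), so it suffices to bound \(\|G\|_{L^1}\). Write
\[
  \|\nabla\phi_\tau(z)\|_2=\tau^{-1}\|z\|_2\phi_\tau(z),
\]
and split the exponent as \(e^{-\|z\|^2/(2\tau)}=e^{-\|z\|^2/(4\tau)}e^{-\|z\|^2/(4\tau)}\). On \(\{\|z\|\ge r\}\), bound one factor by \(e^{-r^2/(4\tau)}\), which is at most \(e^{-r^2/(16\tau)}\). The remaining integral is
\[
  \tau^{-1}\int\|z\|_2(2\pi\tau)^{-d/2}e^{-\|z\|^2/(4\tau)}\,\rmd z
  =\tau^{-1}2^{d/2}\,\mathbb E\|\sqrt{2\tau}Z\|_2
  \le 2^{d/2}\sqrt{2d}\,\tau^{-1/2}.
\]

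\textbf{Step 3 (matching the constant).} Step 2 yields the bound with constant \(2^{d/2}\sqrt{2d}\), not the stated \(C_{\mathrm{off},d}=\sqrt d(4/3)^{d/4}\). To obtain the stated constant I would redo the splitting with a different factorization and a Cauchy–Schwarz (Schur-test) argument in place of the plain \(L^1\) bound. The \((4/3)^{d/4}\) factor suggests the following route.

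Use Schur's test with the bound \(\|G\|_{L^1}\) computed via \(\|z\|\phi_\tau\le\) (Gaussian with an enlarged variance). Concretely, write
\[
  e^{-\|z\|^2/(2\tau)}=e^{-\|z\|^2/(16\tau)}\cdot e^{-7\|z\|^2/(16\tau)},
\]
bound the first factor by \(e^{-r^2/(16\tau)}\), and compute the remaining \(L^1\) norm exactly. Alternatively, apply Cauchy–Schwarz coordinatewise:
\[
  \|\mathbf 1_A\partial_k\mathsf H_\tau h\|_2
  \le\left(\int|\partial_k\phi_\tau|\mathbf 1_{\{\|z\|\ge r\}}\right)\|h\|_2 ,
\]
then sum over the \(d\) coordinates, which accounts for the \(\sqrt d\) factor.

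\textbf{Main obstacle.} The only delicate part is the bookkeeping needed to land exactly on \(\sqrt d(4/3)^{d/4}\). Any bound of the form \(C_d\tau^{-1/2}e^{-r^2/(16\tau)}\|h\|_2\) suffices for the downstream localization argument, because \(C_{\mathrm{off},d}\) only enters constants depending on \(d\). I would first verify that the \(L^2\)-in-\(z\) route works: bound \(\||\partial_k\phi_\tau|\mathbf 1_{\{\|z\|\ge r\}}\|_{L^1}\) via Cauchy–Schwarz against the weight \(e^{\|z\|^2/(8\tau)}\). This produces Gaussian integrals with variance ratio \(3/4\), whose square root to the \(d/2\) power plausibly gives the \((4/3)^{d/4}\) factor.
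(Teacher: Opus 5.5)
Steps 1--2 follow the paper's route: Young's inequality with the truncated kernel \(G\) is equivalent to the paper's Schur test on \(\mathbf 1_A(x)\mathbf 1_B(y)\|\nabla\phi_\tau(x-y)\|_2\). One cosmetic slip: the displayed identity in Step 1 fails for \(x\notin A\), where the right-hand side need not vanish. It should read \(\|\mathbf 1_A(x)\nabla\mathsf H_\tau h(x)\|_2\le (G*|h|)(x)\).

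\textbf{The gap is the constant.} Step 2 proves \(\|\mathbf 1_A\nabla\mathsf H_\tau h\|_2\le 2^{d/2}\sqrt{2d}\,\tau^{-1/2}e^{-r^2/(4\tau)}\|h\|_2\), and \(2^{d/2}\sqrt{2d}>\sqrt d\,(4/3)^{d/4}\) for every \(d\). The stronger exponent does not compensate when \(r^2/\tau\) is small. So the lemma with the stated \(C_{\mathrm{off},d}\) is not established. That constant enters \(R_{\mathrm{pers}}\), and hence \(\tau_\star\) and \(c_0\). You are right that any \(C_d\) would do downstream after redefining those constants, but that proves a different statement. Step 3 only asserts that the right constant ``plausibly'' comes out. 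Your \(1/16\)--\(7/16\) splitting, evaluated exactly, gives \(\tau^{-1/2}(8/7)^{(d+1)/2}\,\mathbb E\|Z\|_2\) with \(Z\sim\mathcal N(0,I_d)\). If you then use \(\mathbb E\|Z\|_2\le\sqrt d\), you get \((8/7)^{(d+1)/2}\sqrt d\), which still exceeds \(C_{\mathrm{off},d}\) for \(d\le 12\).

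\textbf{The fix is one Cauchy--Schwarz step, which is exactly the paper's argument.} With \(R:=r/\sqrt\tau\), one has \(\|G\|_{L^1}=\tau^{-1/2}\mathbb E[\|Z\|_2\mathbf 1_{\{\|Z\|_2\ge R\}}]\). Bound \(\mathbf 1_{\{\|Z\|_2\ge R\}}\le\exp((\|Z\|_2^2-R^2)/16)\), and then
\[
  \mathbb E\bigl[\|Z\|_2e^{\|Z\|_2^2/16}\bigr]
  \le\bigl(\mathbb E\|Z\|_2^2\bigr)^{1/2}\bigl(\mathbb Ee^{\|Z\|_2^2/8}\bigr)^{1/2}
  =\sqrt d\,(4/3)^{d/4}.
\]
The left side equals \((8/7)^{(d+1)/2}\mathbb E\|Z\|_2\), which is precisely what your splitting produces. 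Adding this line therefore completes your proof with the stated constant. Your coordinatewise variant closes the same way. Cauchy--Schwarz gives \(\int|\partial_k\phi_\tau|\mathbf 1_{\{\|z\|_2\ge r\}}\le(4/3)^{d/4}\tau^{-1/2}e^{-r^2/(16\tau)}\), since \(\mathbb EZ_k^2=1\). Summing the \(d\) squared coordinate norms then supplies the \(\sqrt d\).
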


\begin{proof}
The integral kernel of \(\nabla\mathsf H_\tau\) is
\(\nabla\phi_\tau(z)=-(z/\tau)\phi_\tau(z)\).  For \(x\in A\) and
  \(y\in B\), \(\|x-y\|_2\ge r\).  Let \(Z\sim\mathcal N(0,I_d)\) and
\(R:=r/\sqrt\tau\).  The bound
\(\mathbf 1_{\{\|Z\|_2\ge R\}}
\le \exp((\|Z\|_2^2-R^2)/16)\), followed by Cauchy--Schwarz, gives
\begin{align*}
  \sup_{x\in A}
  \int_B\|\nabla\phi_\tau(x-y)\|_2\,\rmd y
  &\le
  \int_{\|z\|_2\ge r}\frac{\|z\|_2}{\tau}\phi_\tau(z)\,\rmd z
  \\
  &=\tau^{-1/2}
  \mathbb E\bigl[\|Z\|_2\mathbf 1_{\{\|Z\|_2\ge R\}}\bigr]
  \\
  &\le\tau^{-1/2}e^{-R^2/16}
  (\mathbb E\|Z\|_2^2)^{1/2}
  (\mathbb Ee^{\|Z\|_2^2/8})^{1/2}
  \\
  &=C_{\mathrm{off},d}\tau^{-1/2}
  \exp\left(-\frac{r^2}{16\tau}\right),
\end{align*}
where the last line uses
\(\mathbb E\|Z\|_2^2=d\),
\(\mathbb Ee^{\|Z\|_2^2/8}=(4/3)^{d/2}\), and \(R=r/\sqrt\tau\).
The same bound holds with the supremum over \(y\in B\) and the integral over
\(x\in A\).  Thus both Schur integrals of the scalar majorant
\(\mathbf 1_A(x)\mathbf 1_B(y)\|\nabla\phi_\tau(x-y)\|_2\) are bounded by
the right-hand coefficient in \eqref{eq:off-diagonal-gradient}.  Schur's
test applied to this majorant proves \eqref{eq:off-diagonal-gradient}.
\end{proof}

For \(r>0\), write \(\log_+r:=\max\{0,\log r\}\).  We now fix the remaining
localization parameters by setting
\begin{equation}
  R_{\mathrm{pers}}
  :=\frac{4C_{\mathrm{off},d}^2A_0K_0^2}{A_1},
  \qquad
  R_{\mathrm{sc}}
  :=\frac{32A_0C_1^2K_0^2}{A_1}.
\label{eq:localization-absorption-ratios}
\end{equation}
Define
\begin{equation}
\begin{split}
  \tau_\star
  :=\min\biggl\{
  &\frac{\ell^2}{16K_0^2},\,
  \frac{\bar\tau}{2},\,
  \frac{\ell^2}{16(1+\log_+R_{\mathrm{pers}})},\,
  \frac{\ell^2}{2(1+\log_+R_{\mathrm{sc}})}
  \biggr\},
  \\
  \eta_0
  &:=\min\left\{
  \frac14,\sqrt{\frac{A_1}{32A_0C_2^2}}
  \right\}.
\end{split}
\label{eq:explicit-localization-parameters}
\end{equation}
Both constants are positive and depend only on \(d,D\).  The four constraints defining
\(\tau_\star\), together with the displayed bound defining \(\eta_0\), will be invoked below.

The key persistence estimate follows from the heat equation.

\begin{lemma}[Persistence of gradient energy]
\label{lem:gradient-energy-persistence}
Let \(K_0\) and \(\tau_\star\) be given by
\eqref{eq:persistence-scale-constant} and \eqref{eq:explicit-localization-parameters}.
If \(0<\tau_0\le\tau_\star\), set \(\rho=K_0\sqrt{\tau_0}\) and
\(h_{b,b'}:=f_b-f_{b'}\).
Then, for every \(b,b'\in\{0,1\}^m\) and every \(\tau\in[\tau_0,2\tau_0]\),
\[
  \int_I
  \|\nabla\mathsf H_\tau h_{b,b'}(x)\|_2^2\,\rmd x
  \ge
  \frac{c_Q^2A_1}{2}\,
  \eta^2\rho^{d-2}d_{\mathrm{Ham}}(b,b').
\]
\end{lemma}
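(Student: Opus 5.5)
The plan is to work directly with \(h:=h_{b,b'}=f_b-f_{b'}=c_Q\eta\sum_{j\in S}\epsilon_jw_{j,\rho}\). Here \(S:=\{j:b_j\ne b_j'\}\) and \(\epsilon_j\in\{\pm1\}\). The function \(h\) is smooth, compactly supported in \(Q\), and in fact supported in \([-\ell,\ell]^d\subset I\). We first compute the energy at heat time zero. Because the supports are disjoint and \(\int\|\nabla w_{j,\rho}\|_2^2=\rho^{d-2}A_1\),
\[
  \int_{\mathbb R^d}\|\nabla h\|_2^2=c_Q^2\eta^2A_1\rho^{d-2}|S|,
  \qquad
  \int_{\mathbb R^d}(\Delta h)^2=c_Q^2\eta^2A_2\rho^{d-4}|S|,
\]
and \(\int h^2=c_Q^2\eta^2A_0\rho^d|S|\). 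We then propagate the energy in heat time. Set \(E(\tau):=\|\nabla\mathsf H_\tau h\|_2^2\) on all of \(\mathbb R^d\). The heat equation \(\partial_\tau\mathsf H_\tau h=\frac12\Delta\mathsf H_\tau h\) and integration by parts give
\[
  E'(\tau)=-\|\Delta\mathsf H_\tau h\|_2^2\ge-\|\Delta h\|_2^2 ,
\]
where the inequality uses the \(L^2\)-contraction of \(\mathsf H_\tau\), which commutes with \(\Delta\). Hence, for \(\tau\le2\tau_0\),
\[
  E(\tau)\ge c_Q^2\eta^2\rho^{d-2}|S|\bigl(A_1-2\tau_0A_2\rho^{-2}\bigr)
  =c_Q^2\eta^2\rho^{d-2}|S|\bigl(A_1-2A_2/K_0^2\bigr)\ge\tfrac34c_Q^2A_1\eta^2\rho^{d-2}|S|,
\]
by \eqref{eq:K0-energy-choice}. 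This is the whole-space lower bound.

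Next we localize to \(I\). We must show that the energy outside \(I\) is at most \(\tfrac14c_Q^2A_1\eta^2\rho^{d-2}|S|\). Take \(A=\mathbb R^d\setminus I\) and \(B=[-\ell,\ell]^d\), so that \(\operatorname{dist}(A,B)\ge2\ell=:r\). Lemma~\ref{lem:off-diagonal-gradient} gives
\[
  \|\mathbf 1_A\nabla\mathsf H_\tau h\|_2^2\le C_{\mathrm{off},d}^2\tau^{-1}e^{-r^2/(8\tau)}\,c_Q^2\eta^2A_0\rho^d|S| .
\]
We compare this with the target, using \(\rho^2=K_0^2\tau_0\le K_0^2\tau\). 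The requirement reduces to
\[
  4C_{\mathrm{off},d}^2A_0K_0^2A_1^{-1}\cdot\tfrac{\tau_0}{\tau}\,e^{-\ell^2/(2\tau)}\le1 .
\]
Since \(\tau_0/\tau\le1\), it suffices that \(R_{\mathrm{pers}}e^{-\ell^2/(2\tau)}\le1\). This follows from \(\tau\le2\tau_\star\le\ell^2/(8(1+\log_+R_{\mathrm{pers}}))\): in that case \(\ell^2/(2\tau)\ge4(1+\log_+R_{\mathrm{pers}})\ge\log R_{\mathrm{pers}}\). The argument has slack, since Lemma~\ref{lem:off-diagonal-gradient} has constant \(16\) in the exponent while we used \(r=2\ell\). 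Subtracting the outside energy from the whole-space bound gives \(\int_I\|\nabla\mathsf H_\tau h\|_2^2\ge\frac12c_Q^2A_1\eta^2\rho^{d-2}|S|\), which is the claimed inequality since \(|S|=d_{\mathrm{Ham}}(b,b')\). Along the way we also need \(\rho\le\ell/4\), which follows from \(\tau_0\le\ell^2/(16K_0^2)\). This ensures that the bumps lie in \([-\ell,\ell]^d\) and are disjoint, as the grid construction requires.

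The main obstacle is the energy dissipation step. We need the monotonicity \(\|\Delta\mathsf H_\tau h\|_2\le\|\Delta h\|_2\) together with a rigorous justification of the integration by parts and of differentiating \(E\). Both are fine because \(h\in C_c^\infty\), so all derivatives of \(\mathsf H_\tau h\) are Schwartz-class. The remaining care is in checking that the numerical constants in \(\tau_\star\) close the localization inequality, which I expect to hold with room to spare as indicated.
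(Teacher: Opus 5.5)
Your proof is correct and follows the paper's argument. You get the $\tfrac34$ whole-space bound from $E'(s)=-\|\Delta\mathsf H_s h\|_2^2$, the $L^2$-contraction and $2\tau_0/\rho^2=2/K_0^2$, then use the off-diagonal gradient lemma to show the energy outside $I$ is at most $\tfrac14$ of the target and subtract. The only difference is that you use the sharper separation $r=2\ell$ from $[-\ell,\ell]^d$ to $I^c$ (giving $e^{-\ell^2/(2\tau)}$), whereas the paper takes $r=\ell$ and lands exactly on the constraint $\ell^2/(16\tau_0)\ge 1+\log_+R_{\mathrm{pers}}$; both versions close.
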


\begin{proof}
Write \(H=d_{\mathrm{Ham}}(b,b')\).  The case \(H=0\) is immediate, so assume
henceforth that \(H\ge1\).  The disjoint supports of the bumps give the exact identities
\begin{align*}
  \|\nabla h_{b,b'}\|_2^2
  &=c_Q^2\eta^2H\rho^{d-2}A_1,
  \\
  \|\Delta h_{b,b'}\|_2^2
  &=c_Q^2\eta^2H\rho^{d-4}A_2.
\end{align*}
Since the bump supports lie strictly inside \(Q\),
\(h_{b,b'}=c_Q\eta\sum_j(b_j-b_j')w_{j,\rho}\in C_c^\infty\).
Set \(u_s:=\mathsf H_s h_{b,b'}\) and \(E(s):=\|\nabla u_s\|_2^2\).
Since \(\partial_su_s=\frac12\Delta u_s\), differentiation under the
integral and integration by parts give
\begin{align*}
  E'(s)
  &=2\langle\nabla u_s,\nabla\partial_su_s\rangle_{L^2}
    =\langle\nabla u_s,\nabla\Delta u_s\rangle_{L^2}
    =-\|\Delta u_s\|_2^2,\\
  E(\tau)
  &=E(0)-\int_0^\tau\|\Delta\mathsf H_s h_{b,b'}\|_2^2\,\rmd s
    \ge E(0)-\tau\|\Delta h_{b,b'}\|_2^2.
\end{align*}
The last inequality uses
\(\Delta\mathsf H_s h_{b,b'}=\mathsf H_s\Delta h_{b,b'}\) and the
\(L^2\)-contraction of \(\mathsf H_s\).
Using the preceding identities, \(\tau\le2\tau_0\), and
\(\rho^2=K_0^2\tau_0\), we obtain
\[
  E(\tau)
  \ge
  c_Q^2\eta^2H\rho^{d-2}
  \left(A_1-\frac{2A_2}{K_0^2}\right).
\]
By \eqref{eq:K0-energy-choice}, the parenthesis is at least \(3A_1/4\).

By \eqref{eq:explicit-packing-grid},
\[
  \operatorname{supp}(w_{j,\rho})
  \subset z_j+\rho[-1/2,1/2]^d
  \subset[-(\ell-\rho/2),\ell-\rho/2]^d.
\]
The distance from this cube to \(I^c\) is \(2\ell+\rho/2\), hence at least
\(\ell\).  Lemma~\ref{lem:off-diagonal-gradient} therefore gives
\begin{align*}
  \|\mathbf 1_{I^c}\nabla\mathsf H_\tau h_{b,b'}\|_2^2
  &\le
  C_{\mathrm{off},d}^2\tau^{-1}
  \exp\left(-\frac{\ell^2}{8\tau}\right)
  \|h_{b,b'}\|_2^2
  \\
  &=
  C_{\mathrm{off},d}^2c_Q^2\eta^2H\rho^d\tau^{-1}
  \exp\left(-\frac{\ell^2}{8\tau}\right)A_0.
\end{align*}
Here \(\rho^2/\tau\le K_0^2\) and
\(e^{-\ell^2/(8\tau)}\le e^{-\ell^2/(16\tau_0)}\).  Moreover, the third
constraint in \eqref{eq:explicit-localization-parameters} ensures that
\(\ell^2/(16\tau_0)\ge1+\log_+R_{\mathrm{pers}}\).
Since \(R_{\mathrm{pers}}e^{-(1+\log_+R_{\mathrm{pers}})}\le1\), it follows
that \(R_{\mathrm{pers}}e^{-\ell^2/(16\tau_0)}\le1\).
Consequently, the tail energy is at most
\((c_Q^2A_1/4)\eta^2H\rho^{d-2}\).  Combining this estimate with the
global lower bound yields
\begin{align*}
  \int_I\|\nabla\mathsf H_\tau h_{b,b'}(x)\|_2^2\,\rmd x
  &=E(\tau)
    -\|\mathbf 1_{I^c}\nabla\mathsf H_\tau h_{b,b'}\|_2^2\\
  &\ge
    \left(\frac34-\frac14\right)c_Q^2A_1\eta^2H\rho^{d-2}
   =\frac{c_Q^2A_1}{2}\eta^2H\rho^{d-2},
\end{align*}
which proves the claim.
\end{proof}

We can now pass from the derivative of the smoothed density to the self-normalized score.

\begin{lemma}[Uniform score separation]
\label{lem:uniform-window-score-separation}
Let \(\tau_\star\) and \(\eta_0\) be given by
\eqref{eq:explicit-localization-parameters}.  If \(0<\tau_0\le\tau_\star\),
\(0<\eta\le\eta_0\), and \(\rho=K_0\sqrt{\tau_0}\),
then, for every \(b,b'\in\{0,1\}^m\) and every \(\tau\in[\tau_0,2\tau_0]\),
\[
  \int_I
  \|s_{b,\tau}(x)-s_{b',\tau}(x)\|_2^2\,\rmd x
  \ge
  \frac{2A_1}{25}\eta^2\rho^{d-2}d_{\mathrm{Ham}}(b,b').
\]
\end{lemma}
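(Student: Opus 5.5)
We pass from the gradient-energy bound of Lemma~\ref{lem:gradient-energy-persistence} to the self-normalized scores via the quotient identity. On \(I\), write \(g=g_{b,\tau}\), \(g'=g_{b',\tau}\). Then
\[
  s_{b,\tau}-s_{b',\tau}
  =\frac{\nabla g-\nabla g'}{g}+\nabla g'\Bigl(\frac1g-\frac1{g'}\Bigr)
  =\frac{\nabla\mathsf H_\tau h_{b,b'}}{g}
   -s_{b',\tau}\,\frac{\mathsf H_\tau h_{b,b'}}{g}.
\]
Using \(\|a-c\|^2\ge\frac12\|a\|^2-\|c\|^2\), the upper density bound \(g\le5c_Q/4\) in the first term, and the lower bound \(g\ge c_Q/4\) in the second (both from Lemma~\ref{lem:interior-density-score-bounds}, valid because \(\tau\le2\tau_0\le2\tau_\star\le\bar\tau\)), we obtain
\[
  \int_I\|s_{b,\tau}-s_{b',\tau}\|_2^2
  \ge\frac{8}{25c_Q^2}\int_I\|\nabla\mathsf H_\tau h_{b,b'}\|_2^2
   -\frac{16}{c_Q^2}\sup_{I}\|s_{b',\tau}\|_2^2\,\|\mathsf H_\tau h_{b,b'}\|_2^2 .
\]
By Lemma~\ref{lem:gradient-energy-persistence}, the first term is at least \(\frac{4A_1}{25}\eta^2\rho^{d-2}H\), where \(H=d_{\mathrm{Ham}}(b,b')\). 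It therefore suffices to show that the error term is at most \(\frac{2A_1}{25}\eta^2\rho^{d-2}H\).

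For the error term, the \(L^2\)-contraction of \(\mathsf H_\tau\) and the disjoint supports give \(\|\mathsf H_\tau h_{b,b'}\|_2^2\le\|h_{b,b'}\|_2^2=c_Q^2\eta^2A_0\rho^dH\). The error is therefore at most
\[
  16A_0\eta^2\rho^dH\sup_I\|s_{b',\tau}\|_2^2 .
\]
From \eqref{eq:interior-score-bound} and \((x+y)^2\le2x^2+2y^2\),
\[
  \sup_I\|s_{b',\tau}\|_2^2
  \le 2C_1^2\tau^{-1}e^{-\ell^2/\tau}+2C_2^2\eta^2\rho^{-2}.
\]
Multiplying through, the error is bounded by \(\eta^2\rho^{d-2}H\) times
\[
  32A_0C_1^2\frac{\rho^2}{\tau}e^{-\ell^2/\tau}+32A_0C_2^2\eta^2 .
\]

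Both pieces are then absorbed by the constraints in \eqref{eq:explicit-localization-parameters}. For the first piece, \(\rho^2/\tau\le K_0^2\), and \(e^{-\ell^2/\tau}\le e^{-\ell^2/(2\tau_0)}\le e^{-(1+\log_+R_{\mathrm{sc}})}\) by the fourth constraint. Hence the first piece is at most \(A_1R_{\mathrm{sc}}e^{-1-\log_+R_{\mathrm{sc}}}\le A_1/e\). For the second piece, \(\eta\le\eta_0\) gives \(32A_0C_2^2\eta^2\le A_1\).

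This absorption is the main obstacle: the crude bounds just obtained give a total of order \(A_1\), whereas the target allows only \(2A_1/25\). I would repair this by weighting the splitting differently, using \(\|a-c\|^2\ge(1-\delta)\|a\|^2-(\delta^{-1}-1)\|c\|^2\) with \(\delta\) chosen to balance against the stated constants. If that is still insufficient, I would avoid the crude \(1/g\) bound by rewriting the score difference as \(\bigl(\nabla\mathsf H_\tau h-s_{b',\tau}\mathsf H_\tau h\bigr)/g\) and bounding the numerator directly. The constants in \(R_{\mathrm{sc}}\) and \(\eta_0\) are calibrated precisely so that this bookkeeping closes. Since every bound above is uniform in \(\tau\in[\tau_0,2\tau_0]\), the conclusion holds throughout the window.
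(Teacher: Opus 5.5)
\textbf{There is a genuine gap.} Your main chain does not close, as you note yourself. Write $a=\nabla\mathsf H_\tau h_{b,b'}/g_{b,\tau}$, $c=s_{b',\tau}\mathsf H_\tau h_{b,b'}/g_{b,\tau}$ and $H=d_{\mathrm{Ham}}(b,b')$.

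The problem is that you split $a-c$ before using the density bounds. That forces you to control $c$ with the \emph{lower} bound $g_{b,\tau}\ge c_Q/4$. As a result the error carries the factor $16/c_Q^2$, which is $25$ times the factor $16/(25c_Q^2)$ on the main term.

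Your estimates then give only
\begin{itemize}
\item $\|a\|_{L^2(I)}^2\ge\frac{8A_1}{25}\eta^2\rho^{d-2}H$, and
\item $\|c\|_{L^2(I)}^2\le(1+e^{-1})A_1\eta^2\rho^{d-2}H$.
\end{itemize}

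Your first proposed repair cannot help. Optimizing $(1-\delta)\|a\|^2-(\delta^{-1}-1)\|c\|^2$ over $\delta$ gives at best $(\|a\|-\|c\|)_+^2$. Here the available upper bound on $\|c\|$ exceeds the available lower bound on $\|a\|$, so no $\delta$ yields a positive bound, let alone $\frac{2A_1}{25}\eta^2\rho^{d-2}H$.

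Nor can you recalibrate. $\tau_\star$ and $\eta_0$ are fixed by the hypotheses, and your route would need the absorbed quantities to be roughly $17$ times smaller than \eqref{eq:explicit-localization-parameters} provides.

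\textbf{The missing step.} What closes the argument is the fallback you mention last but do not carry out, and it is exactly the paper's proof. Your identity already expresses
$s_{b,\tau}-s_{b',\tau}=(\nabla\mathsf H_\tau h_{b,b'}-s_{b',\tau}\mathsf H_\tau h_{b,b'})/g_{b,\tau}$
as a single quotient.
\begin{enumerate}
\item Apply $g_{b,\tau}^{-2}\ge16/(25c_Q^2)$ to the whole integrand first. Only the upper density bound is used, and the lower bound is not needed in this step.
\item Only then split the numerator via $\|u-v\|_2^2\ge\frac12\|u\|_2^2-\|v\|_2^2$.
\end{enumerate}

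The error term now carries $16/(25c_Q^2)$. Your own estimates suffice: the $L^2$-contraction bound $\|\mathsf H_\tau h_{b,b'}\|_2^2\le c_Q^2A_0\eta^2\rho^dH$, the sup bound from \eqref{eq:interior-score-bound}, and the absorptions via $R_{\mathrm{sc}}$ and $\eta_0$. Together they bound the error by
$\frac{(1+e^{-1})A_1}{25}\eta^2\rho^{d-2}H\le\frac{2A_1}{25}\eta^2\rho^{d-2}H$.
Subtracting this from the main term $\frac{4A_1}{25}\eta^2\rho^{d-2}H$, supplied by Lemma~\ref{lem:gradient-energy-persistence}, gives the claim.

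As written, however, the proposal only asserts that ``the bookkeeping closes'' without exhibiting a version in which it does.
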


\begin{proof}
Because \(\tau\le2\tau_0\le2\tau_\star\le\bar\tau\),
Lemma~\ref{lem:interior-density-score-bounds} applies throughout the argument.
Set \(u:=g_{b,\tau}\), \(v:=g_{b',\tau}\), and
\(\delta_\tau:=u-v=\mathsf H_\tau(f_b-f_{b'})\).  On \(I\), the identity
\(\nabla\log u-\nabla\log v
=(\nabla\delta_\tau-\delta_\tau\nabla\log v)/u\) and the upper density bound in
\eqref{eq:interior-density-bounds} imply
\begin{align*}
  \int_I\|s_{b,\tau}-s_{b',\tau}\|_2^2
  &\ge
  \frac{16}{25c_Q^2}\int_I
  \|\nabla\delta_\tau-\delta_\tau s_{b',\tau}\|_2^2
  \\
  &\ge
  \frac{8}{25c_Q^2}\|\nabla\delta_\tau\|_{L^2(I)}^2
  -\frac{16}{25c_Q^2}\|\delta_\tau s_{b',\tau}\|_{L^2(I)}^2.
\end{align*}
Here we used \(\|a-b\|_2^2\ge\frac12\|a\|_2^2-\|b\|_2^2\).

Writing \(H=d_{\mathrm{Ham}}(b,b')\), the \(L^2\)-contraction of the heat
semigroup gives
\(\|\delta_\tau\|_2^2\le\|f_b-f_{b'}\|_2^2
=c_Q^2\eta^2H\rho^dA_0\).  By
Lemma~\ref{lem:interior-density-score-bounds}, the pointwise score bound,
\((a+b)^2\le2(a^2+b^2)\), and
\(\|\delta_\tau\|_{L^2(I)}\le\|\delta_\tau\|_2\),
\begin{align*}
  \|\delta_\tau s_{b',\tau}\|_{L^2(I)}^2
  &\le
  2\left[
    C_1^2\tau^{-1}\exp\left(-\frac{\ell^2}{\tau}\right)
    +C_2^2\frac{\eta^2}{\rho^2}
  \right]
  \|\delta_\tau\|_2^2
  \\
  &\le
  2c_Q^2A_0\eta^2H\rho^{d-2}
  \left[
    C_1^2\frac{\rho^2}{\tau}
    \exp\left(-\frac{\ell^2}{\tau}\right)
    +C_2^2\eta^2
  \right].
\end{align*}
For \(\tau\in[\tau_0,2\tau_0]\) and \(\rho^2=K_0^2\tau_0\), the first
term in brackets is at most
\(C_1^2K_0^2\exp(-\ell^2/(2\tau_0))\).
The fourth constraint in \eqref{eq:explicit-localization-parameters} and the definition of
\(R_{\mathrm{sc}}\) imply that this quantity is at most \(A_1/(32A_0)\).
The definition of \(\eta_0\) gives the same upper bound for \(C_2^2\eta^2\).
It follows that
\((16/(25c_Q^2))\|\delta_\tau s_{b',\tau}\|_{L^2(I)}^2
\le(2A_1/25)\eta^2H\rho^{d-2}\).
On the other hand, Lemma~\ref{lem:gradient-energy-persistence} shows that the positive
term in the decomposition above is at least
\((4A_1/25)\eta^2H\rho^{d-2}\).
Subtracting proves the claim.
\end{proof}

Integrating the separation over the full window gives the scale needed by Fano's method.

\begin{corollary}[Integrated packing separation]
\label{cor:integrated-packing-separation}
Under the assumptions of Lemma~\ref{lem:uniform-window-score-separation}, suppose that
\[
  d_{\mathrm{Ham}}(b,b')\ge \frac{m}{8}.
\]
Then
\begin{equation}
  \int_{\tau_0}^{2\tau_0}
  \int_I
  \|s_{b,\tau}(x)-s_{b',\tau}(x)\|_2^2\,\rmd x\,\rmd\tau
  \ge c_{\mathrm{int}}\eta^2,
  \qquad
  c_{\mathrm{int}}
  :=\frac{A_1\ell^d}{100\,2^dK_0^2}.
\label{eq:integrated-score-packing-separation}
\end{equation}
\end{corollary}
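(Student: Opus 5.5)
The plan is to integrate the pointwise-in-time bound of Lemma~\ref{lem:uniform-window-score-separation} over the window \([\tau_0,2\tau_0]\). We then convert the factor \(m\rho^{d-2}\tau_0\) into a constant using the grid cardinality bound \eqref{eq:grid-cardinality} and the calibration \(\rho=K_0\sqrt{\tau_0}\).

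\textbf{Step 1.} Under the assumptions of Lemma~\ref{lem:uniform-window-score-separation} (\(0<\tau_0\le\tau_\star\), \(0<\eta\le\eta_0\)), that lemma gives, for every \(\tau\in[\tau_0,2\tau_0]\),
\[
\int_I\|s_{b,\tau}-s_{b',\tau}\|_2^2\,\rmd x\ \ge\ \frac{2A_1}{25}\eta^2\rho^{d-2}d_{\mathrm{Ham}}(b,b')\ \ge\ \frac{A_1}{100}\eta^2\rho^{d-2}m,
\]
where the second inequality uses \(d_{\mathrm{Ham}}(b,b')\ge m/8\). Integrating in \(\tau\) over a window of length \(\tau_0\) multiplies the right-hand side by \(\tau_0\). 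Tonelli's theorem justifies exchanging the order of integration, since the integrand is nonnegative and jointly measurable by smoothness of the heat flow.

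\textbf{Step 2.} The lower bound in \eqref{eq:grid-cardinality} requires \(\rho\le\ell/4\). This follows from the first constraint \(\tau_0\le\ell^2/(16K_0^2)\) in \eqref{eq:explicit-localization-parameters}, which gives \(\rho^2=K_0^2\tau_0\le\ell^2/16\). Hence \(m\ge 2^{-d}(\ell/\rho)^d\), and therefore
\[
m\rho^{d-2}\tau_0\ \ge\ 2^{-d}\ell^d\rho^{-2}\tau_0\ =\ \frac{\ell^d}{2^dK_0^2}.
\]
Substituting this into the bound from Step 1 gives exactly
\[
\frac{A_1}{100}\cdot\frac{\ell^d}{2^dK_0^2}\,\eta^2=c_{\mathrm{int}}\eta^2 .
\]

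No step is a real obstacle. The only point needing care is confirming that the hypothesis \(\rho\le\ell/4\) behind \eqref{eq:grid-cardinality} is implied by \(\tau_0\le\tau_\star\), which Step 2 handles, and checking that the constants \(2/25\cdot1/8=1/100\) match.
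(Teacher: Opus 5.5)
Your proposal is correct and follows the paper's proof essentially verbatim. The shared steps are: integrate the pointwise bound of Lemma~\ref{lem:uniform-window-score-separation} over the window of length $\tau_0$, use $d_{\mathrm{Ham}}(b,b')\ge m/8$ to obtain the factor $A_1/100$, and then combine $\rho^2=K_0^2\tau_0$ with $m\rho^d\ge 2^{-d}\ell^d$ from \eqref{eq:grid-cardinality}. Your explicit check that $\tau_0\le\tau_\star$ forces $\rho\le\ell/4$ is a detail the paper leaves implicit at this point, but it is correct.
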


\begin{proof}
Integrate the bound in Lemma~\ref{lem:uniform-window-score-separation}
over \([\tau_0,2\tau_0]\) and use \(d_{\mathrm{Ham}}(b,b')\ge m/8\).
Since this interval has length \(\tau_0\), we obtain
\[
  \int_{\tau_0}^{2\tau_0}\!\int_I
  \|s_{b,\tau}-s_{b',\tau}\|_2^2\,\rmd x\,\rmd\tau
  \ge
  \frac{A_1}{100}\eta^2\tau_0m\rho^{d-2}.
\]
Since \(\rho^2=K_0^2\tau_0\) and
\(m\rho^d\ge2^{-d}\ell^d\) by \eqref{eq:grid-cardinality}, the right-hand side is at least
\(c_{\mathrm{int}}\eta^2\), as claimed.
\end{proof}

\subsection{Fano Reduction and Proof of Theorem~\ref{thm:integrated-statistical-lower-bound}}
\label{app:fano-proof-statistical-lower}

We use the Gilbert--Varshamov packing bound
\citep[Lemma~2.9]{tsybakov2009nonparametric}: for every \(m\ge8\), there exists a set
\(\mathcal B\subseteq\{0,1\}^m\), containing the zero vector, such that
\begin{equation}
  \log|\mathcal B|\ge \frac{\log2}{8}m,
  \qquad
  \min_{b\ne b'\in\mathcal B}
  d_{\mathrm{Ham}}(b,b')\ge\frac{m}{8}.
\label{eq:gilbert-varshamov}
\end{equation}
Below we use Fano's inequality
\citep[Corollary~2.6]{tsybakov2009nonparametric}.

For clarity, we collect the remaining constants before applying the two
results.  Set
\(\kappa:=\min\{\eta_0^2,8^d\log2/(128A_0)\}\), and define
\begin{equation}
\begin{aligned}
  c_0
  &:=
  \log\left(
    1+\min\left\{
      1,\,
      \tau_\star,\,
      \left(\frac{\ell}{K_0}\right)^2
      \left(\frac{1}{16\,2^d}\right)^{2/d}
    \right\}
  \right),\\
  c_1
  &:=
  \frac{7c_Qc_{\mathrm{int}}\kappa}{256}
  \min\left\{
    1,\,
    2^{-d}\left(\frac{\ell}{K_0}\right)^d
  \right\}.
\end{aligned}
\label{eq:explicit-statistical-lower-bound-constants}
\end{equation}
The constants \(\kappa,c_0,c_1\) are strictly positive and depend only on
\(d,D\).  The displayed \(c_0,c_1\) are admissible choices in
Theorem~\ref{thm:integrated-statistical-lower-bound}.

\begin{proofof}{Theorem~\ref{thm:integrated-statistical-lower-bound}}
Assume \(0<t_0\le c_0\).  By
\eqref{eq:explicit-statistical-lower-bound-constants},
\(\tau_0=e^{t_0}-1\) is
bounded above by each term in the minimum defining \(c_0\).  In particular,
\(\tau_0\le\tau_\star\), and the first constraint in
\eqref{eq:explicit-localization-parameters} gives
\(\rho=K_0\sqrt{\tau_0}\le\ell/4\).  Moreover,
\eqref{eq:grid-cardinality} and the other term in that minimum give
\(m\ge2^{-d}(\ell/(K_0\sqrt{\tau_0}))^d\ge16\), so the
Gilbert--Varshamov bound applies.
Let \(\mathcal B\) be the packing in \eqref{eq:gilbert-varshamov}.  Choose
\[
  \eta^2
  :=
  \kappa
  \left(1\wedge\frac{m}{n}\right),
\]
where \(\kappa\) is defined above; in particular,
\(0<\eta\le\eta_0\).  By Lemma~\ref{lem:hard-family-kl},
\eqref{eq:grid-cardinality}, and \(m\rho^d\le\ell^d\),
\begin{equation}
  \max_{b\in\mathcal B}
  \mathrm{KL}(f_b^{\otimes n}\|f_\circ^{\otimes n})
  \le
  \frac{A_0}{8^d}n\eta^2
  \le
  \frac{A_0}{8^d}\kappa m
  \le\frac{(\log2)m}{128}
  \le\frac1{16}\log|\mathcal B|.
\label{eq:fano-kl-calibration}
\end{equation}

Consider an arbitrary measurable, possibly randomized, vector-field estimator
\(\tilde s_n(x,\tau)\) based on the sample.  Represent the estimator's internal randomization by a seed
\(U\sim\lambda\), independent of the sample.  For \(b\in\mathcal B\), let
\[
  \mathbb P_b:=(f_b(x)\,\rmd x)^{\otimes n}\otimes\lambda,
  \qquad
  \mathbb E_b[\,\cdot\,]:=\int (\,\cdot\,)\,\rmd\mathbb P_b.
\]
Thus \(\mathbb P_b\) is the augmented observation law under hypothesis \(b\), and
\[
  \mathrm{KL}
  \bigl(f_b^{\otimes n}\otimes\lambda
  \,\|\,f_\circ^{\otimes n}\otimes\lambda\bigr)
  =
  \mathrm{KL}(f_b^{\otimes n}\|f_\circ^{\otimes n}).
\]
Thus the calibration in \eqref{eq:fano-kl-calibration} also applies to randomized estimators;
all probabilities \(\mathbb P_b\) and expectations \(\mathbb E_b\) below include the seed
\(U\).  Introduce the common semimetric
\begin{equation}
  d_{\mathcal I}(r,r')
  :=
  \left(
    \int_{\tau_0}^{2\tau_0}
    \int_I\|r(x,\tau)-r'(x,\tau)\|_2^2\,\rmd x\,\rmd\tau
  \right)^{1/2}.
\label{eq:common-score-semimetric}
\end{equation}
Let \(\hat b\) be a nearest-neighbour decoder of \(\tilde s_n\) in the finite family
\(\{s_{b,\cdot}:b\in\mathcal B\}\) with respect to \(d_{\mathcal I}\), using a fixed deterministic
rule to break ties.  Since \(\mathcal B\) is finite and each extended-real-valued map
\(d_{\mathcal I}(\tilde s_n,s_{b,\cdot})\), \(b\in\mathcal B\), is measurable, so is
\(\hat b\).  By Lemma~\ref{lem:interior-density-score-bounds}, for every \(b\in\mathcal B\),
\begin{align}
  \int_{\tau_0}^{2\tau_0}
  \|\tilde s_n(\cdot,\tau)-s_{b,\tau}\|_{L^2(g_{b,\tau})}^2\,\rmd\tau
  &\ge
  \frac{c_Q}{4}
  d_{\mathcal I}^2(\tilde s_n,s_{b,\cdot}).
\label{eq:weighted-risk-dominates-common-metric}
\end{align}
If the distance on the right is infinite with positive \(\mathbb P_b\)-probability for some \(b\),
then the corresponding expected risk is infinite and the desired lower bound is immediate.  Hence
we may assume that \(d_{\mathcal I}(\tilde s_n,s_{b,\cdot})<\infty\)
\(\mathbb P_b\)-almost surely for every \(b\in\mathcal B\).

By Corollary~\ref{cor:integrated-packing-separation}, the common separation satisfies
\[
  d_{\mathcal I}^2(s_{b,\cdot},s_{b',\cdot})
  \ge c_{\mathrm{int}}\eta^2
  \qquad(b\ne b').
\]
If \(\hat b\ne b\), the triangle inequality implies
\[
  \sqrt{c_{\mathrm{int}}}\,\eta
  \le d_{\mathcal I}(s_{\hat b,\cdot},s_{b,\cdot})
  \le d_{\mathcal I}(s_{\hat b,\cdot},\tilde s_n)
     +d_{\mathcal I}(\tilde s_n,s_{b,\cdot})
  \le 2d_{\mathcal I}(\tilde s_n,s_{b,\cdot}),
\]
where the last inequality follows from the nearest-neighbour property.  Therefore
\(d_{\mathcal I}^2(\tilde s_n,s_{b,\cdot})
\ge(c_{\mathrm{int}}/4)\eta^2\).
Since \(m\ge16\), \eqref{eq:gilbert-varshamov} gives
\(\log|\mathcal B|\ge(\log2)m/8\ge2\log2\).
Fano's inequality and \eqref{eq:fano-kl-calibration} therefore give
\begin{equation}
  \inf_{\hat b}
  \sup_{b\in\mathcal B}
  \mathbb P_b(\hat b\ne b)
  \ge
  1-\frac{1}{16}-\frac{\log 2}{\log|\mathcal B|}
  \ge\frac7{16}.
\label{eq:fano-error-probability}
\end{equation}
Combining the decoding bound above with
\eqref{eq:weighted-risk-dominates-common-metric} gives, under hypothesis \(b\),
\[
  \int_{\tau_0}^{2\tau_0}
  \|\tilde s_n(\cdot,\tau)-s_{b,\tau}\|_{L^2(g_{b,\tau})}^2\,\rmd\tau
  \ge
  \frac{c_Qc_{\mathrm{int}}}{16}\eta^2
  \mathbf 1_{\{\hat b\ne b\}}.
\]
Taking \(\mathbb E_b\), then the supremum over \(b\), and using
\eqref{eq:fano-error-probability} yields
\begin{equation}
\begin{aligned}
  \sup_{b\in\mathcal B}
  \mathbb E_b
  \int_{\tau_0}^{2\tau_0}
  \|\tilde s_n(\cdot,\tau)-s_{b,\tau}\|_{L^2(g_{b,\tau})}^2\,\rmd\tau
  \ge
  \frac{7c_Qc_{\mathrm{int}}}{256}\eta^2.
\end{aligned}
\label{eq:heat-integrated-fano-lower-bound}
\end{equation}

The lower half of \eqref{eq:grid-cardinality} and
\(\rho=K_0\sqrt{\tau_0}\) give
\(m\ge2^{-d}(\ell/K_0)^d\tau_0^{-d/2}\).
Consequently, using the elementary inequality
\(1\wedge(uv)\ge(1\wedge u)(1\wedge v)\) for \(u,v>0\),
\[
  \eta^2
  =\kappa\left(1\wedge\frac mn\right)
  \ge
  \kappa
  \min\left\{1,2^{-d}\left(\frac{\ell}{K_0}\right)^d\right\}
  \left(1\wedge\frac{1}{n\tau_0^{d/2}}\right).
\]

Now take an arbitrary OU score estimator \(\hat s_n\) and transform it into
\(\tilde s_n\) by \eqref{eq:transformed-score-estimator}.  Since every \(f_b\),
\(b\in\mathcal B\), belongs to \(\mathcal P_D\), restricting first the supremum to this
finite family and then the time integral to \([t_0,t_1]\) gives, by the exact identity
\eqref{eq:exact-ou-heat-loss-identity},
\begin{align*}
&\sup_{p_0\in\mathcal P_D}
\mathbb E_{p_0^{\otimes n}\otimes\lambda}
\int_{t_0}^{T}
\|\hat s_n(\cdot,t)-\nabla\log p_t\|_{L^2(p_t)}^2\rmd t
\\
&\quad\ge
\sup_{b\in\mathcal B}
\mathbb E_b
\int_{\tau_0}^{2\tau_0}
\|\tilde s_n(\cdot,\tau)-s_{b,\tau}\|_{L^2(g_{b,\tau})}^2\rmd\tau
\\
&\quad\ge
 c_1
 \left(1\wedge\frac{1}{n\tau_0^{d/2}}\right).
\end{align*}
Combining this display with \eqref{eq:robust-dominates-nominal}, dividing by \(T-t_0\), and then
taking the infimum over
\(\hat s_n\) proves \eqref{eq:integrated-statistical-lower-bound}.
In the sample-rich regime \(n\tau_0^{d/2}\ge1\), the minimum equals its second argument.
\end{proofof}

\subsection{Fixed-Time Density Separation at the Heat Scale}
\label{app:fixed-time-kl-separation}

We now reuse the family from
Subsection~\ref{app:hard-family-construction} for the fixed-time KL problem.
Unlike the integrated score lower bound, the present argument does not require
localization to the interior cube \(I\) or persistence over an interval of heat
times.  It suffices to show that the \(L^2\) distance between the smoothed
densities remains visible at the single time \(\tau_0\).

For probability measures \(\mu\) and \(\nu\), define the squared Hellinger
distance by
\begin{equation}
  \mathsf h^2(\mu,\nu)
  :=\int
  \left(
    \sqrt{\frac{\rmd\mu}{\rmd\lambda}}
    -\sqrt{\frac{\rmd\nu}{\rmd\lambda}}
  \right)^2\rmd\lambda,
  \qquad \lambda:=\mu+\nu.
\label{eq:hellinger-definition-kl}
\end{equation}
This definition is independent of the dominating measure, is invariant under
a common bimeasurable bijection, and satisfies
\begin{equation}
  \mathrm{KL}(\mu\|\nu)\ge \mathsf h^2(\mu,\nu).
\label{eq:kl-dominates-hellinger}
\end{equation}
We identify a probability density with its induced measure whenever no
confusion can arise.
To verify \eqref{eq:kl-dominates-hellinger}, the inequality is immediate if
\(\mu\not\ll\nu\).  Otherwise, with
\[
  A(\mu,\nu)
  :=\int
  \sqrt{\frac{\rmd\mu}{\rmd\lambda}
         \frac{\rmd\nu}{\rmd\lambda}}\,\rmd\lambda,
\]
Jensen's inequality gives
\(\mathrm{KL}(\mu\|\nu)\ge-2\log A(\mu,\nu)\).  Now use
\(-\log x\ge1-x\) and
\(\mathsf h^2(\mu,\nu)=2[1-A(\mu,\nu)]\).

Recall the notation \(g_{b,\tau}:=\mathsf H_\tau f_b\) from
Subsection~\ref{app:uniform-score-separation}.

\begin{lemma}[Fixed-time \(L^2\) persistence]
\label{lem:fixed-time-density-persistence}
Let \(K_0\) be defined by
\eqref{eq:persistence-scale-constant}, set
\(\rho=K_0\sqrt{\tau_0}\), and assume that \(\rho\le\ell/4\) and
\(0<\eta\le1/4\).  Then, for every
\(b,b'\in\{0,1\}^m\),
\begin{equation}
  \|g_{b,\tau_0}-g_{b',\tau_0}\|_2^2
  \ge
  \frac{c_Q^2A_0}{2}\,
  \eta^2\rho^d d_{\mathrm{Ham}}(b,b').
\label{eq:fixed-time-L2-separation}
\end{equation}
\end{lemma}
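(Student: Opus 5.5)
Since $g_{b,\tau_0}-g_{b',\tau_0}=\mathsf H_{\tau_0}h_{b,b'}$ with $h_{b,b'}:=f_b-f_{b'}=c_Q\eta\sum_j(b_j-b_j')w_{j,\rho}\in C_c^\infty$, I will bound $\|\mathsf H_\tau h\|_2^2$ from below along the heat flow, in the same way as the energy argument of Lemma~\ref{lem:gradient-energy-persistence}, but one derivative lower. Write $H=d_{\mathrm{Ham}}(b,b')$; the case $H=0$ is trivial. The bump supports are disjoint (this uses $\rho\le\ell/4$ via the grid construction), so
\[
\|h_{b,b'}\|_2^2=c_Q^2\eta^2H\rho^dA_0,\qquad \|\nabla h_{b,b'}\|_2^2=c_Q^2\eta^2H\rho^{d-2}A_1.
\]

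Next I set $F(s):=\|\mathsf H_s h_{b,b'}\|_2^2$. Since $\partial_s\mathsf H_sh=\tfrac12\Delta\mathsf H_sh$, integration by parts (justified for Schwartz-class smoothed compactly supported data) gives $F'(s)=-\|\nabla\mathsf H_sh\|_2^2$. Because $\nabla\mathsf H_sh=\mathsf H_s\nabla h$ and $\mathsf H_s$ is an $L^2$-contraction, $|F'(s)|\le\|\nabla h\|_2^2$. Hence
\[
F(\tau_0)\ge F(0)-\tau_0\|\nabla h\|_2^2=c_Q^2\eta^2H\rho^d\Big(A_0-\frac{\tau_0}{\rho^2}A_1\Big)=c_Q^2\eta^2H\rho^d\Big(A_0-\frac{A_1}{K_0^2}\Big).
\]

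The remaining point, which I expect to be the main obstacle, is that $A_0-A_1/K_0^2\ge A_0/2$ is not implied by the stated definition of $K_0$. That definition only compares $A_2$ with $A_1$. I will close the gap with the interpolation inequality for $w\in C_c^\infty$:
\[
A_1=\|\nabla w\|_2^2=-\langle w,\Delta w\rangle\le\|w\|_2\|\Delta w\|_2=\sqrt{A_0A_2}.
\]
Therefore
\[
\frac{A_1}{K_0^2}\le\frac{A_1^2}{8A_2}\le\frac{A_0A_2}{8A_2}=\frac{A_0}{8},
\]
where the first step uses $K_0^2\ge 8A_2/A_1$. This gives $A_0-A_1/K_0^2\ge\tfrac78A_0\ge\tfrac12A_0$, which yields \eqref{eq:fixed-time-L2-separation}.

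As a fallback, if the derivative computation needs more care, I would instead use Plancherel: $\|\mathsf H_{\tau}h\|_2^2=\int|\hat h(\xi)|^2e^{-\tau|\xi|^2}\,\rmd\xi\ge\int|\hat h|^2(1-\tau|\xi|^2)\,\rmd\xi$, which gives the same bound directly.
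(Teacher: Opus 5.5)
Your proposal is correct and matches the paper's proof essentially step for step: the same dissipation identity $F'(s)=-\|\nabla\mathsf H_s h_{b,b'}\|_2^2$ combined with $L^2$-contraction to get $F(\tau_0)\ge F(0)-\tau_0\|\nabla h_{b,b'}\|_2^2$. You also use the same interpolation bound $A_1=-\int w\Delta w\le\sqrt{A_0A_2}$ to turn the choice of $K_0$ into $A_1/K_0^2\le A_0/8$, and your Plancherel fallback is an equally valid one-line alternative.
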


\begin{proof}
Set \(r:=f_b-f_{b'}\) and \(H:=d_{\mathrm{Ham}}(b,b')\).  The disjoint
bump supports give
\begin{equation}
  \|r\|_2^2=c_Q^2\eta^2H\rho^dA_0,
  \qquad
  \|\nabla r\|_2^2=c_Q^2\eta^2H\rho^{d-2}A_1.
\label{eq:density-energy-at-zero}
\end{equation}
Since \(r\in C_c^\infty(\mathbb R^d)\), the function
\(F(s):=\|\mathsf H_sr\|_2^2\) satisfies
\[
  F'(s)
  =2\left\langle
    \mathsf H_sr,\frac12\Delta\mathsf H_sr
  \right\rangle_{L^2}
  =-\|\nabla\mathsf H_sr\|_2^2.
\]
The heat semigroup commutes with derivatives and is an \(L^2\) contraction,
so
  \begin{equation}
  \|\mathsf H_{\tau_0}r\|_2^2
  \ge \|r\|_2^2-\tau_0\|\nabla r\|_2^2
  =c_Q^2\eta^2H\rho^d\left(A_0-\frac{A_1}{K_0^2}\right).
\label{eq:density-energy-dissipation}
  \end{equation}
Integration by parts and Cauchy--Schwarz give
\(A_1=-\int w\Delta w\le\sqrt{A_0A_2}\).  Moreover,
\eqref{eq:persistence-scale-constant} gives
\[
  \frac{A_1}{K_0^2}
  \le\frac{A_1^2}{8A_2}
  \le\frac{A_0}{8}.
\]
Thus the parenthesis in \eqref{eq:density-energy-dissipation} is at least
\(7A_0/8\), which implies \eqref{eq:fixed-time-L2-separation}.
\end{proof}

\begin{corollary}[Fixed-time Hellinger packing]
\label{cor:fixed-time-hellinger-packing}
Suppose that \(\rho\le\ell/4\) and
\(d_{\mathrm{Ham}}(b,b')\ge m/8\).  Then
\begin{equation}
  \mathsf h^2(g_{b,\tau_0},g_{b',\tau_0})
  \ge c_{\mathrm H,d,D}\eta^2,
  \qquad
  c_{\mathrm H,d,D}:=\frac{c_QA_0\ell^d}{80\,2^d}>0.
\label{eq:fixed-time-hellinger-separation}
\end{equation}
The same bound holds for the corresponding OU marginals
\(p_{b,t_0}:=P_{t_0}(f_b(x)\,\rmd x)\).
\end{corollary}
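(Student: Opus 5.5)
The plan is to convert the $L^2$ separation of Lemma~\ref{lem:fixed-time-density-persistence} into a Hellinger separation, using a uniform \emph{upper} bound on the smoothed densities. Both $g_{b,\tau_0}$ and $g_{b',\tau_0}$ have densities with respect to Lebesgue measure. Hence the dominating measure in \eqref{eq:hellinger-definition-kl} can be replaced by Lebesgue measure, and
\[
  \mathsf h^2(g_{b,\tau_0},g_{b',\tau_0})
  =\int\frac{(g_{b,\tau_0}-g_{b',\tau_0})^2}{\bigl(\sqrt{g_{b,\tau_0}}+\sqrt{g_{b',\tau_0}}\bigr)^2}\,\rmd x .
\]

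\textbf{Step 1: bound the denominator.} The heat semigroup is an $L^\infty$ contraction, since it is an average against a probability kernel. Subsection~\ref{app:hard-family-construction} gives $0\le f_b\le 5f_\circ/4\le 5c_Q/4$ whenever $\eta\le1/4$. Therefore $g_{b,\tau_0}\le 5c_Q/4$ everywhere, and the same holds for $g_{b',\tau_0}$. This gives $(\sqrt{g_{b,\tau_0}}+\sqrt{g_{b',\tau_0}})^2\le 4\cdot 5c_Q/4=5c_Q$ pointwise on all of $\mathbb R^d$. No localization to $I$ is needed.

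\textbf{Step 2: apply the $L^2$ lower bound.} Step 1 gives $\mathsf h^2\ge \|g_{b,\tau_0}-g_{b',\tau_0}\|_2^2/(5c_Q)$. The hypotheses $\rho\le\ell/4$ and $\eta\le 1/4$ let us invoke Lemma~\ref{lem:fixed-time-density-persistence}, which yields
\[
  \mathsf h^2\ge \frac{c_QA_0}{10}\,\eta^2\rho^d\, d_{\mathrm{Ham}}(b,b').
\]
Next use $d_{\mathrm{Ham}}(b,b')\ge m/8$ and the lower half of \eqref{eq:grid-cardinality}, in the form $m\rho^d\ge 2^{-d}\ell^d$. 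Together these give $\mathsf h^2\ge c_QA_0\ell^d\eta^2/(80\cdot 2^d)=c_{\mathrm H,d,D}\eta^2$, which is exactly \eqref{eq:fixed-time-hellinger-separation}.

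\textbf{Step 3: transfer to the OU marginals.} Lemma~\ref{lem:ou-heat-loss-identity} gives $p_{b,t_0}(x)=a_{t_0}^{-d}g_{b,\tau_{t_0}}(x/a_{t_0})$ with $\tau_{t_0}=e^{t_0}-1=\tau_0$. Thus $p_{b,t_0}$ is the pushforward of $g_{b,\tau_0}$ under the common dilation $S_{a_{t_0}}$, and the same dilation maps $g_{b',\tau_0}$ to $p_{b',t_0}$. Hellinger distance is invariant under a common bimeasurable bijection, as noted after \eqref{eq:hellinger-definition-kl}; one can also see this directly by changing variables in the integral. The same bound therefore holds for the OU marginals.

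\textbf{Main obstacle.} The only delicate point is that we need the uniform upper bound $g\le 5c_Q/4$ on \emph{all} of $\mathbb R^d$, not a lower bound. The lower bound would fail outside $I$. The $L^\infty$-contraction argument supplies this upper bound directly, so I expect no real difficulty. The remainder is bookkeeping of constants, and the arithmetic above already reproduces $c_{\mathrm H,d,D}$ exactly.
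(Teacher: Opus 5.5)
Your proposal is correct and follows the paper's proof essentially step for step. Both use the $L^\infty$-contraction bound $g_{b,\tau_0}\le 5c_Q/4$ to get $\mathsf h^2\ge\|g_{b,\tau_0}-g_{b',\tau_0}\|_2^2/(5c_Q)$, then apply Lemma~\ref{lem:fixed-time-density-persistence} together with the Hamming and grid-cardinality bounds (your constants reproduce $c_{\mathrm H,d,D}$ exactly), and finish with dilation invariance of the Hellinger distance for the OU marginals.
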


\begin{proof}
The \(L^\infty\) contraction of the heat semigroup and
\(0\le f_b\le5c_Q/4\) give
\[
  0\le g_{b,\tau_0}(x)\le\frac{5c_Q}{4}
  \qquad (x\in\mathbb R^d).
\]
Therefore,
  \[
  \mathsf h^2(g_{b,\tau_0},g_{b',\tau_0})
  =\int
  \frac{(g_{b,\tau_0}-g_{b',\tau_0})^2}
       {(\sqrt{g_{b,\tau_0}}+\sqrt{g_{b',\tau_0}})^2}\,\rmd x
  \ge\frac{1}{5c_Q}
  \|g_{b,\tau_0}-g_{b',\tau_0}\|_2^2.
  \]
Lemma~\ref{lem:fixed-time-density-persistence}, the Hamming separation, and
\eqref{eq:grid-cardinality} yield
  \[
  \mathsf h^2(g_{b,\tau_0},g_{b',\tau_0})
  \ge\frac{c_QA_0}{10}\eta^2\rho^d
  d_{\mathrm{Ham}}(b,b')
  \ge\frac{c_QA_0}{80}\eta^2m\rho^d
  \ge\frac{c_QA_0\ell^d}{80\,2^d}\eta^2.
  \]
Finally, with \(a_{t_0}=e^{-t_0/2}\), let
\(S_{a_{t_0}}(x):=a_{t_0}x\). Then
\eqref{eq:ou-heat-density-score-scaling} gives
\(p_{b,t_0}=(S_{a_{t_0}})_\#(g_{b,\tau_0}(x)\,\rmd x)\).
The invariance of Hellinger distance under this common invertible dilation
proves the last claim.
\end{proof}

\subsection{Fixed-Time KL Lower Bound and Nominal Minimax Rate}
\label{app:nominal-kl-minimax-proof}

We first record the stronger lower bound that includes the sample-poor
truncation.

With \(c_{\mathrm H,d,D}\) defined in
Corollary~\ref{cor:fixed-time-hellinger-packing}, \(\kappa\) as in
Subsection~\ref{app:fano-proof-statistical-lower}, and \(\ell,K_0\) as in
Subsection~\ref{app:hard-family-construction}, set
\begin{equation}
  c_{d,D}^{\mathrm{KL}}
  :=\frac{7c_{\mathrm H,d,D}\kappa}{64}
  \min\left\{
    1,\,
    2^{-d}\left(\frac{\ell}{K_0}\right)^d
  \right\}.
\label{eq:explicit-fixed-time-kl-lower-constant}
\end{equation}
This constant is strictly positive and depends only on \(d,D\).

\begin{proposition}[Fixed-time statistical KL lower bound]
\label{prop:fixed-time-kl-statistical-lower}
Let \(c_0\) be defined by
\eqref{eq:explicit-statistical-lower-bound-constants}, and let
\(c_{d,D}^{\mathrm{KL}}\) be defined by
\eqref{eq:explicit-fixed-time-kl-lower-constant}.  Then, for every
\(0<t_0\le c_0\) and \(n\ge1\),
\begin{equation}
  \inf_{\hat\mu_n}
  \sup_{p_0\in\mathcal P_D}
  \mathbb E\,
  \mathrm{KL}(p_{t_0}\|\hat\mu_n)
  \ge
  c_{d,D}^{\mathrm{KL}}
  \left(1\wedge\frac{1}{n\tau_0^{d/2}}\right),
\label{eq:fixed-time-kl-statistical-lower}
\end{equation}
where the infimum is over all measurable, possibly randomized,
probability-measure estimators based on \(n\) i.i.d. observations from
\(p_0\); the expectation includes both sample and estimator randomness.
\end{proposition}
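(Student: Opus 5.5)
We reuse the hard family \(\{f_b:b\in\mathcal B\}\) from Subsection~\ref{app:hard-family-construction}, together with the Gilbert--Varshamov packing \eqref{eq:gilbert-varshamov} and the amplitude \(\eta^2=\kappa(1\wedge m/n)\). The argument reduces density estimation in KL to a Fano-type testing problem via the Hellinger packing of Corollary~\ref{cor:fixed-time-hellinger-packing}. Assume \(0<t_0\le c_0\). Exactly as in the proof of Theorem~\ref{thm:integrated-statistical-lower-bound}, this gives \(\tau_0\le\tau_\star\), \(\rho=K_0\sqrt{\tau_0}\le\ell/4\), \(m\ge16\), and \(\eta\le\eta_0\le1/4\). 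Consequently \eqref{eq:fano-kl-calibration} holds, i.e. \(\mathrm{KL}(f_b^{\otimes n}\|f_\circ^{\otimes n})\le\frac1{16}\log|\mathcal B|\). Each \(f_b\in\mathcal P_D\), so the supremum over \(\mathcal P_D\) is bounded below by the maximum over \(b\in\mathcal B\), with \(p_{b,t_0}=P_{t_0}(f_b\,\rmd x)\).

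Take an arbitrary (randomized) estimator \(\hat\mu_n\), with the seed absorbed into \(\mathbb P_b\) as before. Decode by \(\hat b\in\arg\min_{b}\mathsf h(\hat\mu_n,p_{b,t_0})\), using a deterministic tie-break. Hellinger distance \(\mathsf h\) (the square root of \eqref{eq:hellinger-definition-kl}) is a genuine metric. Hence if \(\hat b\ne b\), Corollary~\ref{cor:fixed-time-hellinger-packing} and the triangle inequality give
\[
\sqrt{c_{\mathrm H,d,D}}\,\eta\le\mathsf h(p_{\hat b,t_0},p_{b,t_0})\le2\,\mathsf h(\hat\mu_n,p_{b,t_0}).
\]
Combining this with \eqref{eq:kl-dominates-hellinger}, we get
\[
\mathrm{KL}(p_{b,t_0}\|\hat\mu_n)\ge\mathsf h^2(p_{b,t_0},\hat\mu_n)\ge\frac{c_{\mathrm H,d,D}}{4}\eta^2\mathbf 1_{\{\hat b\ne b\}}.
\]
Measurability of \(\hat b\) needs a word. \(\hat\mu_n\) is a measurable map into probability measures, and \(\nu\mapsto\mathsf h(\nu,p_{b,t_0})\) is measurable: it is \(\sqrt{2-2\int\sqrt{\rmd\nu^{ac}/\rmd x\cdot p_{b,t_0}}}\), with the affinity computed against Lebesgue measure because \(p_{b,t_0}\) has a density. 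This is the only slightly delicate point, and it should follow from standard facts about kernels.

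Fano's inequality \eqref{eq:fano-error-probability} gives \(\sup_b\mathbb P_b(\hat b\ne b)\ge7/16\), hence
\[
\sup_b\mathbb E_b\mathrm{KL}(p_{b,t_0}\|\hat\mu_n)\ge\frac{7c_{\mathrm H,d,D}}{64}\eta^2.
\]
Finally, as in the proof of Theorem~\ref{thm:integrated-statistical-lower-bound}, \(m\ge2^{-d}(\ell/K_0)^d\tau_0^{-d/2}\) together with \(1\wedge(uv)\ge(1\wedge u)(1\wedge v)\) yields
\[
\eta^2\ge\kappa\min\{1,2^{-d}(\ell/K_0)^d\}\left(1\wedge\frac{1}{n\tau_0^{d/2}}\right).
\]
This produces exactly \(c^{\mathrm{KL}}_{d,D}\) from \eqref{eq:explicit-fixed-time-kl-lower-constant}, and taking the infimum over \(\hat\mu_n\) concludes the proof.

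The main obstacle is already resolved by earlier results: the Hellinger separation at the single time \(\tau_0\), which comes from Lemma~\ref{lem:fixed-time-density-persistence}. What remains is bookkeeping, namely checking that the parameter constraints coincide with those behind \(c_0\), and handling the measurability of the minimum-distance decoder for estimators that may be singular.
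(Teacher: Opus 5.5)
Your proposal is correct and follows the paper's proof essentially step for step. It uses the same hard family, the amplitude $\eta^2=\kappa(1\wedge m/n)$, the reused Fano calibration, the Hellinger nearest-neighbour decoder with the triangle inequality and $\mathrm{KL}\ge\mathsf h^2$, and it arrives at the same constant $c_{d,D}^{\mathrm{KL}}$.

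The one point you leave open, measurability of the decoder, is settled in the paper more directly. There, $\mu\mapsto\mathsf h(\mu,p_{c,t_0})$ is noted to be lower semicontinuous for weak convergence and hence Borel, which avoids going through the absolutely continuous part of $\hat\mu_n$.
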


\begin{proof}
Assume \(0<t_0\le c_0\).  By
\eqref{eq:explicit-statistical-lower-bound-constants},
\(\rho=K_0\sqrt{\tau_0}\le\ell/4\) and \(m\ge16\).  Let
\(\mathcal B\subset\{0,1\}^m\) be the Gilbert--Varshamov packing in
\eqref{eq:gilbert-varshamov}, and choose
\begin{equation}
  \eta^2:=\kappa\left(1\wedge\frac{m}{n}\right),
\label{eq:kl-amplitude-choice}
\end{equation}
  with \(\kappa\) as above.  The calculation leading to
\eqref{eq:fano-kl-calibration} applies verbatim and gives
\begin{equation}
  \max_{b\in\mathcal B}
  \mathrm{KL}(f_b^{\otimes n}\|f_\circ^{\otimes n})
  \le\frac1{16}\log|\mathcal B|.
\label{eq:kl-fano-calibration-reused}
\end{equation}

Consider an arbitrary measurable, possibly randomized, probability-measure
estimator \(\hat\mu_n\) based on the sample.  Represent its internal
randomization by a seed \(U\sim\lambda\), independent of the sample, and define
\[
  \mathbb P_b:=(f_b(x)\,\rmd x)^{\otimes n}\otimes\lambda,
  \qquad b\in\mathcal B.
\]
Let \(\mathbb E_b\) denote expectation under \(\mathbb P_b\).
Decode among the OU marginals by
\[
  \hat b\in\arg\min_{c\in\mathcal B}
  \mathsf h(\hat\mu_n,p_{c,t_0}),
\]
using a fixed deterministic rule to break ties. Equip
\(\mathcal P(\mathbb R^d)\) with the Borel $\sigma$-field induced by weak
convergence. Since Hellinger distance is lower semicontinuous, each map
\(\mu\mapsto\mathsf h(\mu,p_{c,t_0})\) is Borel measurable. Because
\(\mathcal B\) is finite, the nearest-neighbour decoder with deterministic
tie-breaking is measurable. Corollary~
\ref{cor:fixed-time-hellinger-packing} gives, for distinct packing elements,
\[
  \mathsf h^2(p_{b,t_0},p_{b',t_0})
  \ge c_{\mathrm H,d,D}\eta^2.
\]
If \(\hat b\ne b\), the triangle inequality and the nearest-neighbour
property imply
\[
  \sqrt{c_{\mathrm H,d,D}}\,\eta
\le\mathsf h(p_{b,t_0},p_{\hat b,t_0})
\le2\mathsf h(p_{b,t_0},\hat\mu_n).
\]
Consequently, \eqref{eq:kl-dominates-hellinger} yields
\begin{equation}
\mathrm{KL}(p_{b,t_0}\|\hat\mu_n)
  \ge\frac{c_{\mathrm H,d,D}}{4}\eta^2
\mathbf 1_{\{\hat b\ne b\}}.
\label{eq:kl-loss-controls-decoding}
\end{equation}
The Fano calculation in \eqref{eq:fano-error-probability} applies without
change because \eqref{eq:kl-fano-calibration-reused} is the same information
calibration.  Taking expectations in
\eqref{eq:kl-loss-controls-decoding} therefore gives
\begin{equation}
  \sup_{b\in\mathcal B}
  \mathbb E_b\,
  \mathrm{KL}(p_{b,t_0}\|\hat\mu_n)
  \ge\frac{7c_{\mathrm H,d,D}}{64}\eta^2.
\label{eq:kl-risk-after-fano}
\end{equation}

By \eqref{eq:grid-cardinality}, \(\rho=K_0\sqrt{\tau_0}\), and
\(1\wedge(uv)\ge(1\wedge u)(1\wedge v)\) for \(u,v>0\),
\begin{equation}
  \eta^2
  \ge\kappa
  \min\left\{1,2^{-d}\left(\frac{\ell}{K_0}\right)^d\right\}
  \left(1\wedge\frac{1}{n\tau_0^{d/2}}\right).
\label{eq:kl-amplitude-lower}
\end{equation}
Every \(f_b\) is the density of a member of \(\mathcal P_D\).  Combining
\eqref{eq:kl-risk-after-fano} and \eqref{eq:kl-amplitude-lower}, restricting
the supremum to this finite family, and then taking the infimum over
\(\hat\mu_n\), with \(c_{d,D}^{\mathrm{KL}}\) as in
\eqref{eq:explicit-fixed-time-kl-lower-constant}, proves
\eqref{eq:fixed-time-kl-statistical-lower}.
\end{proof}

\begin{theorem}[Nominal positive-time KL minimax rate]
\label{thm:nominal-kl-minimax}
Fix $d\ge1$ and $D>0$. Let $c_0$ and $c_{d,D}^{\mathrm{KL}}$ be defined by
\eqref{eq:explicit-statistical-lower-bound-constants} and
\eqref{eq:explicit-fixed-time-kl-lower-constant}, respectively. If
$0<t_0\le c_0$ and $n\tau_0^{d/2}\ge1$, then
\begin{equation}
  \frac{c_{d,D}^{\mathrm{KL}}}{n\tau_0^{d/2}}
  \le
  \mathfrak K_{n,0}^{\star}(D;t_0)
  \le
  \frac{C_{d,D}}{2n\tau_0^{d/2}}.
  \label{eq:nominal-kl-minimax-rate}
\end{equation}
\end{theorem}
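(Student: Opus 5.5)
The lower bound is immediate from Proposition~\ref{prop:fixed-time-kl-statistical-lower}. Under $0<t_0\le c_0$ and $n\tau_0^{d/2}\ge1$, the minimum $1\wedge(n\tau_0^{d/2})^{-1}$ equals $1/(n\tau_0^{d/2})$. When $\varepsilon=0$, the class $\mathcal Q_0(p_0)$ is the singleton $\{p_0\}$, because $\wass_2(q_0,p_0)=0$ forces $q_0=p_0$. Hence $\mathfrak K_{n,0}^{\star}(D;t_0)$ coincides with the left-hand side of \eqref{eq:fixed-time-kl-statistical-lower}, and the bound $c_{d,D}^{\mathrm{KL}}/(n\tau_0^{d/2})$ follows.

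For the upper bound, I exhibit one estimator, namely $\hat\mu_n:=\hat p_{n,t_0}^\vartheta$ with $\vartheta=D/\sqrt d$. This is the self-consistent reverse-sampler output, and it is a measurable function of the sample. For any $p_0\in\mathcal P_D$, apply \eqref{eq:generic-shifted-kl} of Proposition~\ref{prop:ou-score-oracle} with $q_0=p_0$. The finiteness hypothesis $D_2(p_{t_0}\|\hat p_{n,t_0}^\vartheta)<\infty$ holds almost surely by \eqref{eq:p-over-phat-bound}, and the Wasserstein term vanishes. This gives
\[
\mathrm{KL}(p_{t_0}\|\hat p^\vartheta_{n,t_0})\le D_2(p_{t_0}\|\hat p^\vartheta_{n,t_0}).
\]
Alternatively, one can use directly that $\mathrm{KL}\le D_2$ by monotonicity of R\'enyi divergences, which avoids the factor $2$. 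Taking expectations and applying Proposition~\ref{prop:reverse-chi-square} at $t=t_0$, then $\log(1+u)\le u$, yields the bound $(M_{\vartheta,t_0}-1)/(n+1)$.

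It remains to bound $M_{\vartheta,t_0}$. This is the only point needing care, since the statement does not assume $\tau_0\le1$ explicitly. By the first entry of the minimum defining $c_0$ in \eqref{eq:explicit-statistical-lower-bound-constants}, $t_0\le c_0$ gives $\tau_0\le1$. Then, as in the proof of Theorem~\ref{thm:finite-sample-robust-kde},
\[
M_{\vartheta,t_0}\le e^{d/2}(1+D^2/d)^{d/2}\tau_0^{-d/2}=\tfrac12 C_{d,D}\tau_0^{-d/2}.
\]
Together with $1/(n+1)\le1/n$, this gives $\mathbb E\,\mathrm{KL}(p_{t_0}\|\hat\mu_n)\le C_{d,D}/(2n\tau_0^{d/2})$ uniformly over $\mathcal P_D$.

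Taking the supremum over $p_0\in\mathcal P_D$ and then the infimum over estimators proves the right-hand inequality. I expect no genuine obstacle. The one point to verify is that the constant comes out as $C_{d,D}/2$ rather than $C_{d,D}$, which is secured by using the single-term inequality with no factor $2$ on the $D_2$ term.
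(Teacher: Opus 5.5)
Your proof is correct and follows the paper's route. The paper's lower bound is the same appeal to Proposition~\ref{prop:fixed-time-kl-statistical-lower}, and its upper bound cites \eqref{eq:self-consistent-kl-sampling-bound} at $\varepsilon=0$, which is exactly the chain you unpack: \eqref{eq:generic-shifted-kl} with $q_0=p_0$, then Proposition~\ref{prop:reverse-chi-square}, then $M_{\vartheta,t_0}\le\tfrac12 C_{d,D}\tau_0^{-d/2}$. That last step relies on your explicit check that $t_0\le c_0$ forces $\tau_0\le1$, and \eqref{eq:generic-shifted-kl} already carries coefficient one on $D_2$, so your factor-$2$ concern never arises.
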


\begin{proof}
The lower bound follows from
Proposition~\ref{prop:fixed-time-kl-statistical-lower}; under
\(n\tau_0^{d/2}\ge1\), the minimum in
\eqref{eq:fixed-time-kl-statistical-lower} equals its second argument.  The
upper bound follows from \eqref{eq:self-consistent-kl-sampling-bound} with
\(\varepsilon=0\), followed by \(n+1\ge n\).  This proves
\eqref{eq:nominal-kl-minimax-rate}.
\end{proof}

\section{Proofs for Shift Lower Bounds and Minimax Optimality}
\label{app:intrinsic-lower-bound}

This appendix proves the shift lower bounds and completes the robust score
and KL minimax results. Subsections~\ref{app:intrinsic-hard-family}--
\ref{app:intrinsic-one-dimensional-lower-bound} develop the one-dimensional
oscillatory construction through Wasserstein admissibility, two-point score
separation, and persistence under OU smoothing.
Subsection~\ref{app:score-lower-and-minimax} lifts the construction to
arbitrary fixed dimension and proves
Theorems~\ref{thm:fixed-dimensional-lower-main}
and~\ref{thm:finite-sample-minimax-optimality}. Finally,
Subsection~\ref{app:robust-kl-lower-and-minimax} proves the positive-time KL
shift lower bound by a two-point Gaussian argument and combines it with the
nominal KL lower bound from Appendix~\ref{app:statistical-minimax-lower-bound}
to prove Theorem~\ref{thm:robust-kl-minimax}.

\subsection{One-Dimensional Construction and Lower-Bound Statement}
\label{app:intrinsic-hard-family}

Throughout the one-dimensional argument, write \(p_0:=p_0^{(1)}\).

Consider the one-dimensional Ornstein--Uhlenbeck process
\(\rmd X_t=-\frac12X_t\,\rmd t+\rmd W_t\). Its explicit solution is
\[
    X_t=a_tX_0+\sigma_tZ,
    \qquad
    a_t=e^{-t/2},
    \qquad
    \sigma_t^2=1-e^{-t},
    \qquad
    Z\sim 
\mathcal{N}(0,1).
\]
We use below the elementary bound \(\sigma_t^2\le t\).

Let the reference density be \(p_0(x):=\frac12\mathbf 1_{[-1,1]}(x)\).
For an integer \(k\ge1\), set
$
    \lambda_k:=\pi k,
   $ and
   $
    h_k(x):=\cos(\lambda_kx).
$
Since
\[
    \int_{-1}^{1}h_k(x)\,\rmd x
    =
    \int_{-1}^{1}\cos(\pi kx)\,\rmd x
    =
    \frac{2\sin(\pi k)}{\pi k}
    =
    0,
\]
the following perturbations preserve total mass. For \(0<\alpha\le1/2\),
define
\[
    q_0^\pm(x)
    :=
    \frac12\bigl(1\pm\alpha h_k(x)\bigr)\mathbf 1_{[-1,1]}(x).
\]
Because $|h_k|\le1$ and $\alpha\le1/2$, these are probability densities, and
\(\operatorname{supp}(q_0^\pm)=\operatorname{supp}(p_0)=[-1,1]\).
Thus the alternatives are compactly supported.

For \(t>0\), let
\[
    K_t(x,y)
    :=
    \frac1{\sqrt{2\pi}\sigma_t}
    \exp\left(
        -\frac{(x-a_ty)^2}{2\sigma_t^2}
    \right)
\]
denote the OU transition density. The smoothed reference density is
\[
    p_t(x)
    =
    \frac12\int_{-1}^{1}K_t(x,y)\,\rmd y.
\]
Similarly,
$
    q_t^\pm(x)
    =
    p_t(x)\pm\alpha r_t(x),
$
where
$
    r_t(x)
    :=
    \frac12\int_{-1}^{1}h_k(y)K_t(x,y)\,\rmd y .
$
Define \(\eta_t(x):=r_t(x)/p_t(x)\). Then
\[
    q_t^\pm(x)
    =
    p_t(x)\bigl(1\pm\alpha\eta_t(x)\bigr).
\]
Under the joint law induced by \(X_0\sim p_0\) and the OU transition,
\(\eta_t(x)=\mathbb E[h_k(X_0)\mid X_t=x]\), so
\(|\eta_t(x)|\le1\) and \(1\pm\alpha\eta_t(x)\ge1-\alpha\ge1/2\).
Thus $q_t^\pm$ are strictly positive for every $t>0$, and their scores are
well defined on all of $\mathbb R$.

The preceding construction underlies the following one-dimensional lower
bound, whose proof is given after the auxiliary estimates.

\begin{proposition}
\label{prop:ou-wasserstein-lower-bound}
Let \(p_0(x)=\frac12\mathbf 1_{[-1,1]}(x)\),
and let \(P_t\) denote the semigroup of the one-dimensional
Ornstein--Uhlenbeck process
\(\rmd X_t=-\frac12X_t\,\rmd t+\rmd W_t\).
Fix \(0<t_0\le 1/(16\pi^2)\), \(2t_0\le T\), and
\(0<\varepsilon\le\sqrt{t_0}\). For each
\(q_0\in\mathcal Q_\varepsilon(p_0)\), set
$
    q_t=P_tq_0.
$
Then
\begin{equation}
\label{eq:one-dimensional-wasserstein-lower-bound}
\inf_{\hat s}
\sup_{q_0\in\mathcal Q_\varepsilon(p_0)}
\mathbb E\left[
    \mathcal L_{q_0}(\hat s)
\right]
\ge
\frac{e^{-4/9}}{2592}
\frac{\varepsilon^2}{t_0(T-t_0)}.
\end{equation}
The infimum is over all possibly random score
estimators whose realizations are measurable maps from
\(\mathbb R\times[t_0,T]\) to \(\mathbb R\), and the expectation is over all
randomness used to construct \(\hat s\).
\end{proposition}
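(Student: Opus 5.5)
The proof will be a two-point argument that uses no data at all. The estimator $\hat s$ is built from samples of $p_0$ alone, so its law does not depend on which shifted law $q_0$ is deployed. Hence it suffices to exhibit two laws $q_0^\pm\in\mathcal Q_\varepsilon(p_0)$ and prove the lower bound for every fixed measurable realization $s$:
\[
\mathcal L_{q_0^+}(s)+\mathcal L_{q_0^-}(s)\ \ge\ 2c\,\frac{\varepsilon^2}{t_0(T-t_0)} .
\]
Taking the maximum over the two points, then expectations over the estimator's randomness, then the infimum over $\hat s$ gives \eqref{eq:one-dimensional-wasserstein-lower-bound}.

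\textbf{Step 1 (pointwise reduction).} I use the oscillatory family $q_0^\pm=\tfrac12(1\pm\alpha h_k)\mathbf 1_{[-1,1]}$ constructed above. Since $q_t^\pm=p_t(1\pm\alpha\eta_t)\ge p_t/2$, the elementary inequality $(a-u)^2+(a-v)^2\ge (u-v)^2/2$ gives, for each realization $s$,
\[
\mathcal L_{q_0^+}(s)+\mathcal L_{q_0^-}(s)\ \ge\ \frac{1}{4(T-t_0)}\int_{t_0}^{2t_0}\!\int p_t\,(\partial_x\log q_t^+-\partial_x\log q_t^-)^2\,\rmd x\,\rmd t .
\]
Here I restrict the time integral to $[t_0,2t_0]\subseteq[t_0,T]$. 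The score difference equals
\[
\partial_x\log\frac{1+\alpha\eta_t}{1-\alpha\eta_t}=\frac{2\alpha\,\eta_t'}{1-\alpha^2\eta_t^2},
\]
whose absolute value is at least $2\alpha|\eta_t'|$ because $|\eta_t|\le1$. So everything reduces to lower bounding $\int p_t(\eta_t')^2$.

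\textbf{Step 2 (Wasserstein admissibility).} In one dimension I use the quantile coupling, or equivalently the bound $\wass_2^2(q,p)\le 4\int (F_q-F_p)^2/p$ valid when $p\ge1/2$ on the common support $[-1,1]$. Here $F_{q^\pm}-F_p=\pm\frac{\alpha}{2\lambda_k}\sin(\lambda_k x)$ on $[-1,1]$, which yields $\wass_2(q_0^\pm,p_0)\le C\alpha/\lambda_k$. I then choose $k$ as the largest integer with $\lambda_k^2t_0\le c'$; the assumption $t_0\le1/(16\pi^2)$ guarantees $k\ge1$ and $\lambda_k\asymp t_0^{-1/2}$. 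I set $\alpha=\varepsilon\lambda_k/C$. The condition $\varepsilon\le\sqrt{t_0}$ then ensures $\alpha\le1/2$ once the constants are tuned.

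\textbf{Step 3 (score separation persists on $[t_0,2t_0]$) — the main obstacle.} I localize to the interior $J=[-1/2,1/2]$ in the rescaled variable. For $x/a_t\in J$, write $r_t$ and $p_t$ as Gaussian convolutions of $\cos(\lambda_k\cdot)\mathbf 1_{[-1,1]}$ and $\mathbf 1_{[-1,1]}$. Replacing the indicator by $1$ costs an error of order $e^{-1/(8\sigma_t^2)}$ together with its $x$-derivative. The full-line convolution is explicit:
\[
r_t(x)\approx \frac{1}{2a_t}e^{-\lambda_k^2\tau_t/2}\cos(\lambda_k x/a_t),\qquad p_t\approx \frac{1}{2a_t}.
\]
Hence $\eta_t'(x)\approx-(\lambda_k/a_t)e^{-\lambda_k^2\tau_t/2}\sin(\lambda_kx/a_t)$. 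Since $\tau_t\le2\tau_0\lesssim t_0$ and $\lambda_k^2t_0\le c'$, the damping factor is at least an absolute constant; I expect the $e^{-4/9}$ in the statement to come from exactly this factor. The remaining work is to check that the boundary error terms are dominated for $t_0\le1/(16\pi^2)$, and this is where I expect most of the effort to go. Averaging $\sin^2$ over $J$, which contains many periods, then gives $\int_{a_tJ}p_t(\eta_t')^2\gtrsim\lambda_k^2$.

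\textbf{Step 4 (conclusion).} Combining the previous steps, the time integral over $[t_0,2t_0]$ is bounded below by a constant times $t_0\,\alpha^2\lambda_k^2\asymp t_0\,\varepsilon^2\lambda_k^4\asymp\varepsilon^2/t_0$. Dividing by $T-t_0$ gives the claimed order $\varepsilon^2/[t_0(T-t_0)]$. Tracking the constants from Steps 1–3 should produce $e^{-4/9}/2592$.
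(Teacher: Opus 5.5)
Your plan is the paper's proof in outline. It uses the same oscillatory pair $q_0^\pm=\frac12(1\pm\alpha\cos(\lambda_k\cdot))\mathbf 1_{[-1,1]}$ and reduces the two-point risk to $\int p_t|\partial_x\eta_t|^2$ on the window $[t_0,2t_0]$. It chooses $\lambda_k^2t_0\asymp1$ and $\alpha\asymp\varepsilon\lambda_k$, and it compares $\eta_t$ in the interior with the explicit heat convolution $e^{-\lambda_k^2\tau_t/2}\cos(\lambda_k u)$. Once Step~3 is actually carried out, this proves the bound with \emph{some} absolute constant. What is not justified is your final sentence. The proposition asserts the specific constant $e^{-4/9}/2592$, and your Steps~1--2 are lossier than the paper's, so that constant does not follow from a persistence estimate of the natural strength.

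\textbf{Step 1 loses a factor $2$.} Replacing the weights by $q_t^\pm\ge p_t/2$ gives only $R_++R_-\ge\alpha^2\int p_t|\partial_x\eta_t|^2$. The paper keeps the exact weights, $\lambda|z-a|^2+\mu|z-b|^2\ge\frac{\lambda\mu}{\lambda+\mu}|a-b|^2$, and uses $\frac{q_t^+q_t^-}{q_t^++q_t^-}=\frac12p_t(1-\alpha^2\eta_t^2)$. This cancels one factor of $1-\alpha^2\eta_t^2$ in the squared score difference $4\alpha^2|\partial_x\eta_t|^2/(1-\alpha^2\eta_t^2)^2$ and gives $2\alpha^2\int p_t|\partial_x\eta_t|^2$.

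\textbf{Where the constant comes from.} It decomposes as $2592=32\cdot81$.
\begin{itemize}
\item The $1/32$ is the persistence bound $\int p_t|\partial_x\eta_t|^2\ge\frac1{32}\lambda_k^2e^{-2\lambda_k^2\sigma_t^2}$, valid for $t\le1/64$ and $\lambda_k^2t\le1$.
\item $e^{-4/9}/81=\min_{y\in[1/9,1/4]}y^2e^{-4y}$ with $y=\lambda_k^2t_0$. This window comes from $\wass_2(p_0,q_0^\pm)\le\alpha/\lambda_k$ (Benamou--Brenier with $\rho_s\ge1/4$), $\alpha=\varepsilon\lambda_k$, and $k=\lfloor(2\pi\sqrt{t_0})^{-1}\rfloor$.
\end{itemize}
So your guess is right that $e^{-4/9}$ is the squared damping factor, evaluated at the lower end $y=1/9$.

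\textbf{Your Steps 1--2 fall short.} Pair your Step~1 with that same persistence bound, and take the most favourable choice of $C$ and frequency cap compatible with $\alpha\le1/2$ and $\lambda_k^2t\le1$ on $[t_0,2t_0]$. That choice is your $C=\sqrt2$ with $y\in[2/9,1/2]$, and it yields only $e^{-8/9}/2592$.

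\textbf{Fix.} Either use the exact two-point weights as above, or prove Step~3 with a constant of at least $e^{4/9}/32$ in place of $1/32$.

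\textbf{Step 3 must be quantitative.} In either case it cannot stay at leading order. On $|u|\le1/2$ you need:
\begin{itemize}
\item $\frac12\le A_t\le1$;
\item a combined boundary error $\delta_t=4\tau_t^{-1}e^{-1/(8\tau_t)}$ (in the rescaled variable the tail is governed by $\tau_t$, not $\sigma_t^2$);
\item the numerical check $\delta_t^2\le\frac{\pi^2}{8}e^{-2}\le\frac18\lambda_k^2e^{-\lambda_k^2\tau_t}$ for $\tau_t\le1/63$.
\end{itemize}
The last check is what lets $(r-s)_+^2\ge\frac12r^2-s^2$ absorb the boundary terms.
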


\subsection{Auxiliary Estimates}
\label{app:intrinsic-aux}

We next establish the three estimates used in the proof of
Proposition~\ref{prop:ou-wasserstein-lower-bound}. The Wasserstein estimate
verifies admissibility of the alternatives, the two-point inequality reduces
the score risk to the energy of \(\eta_t\), and the persistence estimate
controls this energy over the early-time window.

\subsubsection{Wasserstein Size of the Perturbation}
\label{app:intrinsic-wasserstein-size}

\begin{lemma}%
\label{lem:oscillatory-wasserstein-size}
For every \(k\ge1\) and every \(0<\alpha\le1/2\),
\[
    \wass_2(p_0,q_0^\pm)
    \le
    \frac{\alpha}{\lambda_k}.
\]
\end{lemma}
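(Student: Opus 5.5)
Since $p_0$ and $q_0^\pm$ are one-dimensional densities on the same interval $[-1,1]$, we use the explicit monotone (quantile) coupling, or more simply the Benamou--Brenier / negative Sobolev bound. The cleanest route is the one-dimensional $\dot H^{-1}$ estimate
\[
\wass_2^2(\mu,\nu)\le 4\int\frac{|F_\mu(x)-F_\nu(x)|^2}{\mu(x)}\,\rmd x
\]
type bounds, but a sharper and fully explicit argument is available: construct a transport map directly. Define the cumulative difference
\[
G(x):=\int_{-1}^x\bigl(q_0^\pm(y)-p_0(y)\bigr)\,\rmd y=\pm\frac{\alpha}{2}\frac{\sin(\lambda_k x)}{\lambda_k},
\]
which vanishes at $x=\pm1$ because $\sin(\pi k)=0$. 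We then take the linear interpolation $\mu_s:=(1-s)p_0+sq_0^\pm$, $s\in[0,1]$, which stays in $[\frac14,\frac34]$ on $[-1,1]$ since $|h_k|\le1$ and $\alpha\le1/2$. It solves the continuity equation $\partial_s\mu_s+\partial_x(\mu_sv_s)=0$ with velocity $v_s:=-G/\mu_s$, since $\partial_s\mu_s=q_0^\pm-p_0=G'$ and $G(\pm1)=0$ so no mass leaves $[-1,1]$.

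By the Benamou--Brenier formula,
\[
\wass_2^2(p_0,q_0^\pm)\le\int_0^1\!\int_{-1}^1\frac{G(x)^2}{\mu_s(x)}\,\rmd x\,\rmd s .
\]
Using $G^2\le\alpha^2\sin^2(\lambda_kx)/(4\lambda_k^2)$ and $\int_{-1}^1\sin^2(\lambda_kx)\,\rmd x=1$, it remains to bound $\int_0^1 \int\sin^2(\lambda_kx)/\mu_s\,\rmd x\,\rmd s$ by $4$. The crude bound $\mu_s\ge1/4$ gives exactly $4$, hence $\wass_2^2\le \alpha^2/\lambda_k^2$, as claimed.

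The only delicate point is the justification of Benamou--Brenier for this explicit curve; we avoid it by instead building the flow map $\Phi_s$ of $\dot x=v_s(x)$ on $[-1,1]$ (Lipschitz velocity, endpoints fixed since $G(\pm1)=0$). This gives $(\Phi_1)_\#p_0=q_0^\pm$, and Cauchy--Schwarz in $s$ yields $\int|\Phi_1(x)-x|^2p_0(\rmd x)\le\int_0^1\!\int v_s^2\,\rmd\mu_s\,\rmd s$, which is the same quantity bounded above.
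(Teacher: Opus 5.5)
Your proof is correct and follows the paper's argument essentially line for line. Both use the linear interpolation $\mu_s=(1-s)p_0+sq_0^\pm$ with flux $-G=\mp\frac{\alpha}{2\lambda_k}\sin(\lambda_k x)$, which vanishes at $\pm1$ because $\sin(\pi k)=0$; both then apply the Benamou--Brenier upper bound and the density floor $\mu_s\ge 1/4$ together with $\int_{-1}^1\sin^2(\lambda_k x)\,\rmd x=1$. Your flow-map remark gives a self-contained justification of the Benamou--Brenier inequality, where the paper instead invokes the formula directly after noting that the zero-extended continuity equation holds distributionally on $\mathbb R$.
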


\begin{proof}
We prove the claim for $q_0^+$, and the proof for $q_0^-$ is identical after
reversing the sign.
For \(s\in[0,1]\), define the interpolating density and its flux by
\begin{align*}
    \rho_s(x)
    &:=\frac12\bigl(1+s\alpha\cos(\lambda_kx)\bigr)
    \mathbf 1_{[-1,1]}(x),\\
    \rho_s(x)v_s(x)
    &:=-\frac{\alpha}{2\lambda_k}
    \sin(\lambda_kx)\mathbf 1_{[-1,1]}(x).
\end{align*}
Since \(\lambda_k=\pi k\), \(\sin(\lambda_k)=\sin(\pi k)=0\). Thus the flux
vanishes at the boundary points \(\pm1\). Moreover,
\begin{align*}
    \partial_s\rho_s(x)
    &=\frac{\alpha}{2}\cos(\lambda_kx),
    &
    \partial_x(\rho_sv_s)(x)
    &=-\frac{\alpha}{2}\cos(\lambda_kx).
\end{align*}
Thus \(\partial_s\rho_s+\partial_x(\rho_sv_s)=0\)
on \([-1,1]\). After extending both \(\rho_s\) and
\(\rho_sv_s\) by zero outside \([-1,1]\), the same continuity equation holds
on \(\mathbb R\) in the distributional sense because the flux vanishes at
\(\pm1\). By the Benamou--Brenier formula,
\[
\begin{aligned}
\wass_2^2(p_0,q_0^+)
\le
\int_0^1\int_{-1}^{1}|v_s(x)|^2\rho_s(x)\,\rmd x\,\rmd s
=
\int_0^1\int_{-1}^{1}
\frac{\alpha^2}{4\lambda_k^2}
\frac{\sin^2(\lambda_kx)}{\rho_s(x)}\,\rmd x\,\rmd s .
\end{aligned}
\]
Since \(\alpha\le1/2\),
\(\rho_s(x)\ge(1-\alpha)/2\ge1/4\) for all \(s\in[0,1]\) and
\(x\in[-1,1]\). Therefore
\[
    \wass_2^2(p_0,q_0^+)
    \le
    \frac{\alpha^2}{\lambda_k^2}
    \int_{-1}^{1}\sin^2(\lambda_kx)\,\rmd x
    =
    \frac{\alpha^2}{\lambda_k^2}.
\]
Taking square roots proves the claim.
\end{proof}

\subsubsection{Two-Point Score Separation}
\label{app:intrinsic-two-point}

\begin{lemma}%
\label{lem:two-point-score-separation}
Assume \(0<\alpha\le1/2\). For every measurable
\(s:\mathbb R\times[t_0,T]\to\mathbb R\) and every \(t\in[t_0,T]\), define
\[
    R_\xi(t,s)
    :=
    \mathbb E_{q_t^\xi}
    \left[
        |s(X,t)-\partial_x\log q_t^\xi(X)|^2
    \right],
    \qquad \xi\in\{+,-\}.
\]
The two risks satisfy
\nopagebreak[4]
\[
    R_+(t,s)+R_-(t,s)
    \ge
    2\alpha^2
    \int_{\mathbb R}
        |\partial_x\eta_t(x)|^2p_t(x)\,\rmd x .
\]
\end{lemma}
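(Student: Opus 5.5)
We reduce the two risks to a pointwise comparison against a common reference weight and then apply the elementary two-point inequality. Write $q_t^\pm=p_t(1\pm\alpha\eta_t)$, so that
\[
\partial_x\log q_t^\pm=\partial_x\log p_t\pm\frac{\alpha\,\partial_x\eta_t}{1\pm\alpha\eta_t}.
\]
Set $u:=s(\cdot,t)-\partial_x\log p_t$, $A_\pm:=\alpha\partial_x\eta_t/(1\pm\alpha\eta_t)$, and $w_\pm:=1\pm\alpha\eta_t$. Then
\[
R_\pm(t,s)=\int |u\mp A_\pm|^2\,w_\pm\,p_t\,\rmd x .
\]
Since $\eta_t=r_t/p_t$ with $p_t>0$ smooth and $r_t$ smooth, all functions involved are smooth. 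If either $R_\pm$ is infinite there is nothing to prove, so we assume both are finite.

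The main step is a pointwise lower bound in $u$ at each fixed $x$. Note that $w_\pm A_\pm=\alpha\partial_x\eta_t=:B$ is the same for both signs, so $|u\mp A_\pm|^2w_\pm=w_\pm u^2\mp 2uB+B^2/w_\pm$. Summing the two signs gives
\[
(w_++w_-)u^2+B^2\Big(\frac1{w_+}+\frac1{w_-}\Big)=2u^2+B^2\,\frac{2}{w_+w_-},
\]
because $w_++w_-=2$ and $w_+w_-=1-\alpha^2\eta_t^2$. The cross terms cancel exactly. Hence the sum is at least $2B^2/(1-\alpha^2\eta_t^2)\ge 2B^2$, using $0<w_+w_-\le1$, which follows from $|\eta_t|\le1$ and $\alpha\le1/2$.

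Multiplying by $p_t$ and integrating via Tonelli (everything is nonnegative) yields $R_++R_-\ge 2\alpha^2\int|\partial_x\eta_t|^2p_t$. We then check that splitting $R_++R_-$ into the expanded integrals is legitimate. This is safe because we only integrate the pointwise nonnegative sum, and that sum equals the integrand of $R_++R_-$. No step is a genuine obstacle; the only point needing care is the exact cancellation of the cross terms, which hinges on $w_\pm A_\pm$ being independent of the sign.
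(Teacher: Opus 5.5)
Your proof is correct and is essentially the paper's argument. The paper applies the pointwise two-point inequality $\lambda|z-a|^2+\mu|z-b|^2\ge\lambda\mu|a-b|^2/(\lambda+\mu)$ with weights $q_t^\pm$, which gives exactly your intermediate bound $2\alpha^2\int|\partial_x\eta_t|^2p_t/(1-\alpha^2\eta_t^2)$; you instead carry out that inequality by hand, expanding around $\partial_x\log p_t$ (your cross-term cancellation is precisely the fact that this is the weighted minimizer), and both arguments then finish by dropping the factor $(1-\alpha^2\eta_t^2)^{-1}\ge1$.
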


\begin{proof}
For any \(\lambda,\mu>0\) and \(a,b,z\in\mathbb R\),
\(\lambda|z-a|^2+\mu|z-b|^2
\ge\lambda\mu|a-b|^2/(\lambda+\mu)\). Apply this pointwise with
\(\lambda=q_t^+(x)\), \(\mu=q_t^-(x)\),
\(a=\partial_x\log q_t^+(x)\),
\(b=\partial_x\log q_t^-(x)\), and \(z=s(x,t)\).
This gives
\[
\begin{aligned}
R_+(t,s)+R_-(t,s)
&\ge
\int_{\mathbb R}
\frac{q_t^+(x)q_t^-(x)}
     {q_t^+(x)+q_t^-(x)}
\left|
    \partial_x\log q_t^+(x)
    -
    \partial_x\log q_t^-(x)
\right|^2\,\rmd x .
\end{aligned}
\]

Using \(q_t^\pm=p_t(1\pm\alpha\eta_t)\), we have
$
    \partial_x\log q_t^\pm
    =
    \partial_x\log p_t
    +
    \partial_x\log(1\pm\alpha\eta_t).
$
Moreover,
\begin{align*}
    \partial_x\log q_t^+
    -
    \partial_x\log q_t^-
    &=
    \frac{2\alpha\,\partial_x\eta_t}
         {1-\alpha^2\eta_t^2},\\
    \frac{q_t^+q_t^-}{q_t^++q_t^-}
    &=
    \frac12p_t(1-\alpha^2\eta_t^2).
\end{align*}
Substitution yields
\[
\begin{aligned}
R_+(t,s)+R_-(t,s)
&\ge
2\alpha^2
\int_{\mathbb R}
\frac{|\partial_x\eta_t(x)|^2}
     {1-\alpha^2\eta_t(x)^2}
p_t(x)\,\rmd x .
\end{aligned}
\]
Since \(|\eta_t|\le1\) and \(\alpha\le1/2\), we have
\((1-\alpha^2\eta_t^2)^{-1}\ge1\), which proves the result.
\end{proof}

\subsubsection{Persistence of One Oscillatory Mode}
\label{app:intrinsic-mode-persistence}

We next show that an oscillatory mode remains visible during the early-time
window whenever its frequency is below the critical scale $t_0^{-1/2}$.

Set \(u:=x/a_t\) and
\(\tau_t:=\sigma_t^2/a_t^2=e^t-1\), so
\(\sqrt{\tau_t}=\sigma_t/a_t\). After the change of variables \(x=a_tu\), define
\begin{align*}
p_t(x)&=\frac1{2a_t}A_t(u),
&A_t(u)&:=\int_{-1}^{1}\frac{1}{\sqrt{2\pi\tau_t}}
\exp\left(-\frac{(u-y)^2}{2\tau_t}\right)\,\rmd y,\\
r_t(x)&=\frac1{2a_t}B_t(u),
&B_t(u)&:=\int_{-1}^{1}\cos(\lambda_ky)\frac{1}{\sqrt{2\pi\tau_t}}
\exp\left(-\frac{(u-y)^2}{2\tau_t}\right)\,\rmd y,\\
\eta_t(x)&=\frac{B_t(u)}{A_t(u)}.
\end{align*}
Primes below denote derivatives with respect to \(u\).

\begin{lemma}%
\label{lem:ou-mode-persistence}
Assume \(k\ge1\) and that the small-time and frequency--time conditions
\[
    0<t\le\frac1{64},
    \qquad
    \lambda_k^2t\le1
\]
hold. Then
\begin{equation}
\label{eq:ou-mode-persistence-general}
    \int_{\mathbb R}
        |\partial_x\eta_t(x)|^2p_t(x)\,\rmd x
    \ge
    \frac1{32}\lambda_k^2
    \exp\left(-2\lambda_k^2\sigma_t^2\right).
\end{equation}
\end{lemma}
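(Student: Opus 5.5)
The plan is to change variables to the heat clock, reduce the weighted energy to an integral over the interior of $[-1,1]$ where $A_t\approx1$, and there compare $B_t$ with the explicitly smoothed cosine.

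\textbf{Change of variables.} With $x=a_tu$ we have $\partial_x\eta_t(x)=a_t^{-1}\eta'(u)$ and $p_t(x)\,\rmd x=\tfrac12A_t(u)\,\rmd u$, where $\eta=B_t/A_t$. Hence
\[
\int|\partial_x\eta_t|^2p_t\,\rmd x=\frac{1}{2a_t^2}\int_{\mathbb R}|\eta'(u)|^2A_t(u)\,\rmd u .
\]
Since $a_t\le1$, it suffices to show
\[
\int|\eta'|^2A_t\,\rmd u\ge\tfrac1{16}\lambda_k^2e^{-2\lambda_k^2\sigma_t^2}.
\]
Because the integrand is nonnegative, we may restrict to an interior window $J=[-1/2,1/2]$. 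Note $\tau:=\tau_t=e^t-1\le 2t\le1/32$.

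\textbf{Comparison on $J$.} Write $B_t=\widetilde B-E$, where
\[
\widetilde B(u):=\int_{\mathbb R}\cos(\lambda_ky)\,\phi_\tau(u-y)\,\rmd y=e^{-\lambda_k^2\tau/2}\cos(\lambda_ku),
\]
and $E$ is the contribution of $|y|>1$. Similarly $A_t=1-F$, with $F$ the Gaussian mass outside $[-1,1]$. For $u\in J$ the distance to $\pm1$ is at least $1/2$, and $\tau\le1/32$, so $F,|E|,|F'|,|E'|$ are all at most $C\tau^{-1/2}e^{-1/(8\tau)}$, which is at most $e^{-3}$-small. Then
\[
\eta'=\frac{B_t'}{A_t}-\frac{B_tA_t'}{A_t^2},
\]
with $B_t'=-\lambda_ke^{-\lambda_k^2\tau/2}\sin(\lambda_ku)-E'$, $A_t'=F'$, and $|B_t|\le1$.

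\textbf{Main obstacle.} The hard part is making the errors $E',F'$ small \emph{relative} to the main term, whose size $\lambda_ke^{-\lambda_k^2\tau/2}$ is at least $\lambda_ke^{-1}$ (using $\lambda_k^2\tau\le2\lambda_k^2t\le2$). Since $\lambda_k\ge\pi$, it is enough to bound the errors by an absolute constant like $0.1$. This follows from $\tau^{-1/2}e^{-1/(8\tau)}\le\sqrt{32}\,e^{-4}$. I will verify this numerically with explicit Gaussian tail bounds (e.g.\ $|E'(u)|\le\phi_\tau(\operatorname{dist})\cdot 2$ via integration by parts, and the same for $F'$).

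\textbf{Conclusion.} On $J$ this gives
\[
|\eta'(u)|\ge\lambda_ke^{-\lambda_k^2\tau/2}|\sin(\lambda_ku)|-\delta
\]
with $\delta$ small, together with $A_t\ge1/2$. Using $(a-b)^2\ge a^2/2-b^2$ and $\int_J\sin^2(\lambda_ku)\,\rmd u=1/2$ (exact, since $k$ is an integer and $J$ has length $1$), we obtain
\[
\int_J|\eta'|^2A_t\ge\tfrac12\bigl(\tfrac14\lambda_k^2e^{-\lambda_k^2\tau}-\delta^2\bigr)\ge\tfrac1{16}\lambda_k^2e^{-\lambda_k^2\tau}
\]
once $\delta^2\le\tfrac18\lambda_k^2e^{-2}$. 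Finally, $\tau=\sigma_t^2/a_t^2\le2\sigma_t^2$, so $e^{-\lambda_k^2\tau}\ge e^{-2\lambda_k^2\sigma_t^2}$. Combined with the factor $\tfrac{1}{2a_t^2}\ge\tfrac12$, this yields the constant $1/32$.
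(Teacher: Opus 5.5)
Your proposal is correct and follows essentially the same route as the paper: rescale to the heat clock, restrict to $|u|\le1/2$, compare $B_t$ with the explicitly smoothed cosine $e^{-\lambda_k^2\tau/2}\cos(\lambda_k u)$, apply $(r-s)_+^2\ge r^2/2-s^2$ together with $\int_{-1/2}^{1/2}\sin^2(\lambda_k u)\,\rmd u=1/2$, and finish with $\tau_t\le 2\sigma_t^2$ and $a_t^{-2}\ge1$. One small point: your slogan that each error being at most $0.1$ suffices is not enough as stated (it would give $\delta\le 2(0.1)+4(0.1)=0.6$), but your actual tail bounds $|E'|\le 2\phi_\tau(1/2)$ and $|A_t'|\le\phi_\tau(1/2)$ (sharper than the paper's $\tau^{-1}$ bound) give $\delta\le 8\phi_\tau(1/2)\approx0.33$ at $\tau=1/32$, below the required $\pi e^{-1}/\sqrt8\approx0.41$, and using $\tau_t\le1/63$ as the paper does makes the margin trivial.
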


\begin{proof}
We work on the central interval \(|u|\le1/2\).

\paragraph{Step 1: Bounds for the truncated Gaussian mass.}
For \(0<t\le1/64\), the elementary inequality
\(e^t\le(1-t)^{-1}\) gives
\[
    \tau_t
    =
    e^t-1
    \le
    \frac{t}{1-t}
    \le
    \frac1{63}
    <
    \frac1{8\log 4}.
\]
Thus \(\tau_t\le1\). For \(|u|\le1/2\), the Gaussian mass omitted from \(A_t(u)\)
outside \([-1,1]\) lies at distance at least \(1/(2\sqrt{\tau_t})\) in standard
normal units.  Hence, the one-dimensional Gaussian tail bound gives
\begin{equation}
\label{eq:A-lower-upper}
    \frac12\le A_t(u)\le1,
    \qquad |u|\le1/2,
\end{equation}
and
\begin{equation}
\label{eq:A-prime-small}
    |A_t'(u)|
    \le
    \frac{2}{\sqrt{2\pi}}\tau_t^{-1/2}
    \exp\left(-\frac{1}{8\tau_t}\right),
    \qquad |u|\le1/2.
\end{equation}

\paragraph{Step 2: Comparison with the full Gaussian convolution.} 
Let
\[
    \mathcal B_t(u)
    :=
    \int_{\mathbb R}
    \cos(\lambda_ky)
    \frac1{\sqrt{2\pi\tau_t}}
    \exp\left(
        -\frac{(u-y)^2}{2\tau_t}
    \right)\,\rmd y .
\]
The Gaussian convolution of a cosine can be computed explicitly, as
$
    \mathcal B_t(u)
    =
    e^{-\lambda_k^2\tau_t/2}\cos(\lambda_ku),
$
and hence \(\mathcal B_t'(u)
=-\lambda_k e^{-\lambda_k^2\tau_t/2}\sin(\lambda_ku)\).
The difference between \(B_t\) and \(\mathcal B_t\) comes only from the
Gaussian tails outside \([-1,1]\). For \(|u|\le1/2\), those tails are at
distance at least \(1/(2\sqrt{\tau_t})\) in standard normal units.  Using
\(|\cos(\lambda_k y)|\le1\), differentiating the Gaussian kernel, and
\(\tau_t\le1\), the same boundary-tail estimate gives
\begin{equation}
\label{eq:B-tail-error}
    |B_t'(u)-\mathcal B_t'(u)|
    \le
    \frac{2}{\sqrt{2\pi}}\tau_t^{-1}
    \exp\left(-\frac{1}{8\tau_t}\right),
    \qquad |u|\le1/2 .
\end{equation}

\paragraph{Step 3: Lower bound for the derivative of \(B_t/A_t\).}
Since
\[
\partial_u\left(\frac{B_t}{A_t}\right)
=
\frac{B_t'}{A_t}
-
\frac{B_tA_t'}{A_t^2}
= \frac{\mathcal B_t'}{A_t}+
\frac{B_t'-\mathcal B_t'}{A_t}-
\frac{B_tA_t'}{A_t^2},
\]
we obtain
\[
\begin{aligned}
\left|
\partial_u\left(\frac{B_t}{A_t}\right)
\right|
&\ge
\frac{|\mathcal B_t'|}{A_t}-
\frac{|B_t'-\mathcal B_t'|}{A_t}-
\frac{|B_t||A_t'|}{A_t^2}.
\end{aligned}
\]
Because $A_t\le1$, we have \(|\mathcal B_t'|/A_t\ge|\mathcal B_t'|\).
Moreover, $|B_t|\le A_t$, and \eqref{eq:A-lower-upper},
\eqref{eq:A-prime-small}, and \eqref{eq:B-tail-error} imply
\[
\frac{|B_t'-\mathcal B_t'|}{A_t}
+
\frac{|B_t||A_t'|}{A_t^2}
\le
4\tau_t^{-1}
\exp\left(-\frac1{8\tau_t}\right).
\]
Now define \(\delta_t:=4\tau_t^{-1}\exp(-1/(8\tau_t))\).
For \(|u|\le1/2\), it holds that
\[
\left|
    \partial_u\left(\frac{B_t}{A_t}\right)(u)
\right|
\ge
\left(
\lambda_k e^{-\lambda_k^2\tau_t/2}
|\sin(\lambda_k u)|
-
\delta_t
\right)_+ .
\]
For \(r,s\ge0\), the elementary inequality
\((r-s)_+^2\ge \frac12 r^2-s^2\) holds. Hence
\[
\begin{aligned}
\int_{-1/2}^{1/2}
\left|
    \partial_u\left(\frac{B_t}{A_t}\right)(u)
\right|^2\,\rmd u
&\ge
\frac12\lambda_k^2 e^{-\lambda_k^2\tau_t}
\int_{-1/2}^{1/2}\sin^2(\lambda_k u)\,\rmd u
-
\delta_t^2 .
\end{aligned}
\]
Since \(\lambda_k=\pi k\),
\(\int_{-1/2}^{1/2}\sin^2(\lambda_ku)\,\rmd u=1/2\).
Moreover, since \(0<t\le1/64<\log2\) and \(\lambda_k^2t\le1\), it follows that
\[
    \lambda_k^2\tau_t
    =
    \lambda_k^2(e^t-1)
    \le
    2\lambda_k^2t
    \le
    2 .
\]
Since $k\ge1$,
\[
    \lambda_k^2 e^{-\lambda_k^2\tau_t}
    \ge
    \lambda_k^2e^{-2}
    \ge
    \pi^2e^{-2}.
\]
On the other hand, set \(g(y):=16y^{-2}\exp(-1/(4y))\). This function is
increasing on \((0,1/8]\), since
\[
\frac{g'(y)}{g(y)}=\frac{1-8y}{4y^2}\ge0.
\]
As \(\tau_t\le1/63<1/8\), we obtain the explicit bound
\[
\begin{aligned}
    \delta_t^2
    &=
    g(\tau_t)
    \le
    g(1/63)
    =
    16\cdot63^2e^{-63/4}
    <
    \frac{\pi^2}{8}e^{-2}.
\end{aligned}
\]
Consequently,
\[
    \delta_t^2
    \le
    \frac{\pi^2}{8}e^{-2}
    \le
    \frac18
    \lambda_k^2 e^{-\lambda_k^2\tau_t}
\]
uniformly over all \(k\ge1\) satisfying \(\lambda_k^2t\le1\). Therefore,
\[
\int_{-1/2}^{1/2}
\left|
    \partial_u\left(\frac{B_t}{A_t}\right)(u)
\right|^2\,\rmd u
\ge
\frac18\lambda_k^2e^{-\lambda_k^2\tau_t}.
\]

\paragraph{Step 4: Return to the original spatial variable.}
Since \(x=a_tu\), we have
\[
    \partial_x\eta_t(a_tu)
    =a_t^{-1}\partial_u\left(\frac{B_t}{A_t}\right)(u),
    \qquad
    p_t(a_tu)a_t\,\rmd u
    =\frac12A_t(u)\,\rmd u .
\]
Using $A_t(u)\ge1/2$ and $a_t^{-2}\ge1$, we get
\[
\begin{aligned}
\int_{\mathbb R}
    |\partial_x\eta_t(x)|^2p_t(x)\,\rmd x
&\ge
\int_{-1/2}^{1/2}
    |\partial_x\eta_t(a_tu)|^2p_t(a_tu)a_t\,\rmd u\\
&\ge
\frac14
\int_{-1/2}^{1/2}
\left|
    \partial_u\left(\frac{B_t}{A_t}\right)(u)
\right|^2\,\rmd u
\ge
\frac1{32}\lambda_k^2e^{-\lambda_k^2\tau_t}.
\end{aligned}
\]
Finally, since \(0<t\le1/64\),
\(\tau_t=e^t\sigma_t^2\le2\sigma_t^2\).
Thus,
\[
\int_{\mathbb R}
|\partial_x\eta_t(x)|^2p_t(x)\,\rmd x
\ge
    \frac1{32}\lambda_k^2e^{-\lambda_k^2\tau_t}
    \ge
    \frac1{32}\lambda_k^2e^{-2\lambda_k^2\sigma_t^2},
\]
which proves the claimed lower bound \eqref{eq:ou-mode-persistence-general}.
\end{proof}

\subsection{Proof of Proposition~\ref{prop:ou-wasserstein-lower-bound}}
\label{app:intrinsic-one-dimensional-lower-bound}

\begin{proof}
Since \(\pi^2>8\), the assumption \(t_0\le1/(16\pi^2)\) implies
$
t_0<\frac1{128}.
$
We select the largest integer frequency that is uniformly compatible with the
amplitude constraint under \(\varepsilon\le\sqrt{t_0}\). Let
\[
    \kappa_0:=\frac1{2\pi\sqrt{t_0}},
    \qquad
    k:=\lfloor\kappa_0\rfloor,
    \qquad
    \lambda_k=\pi k .
\]
The assumption on \(t_0\) gives \(\kappa_0\ge2\). For every
\(\kappa_0\ge2\), \(2\kappa_0/3\le\lfloor\kappa_0\rfloor\le\kappa_0\).
Indeed, this is immediate when \(2\le\kappa_0<3\), while for
\(\kappa_0\ge3\),
\(\lfloor\kappa_0\rfloor\ge\kappa_0-1\ge2\kappa_0/3\). Therefore, with
\(y:=\lambda_k^2t_0=k^2/(4\kappa_0^2)\), we have
\begin{equation}
\label{eq:selected-frequency-window}
    \frac19\le y\le\frac14.
\end{equation}

Set
$
    \alpha:=\varepsilon\lambda_k.
$
By \eqref{eq:selected-frequency-window} and
\(\varepsilon\le\sqrt{t_0}\),
\[
    0<\alpha
    \le
    \sqrt{t_0}\lambda_k
    =
    \sqrt y
    \le
    \frac12.
\]
By Lemma~\ref{lem:oscillatory-wasserstein-size},
\[
    \wass_2(p_0,q_0^\pm)
    \le
    \frac{\alpha}{\lambda_k}
    =
    \varepsilon.
\]
Thus \(q_0^\pm\) are valid probability densities supported on \([-1,1]\) and
satisfy
$
    q_0^\pm\in\mathcal Q_\varepsilon(p_0).
$
First fix an arbitrary deterministic measurable score field \(s\). For
\(t\in[t_0,2t_0]\),
we have \(t\le2t_0<1/64\), \(\lambda_k^2t\le2y\le1/2<1\), and
\(2\lambda_k^2\sigma_t^2\le4y\). Hence
Lemma~\ref{lem:ou-mode-persistence} and
Lemma~\ref{lem:two-point-score-separation} give
\begin{align*}
    \int_{\mathbb R}
        |\partial_x\eta_t(x)|^2p_t(x)\,\rmd x
    &\ge
    \frac1{32}\lambda_k^2e^{-4y},\\
    R_+(t,s)+R_-(t,s)
    &\ge
    \frac1{16}\alpha^2\lambda_k^2e^{-4y}.
\end{align*}
Therefore
\[
\begin{aligned}
\frac12\left[
\mathcal L_{q_0^+}(s)+\mathcal L_{q_0^-}(s)
\right]
&=
\frac1{2(T-t_0)}
\int_{t_0}^{T}
\bigl(R_+(t,s)+R_-(t,s)\bigr)\,\rmd t
\\
&\ge
\frac1{2(T-t_0)}
\int_{t_0}^{2t_0}
\bigl(R_+(t,s)+R_-(t,s)\bigr)\,\rmd t
\\
&\ge
\frac1{32}
\frac{\alpha^2\lambda_k^2t_0}{T-t_0}e^{-4y}
\\
&=
\frac{\varepsilon^2}{32t_0(T-t_0)}y^2e^{-4y}.
\end{aligned}
\]
The map \(y\mapsto y^2e^{-4y}\) is increasing on \([1/9,1/4]\), since
\(\frac{\rmd}{\rmd y}(y^2e^{-4y})=2y(1-2y)e^{-4y}\ge0\).
Using \eqref{eq:selected-frequency-window}, we conclude that
\[
\frac12\left[
\mathcal L_{q_0^+}(s)+\mathcal L_{q_0^-}(s)
\right]
\ge
\frac{e^{-4/9}}{2592}
\frac{\varepsilon^2}{t_0(T-t_0)}.
\]
Now let \(\hat s\) be any possibly random score estimator.
Applying the preceding inequality to each realization and then taking expectation gives
\[
\frac12\sum_{\xi\in\{+,-\}}
\mathbb E\left[\mathcal L_{q_0^\xi}(\hat s)\right]
\ge
\frac{e^{-4/9}}{2592}
\frac{\varepsilon^2}{t_0(T-t_0)}.
\]
Since both alternatives belong to \(\mathcal Q_\varepsilon(p_0)\),
\[
\sup_{q_0\in\mathcal Q_\varepsilon(p_0)}
\mathbb E\left[\mathcal L_{q_0}(\hat s)\right]
\ge
\max_{\xi\in\{+,-\}}
\mathbb E\left[\mathcal L_{q_0^\xi}(\hat s)\right]
\ge
\frac12\sum_{\xi\in\{+,-\}}
\mathbb E\left[\mathcal L_{q_0^\xi}(\hat s)\right].
\]
Taking the infimum over \(\hat s\) proves
\eqref{eq:one-dimensional-wasserstein-lower-bound}.
\end{proof}

\subsection{Proofs of Theorems~\ref{thm:fixed-dimensional-lower-main}
and~\ref{thm:finite-sample-minimax-optimality}}
\label{app:score-lower-and-minimax}

We finally lift the preceding one-dimensional construction to arbitrary fixed
dimension through a product embedding.

\begin{proofof}{Theorem~\ref{thm:fixed-dimensional-lower-main}}
The case \(d=1\) is exactly
Proposition~\ref{prop:ou-wasserstein-lower-bound}. We therefore assume
\(d\ge2\).

Let \(q_{0,1}^{\pm}\) be the one-dimensional alternatives constructed in the
proof of Proposition~\ref{prop:ou-wasserstein-lower-bound}. Thus they use the
same integer \(k\), frequency \(\lambda_k=\pi k\), and amplitude
\(\alpha=\varepsilon\lambda_k\). With \(y:=\lambda_k^2t_0\), they satisfy
\(1/9\le y\le1/4\) and \(0<\alpha\le1/2\).
Define the \(d\)-dimensional embeddings by
\[
    q_0^{\pm,(d)}(x)
    :=
    q_{0,1}^{\pm}(x_1)
    \prod_{j=2}^d p_0^{(1)}(x_j).
\]
Then \(\operatorname{supp}(q_0^{\pm,(d)})
=\operatorname{supp}(p_0^{(d)})=[-1,1]^d
\subseteq\mathcal B(0,\sqrt d)\).

We next verify the Wasserstein constraint. Let \(\pi\) be a coupling of
\(q_{0,1}^{\pm}\) and \(p_0^{(1)}\). Let
\(Y_2,\ldots,Y_d\) be independent with common law \(p_0^{(1)}\), independent
of \((U,V)\sim\pi\). Then
\(X=(U,Y_2,\ldots,Y_d)\) and \(Y=(V,Y_2,\ldots,Y_d)\) form a coupling of
\(q_0^{\pm,(d)}\) and \(p_0^{(d)}\), with \(\|X-Y\|_2^2=|U-V|^2\).
Taking the infimum over all one-dimensional couplings \(\pi\) gives
\[
    \wass_2\bigl(q_0^{\pm,(d)},p_0^{(d)}\bigr)
    \le
    \wass_2\bigl(q_{0,1}^{\pm},p_0^{(1)}\bigr)
    \le
    \varepsilon.
\]
The preceding Wasserstein bound therefore gives
$
 q_0^{\pm,(d)}\in\mathcal Q_\varepsilon(p_0^{(d)}).
$
Let \(q_{t,1}^{\pm}\) and \(p_t^{(1)}\) denote the one-dimensional OU
evolutions of \(q_{0,1}^{\pm}\) and \(p_0^{(1)}\), respectively. Since the
\(d\)-dimensional OU semigroup factorizes across coordinates, set
\begin{align*}
    q_t^{\pm,(d)}&:=P_t^{(d)}q_0^{\pm,(d)},
    \qquad
    p_t^{(d)}:=P_t^{(d)}p_0^{(d)},\\
    q_t^{\pm,(d)}(x)
    &=q_{t,1}^{\pm}(x_1)\prod_{j=2}^d p_t^{(1)}(x_j),
    \qquad
    p_t^{(d)}(x)=\prod_{j=1}^d p_t^{(1)}(x_j).
\end{align*}
For \(x=(x_1,\ldots,x_d)\in\mathbb R^d\), write
\(x_{2:d}:=(x_2,\ldots,x_d)\in\mathbb R^{d-1}\) and set
\(p_{t,\perp}(x_{2:d}):=\prod_{j=2}^d p_t^{(1)}(x_j)\).
Then \(p_{t,\perp}\) is a probability density on \(\mathbb R^{d-1}\), and
$
    q_t^{\pm,(d)}(x)
    =
    q_{t,1}^{\pm}(x_1)p_{t,\perp}(x_{2:d}).
$
The logarithm of this product factorizes additively, and therefore
\[
    \nabla\log q_t^{\pm,(d)}(x)
    =
    \left(
        \partial_x\log q_{t,1}^{\pm}(x_1),
        \partial_x\log p_t^{(1)}(x_2),
        \ldots,
        \partial_x\log p_t^{(1)}(x_d)
    \right).
\]
Thus the score separation occurs only in the first coordinate:
\[
    \nabla\log q_t^{+,(d)}(x)
    -
    \nabla\log q_t^{-,(d)}(x)
    =
    \left(
        \partial_x\log q_{t,1}^{+}(x_1)
        -
        \partial_x\log q_{t,1}^{-}(x_1),
        0,\ldots,0
    \right).
\]

For a deterministic measurable vector-valued score field \(s\), define
\[
    R_\xi^{(d)}(t,s)
    :=
    \mathbb E_{q_t^{\xi,(d)}}
    \left[
        \left\|
            s(X,t)-\nabla\log q_t^{\xi,(d)}(X)
        \right\|_2^2
    \right],
    \qquad \xi\in\{+,-\}.
\]
The Hilbert-space version of the same two-point inequality used in
Lemma~\ref{lem:two-point-score-separation} gives
\[
\begin{aligned}
R_+^{(d)}(t,s)+R_-^{(d)}(t,s)
&\ge
\int_{\mathbb R^d}
\frac{
    q_t^{+,(d)}(x)q_t^{-,(d)}(x)
}{
    q_t^{+,(d)}(x)+q_t^{-,(d)}(x)
}
\left\|
    \nabla\log q_t^{+,(d)}(x)
    -
    \nabla\log q_t^{-,(d)}(x)
\right\|_2^2\,\rmd x .
\end{aligned}
\]
The overlap factor satisfies
\[
\frac{
    q_t^{+,(d)}(x)q_t^{-,(d)}(x)
}{
    q_t^{+,(d)}(x)+q_t^{-,(d)}(x)
}
=
\frac{
    q_{t,1}^{+}(x_1)q_{t,1}^{-}(x_1)
}{
    q_{t,1}^{+}(x_1)+q_{t,1}^{-}(x_1)
}
p_{t,\perp}(x_{2:d}).
\]
Since \(p_{t,\perp}\) integrates to one, the resulting \(d\)-dimensional lower bound
reduces exactly to the one-dimensional two-point separation:
\[
    R_+^{(d)}(t,s)+R_-^{(d)}(t,s)
    \ge
    2\alpha^2
    \int_{\mathbb R}
        |\partial_x\eta_t(x_1)|^2p_t^{(1)}(x_1)\,\rmd x_1 .
\]
For \(t\in[t_0,2t_0]\), the same bounds as in the one-dimensional proof give
\[
    t\le2t_0<\frac1{64},
    \qquad
    \lambda_k^2t\le2y\le\frac12<1,
    \qquad
    2\lambda_k^2\sigma_t^2\le4y.
\]
Hence Lemma~\ref{lem:ou-mode-persistence} and the preceding two-point bound give
\begin{align*}
    \int_{\mathbb R}
        |\partial_x\eta_t(x_1)|^2p_t^{(1)}(x_1)\,\rmd x_1
    &\ge
    \frac1{32}\lambda_k^2e^{-4y},\\
    R_+^{(d)}(t,s)+R_-^{(d)}(t,s)
    &\ge
    \frac1{16}\alpha^2\lambda_k^2e^{-4y}.
\end{align*}

Therefore, for every deterministic measurable vector-valued score field \(s\),
\[
\begin{aligned}
\frac12\left[
\mathcal L_{q_0^{+,(d)}}(s)+
\mathcal L_{q_0^{-,(d)}}(s)
\right]
&=
\frac1{2(T-t_0)}
\int_{t_0}^{T}
\bigl(R_+^{(d)}(t,s)+R_-^{(d)}(t,s)\bigr)\,\rmd t
\\
&\ge
\frac1{2(T-t_0)}
\int_{t_0}^{2t_0}
\bigl(R_+^{(d)}(t,s)+R_-^{(d)}(t,s)\bigr)\,\rmd t
\\
&\ge
\frac1{32}
\frac{\alpha^2\lambda_k^2t_0}{T-t_0}e^{-4y}
\\
&=
\frac{\varepsilon^2}{32t_0(T-t_0)}y^2e^{-4y}
\ge
\frac{e^{-4/9}}{2592}
\frac{\varepsilon^2}{t_0(T-t_0)} .
\end{aligned}
\]
The last step uses the monotonicity of \(y^2e^{-4y}\) on
\([1/9,1/4]\), as in the one-dimensional proof. The numerical constant
\(e^{-4/9}/2592\) is
independent of \(t_0\), \(\varepsilon\), \(T\), \(d\), and the estimator \(s\).
Now let \(\hat s\) be any possibly random score estimator.
Taking expectation in the preceding inequality and using
\(q_0^{\pm,(d)}\in\mathcal Q_\varepsilon(p_0^{(d)})\) gives
\[
\sup_{q_0\in\mathcal Q_\varepsilon(p_0^{(d)})}
\mathbb E\left[\mathcal L_{q_0}(\hat s)\right]
\ge
\frac12\sum_{\xi\in\{+,-\}}
\mathbb E\left[\mathcal L_{q_0^{\xi,(d)}}(\hat s)\right]
\ge
\frac{e^{-4/9}}{2592}
\frac{\varepsilon^2}{t_0(T-t_0)}.
\]
Taking the infimum over \(\hat s\) proves
\eqref{eq:fixed-dimensional-lower-main}.
\end{proofof}

\begin{proofof}{Theorem~\ref{thm:finite-sample-minimax-optimality}}
The assumed bound on $t_0$ implies $\tau_0\le1$. Applying
\eqref{eq:rate-finite-sample-robust-bound} with $\vartheta^2=D^2/d$, and using
$n+1\ge n$, $\tau_0\ge t_0$, and $C_{d,D}\ge2$, gives the stated bound
uniformly over $p_0\in\mathcal P_D$. Taking the supremum over $p_0$ and then
the infimum over estimators proves the minimax upper bound.

For the lower bound, set
\(A:=1/[n(T-t_0)\tau_0^{d/2}]\) and
\(B:=\varepsilon^2/[t_0(T-t_0)]\).
Theorem~\ref{thm:integrated-statistical-lower-bound} and the sample-rich condition give
$\mathfrak R_{n,\varepsilon}^{\star}(D;t_0,T)\ge c_1A$. Because $D\ge\sqrt d$, the reference law
$p_0^{(d)}$ belongs to $\mathcal P_D$. Theorem~\ref{thm:fixed-dimensional-lower-main} therefore gives
$\mathfrak R_{n,\varepsilon}^{\star}(D;t_0,T)\ge(e^{-4/9}/2592)B$ for $\varepsilon>0$.
For $\varepsilon=0$, this bound holds trivially because $B=0$. Consequently,
\[
\begin{aligned}
  \mathfrak R_{n,\varepsilon}^{\star}(D;t_0,T)
  \ge
  (c_1A)\vee\left(\frac{e^{-4/9}}{2592}B\right)
  \ge
  \frac12\min\left\{c_1,\frac{e^{-4/9}}{2592}\right\}(A+B)
  =
  c_{d,D}(A+B).
\end{aligned}
\]
This is the lower bound in \eqref{eq:combined-finite-sample-minimax-rate}.
\end{proofof}

\subsection{Proof of Theorem~\ref{thm:robust-kl-minimax}}
\label{app:robust-kl-lower-and-minimax}

\begin{proposition}[Intrinsic positive-time KL shift cost]
\label{prop:kl-shift-lower}
Fix $d\ge1$ and $D>0$. For every $n\ge1$, $t_0>0$, and
$0\le\varepsilon\le\sqrt{\tau_0}$,
\begin{equation}
  \mathfrak K_{n,\varepsilon}^{\star}(D;t_0)
  \ge
  \frac{e^{-1}}{\pi}\frac{\varepsilon^2}{\tau_0}.
  \label{eq:kl-shift-lower}
\end{equation}
\end{proposition}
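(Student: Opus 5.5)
Since the estimator $\hat\mu_n$ depends only on samples from $p_0$, I would fix a single reference law and two shifted alternatives that are indistinguishable from the data's point of view, and then show that no single output measure can be KL-close to both. The "two-point Gaussian argument" suggests the simplest choice: $p_0=\delta_0\in\mathcal P_D$ for every $D>0$, with the shifts $q_0^\pm=\delta_{\pm\varepsilon e_1}$. Then $\wass_2(q_0^\pm,p_0)=\varepsilon$, so both alternatives lie in $\mathcal Q_\varepsilon(p_0)$. Their smoothed marginals are $q_{t_0}^\pm=\mathcal N(\pm a_{t_0}\varepsilon e_1,\rho_{t_0}I_d)$. The sample law is $\delta_0^{\otimes n}$ in both cases, so the law of $\hat\mu_n$ (including internal randomness) is identical under $q_0^+$ and $q_0^-$. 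This gives
\[
\mathfrak K_{n,\varepsilon}^\star(D;t_0)\ge \inf_{\mu}\tfrac12\bigl[\mathrm{KL}(q_{t_0}^+\|\mu)+\mathrm{KL}(q_{t_0}^-\|\mu)\bigr],
\]
after bounding $\sup$ by the average and pulling the expectation outside, where the infimum is over a deterministic probability measure $\mu$.

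Next I would solve this deterministic problem. I would use the identity $\frac12[\mathrm{KL}(P\|\mu)+\mathrm{KL}(Q\|\mu)]=\mathrm{JS}(P,Q)+\mathrm{KL}(M\|\mu)$ with $M=\frac12(P+Q)$. Hence the infimum equals the Jensen--Shannon divergence $\mathrm{JS}(q^+_{t_0},q^-_{t_0})$, attained at $\mu=M$ (if $\mu\not\ll$ Lebesgue, the KL is infinite, which only helps). After projecting onto $e_1$ and rescaling, this is the JS divergence between $\mathcal N(\pm\delta,1)$ with $\delta:=a_{t_0}\varepsilon/\sqrt{\rho_{t_0}}=\varepsilon/\sqrt{\tau_0}\le1$, by the hypothesis $\varepsilon\le\sqrt{\tau_0}$.

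The remaining step, and the main technical point, is a quantitative lower bound $\mathrm{JS}(\mathcal N(\delta,1),\mathcal N(-\delta,1))\ge \frac{e^{-1}}{\pi}\delta^2$ for $\delta\in[0,1]$. I would lower bound JS by a Hellinger- or triangular-discrimination-type quantity, namely $\mathrm{JS}(P,Q)\ge \frac14\int\frac{(P-Q)^2}{P+Q}$. Then I would bound that integral pointwise: with $\phi$ the standard normal density, $|\phi(x-\delta)-\phi(x+\delta)|=2\phi(x)e^{-\delta^2/2}\sinh(|x|\delta)$ and $\phi(x-\delta)+\phi(x+\delta)=2\phi(x)e^{-\delta^2/2}\cosh(x\delta)$. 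The integrand therefore equals $2\phi(x)e^{-\delta^2/2}\sinh^2(x\delta)/\cosh(x\delta)$. Using $\sinh^2/\cosh\ge\tanh^2\cdot\cosh\ge (x\delta)^2\cdot c$ on $|x|\le 1$, or simply $\sinh^2(y)/\cosh(y)\ge y^2/\cosh(y)$ with $e^{-\delta^2/2}\ge e^{-1/2}$, gives a bound of the form $c\,\delta^2$. I would tune the truncation or the elementary inequalities to land on the constant $e^{-1}/\pi$. The explicit $1/\pi$ suggests restricting to a window and using the Gaussian density value $(2\pi)^{-1/2}e^{-1/2}$ there. Finally, the case $\varepsilon=0$ is trivial, and substituting $\delta^2=\varepsilon^2/\tau_0$ yields \eqref{eq:kl-shift-lower}.
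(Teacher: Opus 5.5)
Your reduction is correct and is the paper's: $p_0=\delta_0$ and $q_0^\pm=\delta_{\pm\varepsilon e_1}$. The observation law does not depend on the alternative, and bounding the supremum by the average reduces everything to a deterministic two-point problem for $q_{t_0}^\pm=\mathcal N(\pm a_{t_0}\varepsilon e_1,\rho_{t_0}I_d)$. The compensation identity is valid, so the inner infimum is exactly $\mathrm{JS}(q_{t_0}^+,q_{t_0}^-)$. The bound $\mathrm{JS}\ge\frac14\Delta$ with $\Delta:=\int(p-q)^2/(p+q)$ also holds, since pointwise $(1+u)\log(1+u)+(1-u)\log(1-u)\ge u^2$.

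\textbf{The gap.} The final step is the only place where the constant $e^{-1}/\pi$ is decided, and you leave it at ``tune until it lands.'' The routes you name do not get there.
\begin{itemize}
\item The window $|x|\le1$ cannot work. For $|x|\le1$ and $\delta\le1$ one has $\sinh^2(x\delta)/\cosh(x\delta)\le(x\delta)^2$. Hence the contribution of $\{|x|\le1\}$ to $\frac14\Delta$ is at most $\frac12\delta^2\,\mathbb E[X^2\mathbf 1_{\{|X|\le1\}}]\approx0.099\,\delta^2$, which is below $e^{-1}\delta^2/\pi\approx0.117\,\delta^2$.
\item The global route via $\sinh^2y\ge y^2$ and $e^{-\delta^2/2}\ge e^{-1/2}$ needs $\mathbb E[X^2\operatorname{sech}(X\delta)]\ge2e^{-1/2}/\pi\approx0.386$ at $\delta=1$. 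This is true numerically (about $0.44$), but you give no argument. The natural analytic bound $\cosh y\le e^{y^2/2}$ yields only $(1+\delta^2)^{-3/2}\approx0.354$ there.
\item Keeping $\sinh^2y\ge y^2+y^4/3$ together with $\cosh y\le e^{y^2/2}$ does rescue your route. It gives $\frac14\Delta\ge\frac12e^{-\delta^2/2}\delta^2(1+2\delta^2)(1+\delta^2)^{-5/2}\ge0.16\,\delta^2$ on $[0,1]$. But this is not what you wrote.
\end{itemize}

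\textbf{The clean fix.} The natural closure, and the actual source of the $1/\pi$, is total variation.
\begin{itemize}
\item By Cauchy--Schwarz, $\|p-q\|_{L^1}^2\le2\Delta$, i.e.\ $\frac14\Delta\ge\frac12d_{\mathrm{TV}}(P,Q)^2$. Equivalently, Pinsker applied to $\mathrm{KL}(P\|M)$, with $d_{\mathrm{TV}}(P,M)=\frac12d_{\mathrm{TV}}(P,Q)$, gives $\mathrm{JS}\ge\frac12d_{\mathrm{TV}}^2$ directly.
\item The half-space $\{z_1\ge0\}$ gives $d_{\mathrm{TV}}(q_{t_0}^+,q_{t_0}^-)\ge2\Phi(\delta)-1=2\int_0^\delta\phi\ge2\phi(1)\delta$ for $\delta\le1$.
\item Hence $\mathrm{JS}\ge2\phi(1)^2\delta^2=e^{-1}\delta^2/\pi$.
\end{itemize}

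\textbf{Comparison with the paper.} The paper never forms the JS divergence. It applies Pinsker to each $\mathrm{KL}(q_{t_0}^\pm\|\mu)$, then uses $2a^2+2b^2\ge(a+b)^2$ and the TV triangle inequality to reach the same $\frac12d_{\mathrm{TV}}^2$. Your formulation identifies the inner infimum exactly, so it is never weaker. For the stated constant, however, it only pays off once you close it through TV or an explicit sharper inequality as above.
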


\begin{proof}
The case $\varepsilon=0$ is immediate, so assume
$0<\varepsilon\le\sqrt{\tau_0}$. Fix an arbitrary measurable, possibly
randomized probability-measure estimator $\hat\mu_n$ based on $n$ reference
samples. Take $p_0:=\delta_0\in\mathcal P_D$ and
$q_0^\pm:=\delta_{\pm\varepsilon e_1}$, where $e_1$ is the first standard
basis vector. Then $\wass_2(q_0^\pm,p_0)=\varepsilon$, hence
$q_0^\pm\in\mathcal Q_\varepsilon(p_0)$. Under $p_0$, all reference samples
vanish almost surely, so the observation law is identical for the two target
alternatives and the only randomness in $\hat\mu_n$ is internal.
Moreover,
$q_{t_0}^\pm=\mathcal N(\pm a_{t_0}\varepsilon e_1,\rho_{t_0}I_d)$.
Let $\Phi$ and $\phi$ be the standard Gaussian distribution function and
density, and set
$x:=a_{t_0}\varepsilon/\sqrt{\rho_{t_0}}
=\varepsilon/\sqrt{\tau_0}\in(0,1]$.
Let $d_{\mathrm{TV}}(\mu,\nu):=\sup_{A\text{ Borel}}|\mu(A)-\nu(A)|$
denote the total variation distance. Taking the half-space
$A:=\{z\in\mathbb R^d:z_1\ge0\}$ gives
\begin{equation}
\begin{aligned}
  d_{\mathrm{TV}}(q_{t_0}^+,q_{t_0}^-)
  &\ge \lvert q_{t_0}^+(A)-q_{t_0}^-(A)\rvert
  =2\Phi(x)-1\\
  &=2\int_0^x\phi(u)\,\rmd u\ge 2\phi(1)x
  =2\phi(1)\frac{\varepsilon}{\sqrt{\tau_0}}.
\end{aligned}
\label{eq:kl-two-gaussian-tv}
\end{equation}

For any probability measure $\mu$, Pinsker's inequality and the triangle
inequality for total variation imply
\[
\begin{aligned}
  \mathrm{KL}(q_{t_0}^+\|\mu)
  +\mathrm{KL}(q_{t_0}^-\|\mu)
  &\ge
  2d_{\mathrm{TV}}(q_{t_0}^+,\mu)^2
  +2d_{\mathrm{TV}}(q_{t_0}^-,\mu)^2
  \\
  &\ge \left[
    d_{\mathrm{TV}}(q_{t_0}^+,\mu)
    +d_{\mathrm{TV}}(q_{t_0}^-,\mu)
  \right]^2
  \ge d_{\mathrm{TV}}(q_{t_0}^+,q_{t_0}^-)^2,
\end{aligned}
\]
with the inequality understood trivially if either KL divergence is infinite.
Applying this pointwise to each realization of $\hat\mu_n$, taking expectation
over its internal randomness, and using
\eqref{eq:kl-two-gaussian-tv} yields
\[
  \frac12\sum_{\xi\in\{+,-\}}
  \mathbb E\left[\mathrm{KL}(q_{t_0}^\xi\|\hat\mu_n)\right]
  \ge
  \frac12d_{\mathrm{TV}}(q_{t_0}^+,q_{t_0}^-)^2
  \ge 2\phi(1)^2\frac{\varepsilon^2}{\tau_0}
   =\frac{e^{-1}}{\pi}\frac{\varepsilon^2}{\tau_0}.
\]
The supremum over $q_0\in\mathcal Q_\varepsilon(p_0)$ is at least the
maximum, hence the average, of these two risks. Taking the infimum over
$\hat\mu_n$ proves \eqref{eq:kl-shift-lower}.
\end{proof}

\begin{proofof}{Theorem~\ref{thm:robust-kl-minimax}}
Take \(c_0\) and \(c_{d,D}^{\mathrm{KL}}\) as in
Theorem~\ref{thm:nominal-kl-minimax}, and set
\[
  \tilde c_{d,D}
  :=\frac12\min\left\{
    c_{d,D}^{\mathrm{KL}},
    \frac{e^{-1}}{\pi}
  \right\}.
\]
Set $A:=1/(n\tau_0^{d/2})$ and $B:=\varepsilon^2/\tau_0$.
Since $p_0\in\mathcal Q_\varepsilon(p_0)$, the robust minimax risk dominates
its nominal counterpart. Theorem~\ref{thm:nominal-kl-minimax} and the
sample-rich condition therefore give
$\mathfrak K_{n,\varepsilon}^{\star}(D;t_0)
\ge c_{d,D}^{\mathrm{KL}}A$. Proposition~\ref{prop:kl-shift-lower} also gives
$\mathfrak K_{n,\varepsilon}^{\star}(D;t_0)\ge(e^{-1}/\pi)B$. Consequently,
\[
\begin{aligned}
  \mathfrak K_{n,\varepsilon}^{\star}(D;t_0)
  &\ge \bigl(c_{d,D}^{\mathrm{KL}}A\bigr)
  \vee\left(\frac{e^{-1}}{\pi}B\right)
  \ge \frac12\min\left\{c_{d,D}^{\mathrm{KL}},\frac{e^{-1}}{\pi}\right\}(A+B)
  =\tilde c_{d,D}(A+B).
\end{aligned}
\]

For the upper bound, use the self-consistent Gaussian-blanket output
$\hat\mu_n(\rmd x)=\hat p_{n,t_0}^{D/\sqrt d}(x)\,\rmd x$. Since
$t_0\le c_0\le\log2$, one has $\tau_0\le1$, and
\eqref{eq:self-consistent-kl-sampling-bound} gives, uniformly over
$p_0\in\mathcal P_D$ and $q_0\in\mathcal Q_\varepsilon(p_0)$,
$\mathbb E[\mathrm{KL}(q_{t_0}\|\hat\mu_n)]
\le B+C_{d,D}/[2(n+1)\tau_0^{d/2}]
\le C_{d,D}(A+B)/2$, where the last step uses $n+1\ge n$ and
$C_{d,D}\ge2$. Taking the relevant
suprema and then the infimum over estimators proves the upper bound in
\eqref{eq:robust-kl-minimax-rate}.
\end{proofof}

\section{Proofs for Adaptation to Intrinsic Dimension}
\label{app:unknown-subspace}

This appendix proves the results of Section~\ref{sec:unknown-subspace}.
The upper bound combines two ingredients: a bound on the source energy
missed by the empirical span, and an OU comparison that separates this error
from estimation within the span. Conditional Gaussian-blanket estimates then
give Theorem~\ref{thm:unknown-subspace-upper}. The final subsection
establishes the intrinsic-dimensional minimax rate for score estimation.
The corresponding positive-time KL results are given in
Appendix~\ref{app:robust-reverse-sampling}.

Throughout this appendix, write
\(
  \tau_t:=\rho_t/a_t^2=e^t-1
\)
for $t>0$, retaining $\tau_0=e^{t_0}-1$ for its value at $t_0$.

\subsection{Missed Energy of the Empirical Span}
\label{app:unknown-subspace-energy}

\begin{lemma}[Missed source energy]
\label{lem:unknown-subspace-energy}
Let $X_1,X_2,\ldots$ be i.i.d.\ samples from the law of a random vector
$X\in\mathbb R^d$ satisfying $\|X\|_2\le D$ almost surely for some $D>0$.
Suppose that the second-moment matrix
$C:=\mathbb E[XX^\top]$ has rank at most $k\ge1$. If $\Pi_j$ is the
orthogonal projector onto $\operatorname{span}(X_1,\ldots,X_j)$, with
$\Pi_0=0$, then, for every integer $m\ge0$,
\begin{equation}
  \mathbb E\operatorname{tr}[C(I_d-\Pi_m)]
  \le \frac{kD^2}{m+k}.
  \label{eq:unknown-subspace-energy-bound}
\end{equation}
\end{lemma}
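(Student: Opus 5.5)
We will track the random missed energy along the sequential span process and show that it contracts in expectation by a quadratic amount at every step. The bound $kD^2/(m+k)$ then follows from an elementary recursion. Set $U_j:=\operatorname{tr}[C(I_d-\Pi_j)]$, let $\mathcal F_j:=\sigma(X_1,\ldots,X_j)$, and let $u_j:=\mathbb E U_j$. At $j=0$ we have $u_0=\operatorname{tr}C=\mathbb E\|X\|_2^2\le D^2=kD^2/(0+k)$, so the claim holds there.

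\emph{Step 1: exact one-step decrement.} Write $v:=(I_d-\Pi_j)X_{j+1}$. If $v=0$, then $\Pi_{j+1}=\Pi_j$. Otherwise $\Pi_{j+1}=\Pi_j+vv^\top/\|v\|_2^2$, and hence
\[
U_j-U_{j+1}=\frac{v^\top Cv}{\|v\|_2^2}\,\mathbf 1_{\{v\ne0\}}\ge\frac{v^\top Cv}{D^2}.
\]
The inequality uses $\|v\|_2\le\|X_{j+1}\|_2\le D$, and it holds trivially when $v=0$.

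\emph{Step 2: conditional expectation.} Since $X_{j+1}$ is independent of $\mathcal F_j$,
\[
\mathbb E[v^\top Cv\mid\mathcal F_j]=\operatorname{tr}\bigl[(I-\Pi_j)C(I-\Pi_j)C\bigr]=\operatorname{tr}(C_j^2),
\qquad
C_j:=C^{1/2}(I-\Pi_j)C^{1/2}.
\]
Here $C_j$ is positive semidefinite with rank at most $\operatorname{rank}C\le k$, and $\operatorname{tr}C_j=U_j$. Cauchy--Schwarz applied to its at most $k$ nonzero eigenvalues gives $\operatorname{tr}(C_j^2)\ge U_j^2/k$. Combining this with Step 1,
\[
\mathbb E[U_{j+1}\mid\mathcal F_j]\le g(U_j),
\qquad
g(x):=x-\frac{x^2}{kD^2}.
\]

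\emph{Step 3: recursion.} The function $g$ is concave, so Jensen's inequality gives $u_{j+1}\le g(u_j)$. Put $a:=kD^2$; it suffices to show $g(u_j)\le a/(j+k+1)$ whenever $u_j\le a/(j+k)$.

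If $j+k\ge2$, then $u_j\le a/(j+k)\le a/2$. The function $g$ is increasing on $[0,a/2]$, so
\[
g(u_j)\le g\!\left(\frac{a}{j+k}\right)=\frac{a}{j+k}-\frac{a}{(j+k)^2}\le\frac{a}{j+k+1},
\]
where the last inequality follows from $(j+k)^2\ge(j+k)(j+k+1)-(j+k+1)$, that is, $(j+k+1)(j+k-1)\le(j+k)^2$.

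The only other case is $j=0$, $k=1$. There $\max_x g(x)=a/4\le a/2=a/(0+1+1)$, so the bound still holds. Induction on $m$ then yields \eqref{eq:unknown-subspace-energy-bound}.

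The main point to get right is Step 2. The decrement must be lower bounded by a quantity quadratic in $U_j$ with constant exactly $1/(kD^2)$. This needs two ingredients together: the denominator bound $\|v\|_2^2\le D^2$, and the rank-$k$ trace inequality $\operatorname{tr}(C_j^2)\ge(\operatorname{tr}C_j)^2/k$. The second ingredient is where the assumption that $C$ has rank at most $k$ enters. Everything else is bookkeeping, including the concavity step that passes from conditional to unconditional expectations.
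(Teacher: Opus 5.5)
Your proof is correct and follows the paper's argument step for step: the rank-one projector update with $\|v\|_2\le D$, the conditional decrement $\operatorname{tr}[(Q_jCQ_j)^2]$ (your $C_j=C^{1/2}Q_jC^{1/2}$ has the same nonzero spectrum as $Q_jCQ_j$), the rank-$k$ Cauchy--Schwarz bound, and Jensen via concavity of $g$. The only difference is in how the scalar recursion is closed: you use induction and the monotonicity of $g$ on $[0,kD^2/2]$, treating $j=0,k=1$ separately, whereas the paper sets aside the case $b_m=0$ and telescopes reciprocals, $1/b_{j+1}\ge 1/b_j+1/(kD^2)$.
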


\begin{proof}
Set $Q_j:=I_d-\Pi_j$,
$e_j:=\operatorname{tr}(CQ_j)$, and $b_j:=\mathbb E e_j$.
Since $Q_j^\top=Q_j^2=Q_j$,
\[
  e_j=\operatorname{tr}(Q_jCQ_j)
  =\int\|Q_jx\|_2^2\,P_X(\rmd x),
\]
where $P_X$ denotes the law of $X$. Thus $e_j$ is the source energy outside
the observed span. The next observation
$X_{j+1}$ is independent of $X_1,\ldots,X_j$. Set $z:=Q_jX_{j+1}$.
On $\{z\ne0\}$, the rank-one projector update and
$\|z\|_2\le\|X_{j+1}\|_2\le D$ give
\[
  \Pi_{j+1}=\Pi_j+\frac{zz^\top}{\|z\|_2^2},
  \qquad
  e_j-e_{j+1}=\frac{z^\top Cz}{\|z\|_2^2}
  \ge\frac{z^\top Cz}{D^2}.
\]
On $\{z=0\}$, the span is unchanged and the same inequality holds with both
sides equal to zero. Since
$\mathbb E[zz^\top\mid X_1,\ldots,X_j]=Q_jCQ_j$,
\begin{align}
  \mathbb E[e_{j+1}\mid X_1,\ldots,X_j]
  &\le e_j-\frac{\operatorname{tr}(CQ_jCQ_j)}{D^2}\notag\\
  &=e_j-\frac{\operatorname{tr}[(Q_jCQ_j)^2]}{D^2}
  \le e_j-\frac{e_j^2}{kD^2}.
  \label{eq:unknown-subspace-energy-recursion-conditional}
\end{align}
Indeed, $Q_jCQ_j$ is positive semidefinite and has rank $r\le k$.
Writing $\lambda_1,\ldots,\lambda_r$ for its nonzero eigenvalues,
Cauchy--Schwarz gives
\[
  e_j^2=\left(\sum_{i=1}^r\lambda_i\right)^2
  \le r\sum_{i=1}^r\lambda_i^2
  \le k\operatorname{tr}[(Q_jCQ_j)^2].
\]
Taking expectations and using Jensen's inequality gives
\begin{equation}
  b_{j+1}\le b_j-\frac{b_j^2}{kD^2}.
  \label{eq:unknown-subspace-energy-recursion}
\end{equation}
If $b_m=0$, the claim is immediate. Otherwise,
\eqref{eq:unknown-subspace-energy-recursion} gives $b_j\ge b_m>0$ for
$0\le j\le m$, so, for $j<m$,
\[
  \frac1{b_{j+1}}
  \ge\frac1{b_j(1-b_j/(kD^2))}
  \ge\frac1{b_j}+\frac1{kD^2}.
\]
Since $b_0\le D^2$, iteration yields
\[
  b_m\le\frac{1}{b_0^{-1}+m/(kD^2)}
  \le\frac{kD^2}{m+k}.
\]
\end{proof}

Under Assumptions~\ref{assum:support} and~\ref{assum:unknown-subspace},
$C=U\mathbb E[ZZ^\top]U^\top$ has rank at most $k$, even when the latent
second-moment matrix is singular. Thus the lemma bounds the missed energy
without a lower bound on the nonzero eigenvalues.

\subsection{An Orthogonal OU Comparison}
\label{app:unknown-subspace-oracle}

The projected source law is supported on the empirical span, whereas the
target need not be. The following comparison separates error within the
span from the energy in its orthogonal complement.

\begin{lemma}[Orthogonal KL comparison]
\label{lem:unknown-subspace-oracle}
Fix $t_0>0$. Let $\Pi$ be an orthogonal projector on $\mathbb R^d$
and let $p_0\in\mathcal P_2(\mathbb R^d)$. Define
\[
  p_0^\Pi:=\Pi_\#p_0,
  \qquad
  e_\Pi:=\int\|(I_d-\Pi)x\|_2^2\,p_0(\rmd x).
\]
Let $\hat\mu_0\in\mathcal P_2(\mathbb R^d)$ be supported on
$\operatorname{range}(\Pi)$, and set
$\hat p_t:=P_t\hat\mu_0$. If
$D_2(P_{t_0}p_0^\Pi\|\hat p_{t_0})<\infty$, then, for every
$q_0\in\mathcal P_2(\mathbb R^d)$,
\begin{equation}
  \mathrm{KL}\left(q_{t_0}\middle\|\hat p_{t_0}\right)
  \le
  D_2\left(P_{t_0}p_0^\Pi\middle\|\hat p_{t_0}\right)
  +\frac{\wass_2^2(q_0,p_0)+e_\Pi}{\tau_0},
  \label{eq:unknown-subspace-oracle-kl}
\end{equation}
where $q_t:=P_tq_0$.
\end{lemma}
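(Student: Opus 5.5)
The plan is to repeat the proof of Proposition~\ref{prop:ou-score-oracle}, with $P_{t_0}p_0^\Pi$ taking the role of the reference marginal. Write $p^\Pi_{t_0}:=P_{t_0}p_0^\Pi$. All three densities $q_{t_0}$, $p^\Pi_{t_0}$, $\hat p_{t_0}$ are strictly positive, because each is an OU-smoothed law in $\mathcal P_2(\mathbb R^d)$. Here $p_0^\Pi\in\mathcal P_2$ since $\Pi$ is a contraction. Applying Lemma~\ref{lem:three-density-kl} with $q=q_{t_0}$, $p=p^\Pi_{t_0}$, $r=\hat p_{t_0}$ gives
\[
\mathrm{KL}(q_{t_0}\|\hat p_{t_0})\le 2\,\mathrm{KL}(q_{t_0}\|p^\Pi_{t_0})+D_2(p^\Pi_{t_0}\|\hat p_{t_0}).
\]
This step needs $\mathrm{KL}(q_{t_0}\|p^\Pi_{t_0})<\infty$, and the bound in the next step supplies it.

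The second step is to control $2\,\mathrm{KL}(q_{t_0}\|p^\Pi_{t_0})$ using Lemma~\ref{lem:fixed-time-ou-kl}. That lemma yields $2\,\mathrm{KL}(q_{t_0}\|p^\Pi_{t_0})\le \wass_2^2(q_0,p_0^\Pi)/\tau_0$, using $a_{t_0}^2/\rho_{t_0}=1/\tau_0$. The main obstacle is that the triangle inequality only gives $\wass_2(q_0,p_0^\Pi)\le\wass_2(q_0,p_0)+\sqrt{e_\Pi}$. Squaring this produces a cross term, or a factor $2$, so it does not give the stated additive form $\wass_2^2(q_0,p_0)+e_\Pi$. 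I would therefore replace this step with a coupling argument that exploits orthogonality.

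The coupling argument goes as follows. Take an optimal coupling $(X,Y)$ with $X\sim p_0$ and $Y\sim q_0$, and set $X^\Pi:=\Pi X\sim p_0^\Pi$. Then
\[
\|Y-X^\Pi\|_2^2=\|Y-X\|_2^2+\|(I_d-\Pi)X\|_2^2+2\langle Y-X,(I_d-\Pi)X\rangle ,
\]
and the cross term need not vanish. To remove it, I would split coordinates. Write $Y=\Pi Y+(I_d-\Pi)Y$ and use the fact that the OU semigroup factorizes across $\operatorname{range}(\Pi)$ and its orthogonal complement. Moreover, $p_t^\Pi$ factors as (the smoothed range part) $\otimes\,\mathcal N(0,\rho_t I)$ on $\operatorname{range}(\Pi)^\perp$. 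The chain rule for KL, applied after this decomposition, then splits the divergence into a range-part term and a conditional orthogonal-part term, with the latter a KL against a centered Gaussian. In the coupling bound from Lemma~\ref{lem:fixed-time-ou-kl}, the transition KL becomes $\frac{a^2}{2\rho}\bigl(\|\Pi(Y-X)\|^2+\|(I_d-\Pi)Y\|^2\bigr)$. The first piece is at most $\frac{a^2}{2\rho}\|\Pi(Y-X)\|^2$. For the second, $\|(I_d-\Pi)Y\|^2\le(\|(I_d-\Pi)(Y-X)\|+\|(I_d-\Pi)X\|)^2$. Summing these two pieces still leaves a cross term.

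If no exact orthogonality trick closes the gap, I would fall back on $(a+b)^2\le 2a^2+2b^2$. Together with the factor $2$ from Lemma~\ref{lem:three-density-kl}, this bounds $2\,\mathrm{KL}(q_{t_0}\|p^\Pi_{t_0})$ by $2(\wass_2^2+e_\Pi)/\tau_0$, so the argument at least gives the inequality up to a constant $2$. I expect the intended sharp route to come from a more careful use of orthogonality, and this is the step I regard as the crux. Once it is settled, the bound follows directly by combining the two displays.
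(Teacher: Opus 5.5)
Your proposal does not prove the lemma as stated, and the crux you flag cannot be closed along your route. After your first display, the stated bound would require $2\,\mathrm{KL}(q_{t_0}\|p^\Pi_{t_0})\le(\wass_2^2(q_0,p_0)+e_\Pi)/\tau_0$, and that inequality is false in general. Take $\Pi=0$, $\hat\mu_0=p_0^\Pi$ the point mass at $0$ (so the $D_2$ term vanishes), $p_0$ the point mass at $x\neq0$, and $q_0$ the point mass at $x+v$ with $v$ a positive multiple of $x$. Then $2\,\mathrm{KL}(q_{t_0}\|p^\Pi_{t_0})=(\|x\|_2+\|v\|_2)^2/\tau_0$. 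This exceeds $(\|v\|_2^2+\|x\|_2^2)/\tau_0$, and when $\|v\|_2=\|x\|_2$ it equals twice that quantity.

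The factor $2$ from Lemma~\ref{lem:three-density-kl}, applied to all of $q_{t_0}$, doubles the orthogonal contribution, and the cross term you found is genuinely present. Your fallback constant $2$ is therefore the best this route can give. You also never use the hypothesis that $\hat\mu_0$ is supported on $\operatorname{range}(\Pi)$, and that hypothesis is exactly what is needed.

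The missing idea is to split off the orthogonal part \emph{before} invoking the three-density inequality. Let $q^\Pi_t:=P_t(\Pi_\#q_0)$, and use coordinates $x=(z,w)$ adapted to $\operatorname{range}(\Pi)\oplus\operatorname{range}(\Pi)^\perp$. Both $q^\Pi_{t_0}$ and $\hat p_{t_0}$ factor as a density in $z$ times $\varphi_{\rho_{t_0}}(w)$, and $q^\Pi_{t_0}$ has the same $z$-marginal as $q_{t_0}$. Hence $\log(q^\Pi_{t_0}/\hat p_{t_0})$ depends only on $z$ and has the same integral under $q_{t_0}$ and under $q^\Pi_{t_0}$. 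This gives the exact identity
\[
\mathrm{KL}(q_{t_0}\|\hat p_{t_0})=\mathrm{KL}(q_{t_0}\|q^\Pi_{t_0})+\mathrm{KL}(q^\Pi_{t_0}\|\hat p_{t_0}).
\]
Apply Lemma~\ref{lem:three-density-kl} only to the second term, with intermediate density $p^\Pi_{t_0}$. Then bound the two resulting KL terms by Lemma~\ref{lem:fixed-time-ou-kl}, using the couplings $(Y,\Pi Y)$ and $(\Pi Y,\Pi X)$. With $\delta:=Y-X$ for an optimal coupling, this yields
\[
\mathrm{KL}(q_{t_0}\|\hat p_{t_0})\le D_2(p^\Pi_{t_0}\|\hat p_{t_0})+\frac{\mathbb E\|\Pi\delta\|_2^2+\tfrac12\mathbb E\|(I_d-\Pi)Y\|_2^2}{\tau_0}.
\]
The orthogonal term now carries the coefficient $\tfrac12$. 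This absorbs the factor $2$ in $\tfrac12\mathbb E\|(I_d-\Pi)(X+\delta)\|_2^2\le e_\Pi+\mathbb E\|(I_d-\Pi)\delta\|_2^2$. Orthogonality, $\mathbb E\|\Pi\delta\|_2^2+\mathbb E\|(I_d-\Pi)\delta\|_2^2=\wass_2^2(q_0,p_0)$, then gives the stated bound with no cross term.
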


\begin{proof}
Set $Q:=I_d-\Pi$, $p_t^\Pi:=P_tp_0^\Pi$, and
$q_t^\Pi:=P_t(\Pi_\#q_0)$. Here projection is applied to the initial law
before the full $d$-dimensional OU evolution; in general,
$q_t^\Pi\ne\Pi_\#q_t$.
Let $r:=\operatorname{rank}(\Pi)$ and choose orthonormal coordinates
$x=(z,w)\in\mathbb R^r\times\mathbb R^{d-r}$ adapted to
$\operatorname{range}(\Pi)\oplus\operatorname{range}(Q)$.
Write $\varphi_v^{(j)}$ for the density of $\mathcal N(0,vI_j)$,
with $\varphi_v^{(0)}\equiv1$.
Let $q_{\parallel,t}$ and $\hat p_{\parallel,t}$ denote the densities of
the $z$-marginals of $q_t$ and $\hat p_t$, respectively.
Projection of the initial law preserves its $z$-component, so
$q_t^\Pi$ and $q_t$ have the same $z$-marginal.
Since $\Pi_\#q_0$ and $\hat\mu_0$ are supported on
$\operatorname{range}(\Pi)$, their $w$-components under the OU evolution
consist solely of Gaussian noise, independent of their $z$-components. Hence
\[
  q_t^\Pi(z,w)=q_{\parallel,t}(z)\varphi_{\rho_t}^{(d-r)}(w),
  \qquad
  \hat p_t(z,w)=\hat p_{\parallel,t}(z)\varphi_{\rho_t}^{(d-r)}(w).
\]
The density $q_t$ itself need not factor.
All KL divergences below are finite by Lemma~\ref{lem:fixed-time-ou-kl},
since the initial laws have finite second moments.
The log density ratio $\log(q_t^\Pi/\hat p_t)$ depends only on $z$, and thus
\[
  \int_{\mathbb R^d}q_t(x)\log\frac{q_t^\Pi(x)}{\hat p_t(x)}\,\rmd x
  =\int_{\mathbb R^r}q_{\parallel,t}(z)
    \log\frac{q_{\parallel,t}(z)}{\hat p_{\parallel,t}(z)}\,\rmd z
  =\int_{\mathbb R^d}q_t^\Pi(x)\log\frac{q_t^\Pi(x)}{\hat p_t(x)}\,\rmd x.
\]
Decomposing $\log(q_t/\hat p_t)$ through $q_t^\Pi$ therefore gives
\begin{equation}
  \mathrm{KL}(q_t\|\hat p_t)
  =\mathrm{KL}(q_t\|q_t^\Pi)
  +\mathrm{KL}(q_t^\Pi\|\hat p_t).
  \label{eq:unknown-subspace-entropy-chain}
\end{equation}

Let $(X,Y)$ be an optimal coupling of $(p_0,q_0)$ and put
$\delta:=Y-X$, so that
$\mathbb E\|\delta\|_2^2=\wass_2^2(p_0,q_0)$.
Applying Lemma~\ref{lem:fixed-time-ou-kl} with the couplings
$(Y,\Pi Y)$ and $(\Pi Y,\Pi X)$ gives
\[
  \mathrm{KL}(q_{t_0}\|q_{t_0}^\Pi)
  \le\frac{\mathbb E\|QY\|_2^2}{2\tau_0},
  \qquad
  \mathrm{KL}(q_{t_0}^\Pi\|p_{t_0}^\Pi)
  \le\frac{\mathbb E\|\Pi\delta\|_2^2}{2\tau_0}.
\]
Applying Lemma~\ref{lem:three-density-kl} to the second term in
\eqref{eq:unknown-subspace-entropy-chain} at $t=t_0$, with intermediate
density $p_{t_0}^\Pi$, and using the preceding bounds yields
\[
  \mathrm{KL}(q_{t_0}\|\hat p_{t_0})
  \le D_2(p_{t_0}^\Pi\|\hat p_{t_0})
  +\frac{\mathbb E\|\Pi\delta\|_2^2
         +\tfrac12\mathbb E\|QY\|_2^2}{\tau_0}.
\]
Since $QY=QX+Q\delta$ and $\mathbb E\|QX\|_2^2=e_\Pi$,
\[
  \frac12\mathbb E\|QY\|_2^2
  \le e_\Pi+\mathbb E\|Q\delta\|_2^2.
\]
Together with the orthogonal decomposition
$\mathbb E\|\Pi\delta\|_2^2+\mathbb E\|Q\delta\|_2^2
=\wass_2^2(p_0,q_0)$, this proves \eqref{eq:unknown-subspace-oracle-kl}.
\end{proof}

\subsection{Proof of Theorem~\ref{thm:unknown-subspace-upper}}
\label{app:unknown-subspace-upper}

\begin{proof}
Let $\mathcal F_1:=\sigma(X_1,\ldots,X_{n_1})$ be the $\sigma$-field
generated by the pilot sample, and write $M:=M_{\vartheta_k,t_0}$.
Set $\hat r:=\operatorname{rank}(\hat\Pi)=\dim\hat L$.
For $t>0$, write
$\hat p_{n,t}^{\mathrm{sub}}:=P_t\hat\mu_{n,0}^{\mathrm{sub}}$.
The empirical projector $\hat\Pi$ is measurable, since it can be constructed
by applying Gram--Schmidt to the pilot sample with zero residuals omitted.
Hence the projected observations $Y_i$ are also measurable.
The maps $y\mapsto\delta_y$
and $\Sigma\mapsto\mathcal N(0,\Sigma)$ are continuous for weak convergence,
including singular covariance matrices. Their finite mixture
$\hat\mu_{n,0}^{\mathrm{sub}}$ is therefore measurable for the Borel
$\sigma$-field of the weak topology. Almost surely,
$\|Y_i\|_2\le D$, $\hat r\le k$, and
\begin{equation}
  \int\|x\|_2^2\,\hat\mu_{n,0}^{\mathrm{sub}}(\rmd x)
  =\frac{\sum_{i=1}^{n_2}\|Y_i\|_2^2+M\vartheta_k^2\hat r}{n_2+M}
  \le D^2.
  \label{eq:unknown-subspace-estimator-moment}
\end{equation}
Since $Y_i\in\operatorname{range}(\hat\Pi)$ and
$\mathcal N(0,\vartheta_k^2\hat\Pi)$ is supported on the same subspace,
so is their mixture $\hat\mu_{n,0}^{\mathrm{sub}}$, as required by
Lemma~\ref{lem:unknown-subspace-oracle}.

\emph{Step 1: Conditional R\'enyi bound.}
Condition on $\mathcal F_1$. Then $\hat\Pi$ is fixed and the $Y_i$ are
independent with law $p_0^{\hat\Pi}:=\hat\Pi_\#p_0$, supported on
$\mathcal B(0,D)\cap\operatorname{range}(\hat\Pi)$. Set
\[
  \Sigma_t:=\rho_tI_d+a_t^2\vartheta_k^2\hat\Pi,
  \qquad
  k_{t,y}(x):=\varphi_{\rho_tI_d}(x-a_ty),
\]
where $\varphi_\Sigma$ denotes the density of
$\mathcal N(0,\Sigma)$ for positive definite $\Sigma$. Then
\begin{equation}
  \hat p_{n,t}^{\mathrm{sub}}(x)
  =\frac{\sum_{i=1}^{n_2}k_{t,Y_i}(x)+M\varphi_{\Sigma_t}(x)}{n_2+M}.
  \label{eq:unknown-subspace-mixture-density}
\end{equation}
This formula also shows that the score is jointly measurable in the
observations and $(x,t)$ for $t>0$.
For $y\in\operatorname{range}(\hat\Pi)$, the Gaussian factors of
$k_{t,y}$ and $\varphi_{\Sigma_t}$ on
$\operatorname{range}(\hat\Pi)^\perp$ coincide and cancel in the density ratio.
Repeating the Gaussian-ratio calculation of
Lemma~\ref{lem:gaussian-blanket-domination} in
the $\hat r$ tangential directions gives, for
$y\in\mathcal B(0,D)\cap\operatorname{range}(\hat\Pi)$ and $t\ge t_0$,
\begin{equation}
  \sup_{x\in\mathbb R^d}\frac{k_{t,y}(x)}{\varphi_{\Sigma_t}(x)}
  =\exp\left(\frac{\|y\|_2^2}{2\vartheta_k^2}\right)
   \left(1+\frac{\vartheta_k^2}{\tau_t}\right)^{\hat r/2}
  \le M.
  \label{eq:unknown-subspace-conditional-domination}
\end{equation}
For $\hat r=0$, the only possible center is $y=0$ and the ratio equals one. The
last inequality uses $\|y\|_2\le D$, $\hat r\le k$, and $\tau_t\ge\tau_0$.

Conditional on $\mathcal F_1$, the mean kernel density is
$\mathbb E[k_{t,Y_i}(x)\mid\mathcal F_1]=(P_tp_0^{\hat\Pi})(x)$.
By \eqref{eq:unknown-subspace-mixture-density} and
\eqref{eq:unknown-subspace-conditional-domination}, applying
Lemma~\ref{lem:dominated-kernel-add-one} with $n=n_2$, kernels $k_{t,y}$,
reference density $\varphi_{\Sigma_t}$, and domination constant $M$ yields
\begin{equation}
  \mathbb E\left[
  \int\frac{(P_tp_0^{\hat\Pi})(x)^2}
  {\hat p_{n,t}^{\mathrm{sub}}(x)}\,\rmd x
  \;\middle|\;\mathcal F_1\right]
  \le\frac{n_2+M}{n_2+1}.
  \label{eq:unknown-subspace-conditional-second-moment}
\end{equation}
Conditional Jensen's inequality for the logarithm yields
\begin{equation}
  \mathbb E\left[
  D_2(P_tp_0^{\hat\Pi}\|\hat p_{n,t}^{\mathrm{sub}})
  \;\middle|\;\mathcal F_1\right]
  \le\log\left(1+\frac{M-1}{n_2+1}\right).
  \label{eq:unknown-subspace-conditional-renyi}
\end{equation}
The finiteness condition in Lemma~\ref{lem:unknown-subspace-oracle} holds
for every such sample, since
\eqref{eq:unknown-subspace-conditional-domination} and
\eqref{eq:unknown-subspace-mixture-density} give
\[
  P_tp_0^{\hat\Pi}\le M\varphi_{\Sigma_t},
  \qquad
  \hat p_{n,t}^{\mathrm{sub}}\ge\frac{M\varphi_{\Sigma_t}}{n_2+M},
  \qquad
  \int\frac{(P_tp_0^{\hat\Pi})^2}{\hat p_{n,t}^{\mathrm{sub}}}
  \le n_2+M.
\]

\emph{Step 2: KL and score-risk bounds.}
Fix $q_0\in\mathcal Q_\varepsilon(p_0)$ and define
\[
  e_{n_1}:=\int\|(I_d-\hat\Pi)x\|_2^2\,p_0(\rmd x)
  =\operatorname{tr}[C(I_d-\hat\Pi)],
  \qquad C:=\mathbb E_{p_0}[XX^\top].
\]
Taking conditional expectation in Lemma~\ref{lem:unknown-subspace-oracle}
and using \eqref{eq:unknown-subspace-conditional-renyi} at $t=t_0$ gives
\[
  \mathbb E\left[
    \mathrm{KL}\left(q_{t_0}\middle\|\hat p_{n,t_0}^{\mathrm{sub}}\right)
    \;\middle|\;\mathcal F_1\right]
  \le
  \log\left(1+\frac{M-1}{n_2+1}\right)
  +\frac{\varepsilon^2+e_{n_1}}{\tau_0}.
\]
Since $\operatorname{rank}(C)\le k$,
\eqref{eq:unknown-subspace-energy-bound} gives
$\mathbb E e_{n_1}\le kD^2/(n_1+k)$.
Averaging over the pilot sample therefore yields
\begin{equation}
  \mathbb E\left[
    \mathrm{KL}\left(q_{t_0}\middle\|\hat p_{n,t_0}^{\mathrm{sub}}\right)
  \right]
  \le
  \log\left(1+\frac{M-1}{n_2+1}\right)
  +\frac{\varepsilon^2}{\tau_0}
  +\frac{kD^2}{(n_1+k)\tau_0}.
  \label{eq:unknown-subspace-mean-kl}
\end{equation}
For each sample realization, $q_0$ and $\hat\mu_{n,0}^{\mathrm{sub}}$
belong to $\mathcal P_2(\mathbb R^d)$, so
Lemma~\ref{lem:relative-de-bruijn-ou} gives
\[
  \mathcal L_{q_0}(\hat s_n^{\mathrm{sub}})
  =\frac{2}{T-t_0}\left[
    \mathrm{KL}\left(q_{t_0}\middle\|\hat p_{n,t_0}^{\mathrm{sub}}\right)
    -\mathrm{KL}\left(q_T\middle\|\hat p_{n,T}^{\mathrm{sub}}\right)
  \right]
  \le\frac{2}{T-t_0}
    \mathrm{KL}\left(q_{t_0}\middle\|\hat p_{n,t_0}^{\mathrm{sub}}\right).
\]
Taking expectations and using \eqref{eq:unknown-subspace-mean-kl} gives a
bound independent of $q_0\in\mathcal Q_\varepsilon(p_0)$.
Taking the supremum over this class proves
\eqref{eq:unknown-subspace-finite}.

Finally, if $k\ge2$ and $\tau_0\le1$, then
$M\le e^{k/2}(1+D^2/k)^{k/2}\tau_0^{-k/2}$ and
$\tau_0^{-1}\le\tau_0^{-k/2}$. Applying these bounds,
$n_2+1\ge n/2$, $n_1+k\ge n/2$, and $\log(1+u)\le u$ to
\eqref{eq:unknown-subspace-finite} gives
\eqref{eq:unknown-subspace-rate} with the stated constant.
\end{proof}

\subsection{Intrinsic-Dimensional Score Minimax Rate}
\label{app:unknown-subspace-minimax}

We use the source class $\mathcal P_{k,D}$ and minimax risk
$\mathfrak R_{n,\varepsilon}^{\star}(k,D;t_0,T)$ defined in
Subsection~\ref{subsec:unknown-subspace-minimax}.

\begin{corollary}
\label{cor:unknown-subspace-minimax}
Fix $k\ge2$ and $D\ge\sqrt{k}$, and let $d\ge k$ be arbitrary.
Let $c_0,c_{k,D}>0$ be the
lower-bound constants in Theorem~\ref{thm:finite-sample-minimax-optimality}
with the dimension set to $k$.
If $n\ge2$,
\begin{equation}
  0<t_0\le\min\{c_0,1/(16\pi^2)\},
  \qquad
  2t_0\le T,
  \qquad
  n\tau_0^{k/2}\ge1,
  \qquad
  0\le\varepsilon\le\sqrt{t_0},
  \label{eq:unknown-subspace-minimax-regime}
\end{equation}
then
\begin{equation}
  \begin{aligned}
  c_{k,D}
  \left[
    \frac{1}{n(T-t_0)\tau_0^{k/2}}
    +\frac{\varepsilon^2}{t_0(T-t_0)}
  \right]
  &\le
  \mathfrak R_{n,\varepsilon}^{\star}(k,D;t_0,T)\\
  &\le
  \tilde C_{k,D}
  \left[
    \frac{1}{n(T-t_0)\tau_0^{k/2}}
    +\frac{\varepsilon^2}{t_0(T-t_0)}
  \right].
  \end{aligned}
  \label{eq:unknown-subspace-minimax-rate}
\end{equation}
Here $\tilde C_{k,D}$ is the constant in
Theorem~\ref{thm:unknown-subspace-upper}.
\end{corollary}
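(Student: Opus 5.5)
The proof splits into the upper bound, inherited from Theorem~\ref{thm:unknown-subspace-upper}, and the lower bound, obtained by embedding the $k$-dimensional hard instances into $\mathbb R^d$.

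\textbf{Upper bound.} Since $t_0\le c_0\le\log 2$, we have $\tau_0\le1$. Together with $k\ge2$, this puts us in the regime of \eqref{eq:unknown-subspace-rate}, which holds uniformly over $p_0\in\mathcal P_{k,D}$. It remains to replace $2\varepsilon^2/[(T-t_0)\tau_0]$ by $\varepsilon^2/[t_0(T-t_0)]$. This follows from $\tau_0=e^{t_0}-1\ge t_0$ and $\tilde C_{k,D}\ge 4\ge2$. Taking the supremum over $p_0$ and then the infimum over estimators gives the right-hand inequality.

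\textbf{Lower bound.} The plan is to reduce to the $k$-dimensional results via an isometric embedding. Let $U_\star=[e_1,\dots,e_k]$, and for a law $\mu$ on $\mathbb R^k$ put $\iota\mu:=(U_\star)_\#\mu$. The key structural fact is that the $d$-dimensional OU semigroup factorizes:
\[
P_t^{(d)}(\iota\mu\otimes\delta\text{-embedding})=P_t^{(k)}\mu\otimes\varphi_{\rho_t}^{(d-k)} .
\]
Hence the scores of embedded laws are the $k$-dimensional score in the first $k$ coordinates and $-w/\rho_t$ in the remaining ones. This mirrors the product lifting in the proof of Theorem~\ref{thm:fixed-dimensional-lower-main}.

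\emph{Statistical term.} We reuse the hard family $\{f_b\}$ of Appendix~\ref{app:statistical-minimax-lower-bound} in dimension $k$, embedded via $\iota$. These laws lie in $\mathcal B(0,D)\cap\operatorname{range}(U_\star)$, so they belong to $\mathcal P_{k,D}$. Given any $d$-dimensional estimator $\hat s_n$, the samples are supported in $\mathbb R^k\times\{0\}$. For each $w$, the first $k$ coordinates of $\hat s_n((z,w),t)$ define a $k$-dimensional estimator whose risk is at most that of $\hat s_n$. The cleanest route is to note that the $d$-dimensional loss is at least
\[
\int \varphi_{\rho_t}^{(d-k)}(w)\,\bigl\|\hat s_{n,1:k}((\cdot,w),t)-\nabla\log p_t^{(k)}\bigr\|^2_{L^2(p_t^{(k)})}\,\rmd w .
\]
Randomizing $w\sim\varphi_{\rho_t}^{(d-k)}$ as internal randomness then yields a randomized $k$-dimensional estimator with no larger risk. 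Theorem~\ref{thm:integrated-statistical-lower-bound} in dimension $k$ covers randomized estimators, so it gives $c_1(T-t_0)^{-1}\bigl(1\wedge 1/(n\tau_0^{k/2})\bigr)$, which reduces to the $A$-term under $n\tau_0^{k/2}\ge1$. One subtlety: the randomization must not depend on $t$ incoherently. Drawing $w$ afresh per $t$ is still a valid measurable randomized field if we take $W\sim\mathcal N(0,I_{d-k})$ once and set $w=\sqrt{\rho_t}W$. I expect this reduction to be the main technical point to state carefully.

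\emph{Shift term.} Take $p_0=\iota p_0^{(k)}$, which lies in $\mathcal P_{k,D}$ because $D\ge\sqrt k$. Embed the alternatives $q_0^{\pm,(k)}$ from the proof of Theorem~\ref{thm:fixed-dimensional-lower-main}; the Wasserstein distance is preserved by the isometry. The two-point argument goes through verbatim: the score difference lives only in the first coordinate, and the overlap factor integrates out the remaining coordinates. This gives $(e^{-4/9}/2592)\,\varepsilon^2/[t_0(T-t_0)]$, and it requires no estimator reduction since the bound holds for every field.

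Finally, combine the two bounds exactly as in the proof of Theorem~\ref{thm:finite-sample-minimax-optimality}, using $\max\{x,y\}\ge (x+y)/2$ with the constant $c_{k,D}$.
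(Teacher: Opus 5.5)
Your proof is correct and uses the same key device as the paper. That device is the randomized latent estimator: project $\hat s_n$ onto the source coordinates and evaluate it at $(z,\sqrt{\rho_t}W)$ with a single Gaussian seed $W$. It combines with the factorization $P_t^{(d)}(\mu\otimes\delta_0)=P_t^{(k)}\mu\otimes\mathcal N(0,\rho_tI_{d-k})$. Your upper bound argument is also the paper's.

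The difference is in how the lower bound is organized. The paper applies the reduction target by target to the full robust risk, using that every latent Wasserstein ball embeds into the ambient one. It concludes that $\mathfrak R_{n,\varepsilon}^{\star}(k,D;t_0,T)$ dominates the $k$-dimensional robust minimax risk, and then invokes Theorem~\ref{thm:finite-sample-minimax-optimality} with $d=k$ as a black box. You instead use the reduction only at $q_0=p_0$ to import Theorem~\ref{thm:integrated-statistical-lower-bound}. For the shift term you lift the explicit two-point alternatives directly. That needs no estimator reduction, because the two-point bound holds for every fixed score field.

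The paper's route is more modular: any $k$-dimensional lower bound transfers at once. Yours re-verifies the product computation, but makes it transparent that the shift cost does not depend on the estimator or the data.

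One minor point concerns your statistical reduction. You claim that for each fixed $w$ the first $k$ coordinates of $\hat s_n((z,w),t)$ already give an estimator with no larger risk. That is not true as stated. Only the $\varphi_{\rho_t}^{(d-k)}$-average over $w$ is controlled, which is exactly what your subsequent randomization with $w=\sqrt{\rho_t}W$ uses.
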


\begin{proof}
The conditions imply $t_0\le1/(16\pi^2)<\log2$, so $\tau_0\le1$.
The upper bound follows uniformly over the source class from
Theorem~\ref{thm:unknown-subspace-upper}, using
$\tau_0\ge t_0$ and $\tilde C_{k,D}\ge2$.

To transfer the $k$-dimensional lower bound, we construct from any ambient
estimator a latent estimator with no greater risk. Fix a linear isometry
$U:\mathbb R^k\to\mathbb R^d$, and let the columns of
$V\in\mathbb R^{d\times(d-k)}$ form an orthonormal basis of
$\operatorname{range}(U)^\perp$. For latent laws $\bar p_0,\bar q_0$, set
$p_0:=U_\#\bar p_0$ and $q_0:=U_\#\bar q_0$. Then
\begin{equation}
  \wass_2(p_0,q_0)=\wass_2(\bar p_0,\bar q_0).
  \label{eq:unknown-subspace-isometric-wasserstein}
\end{equation}
Indeed, one inequality follows by pushing a latent coupling forward, and the
other by applying $U^\top$ to an ambient coupling, whose marginals are
supported on $\operatorname{range}(U)$.

At positive time, with $\bar q_t:=P_t^{(k)}\bar q_0$,
\begin{equation}
  \begin{aligned}
  q_t(Uz+Vw)
  &=\bar q_t(z)\varphi_{\rho_t}^{(d-k)}(w),\\
  U^\top\nabla\log q_t(Uz+Vw)
  &=\nabla\log\bar q_t(z).
  \end{aligned}
  \label{eq:unknown-subspace-lower-factorization}
\end{equation}
Consider an arbitrary measurable, possibly randomized ambient estimator
$\hat s_n$, and represent its internal randomization by a seed $\omega$
independent of the training sample. We write
$\hat s_n(x,t;x_1,\ldots,x_n,\omega)$ to make the sample and seed dependence
explicit. Given latent observations
$Z_1,\ldots,Z_n$, feed $UZ_1,\ldots,UZ_n$ to the estimator.
To match the orthogonal OU noise, introduce an additional seed
$G\sim\mathcal N(0,I_{d-k})$, independent of the sample and $\omega$. Define
\begin{equation}
  \tilde s_n(z,t)
  :=U^\top\hat s_n\left(
  Uz+\sqrt{\rho_t}VG,t;
  UZ_1,\ldots,UZ_n,\omega\right).
  \label{eq:unknown-subspace-randomized-reduction}
\end{equation}
The same $G$ is used for all $(z,t)$, and composition of the measurable maps in
\eqref{eq:unknown-subspace-randomized-reduction} defines a measurable random
score field. Its construction does not depend on $\bar p_0$ or $\bar q_0$.
When $d=k$, the orthogonal component and the extra seed are absent.

Fix the training sample and $\omega$. Since
$\sqrt{\rho_t}G\sim\mathcal N(0,\rho_tI_{d-k})$ and $[U,V]$ is
orthogonal, \eqref{eq:unknown-subspace-lower-factorization} gives
\[
  \begin{aligned}
  \mathbb E_G\int_{\mathbb R^k}
  \|\tilde s_n(z,t)-\nabla\log\bar q_t(z)\|_2^2
  \bar q_t(z)\,\rmd z
  &=
  \int_{\mathbb R^d}
  \|U^\top(\hat s_n(x,t)-\nabla\log q_t(x))\|_2^2
  q_t(x)\,\rmd x\\
  &\le
  \int_{\mathbb R^d}
  \|\hat s_n(x,t)-\nabla\log q_t(x)\|_2^2q_t(x)\,\rmd x.
  \end{aligned}
\]
The inequality uses contraction under orthogonal projection. Integrating in
$t$, dividing by $T-t_0$, and averaging over the sample and $\omega$ yields
\begin{equation}
  \mathbb E\mathcal L_{\bar q_0}^{(k)}(\tilde s_n)
  \le
  \mathbb E\mathcal L_{U_\#\bar q_0}^{(d)}(\hat s_n).
  \label{eq:unknown-subspace-reduction-risk}
\end{equation}
Here $\mathcal L^{(j)}$ denotes the score loss in dimension $j$, and the
expectation on the left also includes $G$. All integrands are nonnegative,
so Tonelli's theorem applies even if a risk is infinite.

Every latent source law supported on $\mathcal B(0,D)\subset\mathbb R^k$
embeds into $\mathcal P_{k,D}$. Fix such a law $\bar p_0$ and set
$p_0=U_\#\bar p_0$. By
\eqref{eq:unknown-subspace-isometric-wasserstein}, every target in its latent
Wasserstein ball $\mathcal Q_\varepsilon^{(k)}(\bar p_0)$ embeds into
$\mathcal Q_\varepsilon(p_0)$. The training sample and both seeds have the
same joint law for all these targets. Restricting the target supremum to
embedded laws and using \eqref{eq:unknown-subspace-reduction-risk} gives
\[
  \sup_{q_0\in\mathcal Q_\varepsilon(p_0)}
  \mathbb E\mathcal L_{q_0}^{(d)}(\hat s_n)
  \ge
  \sup_{\bar q_0\in\mathcal Q_\varepsilon^{(k)}(\bar p_0)}
  \mathbb E\mathcal L_{U_\#\bar q_0}^{(d)}(\hat s_n)
  \ge
  \sup_{\bar q_0\in\mathcal Q_\varepsilon^{(k)}(\bar p_0)}
  \mathbb E\mathcal L_{\bar q_0}^{(k)}(\tilde s_n).
\]
Now take the supremum over latent sources and the infimum over ambient
estimators. Since the induced estimators form a subset of all admissible
randomized latent estimators, the unknown-subspace minimax risk is at least
the $k$-dimensional minimax risk in \eqref{eq:finite-sample-minimax-risk}.
The lower bound in Theorem~\ref{thm:finite-sample-minimax-optimality}, with
$d=k$, now proves \eqref{eq:unknown-subspace-minimax-rate}, with the same
constants $c_0$ and $c_{k,D}$.
\end{proof}

\section{Proofs for Robust Reverse Sampling}
\label{app:robust-reverse-sampling}

Subsection~\ref{app:kl-sampling} establishes the KL comparison with general
initialization.
Subsection~\ref{app:standard-gaussian-initialization} gives the standard
Gaussian initialization bound. Subsection~\ref{app:unknown-subspace-kl}
states and proves Theorem~\ref{thm:unknown-subspace-kl} for the
unknown-subspace estimator, and
Subsection~\ref{app:unknown-subspace-kl-minimax} establishes its
intrinsic-dimensional KL minimax rate.
The robust KL minimax result in Theorem~\ref{thm:robust-kl-minimax} is proved
in Appendix~\ref{app:robust-kl-lower-and-minimax}.

\subsection{KL Comparison with General Initialization}
\label{app:kl-sampling}

Given a possibly randomized score estimator $\hat s_n$ and a terminal law
$\nu_T$, consider the approximate reverse process
\begin{equation}
\begin{aligned}
  \rmd\hat X_t^{\leftarrow}
  &=\frac12\left(\hat X_t^{\leftarrow}
    +2\hat s_n(\hat X_t^{\leftarrow},T-t)\right)\rmd t+\rmd W_t,\\
  \hat X_0^{\leftarrow}&\sim\nu_T,\qquad 0\le t\le T-t_0.
\end{aligned}
\label{eq:estimated-reverse-sde-general-init}
\end{equation}
Conditional on the training sample and any randomness used to construct
$\hat s_n$ and $\nu_T$, denote the law of $\hat X_{T-t_0}^{\leftarrow}$ by
$\hat\mu_{n,t_0}^{\,\nu}$.

\begin{lemma}[Conditional score-to-KL bridge]
\label{lem:conditional-score-to-kl-bridge}
Fix $q_0\in\mathcal P_2(\mathbb R^d)$.  Conditional on the training sample
and the randomness used to construct $\hat s_n$ and $\nu_T$, suppose that the exact shifted reverse
equation and \eqref{eq:estimated-reverse-sde-general-init} admit
non-explosive weak solutions that are unique in law and whose drifts are
jointly locally bounded in space and reverse time.  Then, in the extended-real
sense,
\begin{equation}
  \mathrm{KL}\left(q_{t_0}\middle\|\hat\mu_{n,t_0}^{\,\nu}\right)
  \le
  \mathrm{KL}\left(q_T\middle\|\nu_T\right)
  +\frac12\int_{t_0}^{T}
  \|\hat s_n(\cdot,t)-\nabla\log q_t\|_{L^2(q_t)}^2\,\rmd t.
  \label{eq:score-to-kl-bridge}
\end{equation}
\end{lemma}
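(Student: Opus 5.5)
We compare path measures on $C([0,T-t_0];\mathbb R^d)$ via Girsanov and then apply data processing at the terminal time. Let $\mathbb Q^\leftarrow$ be the law of the exact shifted reverse process. It solves the reverse equation with drift $b(x,t)=\tfrac12 x+\nabla\log q_{T-t}(x)$ and initial law $q_T$, so its time-$(T-t_0)$ marginal is $q_{t_0}$ by time reversal. This uses the reversal results of \citet{anderson1982reverse,cattiaux2023time}, which apply since $q_0\in\mathcal P_2$ and the positive-time scores are smooth, locally bounded, and have finite energy on $[t_0,T]$. Let $\hat{\mathbb P}^\leftarrow$ be the law of \eqref{eq:estimated-reverse-sde-general-init}, with drift $\hat b(x,t)=\tfrac12 x+\hat s_n(x,T-t)$ and initial law $\nu_T$. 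Because the terminal marginals of $\mathbb Q^\leftarrow$ and $\hat{\mathbb P}^\leftarrow$ are $q_{t_0}$ and $\hat\mu_{n,t_0}^{\,\nu}$, the data-processing inequality gives
\[
\mathrm{KL}(q_{t_0}\|\hat\mu_{n,t_0}^{\,\nu})\le\mathrm{KL}(\mathbb Q^\leftarrow\|\hat{\mathbb P}^\leftarrow).
\]
It therefore suffices to bound the path-space KL by the right-hand side of \eqref{eq:score-to-kl-bridge}. We work conditionally on the training sample and seeds throughout, so all objects are deterministic.

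\textbf{Chain rule.} We disintegrate both path laws at time $0$. This gives
\[
\mathrm{KL}(\mathbb Q^\leftarrow\|\hat{\mathbb P}^\leftarrow)=\mathrm{KL}(q_T\|\nu_T)+\int\mathrm{KL}(\mathbb Q^\leftarrow_x\|\hat{\mathbb P}^\leftarrow_x)\,q_T(\rmd x),
\]
where $\mathbb Q^\leftarrow_x$ and $\hat{\mathbb P}^\leftarrow_x$ are the path laws started at $x$. Both SDEs have unit diffusion coefficient. The drift difference is $\Delta(x,t)=\nabla\log q_{T-t}(x)-\hat s_n(x,T-t)$. If either integral on the right of \eqref{eq:score-to-kl-bridge} is infinite there is nothing to prove.

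\textbf{Girsanov step (main obstacle).} Formally, Girsanov gives $\mathrm{KL}=\tfrac12\mathbb E_{\mathbb Q^\leftarrow}\int_0^{T-t_0}\|\Delta(X_t,t)\|^2\rmd t$. Since $X_t\sim q_{T-t}$ under $\mathbb Q^\leftarrow$, the substitution $t\mapsto T-t$ turns this into $\tfrac12\int_{t_0}^T\|\hat s_n(\cdot,t)-\nabla\log q_t\|^2_{L^2(q_t)}\rmd t$, which is exactly the required bound. The difficulty is justifying this without Novikov's condition. I would use the localization argument of \citet{chen2022sampling} (Section 5.2 there) in the following steps.
\begin{enumerate}
\item Let $\varsigma_R$ be the first exit time from $\mathcal B(0,R)$. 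Before $\varsigma_R$ the drifts are bounded by the joint local boundedness hypothesis, so Girsanov holds for the stopped processes.
\item This gives the exact identity $\mathrm{KL}(\mathbb Q^\leftarrow_{\cdot\wedge\varsigma_R}\|\hat{\mathbb P}^\leftarrow_{\cdot\wedge\varsigma_R})=\tfrac12\mathbb E_{\mathbb Q^\leftarrow}\int_0^{\varsigma_R\wedge(T-t_0)}\|\Delta\|^2\rmd t$, which is bounded by the full integral.
\item Non-explosion gives $\varsigma_R\to\infty$ almost surely under both laws. Uniqueness in law ensures the stopped laws are the restrictions of $\mathbb Q^\leftarrow$ and $\hat{\mathbb P}^\leftarrow$ to the stopped $\sigma$-fields.
\item Lower semicontinuity of KL under the weak convergence of stopped paths to full paths then passes the bound to the limit $R\to\infty$.
\end{enumerate}
The main care needed is to check that the stopped laws converge and that the bound survives the limit. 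Combined with the chain rule and data processing above, this yields \eqref{eq:score-to-kl-bridge}.
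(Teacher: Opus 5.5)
Your proposal is correct and follows essentially the same route as the paper: data processing under the endpoint map, the chain rule at the initial coordinate producing $\mathrm{KL}(q_T\|\nu_T)$, Girsanov for the processes stopped at the exit time from $\mathcal B(0,R)$ (where the drifts are bounded), and the limit $R\to\infty$ via non-explosion and lower semicontinuity of relative entropy. The only difference is the order of steps: you apply the chain rule to the full path laws and then localize the conditional laws, whereas the paper localizes first and applies Novikov and the chain rule to the stopped laws. Either order works, and in both uniqueness in law is what identifies the Girsanov-tilted measure with the exact reverse law on the stopped $\sigma$-field.
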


\begin{proof}
Condition on the training sample and the randomness used to construct
$\hat s_n$ and $\nu_T$, and set
$S:=T-t_0$.  Let $\mathbf Q$ be the path law of the exact shifted reverse
process initialized from $q_T$ and driven by $\nabla\log q_{T-u}$, and let
$\hat{\mathbf Q}$ be the path law of
\eqref{eq:estimated-reverse-sde-general-init}.  By the standard time-reversal
formula, the time-$u$ marginal of $\mathbf Q$ is $q_{T-u}$; see
\citet{anderson1982reverse} and
\citet[Theorem~4.9]{cattiaux2023time}.

If either term on the right of \eqref{eq:score-to-kl-bridge} is infinite, the
claim is immediate.  Otherwise $q_T\ll\nu_T$.  The standard localized
Girsanov argument on path space applies after disintegrating with respect to
the initial coordinate.  For a related localization argument in diffusion
sampling, see \citet{chen2022sampling}.  Indeed, let $X_u$
be the canonical coordinate, set
$\tau_R:=\inf\{u\in[0,S]:\|X_u\|_2\ge R\}\wedge S$, and define
$X_u^{(R)}:=X_{u\wedge\tau_R}$.  Let $\mathbf Q^{(R)}$ and
$\hat{\mathbf Q}^{(R)}$ denote the laws of $X^{(R)}$ under $\mathbf Q$ and
$\hat{\mathbf Q}$, respectively.  The stopped drift difference is bounded, so
Novikov's condition and the entropy chain rule give
\[
  \mathrm{KL}(\mathbf Q^{(R)}\|\hat{\mathbf Q}^{(R)})
  \le
  \mathrm{KL}(q_T\|\nu_T)
  +\frac12\mathbb E_{\mathbf Q}\int_0^{\tau_R}
  \|\nabla\log q_{T-u}(X_u)-\hat s_n(X_u,T-u)\|_2^2\,\rmd u.
\]
Non-explosion implies that $\tau_R\uparrow S$ almost surely under both path
laws, and hence $\mathbf Q^{(R)}\Rightarrow\mathbf Q$ and
$\hat{\mathbf Q}^{(R)}\Rightarrow\hat{\mathbf Q}$.  Therefore, the lower
semicontinuity of relative entropy and monotone convergence yield
\[
  \mathrm{KL}(\mathbf Q\|\hat{\mathbf Q})
  \le
  \mathrm{KL}(q_T\|\nu_T)
  +\frac12\mathbb E_{\mathbf Q}\int_0^S
  \|\nabla\log q_{T-u}(X_u)-\hat s_n(X_u,T-u)\|_2^2\,\rmd u.
\]
Applying data processing under the endpoint map, using that the time-$u$
marginal of $\mathbf Q$ is $q_{T-u}$, and changing variables $t=T-u$ prove
\eqref{eq:score-to-kl-bridge}.
\end{proof}

\begin{proposition}[OU-compatible KL comparison]
\label{prop:ou-compatible-kl-comparison}
Suppose Assumption~\ref{assum:support} holds and fix $\vartheta>0$. Take
$\hat s_n=\hat s_n^\vartheta$ in
\eqref{eq:estimated-reverse-sde-general-init}. Then the reverse equation has
a unique non-explosive strong solution. For every
$q_0\in\mathcal Q_\varepsilon(p_0)$ and every possibly data-dependent
terminal law $\nu_T$, in the extended-real sense,
\begin{equation}
  \mathrm{KL}\left(
    q_{t_0}\middle\|\hat\mu_{n,t_0}^{\,\nu}
  \right)
  \le
  \mathrm{KL}\left(q_{t_0}\middle\|\hat p_{n,t_0}^{\vartheta}\right)
  +\mathrm{KL}\left(q_T\middle\|\nu_T\right)
  -\mathrm{KL}\left(q_T\middle\|\hat p_{n,T}^{\vartheta}\right)
  \label{eq:ou-compatible-excess-terminal-kl-appendix}
\end{equation}
almost surely.
\end{proposition}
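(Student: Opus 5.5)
The plan is to combine the conditional score-to-KL bridge (Lemma~\ref{lem:conditional-score-to-kl-bridge}) with the relative de Bruijn identity (Lemma~\ref{lem:relative-de-bruijn-ou}). The bridge bounds the sampler error by the terminal mismatch plus half the integrated score error. The de Bruijn identity then evaluates that integrated score error exactly, because $\hat s_n^\vartheta$ is the score of an OU trajectory $\hat p_{n,t}^\vartheta=P_t\hat\mu_{n,0}^\vartheta$. Throughout, I fix a realization of the training sample (in $\mathcal B(0,D)$ almost surely) and of any randomness in $\nu_T$.

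\emph{Step 1: well-posedness of the estimated reverse SDE.} By \eqref{eq:blanket-density}, $\hat p_{n,t}^\vartheta$ is a finite Gaussian mixture. Its components have covariances $\rho_tI_d$ or $v_tI_d$ and centers $a_tX_i$ or $0$, all bounded in norm by $D$. Its log-Hessian equals a convex combination of $-\rho_t^{-1}I_d$ and $-v_t^{-1}I_d$ plus a posterior covariance of the component means and precisions. On $[t_0,T]$ this posterior covariance is bounded by a polynomial in $\|x\|_2$. Hence the drift $\tfrac12x+\hat s_n^\vartheta(x,T-t)$ is $C^\infty$ and locally Lipschitz, uniformly in reverse time. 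It also has at most linear growth, since $\|\hat s_n^\vartheta(x,t)\|_2\le \|x\|_2/\rho_{t_0}+D/\rho_{t_0}$. A Lyapunov argument with $\|x\|_2^2$ then gives a unique non-explosive strong solution, which proves the first claim.

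\emph{Step 2: the exact shifted reverse process.} Lemma~\ref{lem:conditional-score-to-kl-bridge} also requires the exact shifted reverse equation with drift $\tfrac12x+\nabla\log q_{T-u}(x)$ to be non-explosive and unique in law. Because $q_0\in\mathcal P_2$ and $t\ge t_0>0$, the density $q_t$ is smooth and strictly positive, and $\nabla\log q_t$ is locally Lipschitz and jointly continuous on $\mathbb R^d\times[t_0,T]$. This gives pathwise uniqueness up to explosion. Existence of a non-explosive solution comes from the time reversal of the forward OU process started at $q_0$ \citep{cattiaux2023time}, whose marginals $q_{T-u}$ are tight. Local uniqueness forces every solution to agree with this reversal up to the explosion time, so no solution explodes. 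I expect this to be the main obstacle, since the whole argument relies on identifying the exact path law. If a global growth bound is needed, I would first try $\nabla^2\log q_t=-\rho_t^{-1}I_d+a_t^2\rho_t^{-2}\operatorname{Cov}(X_0\mid X_t=x)$ together with the local boundedness of this conditional covariance.

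\emph{Step 3: combining the two lemmas.} If $\mathrm{KL}(q_T\|\nu_T)=\infty$ the claim is trivial, so assume it is finite. Both $q_0$ and $\hat\mu_{n,0}^\vartheta$ lie in $\mathcal P_2$, so Lemma~\ref{lem:fixed-time-ou-kl} makes $\mathrm{KL}(q_{t}\|\hat p_{n,t}^\vartheta)$ finite at $t=t_0$ and at $t=T$. Lemma~\ref{lem:relative-de-bruijn-ou} applied to $q_t$ and $\hat p_{n,t}^\vartheta$ then gives
\[
\tfrac12\int_{t_0}^T\|\hat s_n^\vartheta(\cdot,t)-\nabla\log q_t\|_{L^2(q_t)}^2\,\rmd t
=\mathrm{KL}(q_{t_0}\|\hat p_{n,t_0}^\vartheta)-\mathrm{KL}(q_T\|\hat p_{n,T}^\vartheta).
\]
Substituting this into \eqref{eq:score-to-kl-bridge} yields \eqref{eq:ou-compatible-excess-terminal-kl-appendix} for each realization, hence almost surely. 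All subtractions involve finite quantities, so the extended-real reading is consistent.
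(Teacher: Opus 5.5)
Your proposal is correct and follows the paper's proof essentially step for step. You establish well-posedness of the estimated reverse SDE from local Lipschitzness and the linear-growth bound $\|\hat s_n^\vartheta(x,t)\|_2\le(\|x\|_2+D)/\rho_{t_0}$, obtain existence and uniqueness in law for the exact reverse SDE via time reversal plus local Lipschitzness, and then substitute the relative de Bruijn identity into Lemma~\ref{lem:conditional-score-to-kl-bridge}, exactly as the paper does.

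The one detail the paper makes explicit that you leave implicit is the hypothesis of the finite-entropy time-reversal theorem. The paper checks it by noting $\mathrm{KL}(q_{t_0}\|\mathcal N(0,I_d))<\infty$ via Lemma~\ref{lem:fixed-time-ou-kl}, so the forward path law on $[t_0,T]$ has finite entropy relative to the stationary OU path law. Tightness of the marginals is not the relevant condition.
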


\begin{proof}
Condition on the training sample.  Let
$\gamma_d:=\mathcal N(0,I_d)$ denote the stationary OU law.  Since
$q_0\in\mathcal P_2(\mathbb R^d)$, Lemma~\ref{lem:fixed-time-ou-kl}, applied
with $(p_0,q_0)=(\gamma_d,q_0)$, gives
$\mathrm{KL}(q_{t_0}\|\gamma_d)<\infty$.  The forward OU path laws on
$[t_0,T]$ initialized from $q_{t_0}$ and $\gamma_d$ have the same conditional
path kernel given their initial coordinate.  Hence the entropy chain rule
shows that their path-space relative entropy equals
$\mathrm{KL}(q_{t_0}\|\gamma_d)$.  The finite-entropy time-reversal theorem
therefore realizes the time reversal of the former path law as a
non-explosive weak solution of the exact shifted reverse equation; see
\citet{anderson1982reverse} and
\citet[Theorem~4.9]{cattiaux2023time}.  Moreover, the map
$(x,t)\mapsto q_t(x)$ is strictly positive and smooth on
$\mathbb R^d\times(0,\infty)$.  Thus the exact reverse drift is jointly locally
bounded in space and reverse time and locally Lipschitz in space.  Together
with the non-explosive time-reversed construction, local Lipschitzness gives
uniqueness in law up to the terminal reverse time.

Conditional on the training sample, $\hat p_{n,t}^{\vartheta}$ is a finite
mixture of nondegenerate Gaussians and is therefore strictly positive and
smooth.  The empirical-component scores are
$-(x-a_tX_i)/\rho_t$, while the blanket-component score is $-x/v_t$.  Since
$\|X_i\|_2\le D$, $\rho_t\ge\rho_{t_0}$, and $v_t\ge\rho_{t_0}$, their convex
combination satisfies
\[
  \|\nabla\log\hat p_{n,t}^{\vartheta}(x)\|_2
  \le \frac{\|x\|_2+D}{\rho_{t_0}},
  \qquad t\in[t_0,T].
\]
Consequently, the estimated reverse drift is jointly locally bounded, locally
Lipschitz in space, and of linear growth uniformly in reverse time.  The
reverse equation therefore has a unique non-explosive strong solution, and
both reverse drifts satisfy the conditions of
Lemma~\ref{lem:conditional-score-to-kl-bridge}.

Finally, $q_0,\hat\mu_{n,0}^{\vartheta}\in\mathcal P_2(\mathbb R^d)$, so
Lemma~\ref{lem:relative-de-bruijn-ou} gives
\[
  \frac12\int_{t_0}^{T}
  \|\nabla\log q_t-\nabla\log\hat p_{n,t}^{\vartheta}\|_{L^2(q_t)}^2\,\rmd t
  =
  \mathrm{KL}(q_{t_0}\|\hat p_{n,t_0}^{\vartheta})
  -\mathrm{KL}(q_T\|\hat p_{n,T}^{\vartheta}).
\]
Substituting this identity into \eqref{eq:score-to-kl-bridge} proves
\eqref{eq:ou-compatible-excess-terminal-kl-appendix}.
\end{proof}

\subsection{Standard Gaussian Initialization}
\label{app:standard-gaussian-initialization}

Throughout this subsection, let $\gamma_d:=\mathcal N(0,I_d)$.

\begin{lemma}[Standard Gaussian terminal mismatch]
\label{lem:standard-gaussian-terminal-kl}
Under Assumption~\ref{assum:support}, for every \(T>0\) and
\(q_0\in\mathcal Q_\varepsilon(p_0)\),
\begin{equation}
  \mathrm{KL}(q_T\|\gamma_d)
  \le
  \frac{\varepsilon^2}{\tau_T}
  +\frac{e^{-T}D^2}{1+e^{-T}}
  -\frac d2\log(1-e^{-2T}).
  \label{eq:standard-gaussian-terminal-kl-appendix}
\end{equation}
\end{lemma}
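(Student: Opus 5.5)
The plan is to use the three-density inequality of Lemma~\ref{lem:three-density-kl} with the unshifted marginal $p_T$ as the intermediate density. We take $q=q_T$, $p=p_T$ and $r=\gamma_d$, all strictly positive densities for $T>0$. This gives
\[
\mathrm{KL}(q_T\|\gamma_d)\le 2\,\mathrm{KL}(q_T\|p_T)+D_2(p_T\|\gamma_d).
\]
Lemma~\ref{lem:fixed-time-ou-kl} at time $T$, together with $\wass_2(q_0,p_0)\le\varepsilon$, bounds the first term:
\[
2\,\mathrm{KL}(q_T\|p_T)\le \frac{a_T^2}{\sigma_T^2}\,\varepsilon^2=\frac{\varepsilon^2}{\tau_T}.
\]
This produces the shift term. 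Finiteness of $D_2(p_T\|\gamma_d)$, which the lemma requires, comes out of the next step.

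Next we bound the R\'enyi term by conditioning on the initial point. Write $p_T(x)=\int k_{T,y}(x)\,p_0(\rmd y)$, where $k_{T,y}=\varphi_{\rho_T}(\cdot-a_Ty)$. For each fixed $x$, Cauchy--Schwarz (equivalently Jensen) in $y$ gives $p_T(x)^2\le\int k_{T,y}(x)^2\,p_0(\rmd y)$. Then Tonelli yields
\[
\int\frac{p_T^2}{\gamma_d}\le\int p_0(\rmd y)\int_{\mathbb R^d}\frac{\varphi_{\rho_T}(x-a_Ty)^2}{\varphi_1(x)}\,\rmd x .
\]

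The inner integral is an explicit Gaussian computation, and we expect it to be the only real work. The exponent is
\[
-\frac{\|x-m\|_2^2}{\rho_T}+\frac{\|x\|_2^2}{2},\qquad m:=a_Ty .
\]
Its quadratic coefficient is $-(2-\rho_T)/(2\rho_T)$, which is negative because $\rho_T<1<2$, so the integral converges. Completing the square, we expect
\[
\int\frac{\varphi_{\rho_T}(x-m)^2}{\varphi_1(x)}\,\rmd x=\bigl(\rho_T(2-\rho_T)\bigr)^{-d/2}\exp\left(\frac{\|m\|_2^2}{2-\rho_T}\right).
\]
With $\rho_T=1-e^{-T}$ we have $2-\rho_T=1+e^{-T}$ and $\rho_T(2-\rho_T)=1-e^{-2T}$, and $\|m\|_2^2=e^{-T}\|y\|_2^2\le e^{-T}D^2$ on the support of $p_0$ by Assumption~\ref{assum:support}. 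Hence the integral is bounded uniformly in $y$, and
\[
D_2(p_T\|\gamma_d)\le -\frac d2\log(1-e^{-2T})+\frac{e^{-T}D^2}{1+e^{-T}}<\infty.
\]
The main care needed is to check these constants in the completed square; they match the statement exactly.

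Adding the two bounds gives \eqref{eq:standard-gaussian-terminal-kl-appendix}.
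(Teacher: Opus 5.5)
Your proposal is correct and follows essentially the same route as the paper: the three-density inequality with $(q_T,p_T,\gamma_d)$, Lemma~\ref{lem:fixed-time-ou-kl} for the shift term, and Jensen plus Tonelli to reduce $D_2(p_T\|\gamma_d)$ to an explicit completed-square Gaussian integral. Your constant $\bigl(\rho_T(2-\rho_T)\bigr)^{-d/2}\exp\bigl(\|m\|_2^2/(2-\rho_T)\bigr)$ agrees with the paper's $(1-a_T^4)^{-d/2}\exp\bigl(a_T^2\|y\|_2^2/(1+a_T^2)\bigr)$.
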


\begin{proof}
Lemma~\ref{lem:three-density-kl}, applied with
\((q,p,r)=(q_T,p_T,\gamma_d)\), gives
\[
  \mathrm{KL}(q_T\|\gamma_d)
  \le
  2\mathrm{KL}(q_T\|p_T)
  +D_2(p_T\|\gamma_d).
\]
By Lemma~\ref{lem:fixed-time-ou-kl},
\[
  2\mathrm{KL}(q_T\|p_T)
  \le \frac{\varepsilon^2}{\tau_T}.
\]

It remains to control \(D_2(p_T\|\gamma_d)\). Write
\(a=a_T=e^{-T/2}\) and \(\rho=\rho_T=1-e^{-T}\), and let
\(k_y(x):=\varphi_\rho(x-ay)\). A direct completion of the square gives
\begin{equation}
  \int_{\mathbb R^d}\frac{k_y(x)^2}{\varphi_1(x)}\,\rmd x
  =
  (1-a^4)^{-d/2}
  \exp\left(\frac{a^2\|y\|_2^2}{1+a^2}\right).
  \label{eq:gaussian-terminal-renyi-integral}
\end{equation}
Since \(p_T(x)=\mathbb E_{Y\sim p_0}k_Y(x)\), Jensen's inequality gives
\[
  p_T(x)^2
  \le
  \mathbb E_{Y\sim p_0}k_Y(x)^2.
\]
Consequently, Tonelli's theorem and \(\|Y\|_2\le D\) yield
\[
  D_2(p_T\|\gamma_d)
  \le \log\mathbb E_{Y\sim p_0}\int\frac{k_Y(x)^2}{\varphi_1(x)}\,\rmd x
  \le \frac{a^2D^2}{1+a^2}-\frac d2\log(1-a^4).
\]
Combining the two estimates and substituting \(a^2=e^{-T}\) proves
\eqref{eq:standard-gaussian-terminal-kl-appendix}.
\end{proof}

\begin{corollary}[Standard Gaussian initialization]
\label{cor:standard-gaussian-kl-sampling}
Under Assumption~\ref{assum:support}, set $\vartheta=D/\sqrt d$, take
$\hat s_n=\hat s_n^\vartheta$ in
\eqref{eq:estimated-reverse-sde-general-init}, and initialize the reverse
process from $\nu_T=\gamma_d$. If $\tau_0\le1$ and $T\ge\log2$, then
\begin{equation}
\begin{aligned}
  \sup_{q_0\in\mathcal Q_\varepsilon(p_0)}
  \mathbb E\,\mathrm{KL}\left(
    q_{t_0}\middle\|\hat\mu_{n,t_0}^{\,\gamma}
  \right)
  \le{}&
  \frac{C_{d,D}}{2(n+1)\tau_0^{d/2}}
  +\frac{\varepsilon^2}{\tau_0}
  +(D^2+2\varepsilon^2)e^{-T}
  +de^{-2T}.
\end{aligned}
\label{eq:standard-gaussian-kl-sampling-bound}
\end{equation}
\end{corollary}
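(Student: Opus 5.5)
We will combine Proposition~\ref{prop:ou-compatible-kl-comparison} with $\nu_T=\gamma_d$, Lemma~\ref{lem:standard-gaussian-terminal-kl}, and the self-consistent estimate behind \eqref{eq:self-consistent-kl-sampling-bound}. Conditional on the training sample, Proposition~\ref{prop:ou-compatible-kl-comparison} gives almost surely
\[
\mathrm{KL}\bigl(q_{t_0}\|\hat\mu_{n,t_0}^{\,\gamma}\bigr)\le \mathrm{KL}\bigl(q_{t_0}\|\hat p_{n,t_0}^{\vartheta}\bigr)+\mathrm{KL}(q_T\|\gamma_d)-\mathrm{KL}\bigl(q_T\|\hat p_{n,T}^{\vartheta}\bigr).
\]
We will simply discard the last term, since it is nonnegative. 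The right-hand side is then a sum of two nonnegative quantities, so the extended-real arithmetic causes no difficulty.

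For the first term we use \eqref{eq:generic-shifted-kl} with $\hat\mu_0=\hat\mu_{n,0}^\vartheta$, whose finiteness hypotheses were verified in the proof of Theorem~\ref{thm:finite-sample-robust-kde}. This gives $\mathrm{KL}(q_{t_0}\|\hat p_{n,t_0}^\vartheta)\le \varepsilon^2/\tau_0+D_2(p_{t_0}\|\hat p_{n,t_0}^\vartheta)$. Taking expectations and applying Proposition~\ref{prop:reverse-chi-square}, $\log(1+u)\le u$, and the bound $M_{\vartheta,t_0}\le C_{d,D}\tau_0^{-d/2}/2$ (valid for $\vartheta^2=D^2/d$ and $\tau_0\le1$), we obtain $C_{d,D}/(2(n+1)\tau_0^{d/2})+\varepsilon^2/\tau_0$. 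This is exactly \eqref{eq:self-consistent-kl-sampling-bound}.

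For the terminal term, Lemma~\ref{lem:standard-gaussian-terminal-kl} gives the deterministic bound
\[
\frac{\varepsilon^2}{\tau_T}+\frac{e^{-T}D^2}{1+e^{-T}}-\frac d2\log(1-e^{-2T}).
\]
We simplify each piece under $T\ge\log 2$. First, $\tau_T=e^T-1\ge e^T/2$ because $e^T\ge2$, so $\varepsilon^2/\tau_T\le2\varepsilon^2e^{-T}$. Second, $e^{-T}D^2/(1+e^{-T})\le D^2e^{-T}$. Third, put $u=e^{-2T}\le1/4$ and use $-\log(1-u)\le u/(1-u)\le\frac43u$. This gives $-\frac d2\log(1-e^{-2T})\le\frac{2d}{3}e^{-2T}\le de^{-2T}$.

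Summing the two contributions yields the bound $(D^2+2\varepsilon^2)e^{-T}+de^{-2T}$ plus the self-consistent terms. That bound does not depend on $q_0$, so taking the supremum over $\mathcal Q_\varepsilon(p_0)$ completes the argument. The only delicate point is the elementary logarithm estimate, and the assumption $T\ge\log2$ is exactly what makes both constants come out as stated.
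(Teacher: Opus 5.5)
Your proposal is correct and follows the paper's proof essentially step for step. Like the paper, you apply Proposition~\ref{prop:ou-compatible-kl-comparison} with $\nu_T=\gamma_d$ and drop the terminal $\mathrm{KL}(q_T\|\hat p_{n,T}^\vartheta)$ term. You then bound the remaining terms by \eqref{eq:self-consistent-kl-sampling-bound} and Lemma~\ref{lem:standard-gaussian-terminal-kl}, and simplify using $T\ge\log2$. The only difference is cosmetic: you use $-\log(1-u)\le u/(1-u)\le\tfrac43u$ where the paper uses $-\log(1-x)\le2x$ for $x\le1/2$.
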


\begin{proof}
With \(\vartheta=D/\sqrt d\), taking \(\nu_T=\gamma_d\) in
\eqref{eq:ou-compatible-excess-terminal-kl-appendix} and dropping its final
nonpositive term gives
\[
  \mathrm{KL}\left(
    q_{t_0}\middle\|\hat\mu_{n,t_0}^{\,\gamma}
  \right)
  \le
  \mathrm{KL}\left(q_{t_0}\middle\|\hat p_{n,t_0}^{\vartheta}\right)
  +\mathrm{KL}(q_T\|\gamma_d).
\]
Taking expectations and the supremum, 
\eqref{eq:self-consistent-kl-sampling-bound} and
Lemma~\ref{lem:standard-gaussian-terminal-kl} yield
\begin{align*}
  \sup_{q_0\in\mathcal Q_\varepsilon(p_0)}
  \mathbb E\,\mathrm{KL}\left(
    q_{t_0}\middle\|\hat\mu_{n,t_0}^{\,\gamma}
  \right)
  \le{}&
  \frac{C_{d,D}}{2(n+1)\tau_0^{d/2}}
  +\frac{\varepsilon^2}{\tau_0}
  +\frac{\varepsilon^2}{\tau_T}
  +\frac{e^{-T}D^2}{1+e^{-T}}
  -\frac d2\log(1-e^{-2T}).
\end{align*}
Since \(T\ge\log2\),
\[
  \frac1{\tau_T}\le2e^{-T},
  \qquad
  \frac{e^{-T}D^2}{1+e^{-T}}\le D^2e^{-T},
  \qquad
  -\frac d2\log(1-e^{-2T})\le de^{-2T},
\]
where the last inequality uses \(-\log(1-x)\le2x\) for \(0\le x\le1/2\).
Substitution proves \eqref{eq:standard-gaussian-kl-sampling-bound}.
\end{proof}

\subsection{Intrinsic-Dimensional KL Upper Bound}
\label{app:unknown-subspace-kl}

\begin{theorem}
\label{thm:unknown-subspace-kl}
Under the assumptions and construction of
Theorem~\ref{thm:unknown-subspace-upper}, suppose that $k\ge2$ and
$\tau_0\le1$. Then
\begin{equation}
  \sup_{q_0\in\mathcal Q_\varepsilon(p_0)}
  \mathbb E\left[
    \mathrm{KL}\,\left(q_{t_0}\middle\|
    \hat p_{n,t_0}^{\mathrm{sub}}\right)
  \right]
  \le
  \frac{\tilde C_{k,D}}{2n\tau_0^{k/2}}
  +\frac{\varepsilon^2}{\tau_0}.
  \label{eq:unknown-subspace-kl-rate}
\end{equation}
Here $\tilde C_{k,D}$ is the constant in
Theorem~\ref{thm:unknown-subspace-upper}.
\end{theorem}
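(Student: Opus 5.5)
The bound in Theorem~\ref{thm:unknown-subspace-kl} is essentially already contained in the proof of Theorem~\ref{thm:unknown-subspace-upper}, so I would reuse its mean KL estimate \eqref{eq:unknown-subspace-mean-kl} directly rather than redo any analysis. That display was derived before passing from KL to score risk via Lemma~\ref{lem:relative-de-bruijn-ou}. It states that for every $q_0\in\mathcal Q_\varepsilon(p_0)$,
\[
\mathbb E\,\mathrm{KL}\bigl(q_{t_0}\|\hat p_{n,t_0}^{\mathrm{sub}}\bigr)
\le \log\Bigl(1+\frac{M-1}{n_2+1}\Bigr)+\frac{\varepsilon^2}{\tau_0}+\frac{kD^2}{(n_1+k)\tau_0},
\]
with $M=M_{\vartheta_k,t_0}$. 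It came from conditioning on the pilot sample, applying Lemma~\ref{lem:unknown-subspace-oracle}, using the conditional R\'enyi bound \eqref{eq:unknown-subspace-conditional-renyi}, and then averaging the missed energy with Lemma~\ref{lem:unknown-subspace-energy}. The right-hand side does not depend on $q_0$, so taking the supremum over $\mathcal Q_\varepsilon(p_0)$ is immediate.

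It remains to simplify the statistical terms under $k\ge2$ and $\tau_0\le1$, exactly as at the end of the proof of Theorem~\ref{thm:unknown-subspace-upper}. The steps are:
\begin{itemize}
\item Use $\log(1+u)\le u$ and $M-1\le M\le e^{k/2}(1+D^2/k)^{k/2}\tau_0^{-k/2}$, which holds because $\tau_0\le1$.
\item Use $n_2+1\ge n/2$, so the first term is at most $2e^{k/2}(1+D^2/k)^{k/2}/(n\tau_0^{k/2})$.
\item For the missed-energy term, use $n_1+k\ge n/2$ and $\tau_0^{-1}\le\tau_0^{-k/2}$, which holds since $k\ge2$ and $\tau_0\le1$. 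This gives at most $2kD^2/(n\tau_0^{k/2})$.
\end{itemize}
Summing the two statistical terms gives $2[e^{k/2}(1+D^2/k)^{k/2}+kD^2]/(n\tau_0^{k/2})=\tilde C_{k,D}/(2n\tau_0^{k/2})$, since $\tilde C_{k,D}=4[\cdots]$. Together with the untouched $\varepsilon^2/\tau_0$ term, this is \eqref{eq:unknown-subspace-kl-rate}.

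There is no real obstacle. The only point to check is that \eqref{eq:unknown-subspace-mean-kl} was established for arbitrary $q_0$ and arbitrary $n\ge2$ before the score conversion, which it was. The constant bookkeeping, namely the halving relative to the score bound, matches because the score bound carried an extra factor $2/(T-t_0)$.
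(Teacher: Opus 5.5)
Your proposal is correct and matches the paper's proof: take the supremum over $q_0$ in \eqref{eq:unknown-subspace-mean-kl}, then simplify exactly as in the derivation of \eqref{eq:unknown-subspace-rate}; your constant bookkeeping, including the halving to $\tilde C_{k,D}/2$, checks out. The paper's proof also verifies that the reverse SDE with self-consistent initialization outputs exactly $\hat p_{n,t_0}^{\mathrm{sub}}$. That step is needed only to read the bound as a sampling guarantee, not for the displayed inequality itself.
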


\begin{proof}
Under the assumptions and construction of
Theorem~\ref{thm:unknown-subspace-upper}, taking the supremum over
$q_0\in\mathcal Q_\varepsilon(p_0)$ in
\eqref{eq:unknown-subspace-mean-kl} yields
\begin{equation}
  \sup_{q_0\in\mathcal Q_\varepsilon(p_0)}
  \mathbb E\left[
    \mathrm{KL}\left(q_{t_0}\middle\|
    \hat p_{n,t_0}^{\mathrm{sub}}\right)
  \right]
  \le
  \log\left(1+\frac{M_{\vartheta_k,t_0}-1}{n_2+1}\right)
  +\frac{\varepsilon^2}{\tau_0}
  +\frac{kD^2}{(n_1+k)\tau_0}.
  \label{eq:unknown-subspace-kl-finite}
\end{equation}
For $k\ge2$ and $\tau_0\le1$, simplifying
\eqref{eq:unknown-subspace-kl-finite} as in the derivation of
\eqref{eq:unknown-subspace-rate} gives
\eqref{eq:unknown-subspace-kl-rate}.

We also verify that self-consistent initialization yields the conditional
output law $\hat p_{n,t_0}^{\mathrm{sub}}$. Condition on the full sample and
consider the reverse process \eqref{eq:estimated-reverse-sde-general-init}
with $\hat s_n=\hat s_n^{\mathrm{sub}}$ and self-consistent initialization
$\nu_T=\hat p_{n,T}^{\mathrm{sub}}$. By
\eqref{eq:unknown-subspace-mixture-density}, the score is a convex combination
of the component scores $-(x-a_tY_i)/\rho_t$ and $-\Sigma_t^{-1}x$, where
$\Sigma_t=\rho_tI_d+a_t^2\vartheta_k^2\hat\Pi$. Since
$\Sigma_t\succeq\rho_tI_d$ and $\|Y_i\|_2\le D$,
\[
  \|\hat s_n^{\mathrm{sub}}(x,t)\|_2
  \le\frac{\|x\|_2+D}{\rho_{t_0}},
  \qquad t\in[t_0,T].
\]
The mixture density is strictly positive and jointly smooth in $(x,t)$ for
$t>0$. Thus, on each compact spatial set, its score has bounded spatial
derivatives and is locally Lipschitz, uniformly over $t\in[t_0,T]$.
Together with the linear-growth bound, this gives a unique non-explosive
strong solution of \eqref{eq:estimated-reverse-sde-general-init} with the
stated score and initialization.

To identify the output law, start a forward OU process from
$\hat\mu_{n,0}^{\mathrm{sub}}$ and restrict it to $[t_0,T]$.
The moment bound \eqref{eq:unknown-subspace-estimator-moment} and
Lemma~\ref{lem:fixed-time-ou-kl} give
\(\mathrm{KL}(\hat p_{n,t_0}^{\mathrm{sub}}\|
\mathcal N(0,I_d))<\infty\). On $[t_0,T]$, this process and the stationary
OU process have the same conditional path law given their position at
$t_0$. By the entropy chain rule, their relative entropy on path space
equals this finite initial KL divergence.
By the finite-entropy time-reversal theorem
\citep[Theorem~4.9]{cattiaux2023time}, the reversed OU process is a weak
solution of the estimated reverse equation with marginals
$\hat p_{n,T-t}^{\mathrm{sub}}$. By uniqueness in law, the strong solution
has the same marginals. Its output law at reverse time $T-t_0$ is therefore
$\hat p_{n,t_0}^{\mathrm{sub}}$.
\end{proof}

\subsection{Intrinsic-Dimensional KL Minimax Rate}
\label{app:unknown-subspace-kl-minimax}

For the same source class $\mathcal P_{k,D}$, define the minimax positive-time
KL risk by
\begin{equation}
  \mathfrak K_{n,\varepsilon}^{\star}(k,D;t_0)
  :=\inf_{\hat\mu_n}\sup_{p_0\in\mathcal P_{k,D}}
  \sup_{q_0\in\mathcal Q_\varepsilon(p_0)}
  \mathbb E\left[\mathrm{KL}(q_{t_0}\|\hat\mu_n)\right],
  \label{eq:unknown-subspace-kl-minimax-risk}
\end{equation}
with the same estimator class as in
Subsection~\ref{subsec:positive-time-kl-sampling}.

\begin{corollary}
\label{cor:unknown-subspace-kl-minimax}
Fix $k\ge2$ and $D>0$, and let $d\ge k$ be arbitrary. Let
$c_0\in(0,\log2]$ and $\tilde c_{k,D}>0$ be as in
Theorem~\ref{thm:robust-kl-minimax} with dimension $k$. If $n\ge2$ and
\[
  0<t_0\le c_0,\qquad n\tau_0^{k/2}\ge1,\qquad
  0\le\varepsilon\le\sqrt{\tau_0},
\]
then
\begin{equation}
  \tilde c_{k,D}\left[\frac{1}{n\tau_0^{k/2}}+\frac{\varepsilon^2}{\tau_0}\right]
  \le\mathfrak K_{n,\varepsilon}^{\star}(k,D;t_0)
  \le\frac{\tilde C_{k,D}}{2n\tau_0^{k/2}}+\frac{\varepsilon^2}{\tau_0}.
  \label{eq:unknown-subspace-kl-minimax-rate}
\end{equation}
Here $\tilde C_{k,D}$ is the constant in
Theorem~\ref{thm:unknown-subspace-upper}.
\end{corollary}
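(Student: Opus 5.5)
The upper bound follows directly from Theorem~\ref{thm:unknown-subspace-kl}. Under the stated conditions we have $t_0\le c_0\le\log 2$, so $\tau_0\le1$. Because $k\ge2$ and $n\ge2$, the self-consistent reverse sampler built from $\hat s_n^{\mathrm{sub}}$ outputs exactly $\hat p_{n,t_0}^{\mathrm{sub}}$, as verified in the proof of that theorem. Hence \eqref{eq:unknown-subspace-kl-rate} holds uniformly over $p_0\in\mathcal P_{k,D}$ and $q_0\in\mathcal Q_\varepsilon(p_0)$. Taking the supremum over sources and then the infimum over estimators gives the right-hand side of \eqref{eq:unknown-subspace-kl-minimax-rate}.

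For the lower bound, we reduce to the $k$-dimensional problem of Theorem~\ref{thm:robust-kl-minimax}, mirroring the proof of Corollary~\ref{cor:unknown-subspace-minimax} but now for measure estimators and KL loss.

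\textbf{Embedding.} Fix an isometry $U$ and an orthonormal complement basis $V$. Latent laws $\bar p_0$ supported in $\mathcal B(0,D)\subset\mathbb R^k$ embed as $p_0=U_\#\bar p_0\in\mathcal P_{k,D}$. By \eqref{eq:unknown-subspace-isometric-wasserstein}, latent targets $\bar q_0$ in the latent Wasserstein ball embed as $q_0=U_\#\bar q_0\in\mathcal Q_\varepsilon(p_0)$.

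\textbf{Estimator transfer.} Given an ambient, possibly randomized, estimator $\hat\mu_n$ fed the samples $UZ_1,\dots,UZ_n$, define the latent estimator $\tilde\mu_n:=(U^\top)_\#\hat\mu_n$. By \eqref{eq:unknown-subspace-lower-factorization}, $(U^\top)_\#q_{t_0}=\bar q_{t_0}$, so the data-processing inequality under the measurable map $x\mapsto U^\top x$ gives
\[
\mathrm{KL}(\bar q_{t_0}\|\tilde\mu_n)\le\mathrm{KL}(q_{t_0}\|\hat\mu_n)
\]
for every realization. No auxiliary Gaussian seed is needed here, unlike in the score case, because data processing handles the marginalization directly. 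The measurability of $\tilde\mu_n$ follows since pushforward by a continuous map is continuous in the weak topology.

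\textbf{Conclusion.} Taking expectations, restricting the supremum to embedded sources and targets, and then taking the infimum over ambient estimators shows that $\mathfrak K_{n,\varepsilon}^{\star}(k,D;t_0)$ is at least the $k$-dimensional robust KL minimax risk $\mathfrak K_{n,\varepsilon}^{\star}(D;t_0)$ with $d$ replaced by $k$. The induced estimators form a subclass of all latent estimators, so the inequality goes in the right direction. Theorem~\ref{thm:robust-kl-minimax} applied in dimension $k$, under the hypotheses $t_0\le c_0$, $n\tau_0^{k/2}\ge1$ and $\varepsilon\le\sqrt{\tau_0}$, then yields the lower bound with constant $\tilde c_{k,D}$.

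\textbf{Main checks.} The main point to verify is that the lower-bound constructions behind Theorem~\ref{thm:robust-kl-minimax} lie in the latent class $\mathcal P_D$ in dimension $k$; this holds, since the hard family lives in $\mathcal B(0,D/2)$ and $\delta_0\in\mathcal P_D$. One should also note that KL is evaluated at the full ambient marginal $q_{t_0}$, which carries Gaussian noise in the $V$-directions. The factorization \eqref{eq:unknown-subspace-lower-factorization} is exactly what makes the pushforward identity $(U^\top)_\#q_{t_0}=\bar q_{t_0}$ valid despite that noise.
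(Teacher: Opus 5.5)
Your proposal is correct and follows the paper's proof. The upper bound is Theorem~\ref{thm:unknown-subspace-kl} once $t_0\le c_0\le\log 2$ gives $\tau_0\le1$; the reverse-sampler identification is not needed, since $\hat p_{n,t_0}^{\mathrm{sub}}$ is itself an admissible distribution estimator. The lower bound uses the isometric embedding, the latent estimator $(U^\top)_\#\hat\mu_n$, and data processing with $(U^\top)_\#q_{t_0}=\bar q_{t_0}$ to reduce to the $k$-dimensional risk, where Theorem~\ref{thm:robust-kl-minimax} applies directly.
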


\begin{proof}
The upper bound follows from Theorem~\ref{thm:unknown-subspace-kl}, since
$\tau_0\le1$. For the lower bound, use the embedding and notation from the
proof of Corollary~\ref{cor:unknown-subspace-minimax}. Given any ambient
distribution estimator $\hat\mu_n$, apply it to the embedded observations
and define $\tilde\mu_n:=(U^\top)_\#\hat\mu_n$, retaining any internal
randomization. Since $(U^\top)_\#q_{t_0}=\bar q_{t_0}$, the data processing
inequality gives
\[
  \mathrm{KL}(q_{t_0}\|\hat\mu_n)
  \ge\mathrm{KL}\!\left((U^\top)_\#q_{t_0}\,\middle\|\,
  (U^\top)_\#\hat\mu_n\right)
  =\mathrm{KL}(\bar q_{t_0}\|\tilde\mu_n).
\]
The same reduction bounds this minimax risk below by its $k$-dimensional
counterpart. The lower bound follows from Theorem~\ref{thm:robust-kl-minimax}
with dimension $k$.
\end{proof}

\section{Neural Approximation of the Gaussian-Blanket Score}
\label{app:blanket-neural-approximation}

We approximate the isotropic Gaussian-blanket score from
Subsection~\ref{subsec:finite-sample-score}, uniformly over
$\mathbb R^d\times[t_0,T]$.  We separate the known score of the Gaussian
blanket from the sample-dependent correction, retain the former exactly,
and approximate the latter by a ReLU network.
The construction combines spatial localization with elementary ReLU
arithmetic and gives explicit width, depth, and sparsity bounds, together
with a time-integrated bound on the compatibility defect, uniformly over
samples in the support ball.
The final subsection transfers this approximation guarantee to robust score
risk via Corollary~\ref{cor:ou-score-defect}.

\subsection{Uniform Approximation Guarantee}
\label{gbnn:subsec-guarantee}

Fix
\[
 \Theta=(d,D,\vartheta,t_0,T),\qquad
 d\geq 1,\quad D,\vartheta>0,\quad 0<t_0<T<\infty,
\]
and let $X_1,\ldots,X_n\in\mathcal B(0,D)$, with $n\ge1$. Write
$X_{1:n}:=(X_1,\ldots,X_n)$ and initially regard the sample as deterministic.
Retain $a_t$, $\rho_t$, $\tau_0$, and the Gaussian
density $\varphi_v$ from Subsection~\ref{subsec:finite-sample-score}, and write
\begin{equation}
\begin{gathered}
 v_t=\rho_t+a_t^2\vartheta^2,\qquad
 M:=M_{\vartheta,t_0}
   =\exp\left(\frac{D^2}{2\vartheta^2}\right)
      \left(1+\frac{\vartheta^2}{\tau_0}\right)^{d/2}.
\end{gathered}
\label{gbnn:schedules}
\end{equation}
Recall the density estimator \eqref{eq:blanket-density},
\begin{equation}
 \hat p_{n,t}^{\vartheta}(x)
  =\frac{M\varphi_{v_t}(x)+
       \sum_{i=1}^n\varphi_{\rho_t}(x-a_tX_i)}{n+M}.
\label{gbnn:estimator}
\end{equation}
Since the Gaussian blanket has score
$\nabla_x\log\varphi_{v_t}(x)=-x/v_t$, we only need to approximate
the correction
\[
 G_{n,t}(x):=\nabla_x\log
       \frac{\hat p_{n,t}^{\vartheta}(x)}{\varphi_{v_t}(x)}
   =\nabla_x\log\hat p_{n,t}^{\vartheta}(x)+\frac{x}{v_t}.
\]
Lemma~\ref{gbnn:lem-localization} shows that this correction decays at
spatial infinity, allowing its uniform approximation to be reduced to
a bounded region.

We use feedforward ReLU networks with layer dimensions
$m_0,\ldots,m_L$ and affine maps
$\mathcal A_j(z)=A_jz+b_j$, where
$A_j\in\mathbb R^{m_j\times m_{j-1}}$ and
$b_j\in\mathbb R^{m_j}$.  The network has the form
\[
 \Phi=\mathcal A_L\circ\operatorname{ReLU}\circ\cdots
      \circ\operatorname{ReLU}\circ\mathcal A_1,
\]
with $\operatorname{ReLU}(z)=z_+$ applied componentwise between
consecutive affine maps.  Its width, depth, and sparsity are
\[
 W(\Phi):=\max_{1\leq j<L}m_j,\qquad
 L(\Phi):=L,\qquad
 S(\Phi):=\sum_{j=1}^L\bigl(\|A_j\|_0+\|b_j\|_0\bigr),
\]
where $\|\cdot\|_0$ counts nonzero entries.  Depth includes the output
affine layer; for a purely affine network ($L=1$), set $W(\Phi)=0$.
Weights and biases may be arbitrary real numbers.
For the network approximating $G_{n,t}$, $m_0=d+1$ and $m_L=d$;
the complexity bounds below concern this correction network.

All vector norms are Euclidean, and all logarithms are natural unless a
base is indicated.  The $L^\infty(\mathbb R^d)$ norm denotes the essential supremum
with respect to Lebesgue measure; for continuous vector fields,
$\|f\|_{L^\infty(\mathbb R^d)}
=\sup_{x\in\mathbb R^d}\|f(x)\|_2$.
Constants denoted by $C_\Theta$ are finite and depend only on $\Theta$;
their values may change from line to line.

\begin{proposition}[Uniform neural score approximation]
\label{gbnn:prop-approximation}
For every $n\geq1$, $X_{1:n}\in\mathcal B(0,D)^n$, and
$\eta\in(0,1/2)$, there exists a ReLU network
$\Phi_{X_{1:n},\eta}:\mathbb R^{d+1}\to\mathbb R^d$ such that
\begin{equation}
 \sup_{\substack{x\in\mathbb R^d\\t\in[t_0,T]}}
    \|\Phi_{X_{1:n},\eta}(x,t)-G_{n,t}(x)\|_2\leq\eta.
\label{gbnn:uniform-approximation}
\end{equation}
Writing $\Phi=\Phi_{X_{1:n},\eta}$ and
$\ell_{n,\eta}=1+\log((n+1)/\eta)$, we have
\begin{equation}
 W(\Phi)\leq C(n+d),\quad
 L(\Phi)\leq C_\Theta\ell_{n,\eta}^{\,2},\quad
 S(\Phi)\leq C_\Theta\bigl[nd+(n+d)\ell_{n,\eta}^{\,2}\bigr],
\label{gbnn:complexity}
\end{equation}
where $C$ is absolute.
For fixed $\Theta,n,\eta$, the networks admit a common architecture
and set of allowed parameter positions, with parameters continuous in
$X_{1:n}$.
\end{proposition}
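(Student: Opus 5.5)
The plan is to write $G_{n,t}$ in softmax form and approximate it on a bounded space–time box. Outside that box, localization shows that $G_{n,t}$ is uniformly small; inside it, we build the approximant from ReLU primitives. Set $w_0(x,t):=\log M$. For each $i$, set
\[
w_i(x,t):=\log\frac{\varphi_{\rho_t}(x-a_tX_i)}{\varphi_{v_t}(x)}
=\frac d2\log\frac{v_t}{\rho_t}-\frac{\|x-a_tX_i\|_2^2}{2\rho_t}+\frac{\|x\|_2^2}{2v_t}.
\]
Then
\[
G_{n,t}(x)=\sum_{i=1}^n\pi_i(x,t)\left(\frac{x}{v_t}-\frac{x-a_tX_i}{\rho_t}\right),
\qquad \pi_i:=\frac{e^{w_i}}{\sum_{j=0}^n e^{w_j}} .
\]
By the completed square \eqref{eq:gaussian-blanket-completed-square}, $w_i\le\log M$. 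Each $w_i$ is also a concave quadratic in $x$, with curvature $a_t^2\vartheta^2/(\rho_tv_t)\ge c_\Theta>0$ uniformly in $t\in[t_0,T]$.

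The first step is localization (Lemma \ref{gbnn:lem-localization}). For $\|x\|_2\ge R$, the quadratic decay gives $\pi_i\le e^{w_i-\log M}\le \exp(-c_\Theta\|x\|_2^2+C_\Theta\|x\|_2)$. The bracket multiplying $\pi_i$ grows only linearly. Hence $\|G_{n,t}(x)\|_2\le n\,C_\Theta(1+\|x\|_2)e^{-c_\Theta\|x\|_2^2}\le\eta/2$ once $R\asymp C_\Theta\sqrt{\log((n+1)/\eta)}$. Next I pick a network that is exactly zero outside the ball of radius $2R$. Concretely, I multiply the inner approximation by a piecewise-linear cutoff $\chi(\|x\|_\infty)$, which is built from $O(d)$ ReLUs. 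Because $G$ is also small on the transition annulus, the cutoff error there is at most $\eta/2$.

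The second step approximates on $[-2R,2R]^d\times[t_0,T]$, where everything is smooth and bounded in terms of $\Theta$ and $R$. I would compute the pieces in the following order.
\begin{enumerate}
\item The time schedules $1/\rho_t$, $1/v_t$, $a_t$, $\log(v_t/\rho_t)$ are univariate smooth functions on $[t_0,T]$. Each can be approximated to accuracy $\delta$ by a network of depth $O(\log^2(1/\delta))$ and constant width, using standard Yarotsky-type constructions.
\item The squares $\|x\|_2^2$ and the products with $t$-functions come from the ReLU squaring network, via $xy=((x+y)^2-x^2-y^2)/2$, at depth $O(\log(R/\delta))$. This yields all $w_i$ in parallel with width $O(n+d)$; the inner products $x\cdot X_i$ contribute the $nd$ term to the sparsity.
\item The shifted logits $w_i-\log M\le0$ are exponentiated by an approximant of $e^{u}$ on $[-C_\Theta R^2,0]$. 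Negative values below $-\log(n/\eta)$ can be clipped to a floor, so only $O(\log((n+1)/\eta))$-length ranges matter.
\item The sum $S=\sum_{j}e^{w_j-\log M}$ satisfies $S\ge1$, because the $j=0$ term equals $1$. So $1/S$ is needed only on $[1,n+1]$, where it can be approximated by a network of depth $O(\ell_{n,\eta}^2)$.
\item The numerator is formed with the vectors $x/v_t-(x-a_tX_i)/\rho_t$, followed by multiplication by $1/S$.
\end{enumerate}
The error is propagated through the Lipschitz constants, which are polynomial in $R$ and $n$ and hence polynomial in $\ell_{n,\eta}$ after taking logarithms. Choosing every $\delta=\eta/\mathrm{poly}(n,R)$ then gives depth $C_\Theta\ell_{n,\eta}^2$, width $C(n+d)$, and the stated sparsity.

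Continuity in $X_{1:n}$ and the common architecture come for free. The samples enter only as weights, through $a_t$-scaled inner products and constant offsets $\|X_i\|^2$, so the architecture depends only on $\Theta,n,\eta$.

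I expect the main obstacle to be the error bookkeeping through the division and the exponentials. The logits reach magnitude $R^2\sim\ell_{n,\eta}$, so an additive error $\delta$ in $w_i$ becomes a relative error $e^{\delta}-1$ in $\pi_i$; this is harmless. The real risk is that the multiplier bound $C_\Theta(1+R)$ together with the exponential range forces $\delta$ to be polynomially small in $n/\eta$. That only adds logarithmic depth, so I expect it to be manageable as long as the floor $S\ge1$ is used to keep the reciprocal well-conditioned. I would first verify the $\log^2$ depth claim on the exponential and reciprocal subnetworks, since those dominate.
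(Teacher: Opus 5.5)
Your proposal is correct and is essentially the paper's proof: your softmax weights are $\pi_i=R_{i,t}/(1+U_{n,t})$ and your brackets equal $\gamma_tX_i-\lambda_tx$, so you recover the stable quotient \eqref{gbnn:stable-quotient} with the blanket term as the denominator floor, and then use the same localization, an exponential module on a range of length $O_\Theta(\ell_{n,\eta})$, and a reciprocal on $[1,n+1]$ (as the paper does, form the numerator through the aggregates $\sum_iR_{i,t}X_i$ and $\sum_iR_{i,t}$ so the width stays at $C(n+d)$). The differences are minor: the paper globalizes by clipping each coordinate of $x$ to $[-R,R]$, which avoids your extra cutoff multiplication, and it builds the time coefficients explicitly from exponential, reciprocal and logarithm modules; your appeal to ``Yarotsky-type'' constructions should instead cite exponential-rate approximation of analytic functions, because the standard construction for functions of finite smoothness has size polynomial in $1/\delta$ and so does not give $\log^2$ depth at constant width.
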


\subsection{Ratio Representation and Localization}
\label{gbnn:subsec-localization}

To obtain a representation with a denominator bounded below by one, we
compare each sample kernel with the Gaussian blanket.  Define
\begin{equation}
 R_{i,t}(x):=\frac{\varphi_{\rho_t}(x-a_tX_i)}
                       {M\varphi_{v_t}(x)},\qquad
 U_{n,t}(x):=\sum_{i=1}^n R_{i,t}(x).
\label{gbnn:ratio-definitions}
\end{equation}
By Lemma~\ref{lem:gaussian-blanket-domination}, for every
$x\in\mathbb R^d$ and $t\in[t_0,T]$,
\begin{equation}
 0<R_{i,t}(x)\leq1\quad(i=1,\ldots,n),\qquad
 1\leq1+U_{n,t}(x)\leq n+1.
\label{gbnn:ratio-bounds}
\end{equation}
Factoring the density gives
\[
 \hat p_{n,t}^{\vartheta}
   =\frac{M\varphi_{v_t}}{n+M}(1+U_{n,t}),\qquad
 G_{n,t}=\nabla\log(1+U_{n,t})
        =\frac{\nabla U_{n,t}}{1+U_{n,t}}.
\]
Thus the Gaussian density cancels from the score correction, leaving a
denominator bounded below by one.  To compute the numerator, expand the
Gaussian log ratio as
\begin{equation}
 \log R_{i,t}(x)
  =c_t-\frac{\lambda_t}{2}\|x\|_2^2
       +\gamma_t x^\top X_i-\beta_t\|X_i\|_2^2,
\label{gbnn:log-ratio}
\end{equation}
where the four time-dependent coefficients are
\[
\begin{gathered}
 c_t:=\frac d2\log\frac{v_t}{\rho_t}-\log M,\qquad
 \lambda_t:=\rho_t^{-1}-v_t^{-1}
          =\frac{a_t^2\vartheta^2}{\rho_t v_t},\\
 \gamma_t:=\frac{a_t}{\rho_t},\qquad
 \beta_t:=\frac{a_t^2}{2\rho_t}.
\end{gathered}
\]
Differentiation yields
$\nabla R_{i,t}(x)=R_{i,t}(x)(\gamma_tX_i-\lambda_tx)$.
Introducing the weighted sample sum
$V_{n,t}(x):=\sum_{i=1}^nR_{i,t}(x)X_i$, we therefore obtain
\begin{equation}
 G_{n,t}(x)
   =\frac{\gamma_t V_{n,t}(x)-\lambda_t xU_{n,t}(x)}
          {1+U_{n,t}(x)}.
\label{gbnn:stable-quotient}
\end{equation}
The next lemma shows that the correction is uniformly small outside a
ball, providing the localization needed for the network construction.

\begin{lemma}[Localization of the score correction]
\label{gbnn:lem-localization}
For every $\eta\in(0,1/2)$, there is a radius
$R=R_{n,\eta}\geq1$, independent of the sample locations, such that
\begin{equation}
 R\leq C_\Theta\sqrt{1+\log\frac{n+1}{\eta}},
 \qquad
 \sup_{\substack{\|x\|_2\geq R\\t\in[t_0,T]}}
      \|G_{n,t}(x)\|_2\leq\frac{\eta}{4}.
\label{gbnn:localized-tail}
\end{equation}
\end{lemma}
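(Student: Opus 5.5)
The plan is to bound $\|G_{n,t}(x)\|_2$ directly from the stable quotient \eqref{gbnn:stable-quotient}. Since $\|X_i\|_2\le D$ and $U_{n,t}\ge0$, we have
\[
 \|G_{n,t}(x)\|_2\le \frac{(\gamma_tD+\lambda_t\|x\|_2)U_{n,t}(x)}{1+U_{n,t}(x)}
 \le (\gamma_tD+\lambda_t\|x\|_2)\,U_{n,t}(x)
 \le n(\gamma_tD+\lambda_t\|x\|_2)\max_i R_{i,t}(x).
\]
It therefore suffices to show that each ratio $R_{i,t}(x)$ decays like a Gaussian in $\|x\|_2$, uniformly over $t\in[t_0,T]$ and $\|X_i\|_2\le D$. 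The polynomial prefactor and the factor $n$ can then be absorbed by taking $R$ of order $\sqrt{\log((n+1)/\eta)}$.

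For the decay, use the completed square \eqref{eq:gaussian-blanket-completed-square} from the proof of Lemma~\ref{lem:gaussian-blanket-domination}. Together with \eqref{eq:exact-gaussian-ratio} and the definition of $M$, it gives
\[
 R_{i,t}(x)\le \exp\left(-\frac{\lambda_t}{2}\left\|x-\frac{v_t}{a_t\vartheta^2}X_i\right\|_2^2\right),
 \qquad \lambda_t=\frac{a_t^2\vartheta^2}{\rho_tv_t}.
\]
On $[t_0,T]$ all schedule quantities are bounded above and below by positive constants depending only on $\Theta$. Specifically, $a_t\in[e^{-T/2},1]$, $\rho_t\in[\rho_{t_0},1]$, and $v_t\le 1+\vartheta^2$. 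Hence $\lambda_t\ge\lambda_\ast>0$, $\gamma_t\le\rho_{t_0}^{-1}$, $\lambda_t\le\rho_{t_0}^{-1}$, and the shift $\|v_tX_i/(a_t\vartheta^2)\|_2\le c_\ast:=(1+\vartheta^2)e^{T/2}D/\vartheta^2$. For $\|x\|_2\ge 2c_\ast$, the triangle inequality gives $\|x-\text{shift}\|_2\ge\|x\|_2/2$, so $R_{i,t}(x)\le\exp(-\lambda_\ast\|x\|_2^2/8)$.

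Combining the two displays, for $\|x\|_2\ge R\ge \max\{1,2c_\ast\}$ we get
\[
 \|G_{n,t}(x)\|_2\le C_\Theta\, n\,\|x\|_2\,e^{-\lambda_\ast\|x\|_2^2/8}\le C_\Theta\, n\,e^{-\lambda_\ast\|x\|_2^2/16},
\]
where the last step absorbs $\|x\|_2\le C e^{\lambda_\ast\|x\|_2^2/16}$. The supremum of the right-hand side over $\|x\|_2\ge R$ is attained at $\|x\|_2=R$. It suffices to choose
\[
 R^2=\max\left\{1,4c_\ast^2,\frac{16}{\lambda_\ast}\log\frac{4C_\Theta n}{\eta}\right\}.
\]
Since $\log(4C_\Theta n/\eta)\le C'_\Theta\,(1+\log((n+1)/\eta))$, this yields $R\le C_\Theta\sqrt{1+\log((n+1)/\eta)}$. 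The bound is independent of the sample locations because only $\|X_i\|_2\le D$ was used.

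The main obstacle is keeping every constant uniform in $t\in[t_0,T]$ and in the sample. This amounts to tracking the lower bound $\lambda_\ast$ and the upper bound on the center shift $v_t/(a_t\vartheta^2)$, which degrades like $e^{T/2}$; that growth is harmless since $T\in\Theta$. The rest is elementary.
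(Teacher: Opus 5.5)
Your proof is correct and follows essentially the same route as the paper. Both arguments use the completed-square identity with the domination constant $M$ to get $R_{i,t}(x)\le\exp(-\lambda_t\|x-z_{i,t}\|_2^2/2)$, use $1+U_{n,t}\ge1$ to reduce to $n$ Gaussian-decaying terms, and take $R$ of order $\sqrt{\log(n/\eta)}$ to absorb the factor $n$.

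The only cosmetic difference is in how the linear prefactor is handled. The paper writes $\nabla R_{i,t}=-\lambda_t(x-z_{i,t})R_{i,t}$, which is your $(\gamma_tX_i-\lambda_tx)R_{i,t}$ since $\lambda_tz_{i,t}=\gamma_tX_i$. It then absorbs the prefactor via $ue^{-u^2/2}\le\sqrt{2/e}\,e^{-u^2/4}$ in the centered variable, instead of first restricting to $\|x\|_2\ge 2c_\ast$ as you do.
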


\begin{proof}
Applying the completed-square identity
\eqref{eq:gaussian-blanket-completed-square} with $y=X_i$ gives
\begin{equation}
 R_{i,t}(x)=R_{i,t}(z_{i,t})
       \exp\left(-\frac{\lambda_t}{2}\|x-z_{i,t}\|_2^2\right),
 \qquad z_{i,t}=\frac{v_t}{a_t\vartheta^2}X_i.
\label{gbnn:completed-square}
\end{equation}
Here $0<R_{i,t}(z_{i,t})\leq1$ by \eqref{gbnn:ratio-bounds}, and
differentiation gives
\[
 \nabla R_{i,t}(x)=-\lambda_t(x-z_{i,t})R_{i,t}(x).
\]

For the tail estimate, set
\begin{equation}
 \lambda_-:=\min_{t\in[t_0,T]}\lambda_t,\qquad
 C_*:=D\max_{t\in[t_0,T]}\frac{v_t}{a_t\vartheta^2},\qquad
 B_*:=\max\left\{1,\sqrt{\frac2e
                   \max_{t\in[t_0,T]}\lambda_t}\right\}.
\label{gbnn:tail-constants}
\end{equation}
These constants depend only on $\Theta$, and $\lambda_->0$ because
$\lambda_t$ is continuous and strictly positive on $[t_0,T]$.
Set $u=\sqrt{\lambda_t}\|x-z_{i,t}\|_2$.  The inequality
$u e^{-u^2/2}\leq\sqrt{2/e}\,e^{-u^2/4}$ for $u\geq0$ then gives
\[
\begin{aligned}
 \|\nabla R_{i,t}(x)\|_2
  &=\lambda_t\|x-z_{i,t}\|_2 R_{i,t}(z_{i,t})
       e^{-\lambda_t\|x-z_{i,t}\|_2^2/2}\\
  &\leq\sqrt{\lambda_t}\,u e^{-u^2/2}
   \leq\sqrt{\frac{2\lambda_t}{e}}\,e^{-u^2/4}\\
  &\leq B_*\,
        e^{-\lambda_-\|x-z_{i,t}\|_2^2/4}.
\end{aligned}
\]
Using $G_{n,t}=\nabla U_{n,t}/(1+U_{n,t})$ and $1+U_{n,t}\geq1$,
the triangle inequality and $\|z_{i,t}\|_2\leq C_*$ give, for
$\|x\|_2\geq C_*$,
\begin{equation}
 \|G_{n,t}(x)\|_2
  \leq n B_*\,
       e^{-\lambda_-(\|x\|_2-C_*)^2/4}.
\label{gbnn:tail-bound}
\end{equation}
Choose
\begin{equation}
 R_{n,\eta}:=\max\left\{1,\ C_*+
     \sqrt{\frac4{\lambda_-}\log\frac{4nB_*}{\eta}}\right\}.
\label{gbnn:radius}
\end{equation}
Substitution into \eqref{gbnn:tail-bound} proves the tail bound in
\eqref{gbnn:localized-tail}.  Since
$\log(4nB_*/\eta)\leq(1+\log(4B_*))\ell_{n,\eta}$ and
$\ell_{n,\eta}\geq1$, the radius bound in this lemma holds with
\[
 C_\Theta=1+C_*+
       2\sqrt{\frac{1+\log(4B_*)}{\lambda_-}},
\]
which depends only on $\Theta$.
\end{proof}

\subsection{Elementary ReLU Arithmetic}
\label{gbnn:subsec-arithmetic}

For $a<b$, the clipping map
\[
 \operatorname{clip}_{[a,b]}(u)=a+(u-a)_+-(u-b)_+
\]
is represented exactly by a ReLU network of constant width and depth.
Clipping cannot increase the distance to a point in $[a,b]$.

We use multiplication, negative-exponential, and reciprocal networks to
approximate the score correction in \eqref{gbnn:stable-quotient}.
The logarithm, evaluated on a fixed positive interval, is used only to
construct the time coefficient $c_t$.
The square and multiplication modules build on
\citet[Propositions~2--3]{yarotsky2017error}.
The remaining modules are constructed from these building blocks using
repeated squaring and elementary series expansions.

\begin{lemma}[Elementary ReLU arithmetic modules]
\label{gbnn:lem-arithmetic}
Fix $0<\xi<1/2$.  The following ReLU networks exist with width at most
an absolute constant $C$.  In each item, $L$ and $S$ denote the depth
and sparsity of the stated network.
\begin{enumerate}
\item[(i)] Multiplication: for $B\geq1$,
$\operatorname{Mult}_{B,\xi}:\mathbb R^2\to\mathbb R$ satisfies
\[
 \sup_{|u|,|v|\leq B}
 |\operatorname{Mult}_{B,\xi}(u,v)-uv|\leq\xi,\qquad
 L,S\leq C(1+\log B+\log\xi^{-1}).
\]

\item[(ii)] Negative exponential: for $B\geq1$,
$\operatorname{Exp}_{B,\xi}:\mathbb R\to[0,1]$ satisfies
\[
 \sup_{z\in[0,B]}|\operatorname{Exp}_{B,\xi}(z)-e^{-z}|\leq\xi,\qquad
 L,S\leq C(1+\log B+\log\xi^{-1})^2.
\]

\item[(iii)] Reciprocal: for $B\geq1$,
$\operatorname{Rec}_{B,\xi}:\mathbb R\to[0,1]$ satisfies
\[
 \sup_{z\in[1,B]}
 |\operatorname{Rec}_{B,\xi}(z)-z^{-1}|\leq\xi,\qquad
 L,S\leq C(1+\log B+\log\xi^{-1})^2.
\]

\item[(iv)] Logarithm: for $0<b<B<\infty$,
$\operatorname{Log}_{b,B,\xi}:\mathbb R\to\mathbb R$ satisfies
\[
 \sup_{z\in[b,B]}
 |\operatorname{Log}_{b,B,\xi}(z)-\log z|\leq\xi,\qquad
 L,S\leq C\frac{B}{b}
       \left(1+\log\frac{B}{b}+\log\xi^{-1}\right)^2.
\]
\end{enumerate}
\end{lemma}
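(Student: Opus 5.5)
We build each module from the Yarotsky square and multiplication networks \citep[Propositions~2--3]{yarotsky2017error}. We always compose sequentially and carry a constant number of auxiliary channels through the layers with identity maps $u=u_+-(-u)_+$. This keeps the width bounded by an absolute constant: any growth in complexity goes into depth and sparsity, never into width.

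\emph{(i) Multiplication.} Yarotsky's sawtooth construction approximates $u\mapsto u^2$ on $[0,1]$ to accuracy $\delta$ with depth and sparsity $O(\log\delta^{-1})$. We use the polarization identity
\[
uv=2B^2\bigl[((u+v)/2B)^2-(u/2B)^2-(v/2B)^2\bigr],
\]
with absolute values taken via ReLU so that the inputs lie in $[0,1]$. Each square is approximated to accuracy $\delta=\xi/(6B^2)$, which gives total error at most $\xi$ with $L,S\le C(1+\log B+\log\xi^{-1})$.

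\emph{(ii) Negative exponential.} We use repeated squaring. Choose $k=\lceil\log_2 B\rceil$, so that $w:=z/2^k\in[0,1]$, and write $e^{-z}=(e^{-w})^{2^k}$.
\begin{itemize}
\item First approximate $e^{-w}$ on $[0,1]$ by its Taylor polynomial of degree $N\asymp\log\xi^{-1}$. The powers are evaluated by a Horner scheme using $\operatorname{Mult}_{2,\cdot}$, which costs $O(N)$ multiplications, i.e.\ depth $O(\log^2\xi^{-1})$.
\item Then apply $k$ successive squarings. After each squaring we clip to $[0,1]$; clipping never increases the error, and it keeps all inputs of the multiplication modules bounded.
\item Since squaring is $2$-Lipschitz on $[0,1]$, an error $\delta$ at the start grows to at most $2^k\delta+\sum_j 2^j\delta'$ at the end. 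It therefore suffices to take $\delta,\delta'\asymp\xi/(4B)$, which costs only $O(\log B+\log\xi^{-1})$ per module.
\end{itemize}
The resulting depth is $O((\log B+\log\xi^{-1})^2)$. A final clip makes the output $[0,1]$-valued.

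\emph{(iii) Reciprocal.} Set $B'=2^m\ge B$ and $w=z/B'\in[1/B',1]$, so that $z^{-1}=B'^{-1}\,w^{-1}$. We use the Euler product
\[
w^{-1}=\Bigl(\prod_{j=0}^{K-1}\bigl(1+(1-w)^{2^j}\bigr)\Bigr)\big/\bigl(1-(1-w)^{2^K}\bigr).
\]
Choosing $2^K\asymp B'\log(B'/\xi)$ makes $(1-w)^{2^K}\le\xi/B'$ negligible. The powers $(1-w)^{2^j}$ come from the same squaring chain as in (ii), and the running product is accumulated with one multiplication per step. All partial products are bounded by $2^{j}\le 2B'$, so the multiplication modules have range $\mathrm{poly}(B')$. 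Error propagation is again at most linear in $2^K$, so a per-module accuracy $\xi/\mathrm{poly}(B')$ suffices. This gives depth and sparsity $C(1+\log B+\log\xi^{-1})^2$; the final clip goes to $[0,1]$.

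\emph{(iv) Logarithm.} Cover $[b,B]$ by $J\le C B/b$ overlapping intervals $I_j$ of length comparable to $b$ centred at points $c_j$. On each interval, $\log z=\log c_j+\log(1+(z-c_j)/c_j)$ with $|(z-c_j)/c_j|\le1/2$. The factor $1/c_j$ is a fixed weight, and we truncate the series after $N\asymp\log\xi^{-1}$ terms, computed by Horner's scheme with Mult modules; each local approximation then costs depth $O(\log^2\xi^{-1}+\log^2(B/b))$. The pieces are glued with a ReLU hat partition of unity $\sum_j\chi_j\equiv1$ on $[b,B]$, multiplying $\chi_j$ by the local approximation via Mult. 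To keep the width bounded, the $J$ terms are evaluated sequentially in depth, with a single accumulator channel carried through. This is precisely the origin of the linear factor $B/b$ in the stated bounds $L,S\le C\frac{B}{b}(1+\log\frac Bb+\log\xi^{-1})^2$.

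The main obstacle is the error bookkeeping in (ii) and (iii). The squaring chains amplify errors by factors $2^k\asymp B$ and $2^K\asymp B\log(B/\xi)$, so every Mult accuracy must be set to $\xi/\mathrm{poly}(B)$, and the intermediate values must be kept inside the Mult range by clipping. We expect to handle this with an explicit induction on the error after each squaring step, of the form $e_{j+1}\le 2e_j+\delta'$, and to check that the logarithmic cost of this refined accuracy still yields the squared-log depth.
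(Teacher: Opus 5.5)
Your proposal is correct. Parts (i) and (iii) are essentially the paper's constructions. The paper's reciprocal uses the same doubling identity, written as $h_{j+1}=h_j(1+p_j)$ with $p_j=a^{2^j}$ and $a=1-z/B$. The difference is that it starts from $h_0=B^{-1}$, so every iterate stays in $[0,1]$ and the error obeys $e_{j+1}\le3e_j+\xi_*$. In your unnormalized version, the bound that keeps the partial products in the Mult range is $\prod_{i<j}(1+a^{2^i})\le 1/w\le B'$; your stated bound fails, since $2^j$ exceeds $2B'$ for $j$ near $K$.

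The routes genuinely differ in (ii) and (iv).

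\emph{Exponential.} The paper skips the Taylor seed. It approximates $e^{-z}$ by $(1-z/N)^N$ with $N=2^k\ge\max\{2B,2B^2/\xi\}$, which has bias at most $B^2/N$. It then runs one clipped squaring chain from the affine seed $1-z/N$, so every iterate lies in $[0,1]$ and the error obeys $e_{j+1}\le2e_j+\xi/(2N)$. Your scaling-and-squaring needs only $\lceil\log_2B\rceil$ squarings but adds a Horner stage. The Taylor degree there must be of order $\log(B/\xi)$ (accuracy $\xi/(4B)$), not $\log\xi^{-1}$ as you first wrote.

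\emph{Logarithm.} The paper uses the single series $\log z=\log B-\sum_{j\ge1}u^j/j$ with $u=1-z/B\in[0,1-b/B]$. It truncates at $m=\lceil(B/b)\log(2B/(b\xi))\rceil$ terms and uses range-one multiplications, so $B/b$ enters through the slow geometric rate. In your patchwork, $B/b$ instead counts the patches. To get an absolute $C$, two repairs are needed. First, clip each local variable $(z-c_j)/c_j$ to $[-1/2,1/2]$. Second, keep the constants $\log c_j$ out of the Mult modules, either by adding $\sum_j\chi_j\log c_j$ exactly as a piecewise-linear function or by rescaling $z$ by $B$ and adding $\log B$ as a bias. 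Otherwise the Mult range is of order $\max\{|\log b|,|\log B|\}$, and the depth depends on $b$ and $B$ separately.

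In return, your route is more flexible. Patches of length proportional to $c_j$ reduce their number to $O(1+\log(B/b))$, giving a bound polylogarithmic in $B/b$, which the single global series cannot achieve. The paper's version is simpler, and it suffices because the logarithm module is only used with $B/b$ a fixed $\Theta$-dependent constant.
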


\begin{proof}
\emph{(i) Multiplication.}
We first construct a square network.  Let $S_m$ be the piecewise-linear
interpolant of $u^2$ on the dyadic grid
$\{k2^{-m}:0\leq k\leq2^m\}$, so $S_0(u)=u$ on $[0,1]$.
When refining from level $m-1$ to level $m$, the values at old nodes
remain unchanged.  At the midpoint of each coarse interval $[a,b]$,
the old interpolant exceeds $u^2$ by
\[
 S_{m-1}\left(\frac{a+b}{2}\right)
       -\left(\frac{a+b}{2}\right)^2
       =\frac{(b-a)^2}{4}=4^{-m}.
\]
To implement this correction, use the tent map and its iterates
\[
 g(u)=2u_+-4(u-1/2)_++2(u-1)_+,
 \qquad g_j=g^{\circ j}.
\]
The function $g_m$ is zero at the old nodes, one at the new midpoints,
and linear between consecutive nodes of the refined grid.  Hence
\[
 S_m=S_{m-1}-4^{-m}g_m,
 \qquad S_m(u)=u-\sum_{j=1}^m4^{-j}g_j(u).
\]
On each refined interval $[a,b]$, the interpolation error is
$(u-a)(b-u)$, so
\[
 \sup_{u\in[0,1]}|S_m(u)-u^2|\leq4^{-m-1}.
\]
Keeping the current iterate $g_j$ and the partial sum gives constant
width and $O(m+1)$ depth and sparsity.
For later use, denote by $\operatorname{Sq}_{\xi_*}:
\mathbb R\to[0,1]$ the network obtained by choosing
the smallest integer $m\geq0$ with $4^{-m-1}\leq\xi_*$ and clipping
$S_m$ to $[0,1]$.
For $0<\xi_*<1/2$, it approximates $u^2$ on $[0,1]$ with error
at most $\xi_*$ and depth and sparsity $O(1+\log\xi_*^{-1})$.

For $|u|,|v|\leq B$, use
\[
 uv=B^2\left[
       \left(\frac{|u+v|}{2B}\right)^2
       -\left(\frac{|u-v|}{2B}\right)^2\right].
\]
The absolute values are exact ReLU operations.  Replacing the two
squares by parallel copies of $\operatorname{Sq}_{\xi/(2B^2)}$
gives error at most $\xi$ and the claimed complexity.

\emph{(ii) Negative exponential.}
We approximate $e^{-z}$ on $[0,B]$ by $(1-z/N)^N$ and implement this
power by repeated squaring.  Let $N=2^k$ be the smallest power of two with
\[
 N\geq\max\{2B,2B^2/\xi\}.
\]
For $0\leq w\leq1/2$, the logarithm series gives
\[
 0\leq-\log(1-w)-w
   =\sum_{r=2}^\infty\frac{w^r}{r}
   \leq\frac{w^2}{2(1-w)}\leq w^2.
\]
Taking $w=z/N$ yields
$-z-z^2/N\leq N\log(1-z/N)\leq-z$.
Exponentiating and using $1-e^{-r}\leq r$ for $r\geq0$, we obtain
\begin{equation}
 0\leq e^{-z}-(1-z/N)^N
   \leq e^{-z}(1-e^{-z^2/N})
   \leq B^2/N\leq\xi/2.
\label{gbnn:exp-bias}
\end{equation}

It remains to approximate the power by a network.  The exact recursion
$y_0=1-z/N$, $y_{j+1}=y_j^2$ stays in $[0,1]$ and ends at
$y_k=(1-z/N)^N$.  With $\xi_*=\xi/(2N)$, use the square network
from part (i) to define
\[
 \tilde y_0=\operatorname{clip}_{[0,1]}(y_0),\qquad
 \tilde y_{j+1}=\operatorname{Sq}_{\xi_*}(\tilde y_j),
 \qquad 0\leq j<k.
\]
All approximate iterates also lie in $[0,1]$, and the initial clipping
is inactive for $z\in[0,B]$.  Since
$|u^2-v^2|\leq2|u-v|$ on $[0,1]$, the square approximation gives
\[
 |\tilde y_{j+1}-y_{j+1}|
       \leq2|\tilde y_j-y_j|+\xi_*,
 \qquad
 |\tilde y_k-y_k|\leq(N-1)\xi_*\leq\xi/2.
\]
Define $\operatorname{Exp}_{B,\xi}(z):=\tilde y_k$.
Together with \eqref{gbnn:exp-bias}, the last estimate proves the
required error bound.  The initial clipping and the range of
$\operatorname{Sq}_{\xi_*}$ ensure that the output lies in $[0,1]$
for every real input.
There are $k=O(1+\log B+\log\xi^{-1})$ squaring stages, each with depth
and sparsity $O(1+\log\xi_*^{-1})$ of the same order.  Only the current
iterate is needed, so the width remains bounded.  This proves (ii).

\emph{(iii) Reciprocal.}
For $z\in[1,B]$, set $a=1-z/B$.  The geometric series gives
\[
 z^{-1}=\frac1B\sum_{r=0}^\infty a^r.
\]
We approximate this series by partial sums whose number of terms
doubles at each stage.  Specifically, set
\[
 p_j=a^{2^j},\qquad
 h_j=\frac1B\sum_{r=0}^{2^j-1}a^r
    =\frac{1-a^{2^j}}{z}.
\]
These quantities satisfy the recursion
\begin{equation}
 p_0=a,\quad h_0=B^{-1},\qquad
 p_{j+1}=p_j^2,\quad h_{j+1}=h_j(1+p_j).
\label{gbnn:rec-exact}
\end{equation}
Since $z\geq1$ and $0\leq a\leq1-1/B$, we have $p_j,h_j\in[0,1]$
and the truncation error satisfies
\begin{equation}
 |h_j-z^{-1}|=\frac{a^{2^j}}{z}
   \leq(1-1/B)^{2^j}\leq e^{-2^j/B}.
\label{gbnn:rec-bias}
\end{equation}
Choose
\[
 k=\left\lceil\log_2\left(B\log\frac2\xi\right)\right\rceil,
 \qquad \xi_*=\frac{\xi}{2\cdot3^k}.
\]
Starting from $\tilde p_0=\operatorname{clip}_{[0,1]}(a)$ and
$\tilde h_0=B^{-1}$, define
\[
 \tilde p_{j+1}=\operatorname{Sq}_{\xi_*}(\tilde p_j),\qquad
 \tilde h_{j+1}
    =\operatorname{clip}_{[0,1]}
       \bigl(\operatorname{Mult}_{2,\xi_*}
                    (\tilde h_j,1+\tilde p_j)\bigr).
\]
Both updates use the preceding pair of iterates; initial clipping is
inactive for $z\in[1,B]$.
Define $\operatorname{Rec}_{B,\xi}(z):=\tilde h_k(z)$.
For fixed $z\in[1,B]$, set
$e_j:=\max\{|\tilde p_j-p_j|,|\tilde h_j-h_j|\}$,
with the dependence on $z$ suppressed.  Then $e_0=0$ and
\[
 e_{j+1}\leq3e_j+\xi_*,
 \qquad e_k\leq\frac{3^k-1}{2}\xi_*\leq\xi/4.
\]
Indeed, the square map is $2$-Lipschitz on $[0,1]$, while
\[
 |\tilde h(1+\tilde p)-h(1+p)|
       \leq2|\tilde h-h|+|\tilde p-p|
\]
for $p,\tilde p,h,\tilde h\in[0,1]$.
The choice of $k$ makes \eqref{gbnn:rec-bias} at most $\xi/2$.
Hence the total error is less than $\xi$.
The output is clipped to $[0,1]$, and only a fixed number of scalars
is carried between stages.  Since
\[
 k\leq C(1+\log B+\log\xi^{-1}),\qquad
 1+\log\xi_*^{-1}\leq C(1+\log B+\log\xi^{-1}),
\]
part (i) proves the claimed complexity.

\emph{(iv) Logarithm.}
Let $u=1-z/B$ and $q=1-b/B\in(0,1)$.  For $z\in[b,B]$,
\[
 \log z=\log B-\sum_{j=1}^\infty\frac{u^j}{j},
 \qquad 0\leq u\leq q.
\]
Set $\kappa:=B/b>1$ and choose
\[
 m=\left\lceil\kappa\log\frac{2\kappa}{\xi}\right\rceil.
\]
Since $q=1-\kappa^{-1}\leq e^{-1/\kappa}$, the omitted tail satisfies
\[
 \sum_{j=m+1}^\infty\frac{u^j}{j}
 \leq\frac{q^{m+1}}{1-q}
 \leq\kappa e^{-(m+1)/\kappa}\leq\xi/2.
\]
Approximate the powers recursively, starting at $\tilde v_0=1$,
by
\[
 \tilde v_j
   =\operatorname{clip}_{[0,1]}
       \bigl(\operatorname{Mult}_{1,\xi_*}
           (\tilde v_{j-1},u)\bigr),
 \qquad \xi_*=\frac{\xi}{2m}.
\]
Since $u,\tilde v_{j-1}\in[0,1]$ and clipping cannot increase the
distance to $u^j$, the multiplication error gives
\[
 |\tilde v_j-u^j|
   \leq\xi_*+u|\tilde v_{j-1}-u^{j-1}|
   \leq\xi_*+|\tilde v_{j-1}-u^{j-1}|.
\]
Starting from $\tilde v_0=1$, induction therefore gives
$|\tilde v_j-u^j|\leq j\xi_*$.  Define the output network by
\[
 \operatorname{Log}_{b,B,\xi}(z)
    :=\log B-\sum_{j=1}^m\frac{\tilde v_j(z)}{j}.
\]
Its arithmetic error relative to the exact truncated series is at most
$\sum_{j=1}^m j\xi_*/j=m\xi_*=\xi/2$.
Adding the truncation error gives total error at most $\xi$.
Only $u$, the current power, and the running sum need to be stored.
Thus the width is bounded.  Since
$m\leq C\kappa(1+\log\kappa+\log\xi^{-1})$, the depth and sparsity satisfy
\[
 L,S\leq C m(1+\log(m/\xi))
     \leq C\kappa(1+\log\kappa+\log\xi^{-1})^2,
\]
where $C$ is universal.
\end{proof}

\subsection{Proof of the Uniform Approximation Guarantee}
\label{gbnn:subsec-proof}

\begin{proofof}{Proposition~\ref{gbnn:prop-approximation}}
The network implements the ratio formula \eqref{gbnn:stable-quotient}
through the following computations:
\[
\begin{aligned}
 (x,t)&\longmapsto
 \bigl(\|x\|_2^2,(x^\top X_i)_{i=1}^n,
       c_t,\lambda_t,\gamma_t,\beta_t\bigr)\\
 &\longmapsto (R_{i,t}(x))_{i=1}^n
 \longmapsto \bigl(U_{n,t}(x),V_{n,t}(x)\bigr)
 \longmapsto G_{n,t}(x).
\end{aligned}
\]
The spatial input and time coefficients are carried forward for use in
the final quotient.  We approximate these operations on a cube,
constructing each ratio approximation $\tilde R_i$ to take values in
$[0,1]$.  The resulting denominator $1+\sum_{i=1}^n\tilde R_i$ then lies
in $[1,n+1]$, allowing us to apply the reciprocal approximation.
The tail estimate then extends the approximation to the whole space.

\medskip
\noindent\emph{Step 1: Choose the internal accuracy and approximate the time coefficients.}
Choose $R=R_{n,\eta}$ from \eqref{gbnn:radius}.  Throughout the local
construction, $(x,t)\in[-R,R]^d\times[t_0,T]$.
Let
\begin{equation}
\begin{gathered}
 K:=1+\max_{t\in[t_0,T]}
          \max\{|c_t|,\lambda_t,\gamma_t,\beta_t\},
 \qquad Q:=1+\sqrt d\,R+D,\\
 \xi:=\frac{\eta}{96(n+1)K^2Q^3},
 \qquad B_{\mathrm{op}}:=8K(n+1)Q^2.
\end{gathered}
\label{gbnn:internal-parameters}
\end{equation}
Here $K$ bounds the time coefficients and $Q$ controls the spatial
scales.  We use $\xi$ as the internal accuracy and $B_{\mathrm{op}}$ as
a common input bound for the subsequent multiplication modules; the
choice of $\xi$ accounts for the accumulated error in Step~3.
In particular, $K,Q\geq1$, $Q\geq\sqrt d$, $D+\|x\|_2\leq Q$, and
$0<\xi<1/2$.

We construct a constant-width network with outputs
$\tilde c_t,\tilde\lambda_t,\tilde\gamma_t,\tilde\beta_t$
in $[-K,K]$ such that
\begin{equation}
 \sup_{t\in[t_0,T]}
 \max\bigl\{|\tilde c_t-c_t|,
           |\tilde\lambda_t-\lambda_t|,
           |\tilde\gamma_t-\gamma_t|,
           |\tilde\beta_t-\beta_t|\bigr\}\leq\xi.
\label{gbnn:time-errors}
\end{equation}
Its depth and sparsity are $O_\Theta((1+\log\xi^{-1})^2)$.
To see this, set
\[
 b:=\frac12\min\{\rho_{t_0},1,\vartheta^2\}>0,
 \qquad B_0:=2\max\{1,\vartheta^2\}.
\]
Since $\rho_t\in[\rho_{t_0},1]$ and
$v_t=(1-e^{-t})+\vartheta^2e^{-t}$ is a convex combination of $1$ and
$\vartheta^2$, both lie in $[b,B_0]$, an interval determined only by
$\Theta$.

Take an auxiliary accuracy $\xi_0\in(0,1/2)$.  Apply
$\operatorname{Exp}_{\max\{1,T\},\xi_0}$ to $t$ and
$\operatorname{Exp}_{\max\{1,T/2\},\xi_0}$ to $t/2$, and clip the outputs to
$[e^{-T},e^{-t_0}]$ and $[e^{-T/2},e^{-t_0/2}]$, respectively.
The resulting $\tilde u_t$ and $\tilde a_t$ approximate $e^{-t}$
and $a_t$ with error at most $\xi_0$.  Define
\[
 \tilde\rho_t=1-\tilde u_t,\qquad
 \tilde v_t=1+(\vartheta^2-1)\tilde u_t.
\]
They remain in $[b,B_0]$ and satisfy
$|\tilde\rho_t-\rho_t|\leq\xi_0$ and
$|\tilde v_t-v_t|\leq|\vartheta^2-1|\xi_0$.

Using the reciprocal module in Lemma~\ref{gbnn:lem-arithmetic}, set
\[
 \tilde J_{\rho,t}:=b^{-1}\operatorname{Rec}_{B_0/b,b\xi_0}
                                      (\tilde\rho_t/b),\qquad
 \tilde J_{v,t}:=b^{-1}\operatorname{Rec}_{B_0/b,b\xi_0}
                                      (\tilde v_t/b).
\]
The inputs lie in $[1,B_0/b]$, and $b\xi_0<1/2$ because $b\leq1/2$.
These outputs lie in $[0,b^{-1}]$ and approximate
$\tilde\rho_t^{-1}$ and $\tilde v_t^{-1}$ with error at most $\xi_0$.
To include the input error, use the $b^{-2}$-Lipschitz property of the
reciprocal on $[b,B_0]$.  For example,
\[
\begin{aligned}
 |\tilde J_{\rho,t}-\rho_t^{-1}|
 &\leq |\tilde J_{\rho,t}-\tilde\rho_t^{-1}|
          +|\tilde\rho_t^{-1}-\rho_t^{-1}|\\
 &\leq\xi_0+b^{-2}|\tilde\rho_t-\rho_t|
 \leq(1+b^{-2})\xi_0.
\end{aligned}
\]
The estimate for $\tilde J_{v,t}$ follows in the same way.

We now form the four coefficient approximations before final clipping:
\[
\begin{aligned}
 c_{\xi_0}(t)&=\tfrac d2\bigl[
       \operatorname{Log}_{b,B_0,\xi_0}(\tilde v_t)
       -\operatorname{Log}_{b,B_0,\xi_0}(\tilde\rho_t)\bigr]-\log M,\\
 \lambda_{\xi_0}(t)&=\tilde J_{\rho,t}-\tilde J_{v,t},\\
 \gamma_{\xi_0}(t)&=\operatorname{Mult}_{b^{-1},\xi_0}
                            (\tilde a_t,\tilde J_{\rho,t}),\\
 \beta_{\xi_0}(t)&=\tfrac12
           \operatorname{Mult}_{b^{-1},\xi_0}(\tilde u_t,\tilde J_{\rho,t}).
\end{aligned}
\]
The multiplication inputs lie in $[0,b^{-1}]$, since
$\tilde a_t,\tilde u_t\in[0,1]$ and $b^{-1}\geq2$.
The logarithm is $b^{-1}$-Lipschitz on $[b,B_0]$, so its input errors
are controlled in the same way.  The remaining operations are affine
maps and products of bounded scalars.  Thus each of the four errors is at most
$C_{\mathrm{time}}\xi_0$ for a constant $C_{\mathrm{time}}\geq1$ depending
only on $\Theta$.  Choosing $\xi_0=\xi/C_{\mathrm{time}}$ and clipping
each output to $[-K,K]$ proves \eqref{gbnn:time-errors}.
There are a fixed number of scalar modules, so the arithmetic lemma
also gives the claimed depth and sparsity.

\medskip
\noindent\emph{Step 2: Approximate the ratios $R_{i,t}$ on the cube.}
For each coordinate, the square network constructed in the proof of
Lemma~\ref{gbnn:lem-arithmetic} gives
$R^2\operatorname{Sq}_{\xi/(dR^2)}(|x_j|/R)$.
This output lies in $[0,R^2]$ and approximates $x_j^2$ with error at
most $\xi/d$ on the cube.  Summing over $j$, we obtain $\tilde Z(x)$
with
\begin{equation}
 |\tilde Z(x)-\|x\|_2^2|\leq\xi,
 \qquad 0\leq\tilde Z(x)\leq dR^2\leq Q^2.
\label{gbnn:square-sum}
\end{equation}
For each $i$, the inner product $x^\top X_i$ is an exact affine
function of $x$, and $\|X_i\|_2^2$ is a fixed sample-dependent
coefficient.  Both $|x^\top X_i|$ and $\|X_i\|_2^2$ are at most $Q^2$.
Unless stated otherwise, all subsequent products use
$\operatorname{Mult}_{B_{\mathrm{op}},\xi}$.
Write $\ell_i(x,t):=\log R_{i,t}(x)$ and approximate it by
\begin{equation}
 \tilde\ell_i(x,t):=\tilde c_t
      -\frac12\operatorname{Mult}_{B_{\mathrm{op}},\xi}
                    (\tilde\lambda_t,\tilde Z(x))
      +\operatorname{Mult}_{B_{\mathrm{op}},\xi}
                    (\tilde\gamma_t,x^\top X_i)
      -\tilde\beta_t\|X_i\|_2^2.
\label{gbnn:approx-logit}
\end{equation}
The last term is exact multiplication by a fixed coefficient and hence
an affine operation.
Equations \eqref{gbnn:time-errors} and \eqref{gbnn:square-sum} give
\begin{equation}
 |\tilde\ell_i-\ell_i|
 \leq\xi+\tfrac12(1+K+Q^2)\xi
                +(1+Q^2)\xi+Q^2\xi
 \leq6KQ^2\xi.
\label{gbnn:logit-error}
\end{equation}
Also, $\ell_i\leq0$ by \eqref{gbnn:ratio-bounds}, while
\eqref{gbnn:log-ratio} gives $|\ell_i|\leq4KQ^2$.
Set $H:=4KQ^2$ and let
\[
 \tilde\ell_i^{\,\mathrm{cl}}
       :=\operatorname{clip}_{[-H,0]}(\tilde\ell_i),
 \qquad
 \tilde R_i:=\operatorname{Exp}_{H,\xi}(-\tilde\ell_i^{\,\mathrm{cl}}).
\]
Clipping does not increase the error in \eqref{gbnn:logit-error}.
Since $-\tilde\ell_i^{\,\mathrm{cl}},-\ell_i\in[0,H]$ and
$z\mapsto e^{-z}$ is $1$-Lipschitz on $[0,\infty)$, adding the
exponential-network error $\xi$ gives
\begin{equation}
 0\leq\tilde R_i\leq1,
 \qquad |\tilde R_i-R_{i,t}(x)|\leq\xi+6KQ^2\xi\leq e_R,
 \qquad e_R:=7KQ^2\xi.
\label{gbnn:ratio-error}
\end{equation}
The last inequality uses $K,Q\geq1$.
All $n$ ratio modules are evaluated in parallel.

\medskip
\noindent\emph{Step 3: Assemble the score correction and bound its error.}
For this step, suppress $(x,t)$ in the notation and write
$U=U_{n,t}(x)$ and $V=V_{n,t}(x)$.  Form the exact affine aggregates
\[
 \tilde U=\sum_{i=1}^n\tilde R_i,
 \qquad \tilde V=\sum_{i=1}^n\tilde R_iX_i.
\]
Then
\begin{equation}
\begin{gathered}
 0\leq\tilde U\leq n,\qquad
 \|\tilde V\|_2\leq nD,\\
 |\tilde U-U|\leq ne_R,\qquad
 \|\tilde V-V\|_2\leq nDe_R.
\end{gathered}
\label{gbnn:aggregate-errors}
\end{equation}
Let $N$ be the exact numerator and $\overline N$ its approximation
using exact products:
\[
 N:=\gamma_tV-\lambda_txU,
 \qquad
 \overline N:=\tilde\gamma_t\tilde V
                 -\tilde\lambda_t x\tilde U.
\]
Using $U\leq n$, $\|V\|_2\leq nD$ and $D+\|x\|_2\leq Q$, we have
\begin{align}
 \|\overline N-N\|_2
 &\leq K\|\tilde V-V\|_2+\xi\|V\|_2
       +K\|x\|_2\,|\tilde U-U|+\xi\|x\|_2U\notag\\*
 &\leq nQ(Ke_R+\xi)
 \leq8nK^2Q^3\xi.
\label{gbnn:formal-numerator-error}
\end{align}
Replacing these products by multiplication networks gives the
implemented numerator $\tilde N$:
\begin{equation}
\begin{gathered}
 \tilde w
       =\operatorname{Mult}_{B_{\mathrm{op}},\xi}
                        (\tilde\lambda_t,\tilde U),\\
 \tilde N_j
       =\operatorname{Mult}_{B_{\mathrm{op}},\xi}
                        (\tilde\gamma_t,\tilde V_j)
         -\operatorname{Mult}_{B_{\mathrm{op}},\xi}(\tilde w,x_j),
 \qquad 1\leq j\leq d.
\end{gathered}
\label{gbnn:neural-numerator}
\end{equation}
All multiplication inputs are in range: $K,n,nD,Q\leq B_{\mathrm{op}}$,
and the first product gives $|\tilde w|\leq Kn+\xi\leq B_{\mathrm{op}}$.

The two final products contribute at most $2\xi$, and the error in
$\tilde w$ contributes at most $|x_j|\xi$.  Hence
\[
\begin{gathered}
 |\tilde N_j-\overline N_j|
 \leq\xi+\xi+|x_j|\xi=(2+|x_j|)\xi,\\
 \|\tilde N-\overline N\|_2
       \leq(2\sqrt d+\|x\|_2)\xi\leq3Q\xi.
\end{gathered}
\]
Combining this with \eqref{gbnn:formal-numerator-error} gives
\begin{equation}
 \|\tilde N-N\|_2\leq11nK^2Q^3\xi,
 \qquad \|\tilde N\|_2\leq4KnQ.
\label{gbnn:numerator-bounds}
\end{equation}
For the last bound, use $\|\overline N\|_2\leq KnQ$ and $\xi<1$.

The denominator also improves the bound on the exact correction.
Since $\|V\|_2\leq DU$, we have
\[
 \|G_{n,t}(x)\|_2=\frac{\|N\|_2}{1+U}
 \leq K(D+\|x\|_2)\frac{U}{1+U}\leq KQ.
\]
To compare the two quotients, we use the identity
\[
 \frac{\tilde N}{1+\tilde U}-G_{n,t}(x)
 =\frac{\tilde N-N+(U-\tilde U)G_{n,t}(x)}
        {1+\tilde U}.
\]
The denominator error is thus multiplied by a correction bounded by
$KQ$, avoiding an additional factor of $n$.
Since $1+\tilde U\geq1$, equations
\eqref{gbnn:aggregate-errors} and \eqref{gbnn:numerator-bounds} give
\begin{equation}
\begin{aligned}
 \left\|\frac{\tilde N}{1+\tilde U}-G_{n,t}(x)\right\|_2
 &\leq\|\tilde N-N\|_2+KQ|\tilde U-U|\\
 &\leq11nK^2Q^3\xi+KQne_R
 \leq18nK^2Q^3\xi.
\end{aligned}
\label{gbnn:denominator-error}
\end{equation}

To implement the division, use
\[
 \tilde J:=\operatorname{Rec}_{n+1,\xi}(1+\tilde U),
 \qquad
 \Psi_j(x,t):=\operatorname{Mult}_{B_{\mathrm{op}},\xi}
                          (\tilde J,\tilde N_j),
 \quad 1\leq j\leq d.
\]
These modules are evaluated within their approximation domains:
$1+\tilde U\in[1,n+1]$, $\tilde J\in[0,1]$, and
$|\tilde N_j|\leq4KnQ\leq B_{\mathrm{op}}$.  In particular,
$|\tilde J-(1+\tilde U)^{-1}|\leq\xi$, so
\[
 \left\|\Psi(x,t)-\frac{\tilde N}{1+\tilde U}\right\|_2
 \leq\sqrt d\,\xi+\|\tilde N\|_2\xi
 \leq Q\xi+4KnQ\xi.
\]
Combining this with \eqref{gbnn:denominator-error} yields
\begin{equation}
\begin{aligned}
 \|\Psi(x,t)-G_{n,t}(x)\|_2
 &\leq18nK^2Q^3\xi+Q\xi+4KnQ\xi\\
 &\leq24nK^2Q^3\xi
   =\frac{\eta n}{4(n+1)}\leq\frac{\eta}{4}.
\end{aligned}
\label{gbnn:local-error}
\end{equation}
Every approximation above is applied directly to the explicit score
correction; no approximation of a density is differentiated.

\medskip
\noindent\emph{Step 4: Extend to the whole space and verify complexity and measurability.}
Let $\mathcal C_R$ clip each spatial coordinate to $[-R,R]$ and set
$\Phi_{X_{1:n},\eta}(x,t):=\Psi(\mathcal C_R(x),t)$.
On the cube, \eqref{gbnn:local-error} gives error at most $\eta/4$.
Outside the cube, one clipped coordinate has magnitude $R$, so both
$\|x\|_2$ and $\|\mathcal C_R(x)\|_2$ are at least $R$.
By \eqref{gbnn:localized-tail},
\begin{equation}
\begin{aligned}
 \|\Phi_{X_{1:n},\eta}(x,t)-G_{n,t}(x)\|_2
 &\leq\|\Psi(\mathcal C_R(x),t)-G_{n,t}(\mathcal C_R(x))\|_2\\
 &\quad+\|G_{n,t}(\mathcal C_R(x))\|_2+\|G_{n,t}(x)\|_2
 \leq\frac{3\eta}{4}.
\end{aligned}
\label{gbnn:local-to-global}
\end{equation}
Thus \eqref{gbnn:uniform-approximation} holds on the whole space.

We now bound the width, depth, and sparsity.  From
\eqref{gbnn:localized-tail} and \eqref{gbnn:internal-parameters},
\begin{equation}
 Q\leq C_\Theta\sqrt{\ell_{n,\eta}},
 \qquad
 1+\log H+\log B_{\mathrm{op}}+\log\xi^{-1}
          \leq C_\Theta\ell_{n,\eta}.
\label{gbnn:log-budget}
\end{equation}
For the square modules, the underlying accuracy $\xi/(dR^2)$ gives
$1+\log(dR^2/\xi)\leq C_\Theta\ell_{n,\eta}$.
The square, log-ratio, and final multiplication modules therefore have
depth $O_\Theta(\ell_{n,\eta})$, while the time-coefficient, exponential,
and reciprocal modules have depth $O_\Theta(\ell_{n,\eta}^2)$.
Each scalar arithmetic module has constant width, so its sparsity is
bounded by a constant times its depth.  With $O(n+d)$ parallel branches and only a
fixed number of serial stages, these bounds give the width and depth
estimates in \eqref{gbnn:complexity}.

For sparsity, the two sample-dependent affine transformations
\[
 x\longmapsto(x^\top X_i)_{i=1}^n,
 \qquad (r_i)_{i=1}^n\longmapsto\sum_{i=1}^n r_iX_i
\]
use $O(nd)$ nonzero parameters in total.  The coefficients $\|X_i\|_2^2$
in \eqref{gbnn:approx-logit} require another $O(n)$ parameters.
To connect modules, store a signed scalar $z$ as $(z_+,(-z)_+)$ and
recover it by subtraction.  Passing $m$ such scalars through $k$ layers
costs $O(mk)$ parameters and $O(m)$ width, without multiplying consecutive
dense affine matrices.  The arithmetic modules and the channels needed
to store intermediate values and align branch depths thus use
$O_\Theta((n+d)\ell_{n,\eta}^2)$ parameters in total.
The coordinatewise input clipping adds $O(d)$ parameters at constant
depth.  Consequently,
\[
 S(\Phi)\leq Cnd+C_\Theta(n+d)\ell_{n,\eta}^2,
\]
which completes \eqref{gbnn:complexity}.

For fixed $\Theta,n,\eta$, the ranges, clipping thresholds, accuracies,
and integer depths are independent of the sample locations.  Hence the
architecture and allowed parameter positions can be fixed across samples.
The parameters depend continuously on $X_{1:n}$ through the entries of
$X_i$ and $\|X_i\|_2^2$, so the network output is jointly continuous in
the sample and its inputs.  The same holds for $G_{n,t}$ by the smoothness
and positivity of the underlying Gaussian mixture.  Hence their difference
is jointly continuous, and continuity in $x$ gives
\[
 \sup_{x\in\mathbb R^d}
   \|\Phi_{X_{1:n},\eta}(x,t)-G_{n,t}(x)\|_2
 =\sup_{x\in\mathbb Q^d}
   \|\Phi_{X_{1:n},\eta}(x,t)-G_{n,t}(x)\|_2.
\]
The right-hand side is a countable supremum of continuous functions of
the sample and time, and is therefore measurable.
\end{proofof}

\subsection{Transfer to Robust Score Risk}
\label{gbnn:subsec-defect-proof}

We now relate the neural approximation guarantee above to the robust score
risk. The following result shows that the robustness guarantee for an
OU-compatible score is stable under a uniformly controlled approximation.
\begin{corollary}
\label{cor:ou-score-defect}
Under the assumptions of Proposition~\ref{prop:ou-score-oracle}, consider
a measurable, possibly random estimator
\[
  s_\theta(x,t)=\nabla\log\hat p_t(x)+r_\theta(x,t),
  \qquad \hat p_t=P_t\hat\mu_0.
\]
Suppose that, for some $\delta\ge0$,
\begin{equation}
  \mathbb E\left[
    \frac{1}{T-t_0}\int_{t_0}^T
      \|r_\theta(\cdot,t)\|_{L^\infty(\mathbb R^d)}^2\,\rmd t
  \right]\le\delta,
  \label{eq:integrated-uniform-score-defect}
\end{equation}
Then, for every
$\varepsilon\ge0$,
\begin{equation}
  \mathcal R_{\varepsilon,p_0}(s_\theta)
  \le \frac{4\varepsilon^2}{(T-t_0)\tau_0}
    +\frac{4}{T-t_0}\,
      \mathbb E\left[D_2\,\left(p_{t_0}\middle\|\hat p_{t_0}\right)\right]
    +2\delta.
\label{eq:ou-score-defect-risk}
\end{equation}
\end{corollary}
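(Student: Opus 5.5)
The plan is a pointwise triangle inequality that separates the OU-compatible part of \(s_\theta\) from the defect \(r_\theta\). We then control the compatible part with Proposition~\ref{prop:ou-score-oracle} and the defect with the uniform bound \eqref{eq:integrated-uniform-score-defect}. Fix a realization of \(\hat\mu_0\) and of \(r_\theta\), and fix \(q_0\in\mathcal Q_\varepsilon(p_0)\). For every \((x,t)\), the elementary inequality \(\|a+b\|_2^2\le2\|a\|_2^2+2\|b\|_2^2\) gives
\[
\|s_\theta(x,t)-\nabla\log q_t(x)\|_2^2
\le2\|\nabla\log\hat p_t(x)-\nabla\log q_t(x)\|_2^2+2\|r_\theta(x,t)\|_2^2 .
\]

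\textbf{Step 1: integrate and average.} Integrate the display against \(q_t\), then in \(t\) over \([t_0,T]\), and divide by \(T-t_0\). This yields
\[
\mathcal L_{q_0}(s_\theta)\le2\mathcal L_{q_0}(\hat s)+\frac{2}{T-t_0}\int_{t_0}^T\|r_\theta(\cdot,t)\|_{L^\infty(\mathbb R^d)}^2\,\rmd t ,
\]
where \(\hat s=\nabla\log\hat p_t\). Since \(q_t\) is a probability density that is absolutely continuous with respect to Lebesgue measure, bounding \(\int\|r_\theta\|_2^2\,q_t\) by the essential supremum is legitimate. All integrands are nonnegative, so Tonelli's theorem justifies every exchange, including the case of infinite values.

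\textbf{Step 2: bound the compatible part.} The proof of Proposition~\ref{prop:ou-score-oracle} gives, almost surely in \(\hat\mu_0\) and uniformly over \(q_0\) in the ball,
\[
\mathcal L_{q_0}(\hat s)\le\frac{2\varepsilon^2}{(T-t_0)\tau_0}+\frac{2}{T-t_0}D_2(p_{t_0}\|\hat p_{t_0}).
\]
This bound follows from the relative de Bruijn identity, the three-density inequality, and Lemma~\ref{lem:fixed-time-ou-kl}. Multiplying by \(2\) produces the first two terms of \eqref{eq:ou-score-defect-risk}.

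\textbf{Step 3: take expectations.} Take expectation jointly over \(\hat\mu_0\) and the randomness of \(r_\theta\). The defect term contributes at most \(2\delta\) by \eqref{eq:integrated-uniform-score-defect}. The resulting bound does not depend on \(q_0\), so taking the supremum over \(\mathcal Q_\varepsilon(p_0)\) concludes.

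The only delicate point is measurability: the realization-wise bound must be valid before expectations are taken. In particular, the map \(t\mapsto\|r_\theta(\cdot,t)\|_{L^\infty}\) must be measurable, or the integral interpreted as an outer integral. The hypothesis of the corollary already assumes the displayed expectation makes sense, so I expect no real obstacle.
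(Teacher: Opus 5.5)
Your proposal is correct and follows essentially the same route as the paper: the pointwise split $\|u+v\|_2^2\le 2\|u\|_2^2+2\|v\|_2^2$, the bound of the defect integral against the probability density $q_t$ by $\|r_\theta(\cdot,t)\|_{L^\infty(\mathbb R^d)}^2$, and Proposition~\ref{prop:ou-score-oracle} for the OU-compatible part. The only difference is cosmetic: you apply the realization-wise oracle bound before taking expectations, whereas the paper first derives $\mathcal R_{\varepsilon,p_0}(s_\theta)\le 2\mathcal R_{\varepsilon,p_0}(\hat s)+2\delta$ and then invokes \eqref{eq:generic-ou-score-oracle}.
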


\begin{proof}
Fix $\varepsilon\ge0$ and an arbitrary deterministic
$q_0\in\mathcal Q_\varepsilon(p_0)$, and write $q_t=P_tq_0$ and
$\hat s(x,t)=\nabla\log\hat p_t(x)$. The fields $\hat s$ and $r_\theta$
may depend on the same training sample; all expectations below are over
their joint randomness.

For each estimator realization and almost every $t\in[t_0,T]$, the
density $q_t$ has unit mass, so
\[
 \int_{\mathbb R^d}\|r_\theta(x,t)\|_2^2q_t(x)\,\rmd x
 \le \|r_\theta(\cdot,t)\|_{L^\infty(\mathbb R^d)}^2.
\]
Consequently, Tonelli's theorem and
\eqref{eq:integrated-uniform-score-defect} give
\begin{equation}
 \mathbb E\left[
   \frac{1}{T-t_0}\int_{t_0}^T\int_{\mathbb R^d}
   \|r_\theta(x,t)\|_2^2q_t(x)\,\rmd x\,\rmd t
 \right]\le
 \mathbb E\left[
   \frac{1}{T-t_0}\int_{t_0}^T
   \|r_\theta(\cdot,t)\|_{L^\infty(\mathbb R^d)}^2\,\rmd t
 \right]\le\delta.
\label{gbnn:target-defect-bound}
\end{equation}
This bound is uniform over the choice of $q_0$.

Next, the decomposition $s_\theta=\hat s+r_\theta$ and the inequality
$\|u+v\|_2^2\le2\|u\|_2^2+2\|v\|_2^2$ imply, for each realization,
\[
 \|s_\theta(x,t)-\nabla\log q_t(x)\|_2^2
 \le 2\|\hat s(x,t)-\nabla\log q_t(x)\|_2^2
      +2\|r_\theta(x,t)\|_2^2.
\]
Integrating against $q_t(x)\,\rmd x\,\rmd t/(T-t_0)$, taking
expectation, and using \eqref{gbnn:target-defect-bound} yields
\[
 \mathbb E[\mathcal L_{q_0}(s_\theta)]
 \le 2\mathbb E[\mathcal L_{q_0}(\hat s)]+2\delta.
\]
Taking the supremum over $q_0\in\mathcal Q_\varepsilon(p_0)$, outside
the expectation as in \eqref{eq:robust-shifted-score-risk}, therefore gives
\[
 \mathcal R_{\varepsilon,p_0}(s_\theta)
 \le 2\mathcal R_{\varepsilon,p_0}(\hat s)+2\delta.
\]
Finally, the oracle bound \eqref{eq:generic-ou-score-oracle} and
$\tau_0=(1-e^{-t_0})/e^{-t_0}$ give
\[
 \mathcal R_{\varepsilon,p_0}(s_\theta)
 \le \frac{4\varepsilon^2}{(T-t_0)\tau_0}
     +\frac{4}{T-t_0}\,
       \mathbb E\left[D_2\left(p_{t_0}\middle\|\hat p_{t_0}\right)\right]
     +2\delta,
\]
which is \eqref{eq:ou-score-defect-risk}.
\end{proof}

We next apply Corollary~\ref{cor:ou-score-defect} to the neural approximation
constructed in Proposition~\ref{gbnn:prop-approximation}.
Let $X_1,\ldots,X_n\overset{\mathrm{i.i.d.}}{\sim}p_0$, with $p_0$
satisfying Assumption~\ref{assum:support}. Recall that the
Gaussian-blanket score can be written as
\[
\hat s_n^\vartheta(x,t)
=
-\frac{x}{v_t}
+
G_{n,t}(x),
\]
where the Gaussian term $-x/v_t$ is retained exactly and only the
sample-dependent correction $G_{n,t}$ is approximated by a ReLU network.

For every $\eta\in(0,1/2)$,
Proposition~\ref{gbnn:prop-approximation} constructs a sample-dependent ReLU
network $\Phi_{X_{1:n},\eta}$ satisfying
\[
\sup_{x\in\mathbb R^d,\;t\in[t_0,T]}
\left\|
\Phi_{X_{1:n},\eta}(x,t)-G_{n,t}(x)
\right\|_2
\le
\eta.
\]
Define the corresponding neural score estimator by
\begin{equation}
\hat s_{n,\eta}^{\mathrm{NN}}(x,t)
:=
-\frac{x}{v_t}
+
\Phi_{X_{1:n},\eta}(x,t).
\label{eq:neural-gaussian-blanket-score}
\end{equation}
Then
\[
\sup_{x\in\mathbb R^d,\;t\in[t_0,T]}
\left\|
\hat s_{n,\eta}^{\mathrm{NN}}(x,t)
-
\hat s_n^\vartheta(x,t)
\right\|_2
\le
\eta.
\]
Hence \eqref{eq:integrated-uniform-score-defect} holds with
$\delta=\eta^2$, and the proof of Corollary~\ref{cor:ou-score-defect} gives
\begin{equation}
\mathcal R_{\varepsilon,p_0}
\left(
\hat s_{n,\eta}^{\mathrm{NN}}
\right)
\le
2\mathcal R_{\varepsilon,p_0}
\left(
\hat s_n^\vartheta
\right)
+
2\eta^2.
\label{eq:neural-risk-transfer-basic}
\end{equation}

Applying Minkowski's inequality before taking the supremum over $q_0$
gives the sharper bound
\begin{equation}
\sqrt{
\mathcal R_{\varepsilon,p_0}
\left(
\hat s_{n,\eta}^{\mathrm{NN}}
\right)
}
\le
\sqrt{
\mathcal R_{\varepsilon,p_0}
\left(
\hat s_n^\vartheta
\right)
}
+
\eta.
\label{eq:neural-risk-transfer-minkowski}
\end{equation}
With $\vartheta=D/\sqrt d$ and $\tau_0\le1$,
Theorem~\ref{thm:finite-sample-robust-kde} yields
\begin{equation}
\sqrt{
\mathcal R_{\varepsilon,p_0}
\left(
\hat s_{n,\eta}^{\mathrm{NN}}
\right)
}
\le
\left[
\frac{C_{d,D}}
{(n+1)(T-t_0)\tau_0^{d/2}}
+
\frac{2\varepsilon^2}
{(T-t_0)\tau_0}
\right]^{1/2}
+
\eta.
\label{eq:neural-robust-rate}
\end{equation}
Thus, the neural approximation preserves the Gaussian-blanket robustness rate,
up to a constant factor, provided that $\eta^2$ is bounded by the sum of
the statistical and distribution-shift terms.

\paragraph{Scope of the neural representation.}
The approximation result is constructive: the network parameters are specified
from the observed sample, and the width, depth, and sparsity are controlled
explicitly in Proposition~\ref{gbnn:prop-approximation}. The result is a
realizability statement and does not imply that a particular score-matching
training procedure recovers the constructed parameters. Moreover, the
approximation bounds are stated for fixed $0<t_0<T<\infty$; their constants
may deteriorate as $t_0\to 0$ or $T\to\infty$.

\section{Numerical Illustration}
\label{app:three-dimensional-experiment}

We provide a controlled numerical study illustrating the finite-sample and
distribution-shift effects predicted by our theory. We consider a full-rank
three-dimensional example, which permits accurate numerical evaluation of both
the time-averaged score risk and the positive-time KL error while retaining a
nontrivial distribution shift setting.

\paragraph{Source distribution and shift family.}
We set $d=3$ and $D=1.5$, and consider the four support points
\[
\mu_1=\frac{D}{\sqrt3}(1,1,1),\qquad
\mu_2=\frac{D}{\sqrt3}(1,-1,-1),
\]
\[
\mu_3=\frac{D}{\sqrt3}(-1,1,-1),\qquad
\mu_4=\frac{D}{\sqrt3}(-1,-1,1).
\]
The reference distribution is
\[
p_0
=
0.45\delta_{\mu_1}
+0.30\delta_{\mu_2}
+0.15\delta_{\mu_3}
+0.10\delta_{\mu_4}.
\]
The support points are noncoplanar and lie in $\mathcal B(0,D)$, so $p_0$
has full-rank covariance.

To generate distribution shifts, let $\phi=(1+\sqrt5)/2$ and let
$v_1,\ldots,v_{12}$ be the normalized vectors obtained from
\[
(0,\pm1,\pm\phi),\qquad
(\pm1,\pm\phi,0),\qquad
(\pm\phi,0,\pm1).
\]
For $k=1,\ldots,12$, define the translated distributions
\[
q_{0,k}^{\varepsilon}
:=
(x\mapsto x+\varepsilon v_k)_\#p_0,
\qquad
q_{t,k}^{\varepsilon}:=P_tq_{0,k}^{\varepsilon}.
\]
The translation coupling gives
$\mathsf W_2(p_0,q_{0,k}^{\varepsilon})\le\varepsilon$, while the difference
of means gives the reverse inequality; hence
\[
\mathsf W_2(p_0,q_{0,k}^{\varepsilon})=\varepsilon.
\]

We evaluate the worst-case error over these twelve translation directions:
\begin{align}
R_{12}^{\rm tr}(n,\varepsilon)
&:=
\max_{1\le k\le12}
\mathbb E_{X_{1:n}}
\mathcal L_{q_{0,k}^{\varepsilon}}
(\hat s_n^\vartheta),\\
K_{12}(n,\varepsilon)
&:=
\max_{1\le k\le12}
\mathbb E_{X_{1:n}}
\mathrm{KL}\left(
q_{t_0,k}^{\varepsilon}
\middle\|
\hat p_{n,t_0}^{\vartheta}
\right),
\end{align}
where the expectation over the source samples is taken before maximizing over
the translation directions.

\paragraph{Estimator and theoretical bounds.}
We take
\[
t_0=0.05,\qquad T=1,
\]
and consider 23 logarithmically spaced shift radii in $[10^{-5},0.2]$.
The Gaussian-blanket estimator follows Section~\ref{sec:finite-sample}, with
\[
\vartheta=\frac{D}{\sqrt d},
\qquad
M=M_{\vartheta,t_0}
=
\exp\left(\frac{D^2}{2\vartheta^2}\right)
\left(1+\frac{\vartheta^2}{\tau_0}\right)^{d/2}.
\]
For each source-sample realization, we generate
\[
(N_1,\ldots,N_4)
\sim
\operatorname{Multinomial}
(n;(0.45,0.30,0.15,0.10)),
\]
which exactly represents the empirical measure for the finite-support source.
The resulting density estimator is
\[
\hat p_{n,t}^{\vartheta}(x)
=
\frac{
\sum_{j=1}^4N_j\varphi_{\rho_t}(x-a_t\mu_j)
+
M\varphi_{\rho_t+a_t^2\vartheta^2}(x)
}
{n+M}.
\]

The theoretical comparison curves are
\begin{equation}
U_K(n,\varepsilon)
:=
\log\left(1+\frac{M-1}{n+1}\right)
+\frac{\varepsilon^2}{\tau_0},
\label{eq:experiment-explicit-bounds}
\end{equation}
and
\[
U_S(n,\varepsilon)
:=
\frac{2}{T-t_0}U_K(n,\varepsilon),
\]
as implied by Propositions~\ref{prop:ou-score-oracle}
and~\ref{prop:reverse-chi-square}.

\paragraph{Numerical evaluation.}
For the main experiment, we use
\[
n\in\{5\times10^3,\,5\times10^5,\,5\times10^7\}.
\]
For each $n$, we average over 12 independent source-sample replicates, reusing
the same replicates across all shift radii and translation directions.

Since the Gaussian-blanket score is OU-compatible, the time-averaged score
risk is evaluated through the endpoint identity
\[
\mathcal L_{q_{0,k}^{\varepsilon}}
(\hat s_n^\vartheta)
=
\frac{2}{T-t_0}
\left[
\mathrm{KL}\left(
q_{t_0,k}^{\varepsilon}
\middle\|
\hat p_{n,t_0}^{\vartheta}
\right)
-
\mathrm{KL}\left(
q_{T,k}^{\varepsilon}
\middle\|
\hat p_{n,T}^{\vartheta}
\right)
\right],
\]
which follows from Lemma~\ref{lem:relative-de-bruijn-ou}. The KL terms at
$t_0$ and $T$ are computed using component-stratified order-32 tensor
Gauss--Hermite quadrature. Under self-consistent initialization, the reverse
sampler output at forward time $t_0$ is exactly
$\hat p_{n,t_0}^{\vartheta}$, allowing the KL error to be evaluated directly
without simulating the reverse SDE.

Figure~\ref{fig:three-dimensional-score-and-kl} reports the resulting
time-averaged score risk and positive-time KL error. Increasing the source
sample size moves both quantities toward their known-source counterparts,
reflecting the reduction in finite-sample estimation error. As the shift
radius increases, the distribution-shift contribution becomes dominant, with
the observed behavior consistent with the quadratic dependence on
$\varepsilon$ predicted by
Theorem~\ref{thm:finite-sample-robust-kde} and
\eqref{eq:self-consistent-kl-sampling-bound}. Across the displayed
configurations, the empirical errors remain below the corresponding
theoretical upper bounds.

\begin{figure}[htbp]
  \centering
  \includegraphics[width=\textwidth]
  {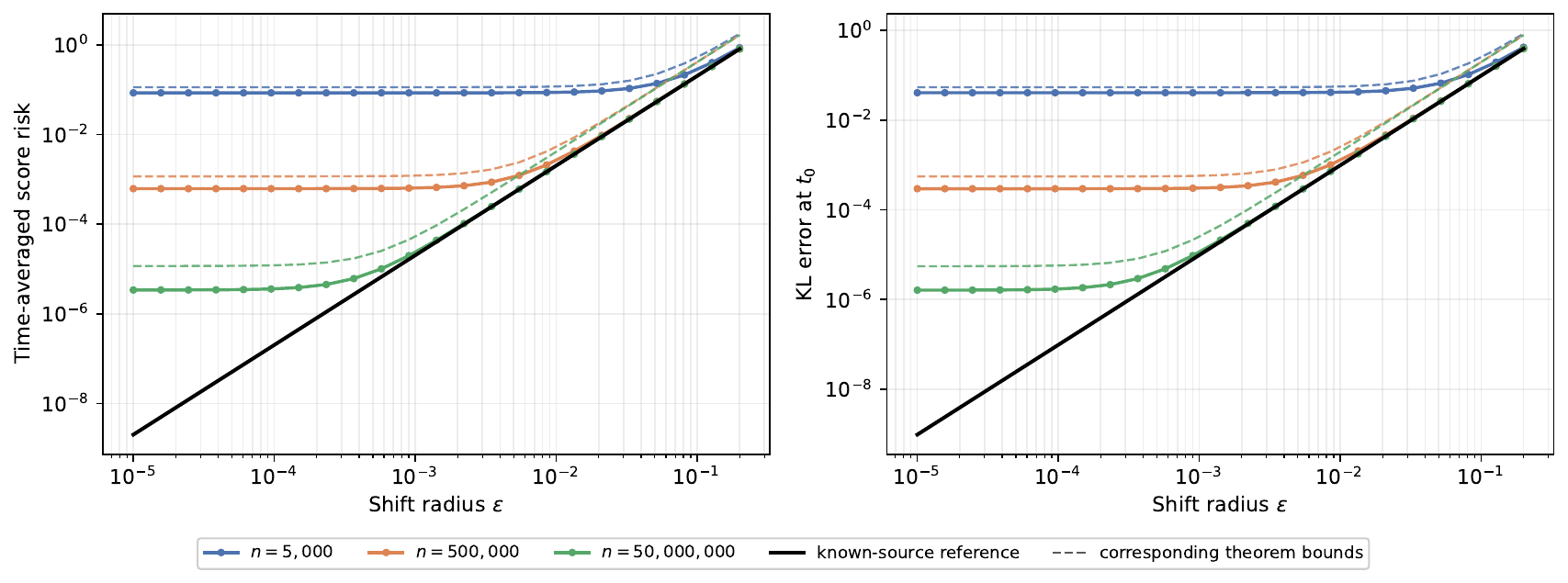}
  \caption{
  Time-averaged score risk (left) and positive-time KL error (right) in the
  controlled three-dimensional experiment. Increasing the source sample size
  reduces the finite-sample contribution, while the distribution-shift
  contribution dominates at larger shift radii. Dashed curves show the
  corresponding theoretical upper bounds.
  }
  \label{fig:three-dimensional-score-and-kl}
\end{figure}

The black curves denote known-source references, using
$\nabla\log p_t$ for the score risk and $p_{t_0}$ for the KL error.
The shaded regions are pointwise 95\% paired-bootstrap intervals over the
12 source-sample replicates, with the maximization over translation directions
recomputed in each of 4,000 bootstrap resamples.

\paragraph{Reference-domain finite-sample behavior.}
To isolate the finite-sample contribution, Figure~
\ref{fig:three-dimensional-blanket-sampling-floor} reports the score risk at
$\varepsilon=0$ for seven sample sizes between $5\times10^3$ and
$5\times10^9$. The risk is evaluated using the same endpoint identity and
shows approximately $n^{-1}$ decay over the displayed range. This experiment
is intended only as a finite-sample illustration and does not probe the
worst-case $\tau_0^{-d/2}$ dependence or the full minimax rate.

\begin{figure}[htbp]
  \centering
  \includegraphics[width=0.82\textwidth]
  {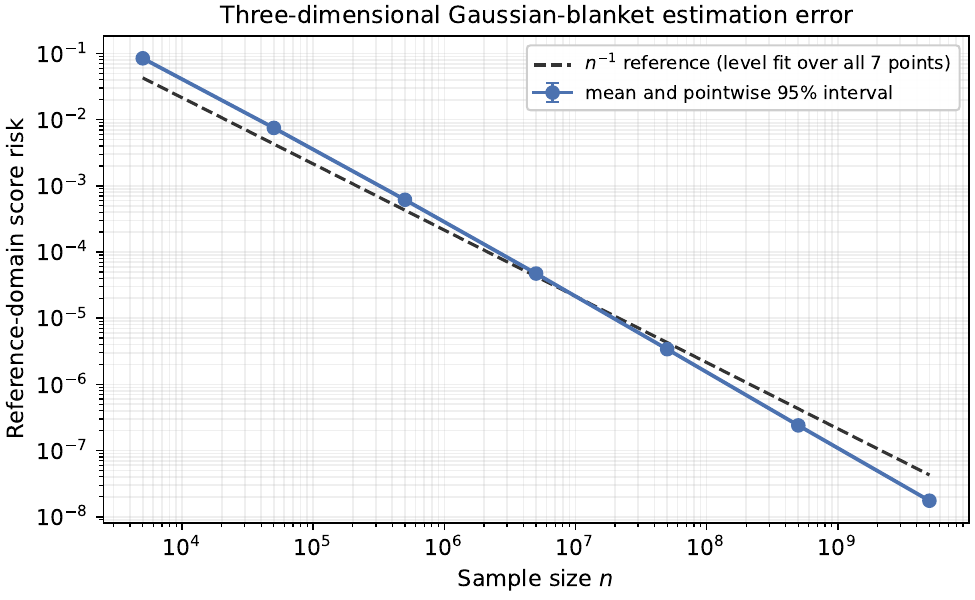}
  \caption{
  Reference-domain Gaussian-blanket score risk as a function of source sample
  size. Points are averaged over 256 independent multinomial-count replicates;
  error bars indicate $\pm1.96$ standard errors, and the dashed line shows an
  $n^{-1}$ reference.
  }
  \label{fig:three-dimensional-blanket-sampling-floor}
\end{figure}

\paragraph{Quadrature validation.}
As a numerical stability check, increasing the endpoint quadrature order from
32 to 48 changes the mean score risks by at most $0.11\%$ over the
configurations examined.

\end{document}